\documentclass[12pt]{amsart}
%
\usepackage{latexsym ,exscale, enumerate,amscd}
\usepackage{
  a4wide, 
  fullpage,
  amsmath,
  amsfonts,
  amssymb,
  times,
  color,
  hyphenat,
  pinlabel,
  mathtools
}


\usepackage{pstricks}
\usepackage[tiling]{pst-fill}

\psset{linewidth=0.3pt,dimen=middle} \psset{xunit=.70cm,yunit=0.70cm}
\psset{arrowsize=1pt 5,arrowlength=0.6,arrowinset=0.7}

\usepackage[all]{xy}
\SelectTips{cm}{}

\usepackage{graphicx}




\newcommand{\sseq}{{\rm SSeq}}

\newcommand{\kk}{ \textbf{\underline{\textit{k}}}}

\newcommand{\onel}{{\mathbf 1}_{\lambda}}
\newcommand{\onelp}{{\mathbf 1}_{\lambda'}}


\newcommand{\refequal}[1]{\xy {\ar@{=}^{#1}
(-1,0)*{};(1,0)*{}};
\endxy}


\newcommand{\U}{\dot{{\bf U}}(\mathfrak{sl}_n)}
\newcommand{\SD}{\dot{{\bf S}}}

\newcommand{\Ucat}{\cal{U}({\mathfrak sl}_n)}
\newcommand{\Scat}{\mathcal{S}}

\newcommand{\UcatD}{\dot{\mathcal U}(\mathfrak{sl}_n)}
\newcommand{\ScatD}{\dot{\Scat}}

\newcommand{\xsum}[2]{
  \vcenter{\xy
  (0,.4)*{\sum};
  (0,3.8)*{\scs #2};
  (0,-3.2)*{\scs #1};
  \endxy}
}

\newcommand{\Uup}{\xy {\ar (0,-3)*{};(0,3)*{} };(0,0)*{\bullet};(2,0)*{};(-2,0)*{};\endxy}
\newcommand{\Udown}{\xy {\ar (0,3)*{};(0,-3)*{} };(0,0)*{\bullet};(2,0)*{};(-2,0)*{};\endxy}

\newcommand{\Ucupri}{\xy (0,-1)*{\dblue\xybox{(-2,1)*{}; (2,1)*{} **\crv{(-2,-3) & (2,-3)} ?(1)*\dir{>};}} \endxy}

\newcommand{\Ucupli}{\xy (0,-1)*{\dblue\xybox{(2,1)*{}; (-2,1)*{} **\crv{(2,-3) & (-2,-3)}?(1)*\dir{>};}}\endxy}

\newcommand{\Ucapri}{\xy (0,1)*{\dblue\xybox{(-2,-1)*{}; (2,-1)*{} **\crv{(-2,3) & (2,3)}?(1)*\dir{>};}}\endxy\;\;}

\newcommand{\Ucapli}{\xy (0,1)*{\dblue\xybox{(2,-1)*{}; (-2,-1)*{} **\crv{(2,3) &(-2,3) }?(1)*\dir{>};}}\endxy\;}

\newcommand{\Ucrossij}{\xy (0,0)*{\dgreen\xybox{\ar (2.5,-2.5)*{};(-2.5,2.5)*{}}}; (0,0)*{\dblue\xybox{\ar (-2.5,-2.5)*{};(2.5,2.5)*{} }};(4,0)*{};(-4,0)*{};\endxy}

\newcommand{\Ucrossdij}{\xy (0,0)*{\dblue\xybox{\ar (2.5,2.5)*{};(-2.5,-2.5)*{}}}; (0,0)*{\dgreen\xybox{\ar (-2.5,2.5)*{};(2.5,-2.5)*{} }};
(4,0)*{};(-4,0)*{};\endxy}


\pagestyle{plain}


\hfuzz=10pc

\newcommand{\BOX}{\hbox {$\sqcap$ \kern -1em $\sqcup$}}

\newcommand{\To}{\Rightarrow}

\newcommand{\Hom}{{\rm Hom}}
\newcommand{\HOM}{{\rm HOM}}
\newcommand{\HOMU}{{\rm HOM_{\Ucat}}}
\newcommand{\HomU}{{\rm Hom_{\Ucat}}}
\newcommand{\HOMUD}{{\rm HOM_{\UcatD}}}

\newcommand{\HOMS}{{\rm HOM_{\Scat(n,d)}}}
\newcommand{\HomS}{{\rm Hom_{\Scat(n,d)}}}
\newcommand{\HOMSD}{{\rm HOM_{\ScatD(n,d)}}}

\newcommand{\End}{{\rm End}}
\newcommand{\END}{{\rm END}}
\newcommand{\ENDU}{{\rm END_{\Ucat}}}

\newcommand{\ENDS}{{\rm END_{\Scat(n,d)}}}
\newcommand{\EndS}{{\rm End_{\Scat(n,d)}}}

\renewcommand{\to}{\rightarrow}
\newcommand{\maps}{\colon}

\newcommand{\id}{{\rm id}}

\newcommand{\scs}{\scriptstyle}

\theoremstyle{definition}
\newtheorem{thm}{Theorem}[section]
\newtheorem{cor}[thm]{Corollary}
\newtheorem{conj}[thm]{Conjecture}
\newtheorem{lem}[thm]{Lemma}
\newtheorem{rem}[thm]{Remark}
\newtheorem{prop}[thm]{Proposition}
\newtheorem{defn}[thm]{Definition}

        \newcommand{\be}{\begin{equation}}
        \newcommand{\ee}{\end{equation}}
        \newcommand{\ba}{\begin{eqnarray}}
        \newcommand{\ea}{\end{eqnarray}}
        \newcommand{\ban}{\begin{eqnarray*}}
        \newcommand{\ean}{\end{eqnarray*}}
        \newcommand{\barr}{\begin{array}}
        \newcommand{\earr}{\end{array}}


\numberwithin{equation}{section}


%
%

%

\def\emph#1{{\sl #1\/}}

%
%


\let\tilde=\widetilde


\let\phi=\varphi
\let\epsilon=\varepsilon

\usepackage{bbm}

\def\N{{\mathbbm N}}

\def\Z{{\mathbbm Z}}
\def\Q{{\mathbbm Q}}


\def\cal#1{\mathcal{#1}}%
\def\1{\mathbbm{1}}%
\def\nn{\notag}



\def\mf{\mathfrak}
\def\shuffle{\,\raise 1pt\hbox{$\scriptscriptstyle\cup{\mskip
               -4mu}\cup$}\,}


%
%
%
%
%













\newcommand{\lowrru}[1]{\xybox{%
  (-8,0)*{};
  (8,0)*{};
  (-6,-18)*{};(6,-9)*{} **\crv{(-6,-13) & (6,-15)} ?(1)*\dir{>};
  (6,-9)*{};(6,0)*{}  **\dir{-} ?(.3)*\dir{ }+(2,0)*{\scs {\bf j}};
}}

\newcommand{\lowllu}[1]{\xybox{%
  (-8,0)*{};
  (8,0)*{};
  (6,-18)*{};(-6,-9)*{} **\crv{(6,-13) & (-6,-15)} ?(1)*\dir{>};
  (-6,-9)*{};(-6,0)*{}  **\dir{-} ?(.3)*\dir{ }+(-2,0)*{\scs {\bf j}};
}}

\newcommand{\bbe}[1]{\xybox{%
  (-2,0)*{};
  (2,0)*{};
  (0,0);(0,-18) **\dir{-}; ?(.5)*\dir{<}+(2.3,0)*{\scriptstyle{#1}};
}}

\newcommand{\bbsid}{\xybox{%
  (-2,0)*{};
  (2,0)*{};
  (0,10);(0,4) **\dir{-};
}}
\newcommand{\bbpef}[1]{\xybox{%
  (-6,0)*{};
  (6,0)*{};
  (-4,0)*{}="t1";
  (4,0)*{}="t2";
  "t1";"t2" **\crv{(-4,-6) & (4,-6)}; ?(.15)*\dir{>} ?(.9)*\dir{>}
   ?(.5)*\dir{}+(0,-2)*{\scriptstyle{#1}};
}}
\newcommand{\bbpfe}[1]{\xybox{%
  (-6,0)*{};
  (6,0)*{};
  (-4,0)*{}="t1";
  (4,0)*{}="t2";
  "t2";"t1" **\crv{(4,-6) & (-4,-6)}; ?(.15)*\dir{>} ?(.9)*\dir{>}
  ?(.5)*\dir{}+(0,-2)*{\scriptstyle{#1}};
}}

\newcommand{\bbcfe}[1]{\xybox{%
  (-6,0)*{};
  (6,0)*{};
  (-4,0)*{}="t1";
  (4,0)*{}="t2";
  "t1";"t2" **\crv{(-4,6) & (4,6)}; ?(.15)*\dir{>} ?(.9)*\dir{>}
  ?(.5)*\dir{}+(0,2)*{\scriptstyle{#1}};
}}
\newcommand{\bbcef}[1]{\xybox{%
  (-6,0)*{};
  (6,0)*{};
  (-4,0)*{}="t1";
  (4,0)*{}="t2";
  "t2";"t1" **\crv{(4,6) & (-4,6)}; ?(.15)*\dir{>}
  ?(.9)*\dir{>} ?(.5)*\dir{}+(0,2)*{\scriptstyle{#1}};
}}

\newcommand{\ccbub}[2]{
\xybox{%
 (-6,0)*{};
  (6,0)*{};
  (-4,0)*{}="t1";
  (4,0)*{}="t2";
  "t2";"t1" **\crv{(4,6) & (-4,6)}; ?(.7)*\dir{}+(-2,0)*{\scs #2}
  ?(.05)*\dir{>} ?(1)*\dir{>};
  "t2";"t1" **\crv{(4,-6) & (-4,-6)};
   ?(.3)*\dir{}+(0,0)*{\bullet}+(0,-3)*{\scs {#1}};
}}
\newcommand{\cbub}[2]{
\xybox{%
 (-6,0)*{};
  (6,0)*{};
  (-4,0)*{}="t1";
  (4,0)*{}="t2";
  "t2";"t1" **\crv{(4,6) & (-4,6)};?(.7)*\dir{}+(-2,0)*{\scs #2};
   ?(0)*\dir{<} ?(.95)*\dir{<};
  "t2";"t1" **\crv{(4,-6) & (-4,-6)};
   ?(.3)*\dir{}+(0,0)*{\bullet}+(0,-3)*{\scs {#1}};
}}

\newcommand{\bbdl}[1]{\xybox{%
  (2,0);(0,-8) **\crv{(2,-2)&(0,-6)}; ?(.5)*\dir{>}
}}
\newcommand{\bbdlu}[1]{\xybox{%
  (2,0);(0,-8) **\crv{(2,-2)&(0,-6)}; ?(.5)*\dir{<}
}}
\newcommand{\bbdr}[1]{\xybox{%
  (-2,0);(0,-8) **\crv{(-2,-2)&(0,-6)}; ?(.5)*\dir{>}
}}
\newcommand{\bbdru}[1]{\xybox{%
  (-2,0);(0,-8) **\crv{(-2,-2)&(0,-6)}; ?(.5)*\dir{<}
}}



\newcommand\figleft[2]{
\mspace{-16mu}
\;\;\vcenter{\xy 0;/r.16pc/:
(-6,0)*{\dblue\xybox{
 (-4,-15)*{}; (-20,25) **\crv{(-3,-6) & (-20,4)}?(0)*\dir{<}?(.6)*\dir{}+(0,0)*{\bullet};
}};
(-2,0)*{\dred\xybox{
 (-12,-15)*{}; (-4,25) **\crv{(-12,-6) & (-4,0)}?(0)*\dir{<}?(.6)*\dir{}+(.2,0)*{\bullet};
 ?(0)*\dir{<}?(.75)*\dir{}+(.2,0)*{\bullet};?(0)*\dir{<}?(.9)*\dir{}+(0,0)*{\bullet};
}};
(-14,6.8)*{\dgreen\xybox{
 (-28,25)*{}; 
 (-12,25) **\crv{(-28,10) & (-12,10)}?(0)*\dir{<};
  ?(.2)*\dir{}+(0,0)*{\bullet}?(.35)*\dir{}+(0,0)*{\bullet};
}};
(-28,0)*{\dred\xybox{
 (-36,-15)*{}; (-36,25) **\crv{(-34,-6) & (-35,4)}?(1)*\dir{>};
 (-28,-15)*{}; (-42,25) **\crv{(-28,-6) & (-42,4)}?(1)*\dir{>};
}};
(-26,-13)*{\dgreen\xybox{
 (-42,-15)*{}; (-20,-15) **\crv{(-42,-5) & (-20,-5)}?(1)*\dir{>};
}};
 (10,4)*{\dblue \cbub{}{}};
 (-17,-6)*{\dgreen\cbub{}{}};
 (8,-8)*{#1};(-38,-8)*{#2};
 \endxy}\;\;
}
\newcommand\figright[2]{
\mspace{-16mu}
\;\;\vcenter{ \xy 0;/r.16pc/: 
   (-14,8)*{\dred\xybox{
   (-16,-10)*{}; (-8,10)*{} **\crv{(-16,-6) & (-8,6)}?(1)*\dir{}+(.1,0)*{\bullet};
   (-8,-10)*{}; (0,10)*{} **\crv{(-8,-6) & (-0,6)}?(.6)*\dir{}+(.2,0)*{\bullet}?
   (1)*\dir{}+(.1,0)*{\bullet};
   (0,10)*{}; (-16,30)*{} **\crv{(0,14) & (-16,26)}?(1)*\dir{>};
  (-8,10)*{}; (0,30)*{} **\crv{(-8,14) & (-0,26)}?(1)*\dir{>}?(.6)*\dir{}+(.25,0)*{\bullet};
}};
(-14,8)*{\dgreen\xybox{
   (-16,10)*{}; (-8,30)*{} **\crv{(-16,14) & (-8,26)}?(1)*\dir{>};
   (0,-10)*{}; (-16,10)*{} **\crv{(0,-6) & (-16,6)}?(.5)*\dir{};
  }};
 (-26,2)*{#1}; (-2,2)*{#2};
 \endxy} \;\;
}

%


\newrgbcolor{dblue}{0 0 0.5}
\newrgbcolor{dred}{0.9 0 0}
\newrgbcolor{dgreen}{0 0.6 0}

\newcommand{\lai}{\lambda_{i}}
\newcommand{\laii}{\lambda_{i+1}}
\newcommand{\laiii}{\lambda_{i+2}}
\newcommand{\laj}{\lambda_{j}}

\newcommand{\qra}{\quad\ra\quad}
\newcommand{\bbox}[1]{\framebox{$\scs #1$}}
\newcommand{\und}[1]{\underline{#1}}

\newcommand{\bscs}{\black\scs}
\newcommand{\llambda}{\overline{\lambda}}

\newcommand{\ii}{\underline{\textbf{\textit{i}}}}
\newcommand{\jj}{\underline{\textbf{\textit{j}}}}

\newcommand{\n}{\noindent}

\newcommand{\fbim}{\mathcal{F}_{Bim}}
\newcommand{\fek}{\mathcal{F}_{EK}}
\newcommand{\bim}{\mathbf{Bim}}

\newcommand{\glcat}{\mathcal{U}(\mathfrak{gl}_n)}

\newcommand{\Ugl}{\dot{\bf U}(\mathfrak{gl}_n)}

\DeclareMathOperator{\HomGL}{Hom_{\glcat}}
\DeclareMathOperator{\HOMGL}{HOM_{\glcat}}

\newcommand{\figins}[3] 
{\raisebox{#1pt}{\includegraphics[height=#2 in]{figs/#3}}}









\newcommand{\bN}{\mathbb{N}}
\newcommand{\bZ}{\mathbb{Z}}
\newcommand{\bQ}{\mathbb{Q}}

\newcommand{\cF}{\mathcal{F}}

\newcommand{\ra}{\rightarrow}



\newcommand{\bigb}[1]{
\begin{pspicture}(0,0)
 \rput(0,0){\psframebox[framearc=.5,fillstyle=solid]{\small $#1$}}
\end{pspicture}}

\newcommand{\stccbub}[2]{\xybox{
 (0,0)*{\includegraphics[scale=0.5]{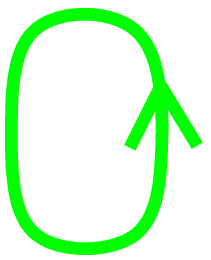}};
 (-5,0)*{\bigb{\pi_{#2}^{\spadesuit}}};(3,-7)*{#1};
}}

\newcommand{\stcbub}[2]{\xybox{
 (0,0)*{\reflectbox{\includegraphics[scale=0.5]{figs/ccbub}}};
 (5,0)*{\bigb{\pi_{#2}^{\spadesuit}}};(-3,-7)*{#1};
}}

\newcommand{\laudaccbub}[2]{
\xybox{%
 (-6,0)*{};
  (6,0)*{};
  (-4,0)*{}="t1";
  (4,0)*{}="t2";
  "t2";"t1" **\crv{(4,6) & (-4,6)}; ?(.7)*\dir{}+(-2,0)*{\scs #2}
  ?(.05)*\dir{>} ?(1)*\dir{>};
  "t2";"t1" **\crv{(4,-6) & (-4,-6)};
   ?(.3)*\dir{}+(0,0)*{\bullet}+(0,-3)*{\scs {#1}};
}}
\newcommand{\laudacbub}[2]{
\xybox{%
 (-6,0)*{};
  (6,0)*{};
  (-4,0)*{}="t1";
  (4,0)*{}="t2";
  "t2";"t1" **\crv{(4,6) & (-4,6)};?(.7)*\dir{}+(-2,0)*{\scs #2};
   ?(0)*\dir{<} ?(.95)*\dir{<};
  "t2";"t1" **\crv{(4,-6) & (-4,-6)};
   ?(.3)*\dir{}+(0,0)*{\bullet}+(0,-3)*{\scs {#1}};
}}

%
%
%
%

%
\fboxrule=1.2pt
%
%
\title{A diagrammatic categorification of the q-Schur algebra}

\author{Marco Mackaay}
\address{Departamento de Matem\'{a}tica\\ Universidade do Algarve\\ 
Campus de Gambelas\\ 8005-139 Faro\\ Portugal and CAMGSD\\Instituto Superior T\'{e}cnico
\\ Avenida Rovisco Pais\\ 
1049-001 Lisboa\\ Portugal}
\email{mmackaay@ualg.pt}
\author{Marko Sto\v si\'c }
\address{Instituto de Sistemas e Rob\'{o}tica and CAMGSD\\Instituto Superior T\'{e}cnico
\\ Avenida Rovisco Pais\\ 
1049-001 Lisboa\\ Portugal and\\
Mathematical Institute SANU\\
Knez Mihailova 36\\
11000 Beograd\\
Serbia}
\email{mstosic@math.ist.utl.pt}
\author{Pedro Vaz}
\address{CAMGSD\\Instituto Superior T\'{e}cnico
\\ Avenida Rovisco Pais\\ 1049-001 Lisboa\\ Portugal}
\email{pedro.forte.vaz@ist.utl.pt}
%
%
%
%
\begin{document}
%
%
\begin{abstract} 
In this paper we categorify the $q$-Schur algebra $\mathbf{S}_q(n,d)$ 
as a quotient of Khovanov and Lauda's diagrammatic 
$2$-category $\Ucat$~\cite{K-L3}. We also show that our $2$-category contains 
Soergel's~\cite{S} monoidal category of bimodules of type $A$, 
which categorifies the Hecke algebra $H_q(d)$, as a full sub-$2$-category 
if $d\leq n$. For the latter result we use Elias 
and Khovanov's diagrammatic presentation of Soergel's 
monoidal category of type $A$~\cite{E-Kh}.
\end{abstract}
\maketitle
\setcounter{tocdepth}{1}
\tableofcontents
%
%
\section{Introduction}                
\label{sec:intro}                     

There is a well-known relation, called {\em Schur-Weyl duality} or {\em 
reciprocity}, between the polynomial representations of homogeneous degree 
$d$ of the general linear group $\mbox{GL}(n,\mathbb{Q})$ and the finite-dimensional representations of the symmetric group on 
$d$ letters $S_d$. Recall that all irreducible polynomial 
representations of $\mbox{GL}(n,\mathbb{Q})$ of homogeneous degree 
$d$ occur in the decomposition of $V^{\otimes d}$, where 
$V=\mathbb{Q}^n$ is the natural representation of 
$\mbox{GL}(n,\mathbb{Q})$. Instead of the 
$\mbox{GL}(n,\mathbb{Q})$-action, we can consider the 
${\mathbf U}(\mathfrak{gl}_n)$-action, without loss of generality. A key observation 
for Schur-Weyl duality is that the permutation action of $S_d$ on 
$V^{\otimes d}$ commutes with the action of ${\mathbf U}(\mathfrak{gl}_n)$. Furthermore, 
we have 
$$\mathbb{Q}[S_d]\cong \mbox{End}_{{\mathbf U}(\mathfrak{gl}_n)}(V^{\otimes d})$$ 
if $n\geq d$. 

By definition, the {\em Schur algebra} is the other 
centralizer algebra 
$$S(n,d):=\mbox{End}_{S_d}(V^{\otimes d}).$$ It is well known that both 
${\mathbf U}(\mathfrak{sl}_n)$ and ${\mathbf U}(\mathfrak{gl}_n)$ map surjectively onto 
$S(n,d)$, for any $d>0$. Therefore we can also define $S(n,d)$ as the image 
of the map 
$${\mathbf U}(\mathfrak{gl}_n)\to \End_{\mathbb{Q}}(V^{\otimes d}),$$
which is the definition used in this paper. Both $S(n,d)$ and $\mathbb{Q}[S_d]$ are split 
semi-simple finite-dimensional algebras, and the 
{\em double centralizer property} above implies that the categories of 
finite-dimensional modules $S(n,d)-\mbox{mod}$ and $S_d-\mbox{mod}$ are 
equivalent, for $n\geq d$.

There are two more facts of interest to us. The 
first is that there actually exists a concrete functor which 
gives rise to the above mentioned equivalence. For $n\geq d$, there exists 
an embedding of $\mathbb{Q}[S_d]$ in $S(n,d)$, which induces the so called 
{\em Schur functor} 
$$S(n,d)-\mbox{mod}\longrightarrow S_d-\mbox{mod}.$$ 
As it turns out, this functor is an equivalence. 

The second fact of interest 
to us is that the Schur 
algebras $S(n,d)$ for various values of $n$ and $d$ are related. 
If $n\leq m$, then $S(n,d)$ can be embedded into $S(m,d)$. 
A more complicated relation is the following: for any 
$k\in\mathbb{N}$, there is a surjection 
$$S(n,d+nk)\to S(n,d).$$ 
This surjection is compatible with the projections of 
${\mathbf U}(\mathfrak{gl}_n)$ and ${\mathbf U}(\mathfrak{sl}_n)$ onto the Schur algebras. 
With these surjections, the 
Schur algebras form an inverse system. As it turns out, the projections 
of ${\mathbf U}(\mathfrak{sl}_n)$ onto the Schur algebras give rise to an embedding 
$${\mathbf U}(\mathfrak{sl}_n)\subset \oplus_{d=0}^{n-1}\lim_{\leftarrow k}S(n,d+nk).$$ 
To get a similar embedding for ${\mathbf U}(\mathfrak{gl}_n)$, one needs to consider 
generalized Schur algebras. We do not give the details of this 
generalization, because we will not need it. We refer the interested reader 
to~\cite{D-G}.
\vskip0.5cm

All the facts recollected above have $q$-analogues, which involve 
the quantum groups ${\mathbf U}_q(\mathfrak{gl}_n)$ and 
${\mathbf U}_q(\mathfrak{sl}_n)$, the Hecke algebra $H_q(d)$, the 
$q$-Schur algebra $S_q(n,d)$, and their respective 
finite-dimensional representations over $\mathbb{Q}(q)$. 

If one is only interested in the finite-dimensional 
representations of ${\mathbf U}_q(\mathfrak{gl}_n)$ and 
${\mathbf U}_q(\mathfrak{sl}_n)$, which can all be decomposed into 
weight spaces, it is easier to work with Lusztig's 
idempotented version of these quantum groups, denoted 
$\Ugl$ and 
$\U$. In these idempotented versions, 
the Cartan subalgebras are 
``replaced'' by algebras generated by 
orthogonal idempotents corresponding to the weights. 
The kernel of the surjection 
$\Ugl\to S_q(n,d)$ is simply the ideal 
generated by all idempotents corresponding to the 
$\mathfrak{gl}_n$-weights which do not appear in the 
decomposition of $V^{\otimes d}$. The same is true for 
the kernel of $\U\to S_q(n,d)$, using 
$\mathfrak{sl}_n$-weights. We will say more about $\Ugl$ 
and $\U$ in the next section.  
\vskip0.5cm

We are interested in the {\em categorification} of the $q$-algebras above, 
the relations between them and the applications to low-dimensional topology. 
By a categorification of a $q$-algebra 
we mean a monoidal category or a 2-category whose Grothendieck group, 
tensored by $\mathbb{Q}(q)$, is 
isomorphic to that $q$-algebra. 

As a matter of fact, all of 
them have been categorified already, and some of them in more than one way.  
Soergel defined a category of bimodules over polynomial rings in 
$d$ variables, which he proved to categorify $H_q(d)$. Elias and Khovanov gave 
a diagrammatic version of the Soergel category. 
Grojnowski and Lusztig~\cite{G-L} were the first to 
categorify $S_q(n,d)$, using categories of perverse sheaves on products of 
partial flag 
varieties. Subsequently Mazorchuk and Stroppel constructed a categorification 
using representation theoretic techniques~\cite{M-S1} and so did 
Williamson~\cite{Will} for $n=d$ using singular Soergel bimodules. 
Khovanov and Lauda have provided a diagrammatic 2-category $\Ucat$ 
which categorifies $\U$. Rouquier~\cite{R2} 
followed a more representation theoretic approach to the 
categorification of the quantum groups. The precise 
relation of his work with Khovanov and Lauda's remains unclear.
We note that the categorifications mentioned above have 
been obtained for arbitrary root data. However, this paper is only about 
type $A$ and we will not consider other types. 

Our interest is in the diagrammatic approach, by which $H_q(d)$ and 
${\mathbf U}_q(\mathfrak{sl}_n)$ have already been categorified. 
The goal of this paper is to define a diagrammatic categorification of 
$S_q(n,d)$. Recall that the objects of 
$\Ucat$ are the weights of $\mathfrak{sl}_n$, which label 
the regions in the diagrams which constitute the 2-morphisms. 
Our idea is quite simple: define a new 2-category $\glcat$ just 
as $\Ucat$ but switch to $\mathfrak{gl}_n$-weights, which 
we conjecture to give a categorification of $\Ugl$. 
Next we mod out $\glcat$ by all diagrams which have regions labeled by 
weights not appearing in the decomposition of $V^{\otimes d}$. This way 
we obtain a 
2-category $\Scat(n,d)$ and the main result of this paper is the proof 
that it indeed categorifies $S_q(n,d)$. 

There are two good reasons 
for switching to $\mathfrak{gl}_n$-weights, besides giving a conjectural 
categorification of $\Ugl$. It is easier to say 
explicitly which $\mathfrak{gl}_n$-weights 
do not appear in $V^{\otimes d}$, as we will show in the next section. Also, 
while working on our paper we found a sign mistake in 
what Khovanov and Lauda call their signed categorification of 
$\U$~\cite{K-L:err}. 
Fortunately it does not affect their unsigned version, but the corrected 
signed version loses a nice property, the cyclicity. We discovered that with 
$\mathfrak{gl}_n$-weights there is a different sign 
convention which solves the problem, at least for $\Scat(n,d)$.        

On our way of proving the main result of this paper we obtain some other 
interesting results:
\begin{itemize}
\item For $n\geq d$, we define a fully faithful 2-functor 
from Soergel's 
category of bimodules to $\Scat(n,d)$, which categorifies the well-known 
inclusion $H_q(d)\subset S_q(n,d)$ explained in Section~\ref{sec:hecke-schur}. 
\item We define functors $\Scat(n,d)
\to \Scat(m,d)$ when $n\leq m$. We are not (yet) able to prove 
that these are faithful, although we strongly suspect that they are. 
We know that they are not full, but suspect that they 
are ``almost full'' in a sense that we will explain in 
Section~\ref{sec:grothendieck}. 
\item We define essentially surjective full 2-functors 
$$\Scat(n,d+kn)\to \Scat(n,d)$$ 
which categorify the surjections above. 
\item We show that Khovanov and Lauda's 2-representation of $\Ucat$ on 
the equivariant cohomology of flag varieties descends to $\Scat(n,d)$. 
\item We conjecture how to 
categorify the irreducible representations of $S_q(n,d)$ using $\Scat(n,d)$. 
Khovanov and Lauda's categorification of these representations, using the 
so-called cyclotomic quotients, should be equivalent to a quotient of ours.     
\end{itemize}

Understanding the precise relation with the other categorifications of 
$S_q(n,d)$ would be very important, but is left for the future. 
As a matter of fact, Brundan and Stroppel have already established a link 
between the category $\mathcal{O}$ approach to categorification and 
Khovanov and Lauda's (see for example~\cite{B-S}), which perhaps can be used 
to obtain an equivalence between Mazorchuk and Stroppel's categorification 
of the $q$-Schur algebra and ours. For $n=d$, Williamson's 2-category of 
Soergel's singular bimodules is equivalent to Khovanov and Lauda's 
2-category build out of the equivariant cohomology of partial flag 
varieties (of flags in $\mathbb{Q}^d$) and we expect both to be equivalent to 
$\Scat(d,d)$.  
\vskip0.5cm

Besides the intrinsic interest of $\Scat(n,d)$, 
with its combinatorics and its link to representation theory, there is also 
a potential application to knot theory. First recall that 
there is a natural surjection of the braid group onto $H_q(d)$. 
The Jones-Ocneanu trace of the image of a braid in $H_q(d)$ is equal to the so 
called HOMFLYPT knot polynomial of the braid closure. This construction has 
been categorified: Rouquier defined a complex of Soergel bimodules for 
each braid and Khovanov discovered that its Hochschild homology categorifies 
the Jones-Ocneanu trace, showing that in this way one obtains a homology which 
is isomorphic to the Khovanov-Rozansky HOMFLYPT-homology. 
Using Elias and Khovanov's work, Elias and Krasner~\cite{E-Kr} worked out the 
diagrammatic version of Rouquier's complex. Their work still remains to be 
extended to include the Hochschild homology. Besides this approach, 
which is the one most directly related to the 
results in this paper, we should also mention a geometric approach due to 
Webster and Williamson in~\cite{W-W1} and a representation theoretic 
approach due to Mazorchuk and Stroppel~\cite{M-S2}. 

More generally, there is a natural homomorphism from the colored braid group, 
with $n$ strands colored by natural numbers whose sum is equal to $d$, 
to $S_q(n,d)$. It is not as widely advertised as the 
non-colored version, but one can easily obtain it from Lusztig's formulas 
in Section 5.2.1 in~\cite{Lu} or from the second part of the paper by 
Murakami-Ohtsuki-Yamada~\cite{M-O-Y}. 
One can also define a colored version of the 
Jones-Ocneanu trace on $S_q(n,d)$ to obtain the colored HOMFLYPT knot 
invariant. Naturally the question arises how to categorify the 
colored HOMFLYPT knot polynomial. In~\cite{C-R} Chuang and Rouquier 
defined a colored version of Rouquier's complex for a braid, using 
a representation theoretic approach. They proved invariance under the 
second braid-like Reidemeister move and 
conjectured invariance under the third move. In~\cite{M-S-V} we defined a 
complex of singular Soergel bimodules, which is equivalent to the 
Chuang-Rouquier complex. We conjectured that the Hochschild homology of such a 
complex categorifies the colored HOMFLYPT knot polynomial of the braid closure. 
We were only able to prove our conjecture for the colors 1 and 2, due to the 
complexity of the calculations for general colors.
Webster and Williamson subsequently showed our 
conjecture to be true, using a generalization of their geometric 
approach~\cite{W-W2}. Cautis, Kamnitzer and 
Licata~\cite{C-K-L} also studied the Chuang-Rouquier complex from a 
geometric point of view. By the above mentioned 2-representation of 
$\Scat(n,d)$ into singular Soergel bimodules, it is natural to expect that 
one should be able to define the Chuang-Rouquier complex in $\Scat(n,d)$, 
such that its 2-representation gives exactly the complex of singular Soergel 
bimodules which we conjectured. In a forthcoming paper we will come back 
to this. In the meanwhile, papers have appeared in which the colored HOMFLYPT 
homology has been constructed using matrix factorizations 
(see \cite{Wu1,Wu2,Wu3,Yo1,Yo2}). 
\vskip0.5cm
The outline of this paper is as follows: 
\begin{itemize}
\item In Section~\ref{sec:hecke-schur} 
we recall some results on the above mentioned 
$q$-algebras. Our choice has been highly selective in an attempt to 
prevent this paper from becoming too long. We have only included those 
results which we categorify or which we need in order to categorify. We hope 
that this introduction makes up for what we left out. 

\item In Section~\ref{sec:scat} we define the 2-categories  
$\mathcal{U}(\mathfrak{gl}_n)$ and $\Scat(n,d)$. As said 
before, the first one is just a copy of Khovanov and Lauda's definition of 
$\Ucat$, but with a different set of weights and a different sign convention. 
The second one is a quotient of the first one.

\item To understand some of the properties of $\Scat(n,d)$, we first 
define its $2$-representation in the $2$-category of bimodules 
over polynomial rings in Section~\ref{sec:2rep}. Except for the 
different sign convention, it is the factorization of the 2-representation 
of \cite{K-L3} through 
$\Scat(n,d)$. The only new feature is our interpretation of this 
2-representation in terms of the categorified MOY-calculus, which we developed 
in~\cite{M-S-V}.  
     
\item Section~\ref{sec:struct} is devoted 
to comparing the structure of the 2-HOM spaces of $\Ucat$ to those of 
$\Scat(n,d)$. The latter ones remain a bit of a mystery to us and we can only 
prove just enough about them for what we need in the rest of this paper. 

\item In Section~\ref{sec:soergel} we define a fully faithful embedding of 
Soergel's categorification of $H_q(d)$ into $\Scat(n,d)$. We 
have not yet attributed any notation to Soergel's category 
in this introduction, because there are actually two slightly different 
versions of it and we will need both, one for $d=n$ and the other for 
$d<n$. 

\item In Section~\ref{sec:grothendieck} we prove that $\Scat(n,d)$ 
indeed categorifies $S_q(n,d)$. We also conjecture how to categorify 
the Weyl modules of $S_q(n,d)$. 
\end{itemize}

\section{Hecke and $q$-Schur algebras}         
\label{sec:hecke-schur}                        

In this section we recollect some facts about the $q$-algebras mentioned 
in the introduction. For details and proofs see \cite{D} and \cite{Mar} 
unless other references are mentioned. We work over the field 
$\bQ(q)$, where $q$ is a formal parameter. 

\subsection{The quantum general and special linear algebras}

Let us first recall the quantum general and special linear algebras. The 
$\mathfrak{gl}_n$-weight lattice is isomorphic to $\bZ^n$. Let 
$\epsilon_i=(0,\ldots,1,\ldots,0)\in \bZ^n$, with $1$ being on the $i$th 
coordinate, and $\alpha_i=\epsilon_i-\epsilon_{i+1}\in\bZ^{n}$, for 
$i=1,\ldots,n-1$. We also define the Euclidean inner product on $\bZ^n$ by  
$(\epsilon_i,\epsilon_j)=\delta_{i,j}$. 
   
\begin{defn} The {\em quantum general linear algebra} 
${\mathbf U}_q(\mathfrak{gl}_n)$ is 
the associative unital $\bQ(q)$-algebra generated by $K_i,K_i^{-1}$, for $1,\ldots, n$, 
and $E_{\pm i}$, for $i=1,\ldots, n-1$, subject to the relations
\begin{gather*}
K_iK_j=K_jK_i\quad K_iK_i^{-1}=K_i^{-1}K_i=1
\\
E_iE_{-j} - E_{-j}E_i = \delta_{i,j}\dfrac{K_iK_{i+1}^{-1}-K_i^{-1}K_{i+1}}{q-q^{-1}}
\\
K_iE_{\pm j}=q^{\pm (\epsilon_i,\alpha_j)}E_{\pm j}K_i
\\
E_{\pm i}^2E_{\pm j}-(q+q^{-1})E_{\pm i}E_{\pm j}E_{\pm i}+E_{\pm j}E_{\pm i}^2=0
\qquad\text{if}\quad |i-j|=1
\\
E_{\pm i}E_{\pm j}-E_{\pm j}E_{\pm i}=0\qquad\text{else}.
\end{gather*} 
\end{defn}

\begin{defn} 
\label{defn:qsln}
The {\em quantum special linear algebra} 
${\mathbf U}_q(\mathfrak{sl}_n)\subseteq {\mathbf U}_q(\mathfrak{gl}_n)$ is 
the unital $\bQ(q)$-subalgebra generated by $K_iK^{-1}_{i+1}$ and 
$E_{\pm i}$, for $i=1,\ldots, n-1$.
\end{defn}

Recall that the ${\mathbf U}_q(\mathfrak{sl}_n)$-weight lattice is 
isomorphic to $\bZ^{n-1}$. Suppose that $V$ is a 
${\mathbf U}_q(\mathfrak{gl}_n)$-weight representation with 
weights $\lambda=(\lambda_1,\ldots,\lambda_n)\in\bZ^n$, i.e. 
$$V\cong \bigoplus_{\lambda}V_{\lambda}$$ 
and $K_i$ acts as multiplication by 
$q^{\lambda_i}$ on $V_{\lambda}$. Then $V$ is also a 
${\mathbf U}_q(\mathfrak{sl}_n)$-weight representation with weights 
$\overline{\lambda}=(\overline{\lambda}_1,\ldots,\overline{\lambda}_{n-1})\in
\bZ^{n-1}$ such that 
$\overline{\lambda}_j=\lambda_j-\lambda_{j+1}$ for $j=1,\ldots,n-1$. 
Conversely, given a ${\mathbf U}_q(\mathfrak{sl}_n)$-weight 
representation with weights $\mu=(\mu_1,\ldots,\mu_{n-1})$, there is not a 
unique choice of ${\mathbf U}_q(\mathfrak{gl}_n)$-action on $V$. We can 
fix this by choosing the action of $K_1\cdots K_n$. In terms of weights, this 
corresponds to the observation that, for any $d\in\bZ$ the equations 
\begin{align}
\label{eq:sl-gl-wts1}
\lambda_i-\lambda_{i+1}&=\mu_i\\
\label{eq:sl-gl-wts2}
\qquad \sum_{i=1}^{n}\lambda_i&=d
\end{align}  
determine $\lambda=(\lambda_1,\ldots,\lambda_n)$ uniquely, 
if there exists a solution to~\eqref{eq:sl-gl-wts1} and~\eqref{eq:sl-gl-wts2} 
at all. To fix notation, we 
define the map $\phi_{n,d}\colon \bZ^{n-1}\to \bZ^{n}\cup \{*\}$ by 
$$
\phi_{n,d}(\mu)=\lambda 
$$
if~\eqref{eq:sl-gl-wts1} and \eqref{eq:sl-gl-wts2} have a solution, and 
put $\phi_{n,d}(\mu)=*$ otherwise.   

Recall that ${\mathbf U}_q(\mathfrak{gl}_n)$ and 
${\mathbf U}_q(\mathfrak{sl}_n)$ are both Hopf algebras, which implies that 
the tensor product of two of their representations is a representation again. 

Both ${\mathbf U}_q(\mathfrak{gl}_n)$ and ${\mathbf U}_q(\mathfrak{sl}_n)$ 
have plenty of non-weight representations, but we are not interested in them. 
Therefore we can restrict our attention to the 
Beilinson-Lusztig-MacPherson~\cite{B-L-M} idempotented version of these 
quantum groups, denoted $\Ugl$ and $\U$ respectively. 
To understand their definition, recall that $K_i$ acts as $q^{\lambda_i}$ on the 
$\lambda$-weight space of any weight representation. 
For each $\lambda\in\bZ^n$ adjoin an idempotent $1_{\lambda}$ to 
${\mathbf U}_q(\mathfrak{gl}_n)$ and add 
the relations
\begin{align*}
1_{\lambda}1_{\mu} &= \delta_{\lambda,\nu}1_{\lambda}   
\\
E_{\pm i}1_{\lambda} &= 1_{\lambda\pm\alpha_i}E_{\pm i}
\\
K_i1_{\lambda} &= q^{\lambda_i}1_{\lambda}.
\end{align*}
\begin{defn} 
\label{defn:Uglndot}
The idempotented quantum general linear algebra is defined by 
$$\Ugl=\bigoplus_{\lambda,\mu\in\bZ^n}1_{\lambda}{\mathbf U}_q(\mathfrak{gl}_n)1_{\mu}.$$
\end{defn}
\noindent For $\ii=(\alpha_1 i_1,\ldots,\alpha_{n-1}i_{n-1})$, with 
$\alpha_j=\pm$, define $$E_{\ii}:=E_{\alpha_1 i_1}\cdots E_{\alpha_{n-1} i_{n-1}}$$ 
and define $\ii_{\Lambda}\in\bZ^n$ to be the $n$-tuple such that 
$$E_{\ii}1_{\mu}=1_{\mu + \ii_{\Lambda}}E_{\ii}.$$

Similarly for ${\mathbf U}_q(\mathfrak{sl}_n)$, adjoin an idempotent $1_{\mu}$ 
for each $\mu\in\bZ^{n-1}$ and add the relations
\begin{align*}
1_{\mu}1_{\nu} &= \delta_{\mu,\nu}1_{\lambda}   
\\
E_{\pm i}1_{\mu} &= 1_{\mu\pm\overline{\alpha}_i}E_{\pm i}
\\
K_iK^{-1}_{i+1}1_{\mu} &= q^{\mu_i}1_{\mu}.
\end{align*}
\begin{defn} The idempotented quantum special linear algebra is defined by 
$$\U=\bigoplus_{\mu,\nu\in\bZ^{n-1}}1_{\mu}{\mathbf U}_q(\mathfrak{sl}_n)1_{\nu}.$$
\end{defn}
\noindent Note that $\Ugl$ and $\U$ are both non-unital algebras, 
because their units 
would have to be equal to the infinite sum of all their idempotents. 
Furthermore, the only ${\mathbf U}_q(\mathfrak{gl}_n)$ 
and ${\mathbf U}_q(\mathfrak{sl}_n)$-representations which factor through 
$\Ugl$ and $\U$, respectively, are the weight representations. 
Finally, note that there is no embedding of $\U$ into $\Ugl$, because 
there is no embedding of the $\mathfrak{sl}_n$-weights into the 
$\mathfrak{gl}_n$-weights.  

\subsection{The $q$-Schur algebra}

Let $d\in\bN$ and let $V$ be the natural $n$-dimensional representation of 
${\mathbf U}_q(\mathfrak{gl}_n)$. Define  
$$\Lambda(n,d)=\{\lambda\in \bN^n\colon\,\, 
\sum_{i=1}^{n}\lambda_i=d\}$$  
$$\Lambda^+(n,d)=\{\lambda\in\Lambda(n,d)\colon d\geq 
\lambda_1\geq\lambda_2\geq\cdots 
\geq\lambda_n\geq 0\}.$$ 
Recall that the weights in $V^{\otimes d}$ are precisely the elements of 
$\Lambda(n,d)$, and that the highest weights are the elements of 
$\Lambda^+(n,d)$.  
The highest weights correspond exactly to the irreducibles $V_{\lambda}$ 
that show up in the decomposition of $V^{\otimes d}$. 

As explained 
in the introduction, we can define the $q$-Schur algebra as follows: 
\begin{defn}
The $q$-Schur algebra $S_q(n,d)$ is the image 
of the representation 
$\psi_{n,d}\colon {\mathbf U}_q(\mathfrak{gl}_n)\to \End_{\mathbb{Q}}(V^{\otimes d})$.
\end{defn}
For each $\lambda\in\Lambda^+(n,d)$, the 
${\mathbf U}_q(\mathfrak{gl}_n)$-action on $V_{\lambda}$ factors through 
the projection $\psi_{n,d}\colon {\mathbf U}_q(\mathfrak{gl}_n)\to S_q(n,d)$. 
This way we obtain all irreducible representations of $S_q(n,d)$. Note that 
this also implies that all representations of $S_q(n,d)$ have a 
weight decomposition. As a matter of fact, it is well known that 
$$S_q(n,d)\cong \prod_{\lambda\in\Lambda^+(n,d)}\End_{\mathbb{Q}}(V_{\lambda}).$$
Therefore $S_q(n,d)$ is a finite-dimensional split semi-simple 
unital algebra and its dimension is equal to 
$$\sum_{\lambda\in\Lambda^+(n,d)}\dim(V_{\lambda})^2=\binom{n^2+d-1}{d}.$$ 

Since $V^{\otimes d}$ is a weight representation, 
$\psi_{n,d}$ gives rise to a homomorphism 
$\Ugl\to S_q(n,d)$, for 
which we use the same notation. This map is still surjective and 
Doty and Giaquinto, in Theorem 2.4 of~\cite{D-G}, showed that 
the kernel of $\psi_{n,d}$ is equal to the ideal generated by all 
idempotents $1_{\lambda}$ such that 
$\lambda\not\in\Lambda(n,d)$. Let $\SD(n,d)$ be 
the quotient of $\Ugl$ by the kernel of $\psi_{n,d}$. Clearly we have 
$\SD(n,d)\cong S_q(n,d)$. By the above observations, 
we see that $\SD(n,d)$ has a Serre presentation. As a matter of fact, 
by Corollary 4.3.2 in~\cite{C-G}, this presentation is simpler than 
that of $\Ugl$: one does not need to impose the last two Serre relations, 
involving cubical terms, because they are implied by the other relations 
and the finite 
dimensionality.\footnote{We thank Rapha\"{e}l Rouquier for pointing 
this out to us and giving us the reference.}  
\begin{lem} 
$\SD(n,d)$ is isomorphic to the associative unital 
$\bQ(q)$-algebra generated by $1_{\lambda}$, for $\lambda\in\Lambda(n,d)$, 
and $E_{\pm i}$, for $i=1,\ldots,n-1$, subject to the relations
\begin{align*}
1_{\lambda}1_{\mu} &= \delta_{\lambda,\mu}1_{\lambda} 
\\[0.5ex]
\sum_{\lambda\in\Lambda(n,d)}1_{\lambda} &= 1
\\[0.5ex]
E_{\pm i}1_{\lambda} &= 1_{\lambda\pm\alpha_i}E_{\pm i}
\\[0.5ex]
E_iE_{-j}-E_{-j}E_i &= \delta_{ij}\sum\limits_{\lambda\in\Lambda(n,d)}
[\overline{\lambda}_i]1_{\lambda}.
\end{align*}
We use the convention that $1_{\mu}X1_{\lambda}=0$, if $\mu$ 
or $\lambda$ is not contained in $\Lambda(n,d)$. Recall that $[a]$ is the 
$q$-integer $(q^a-q^{-a})/(q-q^{-1})$.
\end{lem}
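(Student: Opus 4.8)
The plan is to identify $\SD(n,d)$, defined as the quotient of $\Ugl$ by $\ker\psi_{n,d}$, with the algebra $A$ presented by generators and relations in the statement, and then to invoke the two cited external inputs to pin down that $A$ really is this quotient. Concretely, let $A$ denote the associative unital $\bQ(q)$-algebra with the listed presentation. I would first construct an algebra homomorphism $A\to\SD(n,d)$ by sending the abstract $1_\lambda$ to the image of the idempotent $1_\lambda\in\Ugl$ and $E_{\pm i}$ to the image of $E_{\pm i}$; one must check that the four families of relations hold in $\SD(n,d)$. The idempotent relation $1_\lambda 1_\mu=\delta_{\lambda,\mu}1_\lambda$ and the intertwining relation $E_{\pm i}1_\lambda=1_{\lambda\pm\alpha_i}E_{\pm i}$ hold already in $\Ugl$, hence in the quotient. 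The relation $\sum_{\lambda\in\Lambda(n,d)}1_\lambda=1$ holds because, by Doty--Giaquinto (Theorem 2.4 of~\cite{D-G}), the kernel of $\psi_{n,d}$ is exactly the ideal generated by the $1_\lambda$ with $\lambda\notin\Lambda(n,d)$, so in $\SD(n,d)$ every such idempotent dies, and $\SD(n,d)\cong S_q(n,d)$ is unital with unit $\sum_{\lambda\in\Lambda(n,d)}1_\lambda$. Finally the commutator relation $E_iE_{-j}-E_{-j}E_i=\delta_{ij}\sum_\lambda[\overline\lambda_i]1_\lambda$ is just the image of the defining commutator relation of ${\mathbf U}_q(\mathfrak{gl}_n)$: one rewrites $(K_iK_{i+1}^{-1}-K_i^{-1}K_{i+1})/(q-q^{-1})$ acting on the $\lambda$-weight space as $[\lambda_i-\lambda_{i+1}]=[\overline\lambda_i]$, and sums over $\lambda\in\Lambda(n,d)$ using $\sum 1_\lambda=1$; the summands with $\lambda\notin\Lambda(n,d)$ vanish.

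Next I would show this map $A\to\SD(n,d)$ is surjective and injective. Surjectivity is immediate: $\SD(n,d)$ is generated by the images of the $1_\lambda$ and the $E_{\pm i}$ by construction, and only the $1_\lambda$ with $\lambda\in\Lambda(n,d)$ survive. For injectivity I would argue via a second homomorphism. The quoted Corollary 4.3.2 in~\cite{C-G} gives a Serre-type presentation of $\SD(n,d)$ in which the two cubical Serre relations are redundant — that is, $\SD(n,d)$ is presented by $1_\lambda$ ($\lambda\in\Lambda(n,d)$), $E_{\pm i}$, the idempotent relations, $\sum 1_\lambda=1$, the intertwining relations, the $\mathfrak{sl}_n$-type commutator relation, \emph{and} the quadratic Serre relations $E_{\pm i}^2E_{\pm j}-(q+q^{-1})E_{\pm i}E_{\pm j}E_{\pm i}+E_{\pm j}E_{\pm i}^2=0$ for $|i-j|=1$ and $E_{\pm i}E_{\pm j}=E_{\pm j}E_{\pm i}$ for $|i-j|>1$. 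So to finish it suffices to show that these quadratic Serre relations are already consequences of the relations defining $A$. This is the crux: the relations in the statement of the lemma are a strict subset of the relations in the $\cite{C-G}$ presentation, and we must recover the missing (quadratic and the omitted "non-adjacent commuting") $E$-$E$ Serre relations from the commutator relation and finite-dimensionality — or, more precisely, show $A$ is finite-dimensional and then apply the same $\cite{C-G}$ reasoning to $A$ itself.

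The cleanest route, and the one I would actually write up, is: (1) observe that in $A$, fixing the "$+$" part, one checks $A\,1_\lambda$ is spanned by monomials $E_{\bs}1_\mu$ with $\mu\in\Lambda(n,d)$; the intertwining relations force $\mu$ and the length of admissible $\bs$ to be bounded (weights stay in the finite set $\Lambda(n,d)$), so any $1_\nu A 1_\mu$ is spanned by finitely many such monomials, whence $\dim_{\bQ(q)} A<\infty$; (2) therefore the argument behind Corollary 4.3.2 of~\cite{C-G} — that finite-dimensionality plus the quadratic relations already implicit force the cubical Serre relations, and conversely that over such a bounded-weight algebra the full set of $\mathfrak{gl}_n$-Serre relations is generated by the commutator relation — applies verbatim to $A$, identifying $A$ with the $\cite{C-G}$-presented algebra, which is $\SD(n,d)$. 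The main obstacle is precisely step (2): making rigorous that the missing Serre relations hold in $A$. The honest thing is to cite $\cite{C-G}$ (and the Doty--Giaquinto description of $\ker\psi_{n,d}$) for exactly this point rather than re-deriving it; the lemma is, in effect, a repackaging of those two results, and the proof should say so plainly, with the verification of the four displayed relations in $\SD(n,d)$ and the finite-dimensionality spanning argument being the only genuinely new content.
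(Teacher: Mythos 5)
The paper gives no written proof for this lemma: it is stated as an immediate consequence of the two citations in the preceding paragraph (Doty--Giaquinto~\cite{D-G}, Theorem 2.4, identifying $\ker\psi_{n,d}$ with the ideal generated by the $1_\lambda$, $\lambda\notin\Lambda(n,d)$; and Chris--Ginzburg~\cite{C-G}, Corollary 4.3.2, asserting that the cubical Serre relations and the distant-colour commuting relations may be dropped). Your proposal is the same argument fleshed out, and your verification of the four displayed relations in $\SD(n,d)$ — in particular the rewriting of $(K_iK_{i+1}^{-1}-K_i^{-1}K_{i+1})/(q-q^{-1})$ acting on $1_\lambda$ as $[\overline\lambda_i]$ — is correct.

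One genuine slip that you should fix before writing this up: you attribute to~\cite{C-G} a presentation of $\SD(n,d)$ that \emph{retains} the relations $E_{\pm i}^2E_{\pm j}-(q+q^{-1})E_{\pm i}E_{\pm j}E_{\pm i}+E_{\pm j}E_{\pm i}^2=0$ (which you call ``quadratic'' though they are cubic) and $E_{\pm i}E_{\pm j}=E_{\pm j}E_{\pm i}$, and then declare the crux to be proving these in $A$. But~\cite{C-G} says precisely the opposite: those Serre relations are \emph{redundant}, i.e.\ implied by the remaining relations together with the finiteness of the weight set $\Lambda(n,d)$. Combined with~\cite{D-G}, this says that the four relations in the lemma are already a presentation of $\SD(n,d)$, so $A\cong\SD(n,d)$ follows at once; there is no need for a separate finite-dimensionality estimate on $A$ or a ``transfer of the C--G reasoning to $A$.'' Your closing sentence, that the lemma is a repackaging of the two cited results with the verification of the four relations as the only new content, is the right summary — the middle of the argument just talks itself into a circle on the way there.
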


Although there is no embedding of $\U$ into $\Ugl$, the projection 
$$\psi_{n,d}\colon{\mathbf U}_q(\mathfrak{gl}_n)\to S_q(n,d)$$ 
can be restricted to ${\mathbf U}_q(\mathfrak{sl}_n)$ and is still surjective. 
This gives rise to the surjection 
$$\psi_{n,d}\colon \U\to \SD(n,d),$$
defined by 
\begin{equation}
\label{eq:psi}
\psi_{n,d}(E_{\pm i}1_{\lambda})=E_{\pm i}1_{\phi_{n,d}(\lambda)},
\end{equation}
where $\phi_{n,d}$ was defined below equations~\eqref{eq:sl-gl-wts1} and 
\eqref{eq:sl-gl-wts2}. By convention we put $1_{*}=0$.   

As mentioned in the introduction, the $q$-Schur algebras for various values 
of $n$ and $d$ are related. Let $m\geq n$ and $d$ be arbitrary. There is an 
obvious embedding of the set of ${\mathbf U}_q(\mathfrak{gl}_n)$-weights into 
the set of ${\mathbf U}_q(\mathfrak{gl}_m)$-weights, given by 
$$(\lambda_1,\ldots,\lambda_n)\mapsto(\lambda_1,\ldots,\lambda_n,0,\ldots,0).$$
For fixed $d$, this gives an inclusion $\Lambda(n,d)\subseteq\Lambda(m,d)$, 
which we can use to define 
$$\xi_{n,m}=\sum_{\lambda\in\Lambda(n,d)}1_{\lambda}\in \SD(m,d).$$ 
Note that $\xi_{n,m}\ne 1$ unless $n=m$. 
\begin{defn}
There is a well-defined homomorphism 
$${\iota}_{n,m}\colon \SD(n,d)\to\xi_{n,m}\SD(m,d)\xi_{n,m}$$
given by 
$$
E_{\pm i}\mapsto\xi_{n,m}E_{\pm i}\xi_{n,m}\qquad\mbox{and}\qquad
1_{\lambda}\mapsto \xi_{n,m}1_{\lambda}\xi_{n,m}=1_{\lambda}.
$$
\end{defn}
\noindent It is easy to see that this is an 
isomorphism.

\begin{defn}  Suppose $d'=d+nk$, for a certain $k\in\bN$. Then we define a 
homomorphism $$\pi_{d',d}\colon \SD(n,d')\to \SD(n,d)$$ by 
$$1_{\lambda}\mapsto 1_{\lambda-(k^n)}\quad\mbox{and}\quad 
E_{\pm i}\mapsto E_{\pm i}.$$  
\end{defn}
\noindent It is easy to check that $\pi_{d',d}$ is well-defined and surjective. It is 
also easy to see that 
$$\pi_{d',d}\psi_{n,d'}=\psi_{n,d}$$ 
and that $\pi_{d',d}$ induces a linear isomorphism 
$$V_{\lambda}\to V_{\lambda-(k^n)},$$ 
which intertwines the $\SD(n,d')$ and $\SD(n,d)$ actions, 
if $\lambda-(k^n)\in\Lambda^+(n,d)$. Of course $V_{\lambda}$ and 
$V_{\lambda-(k^n)}$ are isomorphic as ${\mathbf U}_q(\mathfrak{sl}_n)$ 
representations. Furthermore, note that for any 
$d=0,\ldots,n-1$ the set 
\begin{equation}
\label{eq:inversesystem}
\left(S_q(n,d+nk),\pi_{d+nk,d}\right)_{k\in\bN}
\end{equation}
\noindent forms an inverse system, 
so we can form the inverse limit algebra 
$$\lim_{\longleftarrow k}S_q(n,d+nk).$$
\noindent The following lemma is perhaps a bit surprising.
\begin{lem}
\label{lem:inverselimit}
The map 
$\sum_d\prod_k\psi_{n,d+nk}$, with $d=0,\ldots, n-1$ and 
$k\in\bN$, gives an embedding
$${\mathbf U}_q(\mathfrak{sl}_n)\subset \bigoplus_{d=0}^{n-1}
\lim_{\longleftarrow k}S_q(n,d+nk).$$
\end{lem}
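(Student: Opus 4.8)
The plan is to exhibit the map $\Phi := \sum_{d=0}^{n-1}\prod_{k\in\bN}\psi_{n,d+nk}$ as an injective algebra homomorphism from $\U$ into $\bigoplus_{d=0}^{n-1}\lim_{\leftarrow k}S_q(n,d+nk)$, by checking three things: that each component lands in the inverse limit (compatibility with the transition maps $\pi_{d+nk,d}$), that $\Phi$ is a homomorphism, and that $\ker\Phi = 0$. The first point is exactly the identity $\pi_{d',d}\psi_{n,d'}=\psi_{n,d}$ recorded just before the lemma (combined with the description of $\psi_{n,d}$ on $\U$ via $\phi_{n,d}$ in~\eqref{eq:psi}); the second is immediate since each $\psi_{n,d+nk}$ is an algebra map and a product of algebra maps is one. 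So the content is injectivity, and that is where I would spend all the effort.

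For injectivity, I would work idempotent-block by idempotent-block. A general element of $\U$ lies in $\bigoplus_{\mu,\nu\in\bZ^{n-1}} 1_\mu \U 1_\nu$, and since the $1_\mu$ are orthogonal idempotents preserved (up to the reindexing by $\phi_{n,d}$) by every $\psi_{n,d+nk}$, it suffices to show that for each pair $(\mu,\nu)$ the restriction of $\Phi$ to $1_\mu\U 1_\nu$ is injective. Fix such a pair and fix $x\in 1_\mu\U 1_\nu$ with $\Phi(x)=0$. Choose $d\in\{0,\dots,n-1\}$ so that there exists $k_0$ with $\phi_{n,d+nk_0}(\nu)\neq *$ — such a residue $d$ exists because, given $\nu$, equations~\eqref{eq:sl-gl-wts1}–\eqref{eq:sl-gl-wts2} have a solution precisely when $\sum_i\lambda_i$ has a prescribed residue mod $n$, and then any sufficiently large shift stays in $\Lambda(n,\cdot)$ (the $\mathfrak{gl}_n$-weight becomes dominant and nonnegative once $k$ is large). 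For all $k\geq k_0$ we then have $\phi_{n,d+nk}(\nu)\in\Lambda(n,d+nk)$ and likewise $\phi_{n,d+nk}(\mu)\in\Lambda(n,d+nk)$ for $k$ large, so $\psi_{n,d+nk}(x)\in S_q(n,d+nk)$ is the image of $x$ under the surjection $\U\to\SD(n,d+nk)$ of~\eqref{eq:psi}.

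It therefore remains to show that $x$ cannot die under \emph{every} such projection $\U\to\SD(n,d+nk)$ for $k\gg 0$. Here I would use the known structure of $\U$: by Lusztig's theory it has the canonical (or just a PBW-type) basis $\B$, compatible with the weight decomposition, and each $\psi_{n,d+nk}$ sends the part of $\B$ lying in the relevant weight block either to $0$ or to an honest basis-type element of $S_q(n,d+nk)$, with the "surviving" basis elements being exactly those that fit inside the truncation $\Lambda(n,d+nk)$. Writing $x=\sum_b c_b\, b$ as a finite $\bQ(q)$-linear combination of such basis elements in $1_\mu\U 1_\nu$, for $k$ large enough every $b$ occurring survives (the truncation window $\Lambda(n,d+nk)$ eventually contains all the finitely many weights appearing in the string-diagram/root-vector expressions of these finitely many $b$'s), and the images $\psi_{n,d+nk}(b)$ are linearly independent in $S_q(n,d+nk)$; hence $\psi_{n,d+nk}(x)=0$ forces all $c_b=0$, i.e. $x=0$. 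The main obstacle, and the step deserving the most care, is precisely this last linear-independence/stabilization argument: one must pin down which version of the basis of $\U$ is used, verify that the kernel of $\U\to\SD(n,d+nk)$ is spanned by basis elements (equivalently, that the Doty–Giaquinto kernel description on $\Ugl$ pulls back cleanly along the non-unital comparison $\psi_{n,d}$ of~\eqref{eq:psi}), and confirm that for a fixed finite set of basis elements the truncation eventually retains all of them — this is where an appeal to $\cite{B-L-M}$ or to Lusztig's monograph for the stable behaviour of the $q$-Schur quotients is essential.
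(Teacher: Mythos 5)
The paper offers no proof of this lemma at all: the sentence immediately after it reads ``The results in this paragraph were taken from~\cite{B-L-M}.'' So there is no in-paper argument to compare against; what matters is whether your proposal is a valid proof of the statement as written.

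It is not, and the gap is at the very first move. You decompose an arbitrary element as lying in $\bigoplus_{\mu,\nu}1_\mu\U 1_\nu$ and argue block by block. But the lemma is about ${\mathbf U}_q(\mathfrak{sl}_n)$, the \emph{unital} quantum group of Definition~\ref{defn:qsln} --- not the idempotented algebra $\U$. The algebra ${\mathbf U}_q(\mathfrak{sl}_n)$ has a $1$, has the $K_iK_{i+1}^{-1}$ as honest group-like units, and has no orthogonal idempotent decomposition; a general element (say $K_1K_2^{-1}$ or even $1$) does not live in any $1_\mu\U 1_\nu$. What you have actually sketched is a proof of the neighbouring assertion~\eqref{eq:inverselimit} about $\U$, which the paper records as a separate (also unproved) statement. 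These are genuinely different claims requiring different arguments, and your block decomposition simply does not apply to the lemma.

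For the lemma itself a more elementary route is available and is presumably what BLM have in mind: $\ker\bigl(\sum_d\prod_k\psi_{n,d+nk}\bigr)=\bigcap_{d'\geq 0}\ker\psi_{n,d'}$ is the joint annihilator of all the representations $V^{\otimes d'}$, and this is zero because (a) every finite-dimensional type $1$ irreducible of ${\mathbf U}_q(\mathfrak{sl}_n)$ occurs as a summand of some $V^{\otimes d'}$, and (b) over $\bQ(q)$ the finite-dimensional type $1$ representations of a quantized enveloping algebra form a faithful family. This uses no canonical bases, no Doty--Giaquinto kernel description, and no stabilisation of $q$-Schur quotients. Compatibility of the images with the transition maps $\pi_{d',d}$ is the identity $\pi_{d',d}\psi_{n,d'}=\psi_{n,d}$ already noted in the text, exactly as you say.

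Your sketch for the $\U$ version~\eqref{eq:inverselimit} is reasonable in outline: there the idempotent decomposition is available, and the crux is indeed that for a fixed finite set of basis elements of a fixed weight block, the projections $\psi_{n,d+nk}$ are eventually injective as $k\to\infty$. But as you yourself flag, that survival-and-linear-independence step is precisely the BLM stability statement about canonical/BLM bases under the $q$-Schur quotients, so your argument outsources its essential content to~\cite{B-L-M} --- which is what the paper does anyway. In other words: for~\eqref{eq:inverselimit} your proposal is a plausible organisation of the BLM argument but not an independent proof; for Lemma~\ref{lem:inverselimit} as actually stated, the proposal does not apply and should be replaced by the faithful-family argument above.
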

\noindent We also have 
\begin{equation}
\label{eq:inverselimit}
\U\subset\bigoplus_{d=0}^{n-1}
\lim_{\longleftarrow k}S_q(n,d+nk).
\end{equation}
\noindent The reader should remember this embedding when reading 
Corollary~\ref{cor:inverselimit}. The results in this paragraph were 
taken from~\cite{B-L-M}.  

We need to recall two more facts about $q$-Schur algebras and their 
representations. The first is that the irreducibles $V_{\lambda}$, 
for $\lambda\in\Lambda^+(n,d)$, can be 
constructed as subquotients of $\SD(n,d)$, called Weyl modules. Let $<$ denote  
the lexicographic order on $\Lambda(n,d)$.  
\begin{lem} 
\label{lem:weyl}
For any $\lambda\in\Lambda^+(n,d)$, we have 
$$V_{\lambda}\cong \SD(n,d)1_{\lambda}/[\mu>\lambda].$$
Here $[\mu>\lambda]$ is the ideal generated by all elements of the form 
$1_{\mu}x1_{\lambda}$, for some $x\in \SD(n,d)$ and $\mu>\lambda$.
\end{lem}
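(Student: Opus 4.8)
The plan is to deduce the statement directly from the Wedderburn decomposition $\SD(n,d)\cong\prod_{\nu\in\Lambda^+(n,d)}\End_{\bQ(q)}(V_{\nu})$ recalled above; since $\SD(n,d)$ is split semisimple over $\bQ(q)$, there are none of the subtleties that make Weyl modules interesting in positive characteristic, so the content is simply that this particular quotient realizes the simple module $V_{\lambda}$. First I would describe the left module $M:=\SD(n,d)1_{\lambda}$. Under the Wedderburn isomorphism $1_{\lambda}$ goes to the tuple $(e^{\nu}_{\lambda})_{\nu}$ of weight-space projections, where $e^{\nu}_{\lambda}\in\End_{\bQ(q)}(V_{\nu})$ projects onto $(V_{\nu})_{\lambda}$; this follows from the relation $K_i1_{\lambda}=q^{\lambda_i}1_{\lambda}$ together with the fact that every $\SD(n,d)$-module is a weight module. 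Because $\End_{\bQ(q)}(W)e\cong W^{\oplus\rk e}$ as a left module for any idempotent $e$, this gives
$$M\;\cong\;\bigoplus_{\nu\in\Lambda^+(n,d)}V_{\nu}^{\oplus\dim(V_{\nu})_{\lambda}}.$$
Here I would invoke the standard weight combinatorics for polynomial representations: for a dominant weight $\lambda$ one has $(V_{\nu})_{\lambda}\neq0$ iff $\lambda\trianglelefteq\nu$ in the dominance order, and $\dim(V_{\nu})_{\nu}=1$, so $V_{\lambda}$ occurs in $M$ with multiplicity exactly one.

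Next I would identify the submodule $N:=[\mu>\lambda]$. The left submodule of $M$ generated by the elements $1_{\mu}x1_{\lambda}$, $\mu>\lambda$, is $\sum_{\mu>\lambda}\SD(n,d)1_{\mu}\SD(n,d)1_{\lambda}$; the same submodule is obtained if one instead reads $[\mu>\lambda]$ as a two-sided ideal $J$ and the quotient as $M/J1_{\lambda}$, since $J1_{\lambda}=\sum_{\mu>\lambda}\SD(n,d)1_{\mu}\SD(n,d)1_{\lambda}$. Blockwise, the $\nu$-component of $1_{\mu}\SD(n,d)1_{\lambda}$ is $\Hom\bigl((V_{\nu})_{\lambda},(V_{\nu})_{\mu}\bigr)$, and composing on the left with all of $\End(V_{\nu})$ fills up the entire column space $\Hom\bigl((V_{\nu})_{\lambda},V_{\nu}\bigr)$ as soon as $(V_{\nu})_{\mu}\neq0$. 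Hence $N$ is exactly the sum of those isotypic blocks $V_{\nu}^{\oplus\dim(V_{\nu})_{\lambda}}$, $\lambda\trianglelefteq\nu$, for which there exists some $\mu\in\Lambda(n,d)$ with $\mu>\lambda$ and $(V_{\nu})_{\mu}\neq0$.

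The key step, and the one I would be most careful about, is the comparison of the lexicographic order used in the statement with the dominance order that actually governs the multiplicities: I claim such a $\mu$ exists precisely when $\nu\neq\lambda$ (given $\lambda\trianglelefteq\nu$). This rests on two elementary facts about $n$-tuples of sum $d$, namely $\mu\trianglelefteq\mu^{+}$ for the decreasing rearrangement $\mu^{+}$ of $\mu$, and $\mu\trianglelefteq\lambda\Rightarrow\mu\leq\lambda$ lexicographically. If $\nu=\lambda$, then any weight $\mu$ of $V_{\lambda}$ satisfies $\mu\trianglelefteq\mu^{+}\trianglelefteq\lambda$, hence $\mu\leq\lambda$, so no $\mu>\lambda$ can occur; conversely, if $\lambda\trianglelefteq\nu$ and $\nu\neq\lambda$, then $\mu:=\nu\in\Lambda(n,d)$ is a weight of $V_{\nu}$ and $\lambda\trianglelefteq\nu$, $\lambda\neq\nu$ forces $\lambda<\nu$ lexicographically.

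Combining this with the two previous paragraphs, $M/N$ kills every isotypic block except the one for $V_{\lambda}$, which survives with multiplicity one, so $\SD(n,d)1_{\lambda}/[\mu>\lambda]\cong V_{\lambda}$. I expect the only real obstacle to be organizational rather than mathematical: keeping straight which order controls what, so that the ideal $[\mu>\lambda]$ is correctly matched with ``all isotypic components other than $V_{\lambda}$''. Everything else is routine bookkeeping with the Wedderburn decomposition and the Kostka-type weight combinatorics of $\mathfrak{gl}_n$ (equivalently $\mathfrak{sl}_n$).
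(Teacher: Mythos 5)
The paper does not actually prove this lemma: it is stated as a recollection of known facts about $q$-Schur algebras, with the reader referred to Donkin and Martin for proofs, and the text moves directly on to the anti-involution $\tau$. So there is no in-paper argument to compare against; what you have written is a self-contained proof of a standard fact, and it is correct. Since $\SD(n,d)$ is split semisimple over $\bQ(q)$, the ``Weyl module'' is already simple and the lemma is purely a Wedderburn-plus-combinatorics statement, which is what your argument delivers. The block decomposition $\SD(n,d)1_{\lambda}\cong\bigoplus_{\nu}V_{\nu}^{\oplus\dim(V_{\nu})_{\lambda}}$, the identification of $1_{\lambda}$ with the tuple of $\lambda$-weight projections, and the observation that $\End(V_{\nu})\cdot\Hom\bigl((V_{\nu})_{\lambda},(V_{\nu})_{\mu}\bigr)=\Hom\bigl((V_{\nu})_{\lambda},V_{\nu}\bigr)$ once $(V_{\nu})_{\mu}\neq0$ are all sound. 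The genuinely load-bearing point is the one you single out: reconciling the lexicographic order $<$ used in the definition of $[\mu>\lambda]$ with the dominance order $\trianglelefteq$ that controls which weights occur in $V_{\nu}$. Your two auxiliary facts -- $\mu\trianglelefteq\mu^{+}$ for the decreasing rearrangement, and $\mu\trianglelefteq\lambda\Rightarrow\mu\leq_{\mathrm{lex}}\lambda$ -- are both elementary and correct for $n$-tuples of fixed sum (the second follows by looking at the first index where $\mu$ and $\lambda$ differ), and chaining them shows that $V_{\lambda}$ itself contributes no weight $\mu>_{\mathrm{lex}}\lambda$, while every other $\nu\trianglerighteq\lambda$, $\nu\neq\lambda$ does via $\mu=\nu$. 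The argument closes cleanly. One small stylistic point: you could shorten the passage about left versus two-sided ideals; since you only ever use $[\mu>\lambda]$ inside $\SD(n,d)1_{\lambda}$, it suffices to note once that multiplying the two-sided ideal on the right by $1_{\lambda}$ returns $\sum_{\mu>\lambda}\SD(n,d)1_{\mu}\SD(n,d)1_{\lambda}$ because $1_{\lambda}\in 1_{\lambda}\SD(n,d)1_{\lambda}$.
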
 

Finally, we recall a well known anti-involution on $\SD(n,d)$, which we 
will need in this paper. 
\begin{defn}
\label{defn:tau}
We define an algebra anti-involution 
$$\tau\colon \SD(n,d)\to \SD(n,d)^{\mbox{\scriptsize op}}$$
\noindent by 
$$
\tau(1_{\lambda})=1_{\lambda},
\quad \tau(1_{\lambda+\alpha_i}E_i1_{\lambda})= 
q^{-1-\overline{\lambda}_i}1_{\lambda}E_{-i}1_{\lambda+\alpha_i},
\quad \tau(1_{\lambda}E_{-i}1_{\lambda+\alpha_i})= q^{1+\overline{\lambda}_i}
1_{\lambda+\alpha_i}E_i1_{\lambda}.
$$ 
\end{defn}
\noindent Note that up to a shift $t'$, we have 
\begin{align*}
1_{\mu}E_{s_1}E_{s_2}\cdots E_{s_{m-1}}E_{s_m}1_{\lambda}q^t &\mapsto 
1_{\lambda}E_{-s_m}E_{-s_{m-1}}\cdots E_{-s_2}E_{-s_1}1_{\mu}q^{-t+t'}.
\end{align*}
\noindent Our $\tau$ is the analogue of the one in \cite{K-L3}.

\subsection{The Hecke algebra}

Recall that $H_q(n)$ is a $q$-deformation of the group algebra of 
the symmetric group on $n$ letters.
\begin{defn}
The Hecke algebra $H_q(n)$ is the unital associative 
$\bQ(q)$-algebra generated by the 
elements $T_i$, $i=1,\ldots, n-1$, subject to the relations
\begin{align*}
T_i^2 &= (q^2-1)T_i+q^2
\\
T_iT_j &= T_jT_i\qquad\text{if}\quad|i-j|>1
\\
T_iT_{i+1}T_i &=T_{i+1}T_iT_{i+1}.
\end{align*}
\end{defn}
\noindent Note that some people write $q$ where we write $q^2$ and 
use $v=q^{-1}$ in their presentation of the Hecke algebra. It is also not 
uncommon to find $t$ instead of our $q$. 

For $q=1$ we recover the presentation of $\bQ[S_n]$ in terms of the simple 
transpositions $\sigma_i$. For any element $\sigma\in S_n$ we can define 
$T_{\sigma}=T_{i_1}\cdots T_{i_k}$, choosing a reduced expression 
$\sigma=\sigma_{i_1}\cdots \sigma_{i_k}$. The relations above guarantee that 
 all reduced expressions of $\sigma$ give the same element $T_{\sigma}$. 
The $T_{\sigma}$, for $\sigma\in S_n$, form a linear basis of $H_q(n)$.

There is a simple change of generators, 
which is convenient for categorification 
purposes. Write $b_i=q^{-1}(T_i+1)$. Then the relations above become 
\begin{align*}
b_i^2 &= (q+q^{-1})b_i
\\
b_ib_j &= b_jb_i \qquad\text{if}\quad|i-j|>1
\\
b_ib_{i+1}b_i+b_{i+1} &= b_{i+1}b_ib_{i+1}+b_i.
\end{align*}
\noindent These generators are the simplest elements of the so called 
{\em Kazhdan-Lusztig basis}. Although the change of generators is simple, the 
whole change of linear bases is very complicated. 

As mentioned in the introduction, there is a $q$-version of Schur-Weyl duality. 
There is a $q$-permutation action of $H_q(d)$ on $V^{\otimes d}$, which 
is induced by the $R$-matrix of 
${\mathbf U}_q(\mathfrak{gl}_n)$ or ${\mathbf U}_q(\mathfrak{sl}_n)$ and 
commutes with the actions of these quantum enveloping algebras. With respect to 
these actions, $H_q(d)$ and $\SD(n,d)$ have the double centralizer property. 
Furthermore, their respective categories of finite-dimensional representations 
are equivalent. 

Suppose $n\geq d$. We explicitly recall the embedding of $H_q(d)$ into 
$\SD(n,d)$. Let $1_d=1_{(1^d)}$. Note that the ${\mathbf U}_q(\mathfrak{gl}_n)$-weight 
$(1^d)$ gives the zero ${\mathbf U}_q(\mathfrak{sl}_n)$-weight, for $n=d$, and 
a fundamental ${\mathbf U}_q(\mathfrak{sl}_n)$-weight for $n>d$. 
We define the following map 
$$\sigma_{n,d}\colon H_q(d)\to 1_{d}\SD(n,d)1_{d}$$
by 
$$\sigma_{n,d}(b_i)=1_dE_{-i}E_i1_d=1_dE_iE_{-i}1_d,$$
for $i=1,\ldots,d-1$.
It is easy to check that $\sigma_{n,d}$ is well-defined. It turns out that 
$\sigma_{n,d}$ is actually an isomorphism, which induces the 
{\em $q$-Schur functor} $\SD(n,d)-\mbox{mod}\to H_q(d)-\mbox{mod}$, where mod 
denotes the category of finite-dimensional modules. This functor is an 
equivalence. Let us state explicitly an easy implication of this 
equivalence, which we need in the sequel. 
\begin{lem} 
\label{lem:emb}
Let $0<d\leq n$ and let $A$ be a unital associative 
$\bQ(q)$-algebra. Suppose $\pi\colon \SD(n,d)\to A$ is a surjection of 
$\bQ(q)$-algebras, 
such that $\pi\circ \sigma_{n,d}\colon H_q(d)\to A$ is an embedding. Then 
$A\cong \SD(n,d)$.
\end{lem}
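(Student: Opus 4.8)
The plan is to leverage the two facts recalled just above: first, that $\SD(n,d)\cong S_q(n,d)$ is split semisimple and finite-dimensional over $\bQ(q)$, so that $\SD(n,d)\cong\prod_{\lambda\in\Lambda^+(n,d)}\End_{\bQ(q)}(V_\lambda)$ and every quotient algebra is a projection onto a sub-product (equivalently, $A\cong\SD(n,d)/I$ for a two-sided ideal $I$ which is a sum of some of the matrix blocks $\End_{\bQ(q)}(V_\lambda)$); and second, that for $d\le n$ the isomorphism $\sigma_{n,d}\colon H_q(d)\xrightarrow{\ \sim\ }1_d\SD(n,d)1_d$ induces a $q$-Schur \emph{equivalence} $\SD(n,d)\text{-mod}\to H_q(d)\text{-mod}$, $M\mapsto 1_dM$. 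The first thing I would do is translate this equivalence into the statement that $1_d$ is a \emph{full} idempotent:
$$\SD(n,d)\,1_d\,\SD(n,d)=\SD(n,d),$$
which is the standard Morita criterion for $M\mapsto eM$ to be an equivalence. In the semisimple picture this just says $1_d\End_{\bQ(q)}(V_\lambda)1_d\cong\End_{\bQ(q)}\bigl((V_\lambda)_{(1^d)}\bigr)\ne 0$ for each $\lambda\in\Lambda^+(n,d)$, which holds because $(1^d)$ is the dominance-minimal partition of $d$ and is therefore a weight of every Weyl module.

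Next I would pin down $I:=\ker\pi$. By hypothesis $\pi\circ\sigma_{n,d}$ is injective, and $\sigma_{n,d}$ maps $H_q(d)$ isomorphically onto $1_d\SD(n,d)1_d$, so $\pi$ is injective on the subalgebra $1_d\SD(n,d)1_d$. Hence $I\cap 1_d\SD(n,d)1_d=0$, and in particular $1_d\,I\,1_d=0$, since $1_dI1_d\subseteq I\cap 1_d\SD(n,d)1_d$.

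Finally I would deduce $I=0$. Writing $\SD=\SD(n,d)$ and using that $I$ is a two-sided ideal ($\SD I\SD=I$) together with fullness ($\SD=\SD\,1_d\,\SD$),
$$I=\SD I\SD=(\SD\,1_d\,\SD)\,I\,(\SD\,1_d\,\SD)=\SD\,1_d\,(\SD I\SD)\,1_d\,\SD=\SD\,(1_d I1_d)\,\SD=0.$$
(Or, working blockwise: $1_d$ detects every block $\End_{\bQ(q)}(V_\lambda)$ because $(V_\lambda)_{(1^d)}\ne0$, so $1_dI1_d=0$ already forces $I=0$.) Thus $\pi$ is injective, and being surjective by assumption, it is an isomorphism $\SD(n,d)\cong A$.

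The only genuinely non-formal step is the first one — extracting fullness of $1_d$ from the recalled equivalence; everything afterwards is elementary and uses neither the explicit presentation of $\SD(n,d)$ nor any quantum-group input. If one prefers to avoid invoking abstract Morita theory, one can instead verify directly that $(V_\lambda)_{(1^d)}\ne0$ for every $\lambda\in\Lambda^+(n,d)$ — equivalently that the $(1^d)$-weight space of $V_\lambda$, on which $S_d$ acts as the Specht module $S^\lambda$, is nonzero — and run only the blockwise version of the argument.
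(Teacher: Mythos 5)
Your proof is correct and follows essentially the same route as the paper: both rest on the semisimple block decomposition $\SD(n,d)\cong\prod_{\lambda}\End_{\bQ(q)}(V_\lambda)$ and on the observation that the $q$-Schur equivalence forces $1_d$ to meet every block nontrivially (i.e.\ $(V_\lambda)_{(1^d)}\ne 0$ for all $\lambda\in\Lambda^+(n,d)$), so that no block can lie in $\ker\pi$. The Morita-theoretic packaging ($1_d$ full, hence $1_dI1_d=0\Rightarrow I=0$) is a tidy but inessential reformulation, and your "blockwise version" in the final parenthesis is precisely the paper's argument.
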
   
\begin{proof} Recall that 
$$\SD(n,d)\cong \prod_{\lambda\in\Lambda^+(n,d)}\mbox{End}_{\Q(q)}(V_{\lambda}).$$ 
The fact that the $q$-Schur functor is an equivalence means that the 
projection of 
$\sigma_{n,d}(H_q(d))$ onto $\mbox{End}_{\Q(q)}(V_{\lambda})$ is non-zero, for any  
$\lambda\in\Lambda^+(n,d)$. Since all $\mbox{End}_{\Q(q)}(V_{\lambda})$ are simple 
algebras, $A$ has to be isomorphic to the product 
$$\prod_{\lambda\in\Lambda'}\mbox{End}_{\Q(q)}(V_{\lambda}),$$ 
for a certain subset $\Lambda'\subseteq \Lambda^+(n,d)$. 
But $\pi\circ \sigma_{n,d}$ is an embedding, so $\Lambda'=\Lambda^+(n,d)$ 
has to hold. 
\end{proof}

\section{The 2-categories $\glcat$ and $\Scat(n,d)$}  
\label{sec:scat}                                      

In this section we define two $2$-categories, 
$\mathcal{U}(\mathfrak{gl}_n)$ and $\Scat(n,d)$, 
using a graphical calculus analogous to Khovanov and Lauda's in~\cite{K-L3}. 
We thank Khovanov and Lauda for letting us copy their definition of 
$\mathcal{U}_{\to}(\mathfrak{sl}_n)$. 
Taking their definition, we first introduce a change of weights to obtain 
$\mathcal{U}(\mathfrak{gl}_n)$. Then we divide by an ideal to obtain 
$\Scat(n,d)$. 

As remarked in the introduction, our signs are 
slightly different from 
those in~\cite{K-L3}. Khovanov 
and Lauda~\cite{K-L:err} corrected their sign convention in 
$\mathcal{U}_{\to}(\mathfrak{sl}_n)$. 
As it turns out, 
the corrected $\mathcal{U}_{\to}(\mathfrak{sl}_n)$ is no longer cyclic, 
which makes working with that sign convention awkward. 
Fortunately Khovanov and Lauda's 
non-signed version, $\Ucat$, is still correct and cyclic and is isomorphic to 
the corrected $\mathcal{U}_{\to}(\mathfrak{sl}_n)$~\cite{K-L3, K-L:err}. However, the sign 
convention in $\Ucat$ is not so practical for the 
2-representation into bimodules, so we have decided to stick to our own 
sign convention in this paper. To get from our signs back to 
Khovanov and Lauda's (corrected) signs in $\mathcal{U}_{\to}(\mathfrak{sl}_n)$, 
apply the $2$-isomorphism which is the 
identity on all objects, $1$- and 
$2$-morphisms except the left cups and caps, on which it is given by   
\begin{equation}
\label{eq:signs}
\Ucapli_{i,\lambda}\mapsto (-1)^{\lambda_{i+1}+1}\,\,\Ucapli_{i,\lambda}
\quad\mbox{and}\quad \Ucupli_{i,\lambda}\mapsto (-1)^{\lambda_{i+1}}\,\,
\Ucupli_{i,\lambda}.
\end{equation}  

The various parts of our definition of $\mathcal{U}(\mathfrak{gl}_n)$ and 
$\Scat(n,d)$ below have exactly the same order as the corresponding parts of 
Khovanov and Lauda's definition of $\mathcal{U}_{\to}(\mathfrak{sl}_n)$, so 
the reader can compare them in detail. From 
now on we will always write $\Ucat$, instead of $\mathcal{U}_{\to}(\mathfrak{sl}_n)$, for the 
corrected signed categorification of $\U$. Since we will never work with 
the unsigned version, there should be no confusion. 

%
%
\subsection{The 2-category $\glcat$}
As already remarked in the introduction, the idea underlying the 
definition of $\mathcal{U}(\mathfrak{gl}_n)$ is very simple: 
it is obtained from $\Ucat$ by passing from $\mathfrak{sl}_n$-weights 
to $\mathfrak{gl}_n$-weights.

From now on let $n\in\N_{>1}$ be arbitrary but fixed and 
let $I=\{1,2,\ldots,n-1\}$. In the sequel we use {\em signed sequences} 
$\ii=(\alpha_1i_1,\ldots,\alpha_mi_m)$, 
for any $m\in\N$, $\alpha_j\in\{\pm 1\}$ and $i_j\in I$. 
The set of signed sequences 
we denote $\sseq$. For $\ii=(\alpha_1i_1,\ldots,\alpha_mi_m)\in\sseq$ we 
define $\ii_{\Lambda}:=\alpha_1 (i_1)_{\Lambda}+\cdots+\alpha_m (i_m)_{\Lambda}$, where 
$$(i_j)_{\Lambda}=(0,0,\ldots,1,-1,0\ldots,0),$$
such that the vector starts with $i_j-1$ and ends with $k-1-i_j$ zeros. 
To understand these definitions, the reader should recall our definition of 
$E_{\ii}$ and $\ii_{\Lambda}$ below Definition~\ref{defn:Uglndot}. We also 
define the symmetric $\Z$-valued bilinear form on $\Q[I]$ 
by $i\cdot i=2$, $i\cdot (i+1)=-1$ and $i\cdot j=0$, for $\vert i-j\vert>1$. 
Recall that $\overline{\lambda}_i=\lambda_i-\lambda_{i+1}$.

\begin{defn} \label{def_glcat} $\glcat$ is an
additive $\Q$-linear 2-category. The 2-category $\glcat$ consists of
\begin{itemize}
  \item objects:  $\lambda\in\bZ^n$.
\end{itemize}
The hom-category $\glcat(\lambda,\lambda')$ between two objects 
$\lambda$, $\lambda'$ is an additive $\Q$-linear category 
consisting of:
\begin{itemize}
  \item objects\footnote{We refer to objects of the category
$\glcat(\lambda,\lambda')$ as 1-morphisms of $\glcat$.  Likewise, the morphisms of
$\glcat(\lambda,\lambda')$ are called 2-morphisms in $\glcat$. } of
$\glcat(\lambda,\lambda')$: a 1-morphism in $\glcat$ from $\lambda$ to $\lambda'$
is a formal finite direct sum of 1-morphisms
  \[
 \cal{E}_{\ii} \onel\{t\} = \onelp \cal{E}_{\ii} \onel\{t\}
:= \cal{E}_{\alpha_1 i_1}\dotsm\cal{E}_{\alpha_m i_m} \onel\{t\}
  \]
for any $t\in \Z$ and signed sequence $\ii\in\sseq$ such that 
$\lambda'=\lambda+\ii_{\Lambda}$ and $\lambda$, $\lambda'\in\bZ^n$. 
  \item morphisms of $\glcat(\lambda,\lambda')$: for 1-morphisms $\cal{E}_{\ii} \onel\{t\}$
and  $\cal{E}_{\jj} \onel\{t'\}$ in $\glcat$, the hom
sets $\glcat(\cal{E}_{\ii} \onel\{t\},\cal{E}_{\jj} \onel\{t'\})$ of
$\glcat(\lambda,\lambda')$ are graded $\Q$-vector spaces given by linear
combinations of degree $t-t'$ diagrams, modulo certain relations, built from
compo\-sites of:
\begin{enumerate}[i)]
  \item  Degree zero identity 2-morphisms $1_x$ for each 1-morphism $x$ in
$\glcat$; the identity 2-morphisms $1_{\cal{E}_{+i} \onel}\{t\}$ and
$1_{\cal{E}_{-i} \onel}\{t\}$, for $i \in I$, are represented graphically by
\[
\begin{array}{ccc}
  1_{\cal{E}_{+i} \onel\{t\}} &\quad  & 1_{\cal{E}_{-i} \onel\{t\}} \\ \\
   \xy
 (0,0)*{\dblue\xybox{(0,8);(0,-8); **\dir{-} ?(.5)*\dir{>}+(2.3,0)*{\scriptstyle{}};}};
 (-1,-11)*{ i};(-1,11)*{ i};
 (6,2)*{ \lambda};
 (-8,2)*{ \lambda +i_{\Lambda}};
 (-10,0)*{};(10,0)*{};
 \endxy
 & &
 \;\;   
   \xy
 (0,0)*{\dblue\xybox{(0,8);(0,-8); **\dir{-} ?(.5)*\dir{<}+(2.3,0)*{\scriptstyle{}};}};
 (-1,-11)*{ i};(-1,11)*{ i};
 (6,2)*{ \lambda};
 (-8.5,2)*{ \lambda -i_{\Lambda}};
 (-12,0)*{};(12,0)*{};
 \endxy
\\ \\
   \;\;\text{ {\rm deg} 0}\;\;
 & &\;\;\text{ {\rm deg} 0}\;\;
\end{array}
\]
for any $\lambda + i_{\Lambda} \in\bZ^n$ and any 
$\lambda - i_{\Lambda} \in \bZ^n$, respectively.

More generally, for a signed sequence $\ii=(\alpha_1i_1, \alpha_2i_2, \ldots
\alpha_mi_m)$, the identity $1_{\cal{E}_{\ii} \onel\{t\}}$ 2-morphism is
represented as
\begin{equation*}
\begin{array}{ccc}
  \xy
 (-12,0)*{\dblue\xybox{(-12,8);(-12,-8); **\dir{-};}};
 (-4,0)*{\dred\xybox{(-4,8);(-4,-8); **\dir{-};}};
 (4,0)*{\cdots};
 (12,0)*{\dgreen\xybox{(12,8);(12,-8); **\dir{-};}};
 (-12,11)*{i_1}; (-4,11)*{ i_2};(12,11)*{ i_m };
  (-12,-11)*{ i_1}; (-4,-11)*{ i_2};(12,-11)*{ i_m};
 (18,2)*{ \lambda}; (-20,2)*{ \lambda+\ii_{\Lambda}};
 \endxy
\end{array}
\end{equation*}
where the strand labeled $i_{k}$ is oriented up if $\alpha_{k}=+$
and oriented down if $\alpha_{k}=-$. We will often place labels with no
sign on the side of a strand and omit the labels at the top and bottom.  The
signs can be recovered from the orientations on the strands.

\item For each $\lambda \in \bZ^n$ the 2-morphisms 
\[
\begin{tabular}{|l|c|c|c|c|}
\hline
 {\bf Notation:} \xy (0,-5)*{};(0,7)*{}; \endxy&
 $\dblue\Uup_{\black i,\lambda}$  &  $\dblue\Udown_{\black i,\lambda}$  
 &$\Ucrossij_{i,j,\lambda}$
 &$\Ucrossdij_{i,j,\lambda}$  \\
 \hline
 {\bf 2-morphism:} &   \xy
 (0,0)*{\dblue\xybox{(0,7);(0,-7); **\dir{-} ?(.75)*\dir{>}+(2.3,0)*{\scriptstyle{}}
 ?(.1)*\dir{ }+(2,0)*{\black \scs i};
 (0,-2)*{\txt\large{$\bullet$}};}};
 (4,4)*{ \lambda};
 (-8,4)*{ \lambda +i_{\Lambda}};
 (-10,0)*{};(10,0)*{};
 \endxy
 &
     \xy
 (0,0)*{\dblue\xybox{(0,7);(0,-7); **\dir{-} ?(.75)*\dir{<}+(2.3,0)*{\scriptstyle{}}
 ?(.1)*\dir{ }+(2,0)*{\black\scs i};
 (0,-2)*{\txt\large{$\bullet$}};}};
 (-6,4)*{ \lambda};
 (8,3.9)*{ \lambda +i_{\Lambda}};
 (-10,0)*{};(10,9)*{};
 \endxy
 &
   \xy
  (0,0)*{\xybox{
    (0,0)*{\dblue\xybox{(-4,-4)*{};(4,4)*{} **\crv{(-4,-1) & (4,1)}?(1)*\dir{>} ;}};
    (0,0)*{\dgreen\xybox{(4,-4)*{};(-4,4)*{} **\crv{(4,-1) & (-4,1)}?(1)*\dir{>};}};
    (-5,-3)*{\scs i};
     (5.1,-3)*{\scs j};
     (8,1)*{ \lambda};
     (-12,0)*{};(12,0)*{};
     }};
  \endxy
 &
   \xy
  (0,0)*{\xybox{
    (0,0)*{\dgreen\xybox{(-4,4)*{};(4,-4)*{} **\crv{(-4,1) & (4,-1)}?(1)*\dir{>} ;}};
    (0,0)*{\dblue\xybox{(4,4)*{};(-4,-4)*{} **\crv{(4,1) & (-4,-1)}?(1)*\dir{>};}};
    (-6,-3)*{\scs i};
     (6,-3)*{\scs j};
     (8,1)*{ \lambda};
     (-12,0)*{};(12,0)*{};
     }};
  \endxy
\\ & & & &\\
\hline
 {\bf Degree:} & \;\;\text{  $i\cdot i$ }\;\;
 &\;\;\text{  $i\cdot i$}\;\;& \;\;\text{  $-i \cdot j$}\;\;
 & \;\;\text{  $-i \cdot j$}\;\; \\
 \hline
\end{tabular}
\]

\[
\begin{tabular}{|l|c|c|c|c|}
\hline
  {\bf Notation:} \xy (0,-5)*{};(0,7)*{}; \endxy
 & \text{$\Ucupri_{i,\lambda}$} 
 & \text{$\Ucupli_{i,\lambda}$} 
 & \text{$\Ucapli_{i,\lambda}$} 
 & \text{$\Ucapri_{i,\lambda}$} \\
 \hline
  {\bf 2-morphism:} & \xy
    (0,-3)*{\dblue\bbpef{\black i}};
    (8,-3)*{ \lambda};
    (-12,0)*{};(12,0)*{};
    \endxy
  & \xy
    (0,-3)*{\dblue\bbpfe{\black i}};
    (8,-3)*{ \lambda};
    (-12,0)*{};(12,0)*{};
    \endxy
  & \xy
    (0,0)*{\dblue\bbcef{\black i}};
    (8,3)*{ \lambda};
    (-12,0)*{};(12,0)*{};
    \endxy 
  & \xy
    (0,0)*{\dblue\bbcfe{\black i}};
    (8,3)*{ \lambda};
    (-12,0)*{};(12,0)*{};(8,8)*{};
    \endxy\\& & &  &\\ \hline
 {\bf Degree:} & \;\;\text{  $1+\llambda_i$}\;\;
 & \;\;\text{ $1-\llambda_i$}\;\;
 & \;\;\text{ $1+\llambda_i$}\;\;
 & \;\;\text{  $1-\llambda_i$}\;\;
 \\
 \hline
\end{tabular}
\]
\end{enumerate}

\item Biadjointness and cyclicity:
\begin{enumerate}[i)]
\item\label{it:sl2i}  $\mathbf{1}_{\lambda+i_{\Lambda}}\cal{E}_{+i}\onel$ and
$\onel\cal{E}_{-i}\mathbf{1}_{\lambda+i_{\Lambda}}$ are biadjoint, up to grading shifts:
\begin{equation} \label{eq_biadjoint1}
\text{$
  \xy   0;/r.18pc/:
    (0,0)*{\dblue\xybox{
    (-8,0)*{}="1";
    (0,0)*{}="2";
    (8,0)*{}="3";
    (-8,-10);"1" **\dir{-};
    "1";"2" **\crv{(-8,8) & (0,8)} ?(0)*\dir{>} ?(1)*\dir{>};
    "2";"3" **\crv{(0,-8) & (8,-8)}?(1)*\dir{>};
    "3"; (8,10) **\dir{-};}};
    (12,-9)*{\lambda};
    (-6,9)*{\lambda+i_{\Lambda}};
    \endxy
    \; =
    \;
\xy   0;/r.18pc/:
    (0,0)*{\dblue\xybox{
    (-8,0)*{}="1";
    (0,0)*{}="2";
    (8,0)*{}="3";
    (0,-10);(0,10)**\dir{-} ?(.5)*\dir{>};}};
    (5,8)*{\lambda};
    (-9,8)*{\lambda+i_{\Lambda}};
    \endxy
\qquad \quad  \xy   0;/r.18pc/:
    (0,0)*{\dblue\xybox{
    (-8,0)*{}="1";
    (0,0)*{}="2";
    (8,0)*{}="3";
    (-8,-10);"1" **\dir{-};
    "1";"2" **\crv{(-8,8) & (0,8)} ?(0)*\dir{<} ?(1)*\dir{<};
    "2";"3" **\crv{(0,-8) & (8,-8)}?(1)*\dir{<};
    "3"; (8,10) **\dir{-};}};
    (12,-9)*{\lambda+i_{\Lambda}};
    (-6,9)*{ \lambda};
    \endxy
    \; =
    \;
\xy   0;/r.18pc/:
    (0,0)*{\dblue\xybox{
    (-8,0)*{}="1";
    (0,0)*{}="2";
    (8,0)*{}="3";
    (0,-10);(0,10)**\dir{-} ?(.5)*\dir{<};}};
    (9,8)*{\lambda+i_{\Lambda}};
    (-6,8)*{ \lambda};
    \endxy
$}
\end{equation}

\begin{equation}
\label{eq_biadjoint2}
\text{$
 \xy   0;/r.18pc/:
    (0,0)*{\dblue\xybox{
    (8,0)*{}="1";
    (0,0)*{}="2";
    (-8,0)*{}="3";
    (8,-10);"1" **\dir{-};
    "1";"2" **\crv{(8,8) & (0,8)} ?(0)*\dir{>} ?(1)*\dir{>};
    "2";"3" **\crv{(0,-8) & (-8,-8)}?(1)*\dir{>};
    "3"; (-8,10) **\dir{-};}};
    (12,9)*{\lambda};
    (-5,-9)*{\lambda+i_{\Lambda}};
    \endxy
    \; =
    \;
      \xy 0;/r.18pc/:
    (0,0)*{\dblue\xybox{
    (8,0)*{}="1";
    (0,0)*{}="2";
    (-8,0)*{}="3";
    (0,-10);(0,10)**\dir{-} ?(.5)*\dir{>};}};
    (5,-8)*{\lambda};
    (-9,-8)*{\lambda+i_{\Lambda}};
    \endxy
\qquad \quad \xy  0;/r.18pc/:
    (0,0)*{\dblue\xybox{
    (8,0)*{}="1";
    (0,0)*{}="2";
    (-8,0)*{}="3";
    (8,-10);"1" **\dir{-};
    "1";"2" **\crv{(8,8) & (0,8)} ?(0)*\dir{<} ?(1)*\dir{<};
    "2";"3" **\crv{(0,-8) & (-8,-8)}?(1)*\dir{<};
    "3"; (-8,10) **\dir{-};}};
    (12,9)*{\lambda+i_{\Lambda}};
    (-6,-9)*{ \lambda};
    \endxy
    \; =
    \;
\xy  0;/r.18pc/:
    (0,0)*{\dblue\xybox{
    (8,0)*{}="1";
    (0,0)*{}="2";
    (-8,0)*{}="3";
    (0,-10);(0,10)**\dir{-} ?(.5)*\dir{<};}};
    (9,-8)*{\lambda+i_{\Lambda}};
    (-6,-8)*{ \lambda};
    \endxy
$}
\end{equation}
\item
\begin{equation} \label{eq_cyclic_dot}
\text{$
    \xy
    (0,0)*{\dblue\xybox{
    (-8,5)*{}="1";
    (0,5)*{}="2";
    (0,-5)*{}="2'";
    (8,-5)*{}="3";
    (-8,-10);"1" **\dir{-};
    "2";"2'" **\dir{-} ?(.5)*\dir{<};
    "1";"2" **\crv{(-8,12) & (0,12)} ?(0)*\dir{<};
    "2'";"3" **\crv{(0,-12) & (8,-12)}?(1)*\dir{<};
    "3"; (8,10) **\dir{-};
    (0,4)*{\txt\large{$\bullet$}};}};
    (15,-9)*{ \lambda+i_{\Lambda}};
    (-12,9)*{\lambda};
    (10,8)*{\scs };
    (-10,-8)*{\scs i};
    \endxy
    \quad = \quad
      \xy
 (0,0)*{\dblue\xybox{
 (0,10);(0,-10); **\dir{-} ?(.75)*\dir{<}+(2.3,0)*{\scriptstyle{}}
 ?(.1)*\dir{ }+(2,0)*{\scs };
 (0,0)*{\txt\large{$\bullet$}};}};
 (-6,5)*{ \lambda};
 (8,5)*{ \lambda +i_{\Lambda}};
 (-10,0)*{};(10,0)*{};(-2,-8)*{\scs i};
 \endxy
    \quad = \quad
    \xy
    (0,0)*{\dblue\xybox{
    (8,5)*{}="1";
    (0,5)*{}="2";
    (0,-5)*{}="2'";
    (-8,-5)*{}="3";
    (8,-10);"1" **\dir{-};
    "2";"2'" **\dir{-} ?(.5)*\dir{<};
    "1";"2" **\crv{(8,12) & (0,12)} ?(0)*\dir{<};
    "2'";"3" **\crv{(0,-12) & (-8,-12)}?(1)*\dir{<};
    "3"; (-8,10) **\dir{-};
    (0,4)*{\txt\large{$\bullet$}};}};
    (15,9)*{\lambda+i_{\Lambda}};
    (-12,-9)*{\lambda};
    (-10,8)*{\scs };
    (10,-8)*{\scs i};
    \endxy
$}
\end{equation}
\item All 2-morphisms are cyclic with respect to the above biadjoint
   structure.\footnote{See \cite{L1} 
and the references therein for
  the definition of a cyclic 2-morphism with respect to a biadjoint 
structure.} This is ensured by the relations \eqref{eq_cyclic_dot}, and the
   relations
\begin{equation} \label{eq_cyclic_cross-gen}
\text{$
\xy 0;/r.19pc/:
(0,0)*{\dred\xybox{
     (-4,-4)*{};(4,4)*{} **\crv{(-4,-1) & (4,1)}?(1)*\dir{>};
     (-4,-4)*{};(18,-4)*{} **\crv{(-4,-16) & (18,-16)} ?(1)*\dir{<}?(0)*\dir{<};
     (18,-4);(18,12) **\dir{-};
     (4,4)*{};(-18,4)*{} **\crv{(4,16) & (-18,16)} ?(1)*\dir{>};  
     (-18,4);(-18,-12) **\dir{-}; 
}};
(0,0)*{\dblue\xybox{
     (4,-4)*{};(-4,4)*{} **\crv{(4,-1) & (-4,1)};
     (12,-4);(12,12) **\dir{-};
     (-12,4);(-12,-12) **\dir{-};
     (4,-4)*{};(12,-4)*{} **\crv{(4,-10) & (12,-10)}?(1)*\dir{<}?(0)*\dir{<};
      (-4,4)*{};(-12,4)*{} **\crv{(-4,10) & (-12,10)}?(1)*\dir{>}?(0)*\dir{>};
}};
     (8,1)*{ \lambda};
     (-10,0)*{};(10,0)*{};
      (20,11)*{\scs j};(10,11)*{\scs i};
      (-20,-11)*{\scs j};(-10,-11)*{\scs i};
  \endxy
\quad =  \quad \xy
(0,0)*{\dred\xybox{
     (-4,-4)*{};(4,4)*{} **\crv{(-4,-1) & (4,1)}?(0)*\dir{<};}};
(0,0)*{\dblue\xybox{
     (4,-4)*{};(-4,4)*{} **\crv{(4,-1) & (-4,1)}?(0)*\dir{<};}};
     (-5,3)*{\scs i};
     (5.1,3)*{\scs j};
     (-8,0)*{ \lambda};
     (-12,0)*{};(12,0)*{};
  \endxy \quad :=  \quad
 \xy 0;/r.19pc/:
(0,0)*{\dblue\xybox{
      (4,-4)*{};(-4,4)*{} **\crv{(4,-1) & (-4,1)}?(1)*\dir{>};
      (-4,4)*{};(18,4)*{} **\crv{(-4,16) & (18,16)} ?(1)*\dir{>};
      (4,-4)*{};(-18,-4)*{} **\crv{(4,-16) & (-18,-16)} ?(1)*\dir{<}?(0)*\dir{<};
      (18,4);(18,-12) **\dir{-};
      (-18,-4);(-18,12) **\dir{-};
}};
(0,0)*{\dred\xybox{
      (-4,-4)*{};(4,4)*{} **\crv{(-4,-1) & (4,1)}; 
      (12,4);(12,-12) **\dir{-};
      (-10,0)*{};(10,0)*{};
      (-4,-4)*{};(-12,-4)*{} **\crv{(-4,-10) & (-12,-10)}?(1)*\dir{<}?(0)*\dir{<};
      (4,4)*{};(12,4)*{} **\crv{(4,10) & (12,10)}?(1)*\dir{>}?(0)*\dir{>};
      (-12,-4);(-12,12) **\dir{-};
}};
  (8,1)*{ \lambda};
 (-20,11)*{\scs i};(-10,11)*{\scs j};
  (20,-11)*{\scs i};(10,-11)*{\scs j};
  \endxy
$}
\end{equation}
Note that we can take either the first or the last diagram above as the 
definition of the up-side-down crossing. We have chosen the last one 
above, because it is the one which matches Khovanov and Lauda's signs. 
The cyclic condition on 2-morphisms expressed by \eqref{eq_cyclic_dot} and
\eqref{eq_cyclic_cross-gen} ensures that diagrams related by isotopy represent
the same 2-morphism in $\glcat$.

It will be convenient to introduce degree zero 2-morphisms:
\begin{equation} \label{eq_crossl-gen}
\text{$
  \xy
(0,0)*{\dred\xybox{
    (-4,-4)*{};(4,4)*{} **\crv{(-4,-1) & (4,1)}?(1)*\dir{>};}};
(0,0)*{\dblue\xybox{
    (4,-4)*{};(-4,4)*{} **\crv{(4,-1) & (-4,1)}?(0)*\dir{<};}};
    (-5,-3)*{\scs j};
     (-5,3)*{\scs i};
     (8,2)*{ \lambda};
     (-12,0)*{};(12,0)*{};
  \endxy
:=
 \xy 0;/r.19pc/:
(0,0)*{\dred\xybox{
    (4,-4)*{};(-4,4)*{} **\crv{(4,-1) & (-4,1)}?(1)*\dir{>};
    (-4,4);(-4,12) **\dir{-}; 
    (4,-4);(4,-12) **\dir{-};
}};
(0,0)*{\dblue\xybox{
    (-4,-4)*{};(4,4)*{} **\crv{(-4,-1) & (4,1)};
    (-12,-4);(-12,12) **\dir{-};
    (12,4);(12,-12) **\dir{-};
    (-10,0)*{};(10,0)*{};
    (-4,-4)*{};(-12,-4)*{} **\crv{(-4,-10) & (-12,-10)}?(1)*\dir{<}?(0)*\dir{<};
    (4,4)*{};(12,4)*{} **\crv{(4,10) & (12,10)}?(1)*\dir{>}?(0)*\dir{>};
}};
    (16,1)*{\lambda};
    (-14,11)*{\scs i};(-2,11)*{\scs j};
    (14,-11)*{\scs i};(2,-11)*{\scs j};
 \endxy
  \quad = \quad
  \xy 0;/r.19pc/:
(0,0)*{\dblue\xybox{
    (-4,-4)*{};(4,4)*{} **\crv{(-4,-1) & (4,1)}?(1)*\dir{<};
    (4,4);(4,12) **\dir{-};(-4,-4);(-4,-12) **\dir{-};
}};
(0,0)*{\dred\xybox{
    (4,-4)*{};(-4,4)*{} **\crv{(4,-1) & (-4,1)};
     (12,-4);(12,12) **\dir{-};
     (-12,4);(-12,-12) **\dir{-};
     (10,0)*{};(-10,0)*{};
     (4,-4)*{};(12,-4)*{} **\crv{(4,-10) & (12,-10)}?(1)*\dir{>}?(0)*\dir{>};
      (-4,4)*{};(-12,4)*{} **\crv{(-4,10) & (-12,10)}?(1)*\dir{<}?(0)*\dir{<};
}};
     (16,1)*{\lambda};
     (14,11)*{\scs j};(2,11)*{\scs i};
     (-14,-11)*{\scs j};(-2,-11)*{\scs i};
  \endxy
$}
\end{equation}
\begin{equation} \label{eq_crossr-gen}
\text{$
  \xy
(0,0)*{\dred\xybox{
    (-4,-4)*{};(4,4)*{} **\crv{(-4,-1) & (4,1)}?(0)*\dir{<};}};
(0,0)*{\dblue\xybox{
    (4,-4)*{};(-4,4)*{} **\crv{(4,-1) & (-4,1)}?(1)*\dir{>};}};
    (5.1,-3)*{\scs i};
     (5.1,3)*{\scs j};
     (-8,2)*{ \lambda};
     (-12,0)*{};(12,0)*{};
  \endxy
:=
 \xy 0;/r.19pc/:
(0,0)*{\dblue\xybox{
    (-4,-4)*{};(4,4)*{} **\crv{(-4,-1) & (4,1)}?(1)*\dir{>};
    (4,4);(4,12) **\dir{-};
    (-4,-4);(-4,-12) **\dir{-};
}};
(0,0)*{\dred\xybox{
     (4,-4)*{};(-4,4)*{} **\crv{(4,-1) & (-4,1)};
     (12,-4);(12,12) **\dir{-};
     (-12,4);(-12,-12) **\dir{-};
     (10,0)*{};(-10,0)*{};
     (4,-4)*{};(12,-4)*{} **\crv{(4,-10) & (12,-10)}?(1)*\dir{<}?(0)*\dir{<};
     (-4,4)*{};(-12,4)*{} **\crv{(-4,10) & (-12,10)}?(1)*\dir{>}?(0)*\dir{>};
}};
     (-16,1)*{\lambda};
     (14,11)*{\scs j};(2,11)*{\scs i};
     (-14,-11)*{\scs j};(-2,-11)*{\scs i};
\endxy
  \quad = \quad
  \xy 0;/r.19pc/:
(0,0)*{\dred\xybox{
     (4,-4)*{};(-4,4)*{} **\crv{(4,-1) & (-4,1)}?(1)*\dir{<};
     (-4,4);(-4,12) **\dir{-};     (4,-4);(4,-12) **\dir{-};
}};
(0,0)*{\dblue\xybox{
     (-4,-4)*{};(4,4)*{} **\crv{(-4,-1) & (4,1)};
     (-12,-4);(-12,12) **\dir{-};
     (12,4);(12,-12) **\dir{-};
     (-10,0)*{};(10,0)*{};
     (-4,-4)*{};(-12,-4)*{} **\crv{(-4,-10) & (-12,-10)}?(1)*\dir{>}?(0)*\dir{>};
     (4,4)*{};(12,4)*{} **\crv{(4,10) & (12,10)}?(1)*\dir{<}?(0)*\dir{<};
}};
     (-16,1)*{\lambda};
     (-14,11)*{\scs i};(-2,11)*{\scs j};
     (14,-11)*{\scs i};(2,-11)*{\scs j};
  \endxy
$}
\end{equation}
where the second equality in \eqref{eq_crossl-gen} and \eqref{eq_crossr-gen}
follow from \eqref{eq_cyclic_cross-gen}. Again we have indicated which choice 
of twists we use to define the sideways crossings, which is exactly the choice 
which matches Khovanov and Lauda's sign conventions.  

\item All dotted bubbles of negative degree are zero. That is,
\begin{equation} \label{eq_positivity_bubbles}
 \xy
 (-12,0)*{\dblue\cbub{\black m}{\black i}};
 (-8,8)*{\lambda};
 \endxy
  = 0
 \qquad
  \text{if $m<\llambda_i-1$} \qquad
 \xy
 (-12,0)*{\dblue\ccbub{\black m}{\black i}};
 (-8,8)*{\lambda};
 \endxy = 0\quad
  \text{if $m< -\llambda_i-1$}
\end{equation}
for all $m \in \Z_+$, where a dot carrying a label $m$ denotes the
$m$-fold iterated vertical composite of $\Uup_{i,\lambda}$ or
$\Udown_{i,\lambda}$ depending on the orientation.  A dotted bubble of degree
zero equals $\pm 1$:
\begin{equation}\label{eq:bubb_deg0}
\xy 0;/r.18pc/:
 (0,-1)*{\dblue\cbub{\black\llambda_i-1}{\black i}};
  (4,8)*{\lambda};
 \endxy
  = (-1)^{\laii} \quad \text{for $\llambda_i \geq 1$,}
  \qquad \quad
  \xy 0;/r.18pc/:
 (0,-1)*{\dblue\ccbub{\black -\llambda_i-1}{\black i}};
  (4,8)*{\lambda};
 \endxy
  = (-1)^{\laii-1} \quad \text{for $\llambda_i \leq -1$.}
\end{equation}
\item For the following relations we employ the convention that all summations
are increasing, so that a summation of the form $\sum_{f=0}^{m}$ is zero 
if $m < 0$.
\begin{eqnarray}
\label{eq:redtobubbles}
  \text{$\xy 0;/r.18pc/:
  (10,8)*{\lambda};
  (0,-3)*{\dblue\xybox{
  (-3,-8)*{};(3,8)*{} **\crv{(-3,-1) & (3,1)}?(1)*\dir{>};?(0)*\dir{>};
    (3,-8)*{};(-3,8)*{} **\crv{(3,-1) & (-3,1)}?(1)*\dir{>};
  (-3,-12)*{\bbsid};
  (-3,8)*{\bbsid};
  (3,8)*{}="t1";
  (9,8)*{}="t2";
  (3,-8)*{}="t1'";
  (9,-8)*{}="t2'";
   "t1";"t2" **\crv{(3,14) & (9, 14)};
   "t1'";"t2'" **\crv{(3,-14) & (9, -14)};
   "t2'";"t2" **\dir{-} ?(.5)*\dir{<};}};
   (9,0)*{}; (-7.5,-12)*{\scs i};
 \endxy$} \; = \; -\sum_{f=0}^{-\llambda_i}
   \xy
  (19,4)*{\lambda};
  (0,0)*{\dblue\bbe{}};(-2,-8)*{\scs i};
  (12,-2)*{\dblue\cbub{\black\llambda_i-1+f}{\black i}};
  (0,6)*{\dblue\bullet}+(6,1)*{\scs -\llambda_i-f};
 \endxy
\qquad \quad
  \text{$ \xy 0;/r.18pc/:
  (-12,8)*{\lambda};
   (0,-2)*{\dblue\xybox{
   (-3,-8)*{};(3,8)*{} **\crv{(-3,-1) & (3,1)}?(1)*\dir{>};?(0)*\dir{>};
    (3,-8)*{};(-3,8)*{} **\crv{(3,-1) & (-3,1)}?(1)*\dir{>};
  (3,-12)*{\bbsid};
  (3,8)*{\bbsid}; 
  (-9,8)*{}="t1";
  (-3,8)*{}="t2";
  (-9,-8)*{}="t1'";
  (-3,-8)*{}="t2'";
   "t1";"t2" **\crv{(-9,14) & (-3, 14)};
   "t1'";"t2'" **\crv{(-9,-14) & (-3, -14)};
  "t1'";"t1" **\dir{-} ?(.5)*\dir{<};}};(7.5,-11)*{\scs i};
 \endxy$} \; = \;
 \sum_{g=0}^{\llambda_i}
   \xy
  (-12,8)*{\lambda};
  (0,0)*{\dblue\bbe{}};(2,-8)*{\scs i};
  (-12,-2)*{\dblue\ccbub{\black -\llambda_i-1+g}{\black i}};
  (0,6)*{\dblue\bullet}+(8,-1)*{\scs \llambda_i-g};
 \endxy
\end{eqnarray}
%
%
\begin{eqnarray}
\label{eq:EF}
 \vcenter{\xy 0;/r.18pc/:
  (0,0)*{\dblue\xybox{
  (-8,0)*{};
  (8,0)*{};
  (-4,10)*{}="t1";
  (4,10)*{}="t2";
  (-4,-10)*{}="b1";
  (4,-10)*{}="b2";
  "t1";"b1" **\dir{-} ?(.5)*\dir{<};
  "t2";"b2" **\dir{-} ?(.5)*\dir{>};}};
  (-6,-8)*{\scs i};
  (6,-8)*{\scs i};
  (10,2)*{\lambda};
  (-10,2)*{\lambda};
  \endxy}
&\quad = \quad&
 \vcenter{   \xy 0;/r.18pc/:
    (0,0)*{\dblue\xybox{
    (-4,-4)*{};(4,4)*{} **\crv{(-4,-1) & (4,1)}?(1)*\dir{>};
    (4,-4)*{};(-4,4)*{} **\crv{(4,-1) & (-4,1)}?(1)*\dir{<};?(0)*\dir{<};
    (-4,4)*{};(4,12)*{} **\crv{(-4,7) & (4,9)};
    (4,4)*{};(-4,12)*{} **\crv{(4,7) & (-4,9)}?(1)*\dir{>};}};
    (8,8)*{\lambda};(-6,-7)*{\scs i};(6.8,-7)*{\scs i};
 \endxy}
  \quad - \quad
   \sum_{f=0}^{\llambda_i-1} \sum_{g=0}^{f}
    \vcenter{\xy 0;/r.18pc/:
    (-10,10)*{\lambda};
  (0,0)*{\dblue\xybox{
  (-8,0)*{}; (8,0)*{};
  (-4,-15)*{}="b1";
  (4,-15)*{}="b2";
  "b2";"b1" **\crv{(5,-8) & (-5,-8)}; ?(.05)*\dir{<} ?(.93)*\dir{<}
  ?(.8)*\dir{}+(0,-.1)*{\bullet}+(-5,2)*{\black\scs f-g};
  (-4,15)*{}="t1";
  (4,15)*{}="t2";
  "t2";"t1" **\crv{(5,8) & (-5,8)}; ?(.15)*\dir{>} ?(.95)*\dir{>}
  ?(.4)*\dir{}+(0,-.2)*{\bullet}+(3,-2)*{\black\scs \mspace{38mu}\;\;\; \llambda_i-1-f};
  (0,0)*{\ccbub{\black\scs \quad\;\;\;-\llambda_i-1+g}{i}};}};
  \endxy}
\label{eq_ident_decomp0}
\\[2ex]
 \vcenter{\xy 0;/r.18pc/:
  (0,0)*{\dblue\xybox{
  (-8,0)*{};(8,0)*{};
  (-4,10)*{}="t1";
  (4,10)*{}="t2";
  (-4,-10)*{}="b1";
  (4,-10)*{}="b2";
  "t1";"b1" **\dir{-} ?(.5)*\dir{>};
  "t2";"b2" **\dir{-} ?(.5)*\dir{<};}};
  (-6,-8)*{\scs i};(6,-8)*{\scs i};
  (10,2)*{\lambda};
  (-10,2)*{\lambda};
  \endxy}
&\quad = \quad&
   \vcenter{\xy 0;/r.18pc/:
    (0,0)*{\dblue\xybox{
    (-4,-4)*{};(4,4)*{} **\crv{(-4,-1) & (4,1)}?(1)*\dir{<};?(0)*\dir{<};
    (4,-4)*{};(-4,4)*{} **\crv{(4,-1) & (-4,1)}?(1)*\dir{>};
    (-4,4)*{};(4,12)*{} **\crv{(-4,7) & (4,9)}?(1)*\dir{>};
    (4,4)*{};(-4,12)*{} **\crv{(4,7) & (-4,9)};}};
    (8,8)*{\lambda};(-6.8,-7)*{\scs i};(6,-7)*{\scs i};
 \endxy}
  \quad - \quad
\sum_{f=0}^{-\llambda_i-1} \sum_{g=0}^{f}
    \vcenter{\xy 0;/r.18pc/:
  (0,0)*{\dblue\xybox{ 
  (-8,0)*{}; (8,0)*{};
  (-4,-15)*{}="b1";
  (4,-15)*{}="b2";
  "b2";"b1" **\crv{(5,-8) & (-5,-8)}; ?(.1)*\dir{>} ?(.95)*\dir{>}
  ?(.8)*\dir{}+(0,-.1)*{\bullet}+(-5,2)*{\black\scs f-g};
  (-4,15)*{}="t1";
  (4,15)*{}="t2";
  "t2";"t1" **\crv{(5,8) & (-5,8)}; ?(.15)*\dir{<} ?(.97)*\dir{<}
  ?(.4)*\dir{}+(0,-.2)*{\bullet}+(3,-2)*{\black\scs \mspace{32mu}\;\;-\llambda_i-1-f};
  (0,0)*{\cbub{\black\scs \quad\; \llambda_i-1+g}{i}};}};
  (-10,10)*{\lambda};
  \endxy} \label{eq_ident_decomp}
\end{eqnarray}
for all $\lambda\in \bZ^n$
(see~\eqref{eq_crossl-gen} and~\eqref{eq_crossr-gen} for the definition of sideways
crossings). 
Notice that for some values of 
$\lambda$ the dotted
bubbles appearing above have negative labels. A composite of $\dblue\Uup_{\black\!\! i,\lambda}$
or $\dblue\Udown_{\black i,\lambda}$ with itself a negative number of times does not make
sense. These dotted bubbles with negative labels, called {\em fake bubbles}, are
formal symbols inductively defined by the equation
\begin{equation}
 \makebox[0pt]{ $
\left(\ \xy 0;/r.15pc/:
 (0,0)*{\dblue\xybox{
 (0,0)*{\ccbub{\black\mspace{-32mu}-\llambda_i-1}{\black i}};}};
  (4,8)*{\lambda};
 \endxy
 +
 \xy 0;/r.15pc/:
 (0,0)*{\dblue\xybox{
 (0,0)*{\ccbub{\black\mspace{-12mu}-\llambda_i-1+1}{\black i}};}};
  (4,8)*{\lambda};
 \endxy t
 + \cdots +
 \xy 0;/r.15pc/:
 (0,0)*{\dblue\xybox{
 (0,0)*{\ccbub{\black\mspace{-12mu}-\llambda_i-1+r}{\black i}};}};
  (4,8)*{\lambda};
 \endxy t^{r}
 + \cdots
\right)
%
\left( \xy 0;/r.15pc/:
(0,0)*{\dblue\xybox{
 (0,0)*{\cbub{\black\mspace{-22mu}\llambda_i-1}{\black i}};}};
  (4,8)*{\lambda};
 \endxy
 + \cdots +
 \xy 0;/r.15pc/:
 (0,0)*{\dblue\xybox{
 (0,0)*{\cbub{\black\mspace{-8mu}\llambda_i-1+r}{\black i}};}};
 (4,8)*{\lambda};
 \endxy t^{r}
 + \cdots
\right) =-1 $ } 
\label{eq_infinite_Grass}
\end{equation}
and the additional condition
\[
\xy 0;/r.18pc/:
 (0,0)*{\dblue\cbub{\black -1}{\black i}};
  (4,8)*{\lambda};
 \endxy
 \quad = (-1)^{\laii},
 \qquad
  \xy 0;/r.18pc/:
 (0,0)*{\dblue\ccbub{\black -1}{\black i}};
  (4,8)*{\lambda};
 \endxy
  \quad = (-1)^{\laii-1} \qquad \text{if $\llambda_i =0$.}
\]
Although the labels are negative for fake bubbles, one can check that the overall
degree of each fake bubble is still positive, so that these fake bubbles do not
violate the positivity of dotted bubble axiom. The above equation, called the
infinite Grassmannian relation, remains valid even in high degree when most of
the bubbles involved are not fake bubbles.  See \cite{L1} for more details.

\item NilHecke relations:
 \begin{equation}
  \vcenter{\xy 0;/r.18pc/:
    (0,0)*{\dblue\xybox{
    (-4,-4)*{};(4,4)*{} **\crv{(-4,-1) & (4,1)}?(1)*\dir{>};
    (4,-4)*{};(-4,4)*{} **\crv{(4,-1) & (-4,1)}?(1)*\dir{>};
    (-4,4)*{};(4,12)*{} **\crv{(-4,7) & (4,9)}?(1)*\dir{>};
    (4,4)*{};(-4,12)*{} **\crv{(4,7) & (-4,9)}?(1)*\dir{>};}};
  (8,8)*{\lambda};(-5,-7)*{\scs i};(5.1,-7)*{\scs i};
 \endxy}
 =0, \qquad \quad
 \vcenter{
 \xy 0;/r.18pc/:
 (0,0)*{\dblue\xybox{
    (-4,-4)*{};(4,4)*{} **\crv{(-4,-1) & (4,1)}?(1)*\dir{>};
    (4,-4)*{};(-4,4)*{} **\crv{(4,-1) & (-4,1)}?(1)*\dir{>};
    (4,4)*{};(12,12)*{} **\crv{(4,7) & (12,9)}?(1)*\dir{>};
    (12,4)*{};(4,12)*{} **\crv{(12,7) & (4,9)}?(1)*\dir{>};
    (-4,12)*{};(4,20)*{} **\crv{(-4,15) & (4,17)}?(1)*\dir{>};
    (4,12)*{};(-4,20)*{} **\crv{(4,15) & (-4,17)}?(1)*\dir{>};
    (-4,4)*{}; (-4,12) **\dir{-};
    (12,-4)*{}; (12,4) **\dir{-};
    (12,12)*{}; (12,20) **\dir{-};}}; 
     (-9.5,-11)*{\scs i};(1.5,-11)*{\scs i};(9.5,-11)*{\scs i};
  (12,0)*{\lambda};
\endxy}
 \;\; =\;\;
 \vcenter{
 \xy 0;/r.18pc/:
    (0,0)*{\dblue\xybox{
    (4,-4)*{};(-4,4)*{} **\crv{(4,-1) & (-4,1)}?(1)*\dir{>};
    (-4,-4)*{};(4,4)*{} **\crv{(-4,-1) & (4,1)}?(1)*\dir{>};
    (-4,4)*{};(-12,12)*{} **\crv{(-4,7) & (-12,9)}?(1)*\dir{>};
    (-12,4)*{};(-4,12)*{} **\crv{(-12,7) & (-4,9)}?(1)*\dir{>};
    (4,12)*{};(-4,20)*{} **\crv{(4,15) & (-4,17)}?(1)*\dir{>};
    (-4,12)*{};(4,20)*{} **\crv{(-4,15) & (4,17)}?(1)*\dir{>};
    (4,4)*{}; (4,12) **\dir{-};
    (-12,-4)*{}; (-12,4) **\dir{-};
    (-12,12)*{}; (-12,20) **\dir{-};}};
  (-1.5,-11)*{\scs i};(9.5,-11)*{\scs i};(-9.5,-11)*{\scs i};
  (12,0)*{\lambda};
\endxy} \label{eq_nil_rels}
  \end{equation}
\begin{eqnarray}
  \xy
  (0,1)*{\dblue\xybox{
  (4,4);(4,-4) **\dir{-}?(0)*\dir{<}+(2.3,0)*{};
  (-4,4);(-4,-4) **\dir{-}?(0)*\dir{<}+(2.3,0)*{};}};
  (6,2)*{\lambda};(-6.2,-2)*{\scs i}; (4,-2)*{\scs i};
 \endxy
 \quad =
\xy
  (0,0)*{\dblue\xybox{
    (-4,-4)*{};(4,4)*{} **\crv{(-4,-1) & (4,1)}?(1)*\dir{>}?(.25)*{\bullet};
    (4,-4)*{};(-4,4)*{} **\crv{(4,-1) & (-4,1)}?(1)*\dir{>};
    (-5,-3)*{\black\scs i};
     (5.1,-3)*{\black\scs i};
     (8,1)*{\black\lambda};
     (-10,0)*{};(10,0)*{};
     }};
  \endxy
 \;\; -
 \xy
  (0,0)*{\dblue\xybox{
    (-4,-4)*{};(4,4)*{} **\crv{(-4,-1) & (4,1)}?(1)*\dir{>}?(.75)*{\bullet};
    (4,-4)*{};(-4,4)*{} **\crv{(4,-1) & (-4,1)}?(1)*\dir{>};
    (-5,-3)*{\black\scs i};
     (5.1,-3)*{\black\scs i};
     (8,1)*{\black\lambda};
     (-10,0)*{};(10,0)*{};
     }};
  \endxy
 \;\; =
\xy
  (0,0)*{\dblue\xybox{
    (-4,-4)*{};(4,4)*{} **\crv{(-4,-1) & (4,1)}?(1)*\dir{>};
    (4,-4)*{};(-4,4)*{} **\crv{(4,-1) & (-4,1)}?(1)*\dir{>}?(.75)*{\bullet};
    (-5,-3)*{\black\scs i};
     (5.1,-3)*{\black\scs i};
     (8,1)*{\black\lambda};
     (-10,0)*{};(10,0)*{};
     }};
  \endxy
 \;\; -
  \xy
  (0,0)*{\dblue\xybox{
    (-4,-4)*{};(4,4)*{} **\crv{(-4,-1) & (4,1)}?(1)*\dir{>} ;
    (4,-4)*{};(-4,4)*{} **\crv{(4,-1) & (-4,1)}?(1)*\dir{>}?(.25)*{\bullet};
    (-5,-3)*{\black\scs i};
     (5.1,-3)*{\black\scs i};
     (8,1)*{\black\lambda};
     (-10,0)*{};(10,0)*{};
     }};
  \endxy 
 \label{eq_nil_dotslide}
\end{eqnarray}
We will also include \eqref{eq_cyclic_cross-gen} for $i =j$ as an
$\mf{sl}_2$-relation.
\end{enumerate}

\item For $i \neq j$
\begin{equation} \label{eq_downup_ij-gen}
\text{$
 \vcenter{   \xy 0;/r.18pc/:
(0,0)*{\dblue\xybox{
    (-4,-4)*{};(4,4)*{} **\crv{(-4,-1) & (4,1)}?(1)*\dir{>};
    (4,4)*{};(-4,12)*{} **\crv{(4,7) & (-4,9)}?(1)*\dir{>};
}};
(0,0)*{\dred\xybox{
    (4,-4)*{};(-4,4)*{} **\crv{(4,-1) & (-4,1)}?(1)*\dir{<};?(0)*\dir{<};
    (-4,4)*{};(4,12)*{} **\crv{(-4,7) & (4,9)};
}};
   (7,4)*{\lambda};(-6,-7)*{\scs i};(6.5,-7)*{\scs j};
 \endxy}
 \;\;= \;\;
\xy 0;/r.18pc/:
(3.5,0)*{\dred\xybox{
    (3,9);(3,-9) **\dir{-}?(.55)*\dir{>}+(2.3,0)*{};}};
(-3.5,0)*{\dblue\xybox{
    (-3,9);(-3,-9) **\dir{-}?(.5)*\dir{<}+(2.3,0)*{};}};
    (7,2)*{\lambda};(-6,-6)*{\scs i};(3.8,-6)*{\scs j};
 \endxy
 \qquad
    \vcenter{\xy 0;/r.18pc/:
(0,0)*{\dblue\xybox{
    (-4,-4)*{};(4,4)*{} **\crv{(-4,-1) & (4,1)}?(1)*\dir{<};?(0)*\dir{<};
    (4,4)*{};(-4,12)*{} **\crv{(4,7) & (-4,9)};
}}; 
(0,0)*{\dred\xybox{
    (4,-4)*{};(-4,4)*{} **\crv{(4,-1) & (-4,1)}?(1)*\dir{>};
    (-4,4)*{};(4,12)*{} **\crv{(-4,7) & (4,9)}?(1)*\dir{>};
}};
    (7,4)*{\lambda};(-6.5,-7)*{\scs i};(6,-7)*{\scs j};
 \endxy}
 \;\;=\;\;
\xy 0;/r.18pc/:
(3.5,0)*{\dred\xybox{
    (3,9);(3,-9) **\dir{-}?(.5)*\dir{<}+(2.3,0)*{};
}};
(-3.5,0)*{\dblue\xybox{
    (-3,9);(-3,-9) **\dir{-}?(.55)*\dir{>}+(2.3,0)*{};
}};
    (7,2)*{\lambda};(-6.5,-6)*{\scs i};(3.6,-6)*{\scs j};
 \endxy
$}
\end{equation}

\item The analogue of the $R(\nu)$-relations:
\begin{enumerate}[i)]
\item For $i \neq j$
\begin{eqnarray}
  \vcenter{\xy 0;/r.18pc/:
(0,0)*{\dblue\xybox{
    (-4,-4)*{};(4,4)*{} **\crv{(-4,-1) & (4,1)}?(1)*\dir{>};
    (4,4)*{};(-4,12)*{} **\crv{(4,7) & (-4,9)}?(1)*\dir{>};
}};
(0,0)*{\dred\xybox{
    (4,-4)*{};(-4,4)*{} **\crv{(4,-1) & (-4,1)}?(1)*\dir{>};
    (-4,4)*{};(4,12)*{} **\crv{(-4,7) & (4,9)}?(1)*\dir{>};
}};
    (8,8)*{\lambda};(-5,-6)*{\scs i};
    (5.3,-6)*{\scs j};
 \endxy}
 \qquad = \qquad
 \left\{
 \begin{array}{ccc}
     \xy 0;/r.18pc/:
(4,0)*{\dred\xybox{
  (3,9);(3,-9) **\dir{-}?(.5)*\dir{<}+(2.3,0)*{};
}};
(-2.5,0)*{\dblue\xybox{
  (-3,9);(-3,-9) **\dir{-}?(.5)*\dir{<}+(2.3,0)*{};
}};
  (8,2)*{\lambda};(-5,-6)*{\scs i};(5.1,-6)*{\scs j};
 \endxy &  &  \text{if $i \cdot j=0$,}\\ \\
  (i-j)\left(
  \vcenter{\xy 0;/r.18pc/:
(4.5,0)*{\dred\xybox{
  (3,9);(3,-9) **\dir{-}?(.5)*\dir{<}+(2.3,0)*{};
}};
(-2.5,0)*{\dblue\xybox{
  (-3,9);(-3,-9) **\dir{-}?(.5)*\dir{<}+(2.3,0)*{};
  (-3,4)*{\bullet};
}};
  (8,2)*{\black\lambda}; 
  (-5,-6)*{\bscs i};     (5.1,-6)*{\bscs j};
 \endxy} \quad
 - \quad
 \vcenter{\xy 0;/r.18pc/:
(4,0)*{\dred\xybox{
  (3,9);(3,-9) **\dir{-}?(.5)*\dir{<}+(2.3,0)*{}; (3,4)*{\bullet};
}};
(-2,0)*{\dblue\xybox{
  (-3,9);(-3,-9) **\dir{-}?(.5)*\dir{<}+(2.3,0)*{};
}};
  (9,2)*{\black\lambda};
  (-5,-6)*{\bscs i};     (5.1,-6)*{\bscs j};
 \endxy}\right)
   &  & \text{if $i \cdot j =-1$.}
 \end{array}
 \right. 
\label{eq_r2_ij-gen}
\end{eqnarray}
\n Notice that $(i-j)$ is just a sign, which takes into account the standard 
orientation of the Dynkin diagram.

\begin{eqnarray} \label{eq_dot_slide_ij-gen}
\xy
(0,0)*{\dblue\xybox{
    (-4,-4)*{};(4,4)*{} **\crv{(-4,-1) & (4,1)}?(1)*\dir{>}?(.75)*{\bullet};
}};
(0,0)*{\dred\xybox{
    (4,-4)*{};(-4,4)*{} **\crv{(4,-1) & (-4,1)}?(1)*\dir{>};
}};
    (-5,-3)*{\scs i};
    (5.1,-3)*{\scs j};
    (8,1)*{ \lambda};
    (-10,0)*{};(10,0)*{};
\endxy
 \;\; =
\xy
(0,0)*{\dblue\xybox{
    (-4,-4)*{};(4,4)*{} **\crv{(-4,-1) & (4,1)}?(1)*\dir{>}?(.25)*{\bullet};
}};
(0,0)*{\dred\xybox{
    (4,-4)*{};(-4,4)*{} **\crv{(4,-1) & (-4,1)}?(1)*\dir{>};
}}; 
     (-5,-3)*{\scs i};
     (5.1,-3)*{\scs j};
     (8,1)*{ \lambda};
     (-10,0)*{};(10,0)*{};
\endxy
\qquad  \xy
(0,0)*{\dblue\xybox{
    (-4,-4)*{};(4,4)*{} **\crv{(-4,-1) & (4,1)}?(1)*\dir{>};
}};
(0,0)*{\dred\xybox{
    (4,-4)*{};(-4,4)*{} **\crv{(4,-1) & (-4,1)}?(1)*\dir{>}?(.75)*{\bullet};
}};
    (-5,-3)*{\scs i};
    (5.1,-3)*{\scs j};
    (8,1)*{ \lambda};
    (-10,0)*{};(10,0)*{};
  \endxy
\;\;  =
  \xy
(0,0)*{\dblue\xybox{
    (-4,-4)*{};(4,4)*{} **\crv{(-4,-1) & (4,1)}?(1)*\dir{>} ;
}};
(0,0)*{\dred\xybox{
    (4,-4)*{};(-4,4)*{} **\crv{(4,-1) & (-4,1)}?(1)*\dir{>}?(.25)*{\bullet};
}};
    (-5,-3)*{\scs i};
     (5.1,-3)*{\scs j};
     (8,1)*{ \lambda};
     (-10,0)*{};(12,0)*{};
   \endxy
\end{eqnarray}

\item Unless $i = k$ and $i \cdot j=-1$
\begin{equation}
\text{$
 \vcenter{
 \xy 0;/r.18pc/:
(0,0)*{\dblue\xybox{
    (-4,-4)*{};(4,4)*{} **\crv{(-4,-1) & (4,1)}?(1)*\dir{>};
    (4,4)*{};(12,12)*{} **\crv{(4,7) & (12,9)}?(1)*\dir{>};
    (12,12)*{}; (12,20) **\dir{-};
}};
(-4,0)*{\dred\xybox{
    (4,-4)*{};(-4,4)*{} **\crv{(4,-1) & (-4,1)}?(1)*\dir{>};
    (-4,12)*{};(4,20)*{} **\crv{(-4,15) & (4,17)}?(1)*\dir{>};
    (-4,4)*{}; (-4,12) **\dir{-};
}};
(0,0)*{\dgreen\xybox{
    (12,4)*{};(4,12)*{} **\crv{(12,7) & (4,9)}?(1)*\dir{>};
    (4,12)*{};(-4,20)*{} **\crv{(4,15) & (-4,17)}?(1)*\dir{>};
    (12,-4)*{}; (12,4) **\dir{-};
}};
  (12,0)*{\lambda};
  (-10,-11)*{\scs i};
  (  2,-11)*{\scs j};
  (10.5,-11)*{\scs k};
\endxy}
 \;\; =\;\;
 \vcenter{
 \xy 0;/r.18pc/:
(0,0)*{\dgreen\xybox{
    (4,-4)*{};(-4,4)*{} **\crv{(4,-1) & (-4,1)}?(1)*\dir{>};
    (-4,4)*{};(-12,12)*{} **\crv{(-4,7) & (-12,9)}?(1)*\dir{>};
    (-12,12)*{}; (-12,20) **\dir{-};
}};
(4,0)*{\dred\xybox{
    (-4,-4)*{};(4,4)*{} **\crv{(-4,-1) & (4,1)}?(1)*\dir{>};
    (4,12)*{};(-4,20)*{} **\crv{(4,15) & (-4,17)}?(1)*\dir{>};
    (4,4)*{}; (4,12) **\dir{-};
}};
(0,0)*{\dblue\xybox{
    (-12,4)*{};(-4,12)*{} **\crv{(-12,7) & (-4,9)}?(1)*\dir{>};
    (-4,12)*{};(4,20)*{} **\crv{(-4,15) & (4,17)}?(1)*\dir{>};
    (-12,-4)*{}; (-12,4) **\dir{-};
}};
  (12,0)*{\lambda};
  (10,-11)*{\scs k};
  (-1.5,-11)*{\scs j};
  (-9.5,-11)*{\scs i};
\endxy} \label{eq_r3_easy-gen}
$}
\end{equation}

For $i \cdot j =-1$
\begin{equation}
\text{$
 \vcenter{
 \xy 0;/r.18pc/:
(0,0)*{\dblue\xybox{
    (-4,-4)*{};(4,4)*{} **\crv{(-4,-1) & (4,1)}?(1)*\dir{>};
    (4,4)*{};(12,12)*{} **\crv{(4,7) & (12,9)}?(1)*\dir{>};
    (12,4)*{};(4,12)*{} **\crv{(12,7) & (4,9)}?(1)*\dir{>};
    (4,12)*{};(-4,20)*{} **\crv{(4,15) & (-4,17)}?(1)*\dir{>};
    (12,-4)*{}; (12,4) **\dir{-};
    (12,12)*{}; (12,20) **\dir{-};
}};
(-4,0)*{\dred\xybox{
    (4,-4)*{};(-4,4)*{} **\crv{(4,-1) & (-4,1)}?(1)*\dir{>};
    (-4,12)*{};(4,20)*{} **\crv{(-4,15) & (4,17)}?(1)*\dir{>};
    (-4,4)*{}; (-4,12) **\dir{-};
}};
  (12,0)*{\lambda};
  (-10,-11)*{\scs i};
  (1.5,-11)*{\scs j};
  (9.5,-11)*{\scs i};
\endxy}
\quad - \quad
 \vcenter{
 \xy 0;/r.18pc/:
(0,0)*{\dblue\xybox{
    (4,-4)*{};(-4,4)*{} **\crv{(4,-1) & (-4,1)}?(1)*\dir{>};
    (-4,4)*{};(-12,12)*{} **\crv{(-4,7) & (-12,9)}?(1)*\dir{>};
    (-12,4)*{};(-4,12)*{} **\crv{(-12,7) & (-4,9)}?(1)*\dir{>};
    (-4,12)*{};(4,20)*{} **\crv{(-4,15) & (4,17)}?(1)*\dir{>};
    (-12,-4)*{}; (-12,4) **\dir{-};
    (-12,12)*{}; (-12,20) **\dir{-};
}};
(4,0)*{\dred\xybox{
    (-4,-4)*{};(4,4)*{} **\crv{(-4,-1) & (4,1)}?(1)*\dir{>};
    (4,12)*{};(-4,20)*{} **\crv{(4,15) & (-4,17)}?(1)*\dir{>};
    (4,4)*{}; (4,12) **\dir{-};
}};
  (12,0)*{\lambda};
  (10,-11)*{\scs i};
  (-1.5,-11)*{\scs j};
  (-9.5,-11)*{\scs i};
\endxy}
 \;\; =\;\;\;\; (i-j)
\xy 0;/r.18pc/:
(10,0)*{\dblue\xybox{
  (4,12);(4,-12) **\dir{-}?(.5)*\dir{<};
  (22,12);(22,-12) **\dir{-}?(.5)*\dir{<}?(.25)*\dir{}+(0,0)*{}+(10,0)*{\scs};
}};
(3.5,0)*{\dred\xybox{
  (-4,12);(-4,-12) **\dir{-}?(.5)*\dir{<}?(.25)*\dir{}+(0,0)*{}+(-3,0)*{\scs };
}};
  (20,0)*{\lambda}; 
  (-5.6,-11)*{\scs i};
  (3.1,-11)*{\scs j};
  (15.2,-11)*{\scs i};
 \endxy
 \label{eq_r3_hard-gen}
$}.
\end{equation}
\end{enumerate}

\item The additive $\Z$-linear composition functor $\glcat(\lambda,\lambda')
 \times \glcat(\lambda',\lambda'') \to \glcat(\lambda,\lambda'')$ is given on
 1-morphisms of $\glcat$ by
\begin{equation}
  \cal{E}_{\jj}\mathbf{1}_{\lambda'}\{t'\} \times \cal{E}_{\ii}\onel\{t\} \mapsto
  \cal{E}_{\jj\ii}\onel\{t+t'\}
\end{equation}
for $\ii_{\Lambda}=\lambda-\lambda'$, and on 2-morphisms of $\glcat$ by juxtaposition of
diagrams
\[\text{$
 \left(\figleft{\lambda'}{\lambda''}\right)
\;\; \times \;\;
\left(\figright{\lambda'}{\lambda}\right)
 \;\;\mapsto \;\
\figleft{}{\lambda''}
\figright{}{\lambda}
$} . \]
\end{itemize}
\end{defn}

\noindent This concludes the definition of $\glcat$. In the next subsection we 
will show some further relations, which are easy consequences of the ones 
above. 

\subsubsection{Further relations in $\glcat$}

\medskip

The following $\glcat$-relations follow from the relations in 
Definition~\ref{def_glcat} and are going to be used in the sequel.


\n\emph{Bubble slides}:
\begin{equation}
\label{eq:bub_slides}
\text{$ 
   \xy
  (14,8)*{\lambda};
  (0,0)*{\dgreen\bbe{}};
  (0,-12)*{\scs j};
  (12,-2)*{\dblue\ccbub{\black -\llambda_i-1+m}{\black i}};
  (0,6)*{ }+(7,-1)*{\scs  };
 \endxy
$}
\quad = \quad
 \begin{cases}
 \ \xsum{f=0}{m}(f-m-1)
   \xy
  (0,8)*{\lambda+j_{\Lambda}};
  (12,0)*{\dgreen\bbe{}};
  (12,-12)*{\scs j};
  (0,-2)*{\dblue\ccbub{\black - \overline{(\lambda + j_\Lambda)}_i -1 + f}{\black i}};
  (12,6)*{\dgreen\bullet}+(5,-1)*{\scs m-f};
 \endxy
    &  \text{if $i=j$} 
\\  \\
\ \qquad \qquad  \xy
  (0,8)*{\lambda+j_{\Lambda}};
  (12,0)*{\dgreen\bbe{}};
  (12,-12)*{\scs j};
  (0,-2)*{\dblue\ccbub{\black -\overline{(\lambda + j_\Lambda)_i} -1+m }{\black i}};
 \endxy  &  \text{if $i \cdot j=0$}
 \end{cases}
\end{equation}
\begin{align}
\label{eq:2ndbubbslide}
\text{$ 
   \xy
  (14,8)*{\lambda};
  (0,0)*{\dred\bbe{}};
  (0,-12)*{\scs i+1};
  (12,-2)*{\dblue\ccbub{\black -\llambda_i-1+m}{\black i}};
  (0,6)*{ }+(7,-1)*{\scs  };
 \endxy
$}
&= \quad
   \xy
  (-4,8)*{\lambda+(i+1)_{\Lambda}};
  (12,0)*{\dred\bbe{}};
  (12,-12)*{\scs i+1};
  (-6,-2)*{\dblue\ccbub{\black -(\overline{\lambda +(i+1)_{\Lambda}})_i-2+m}{\black i}};
  (12,6)*{\dred\bullet}+(5,-1)*{\scs };
 \endxy
 \quad - \quad
  \xy
  (-4,8)*{\lambda+(i+1)_{\Lambda}};
  (12,0)*{\red\bbe{}};
  (11,-12)*{\scs i+1};
  (-6,-2)*{\dblue\ccbub{\black -(\overline{\lambda+(i+1)_{\Lambda}})_i -1+m}{\black i}};
 \endxy
\\ \displaybreak[0]
\text{$ 
   \xy
  (6,8)*{\lambda};
  (12,0)*{\dred\bbe{}};
  (12,-12)*{\scs i+1};
  (0,-2)*{\dblue\ccbub{\black -\llambda_i-1+m}{\black i}};
  (-12,6)*{ }+(7,-1)*{\scs  };
 \endxy
$}
&= 
-\sum\limits_{f+g=m}
   \xy
  (18,8)*{\lambda-(i+1)_{\Lambda}};
  (0,0)*{\dred\bbe{}};
  (0,-12)*{\scs i+1};
  (16,-2)*{\dblue\ccbub{\black -(\overline{\lambda -(i+1)_{\Lambda}})_i-2+g}{\black i}};
  (0,6)*{\dred\bullet}+(-3,-1)*{\scs f};
 \endxy
\\\nn\\ 
\label{eq:extrabubble4}
\displaybreak[0]
\text{$ 
   \xy
  (14,8)*{\lambda};
  (0,0)*{\dred\bbe{}};
  (0,-12)*{\scs i+1};
  (12,-2)*{\dblue\cbub{\black \llambda_i-1+m}{\black i}};
  (0,6)*{ }+(7,-1)*{\scs  };
 \endxy
$}
&= -\sum\limits_{f+g=m}\ \
   \xy
  (-4,8)*{\lambda+(i+1)_{\Lambda}};
  (12,0)*{\dred\bbe{}};
  (12,-12)*{\scs i+1};
  (-6,-2)*{\dblue\ccbub{\black (\overline{\lambda +(i+1)_{\Lambda}})_i-1+g}{\black i}};
  (12,6)*{\dred\bullet}+(2,-1)*{\scs f};
 \endxy
\\\nn \\ \displaybreak[0]
\text{$ 
   \xy
  (6,8)*{\lambda};
  (12,0)*{\dred\bbe{}};
  (12,-12)*{\scs i+1};
  (0,-2)*{\dblue\cbub{\black \llambda_i-1+m}{\black i}};
  (-12,6)*{ }+(7,-1)*{\scs  };
 \endxy
$}
&= 
\quad
   \xy
  (18,8)*{\lambda-(i+1)_{\Lambda}};
  (0,0)*{\dred\bbe{}};
  (0,-12)*{\scs i+1};
  (16,-2)*{\dblue\cbub{\black (\overline{\lambda -(i+1)_{\Lambda}})_i-2+m}{\black i}};
  (0,6)*{\dred\bullet}+(-3,-1)*{\scs };
 \endxy
\quad-\quad
   \xy
  (18,8)*{\lambda-(i+1)_{\Lambda}};
  (0,0)*{\dred\bbe{}};
  (0,-12)*{\scs i+1};
  (16,-2)*{\dblue\cbub{\black (\overline{\lambda -(i+1)_{\Lambda}})_i-1+m}{\black i}};
 \endxy
\end{align}
If we switch labels $i$ and $i+1$, then the r.h.s. of the above equations gets 
a minus sign. Bubble slides with the vertical strand oriented downwards 
can easily be obtained from the ones above by rotating the diagrams 180 
degrees. 

\medskip

\n\emph{More Reidemeister 3 like relations}. 
Unless $i=k=j$ we have
\begin{equation} \label{eq_other_r3_1}
\text{$
 \vcenter{
 \xy 0;/r.18pc/:
(0,0)*{\dblue\xybox{
    (-4,-4)*{};(4,4)*{} **\crv{(-4,-1) & (4,1)}?(1)*\dir{>};
    (4,4)*{};(12,12)*{} **\crv{(4,7) & (12,9)}?(1)*\dir{>};
    (12,12)*{}; (12,20) **\dir{-};
}};
(-4,0)*{\dred\xybox{
    (4,-4)*{};(-4,4)*{} **\crv{(4,-1) & (-4,1)}?(0)*\dir{<};
    (-4,12)*{};(4,20)*{} **\crv{(-4,15) & (4,17)}?(0)*\dir{<};
    (-4,4)*{}; (-4,12) **\dir{-};
}};
(0,0)*{\dgreen\xybox{
    (12,4)*{};(4,12)*{} **\crv{(12,7) & (4,9)}?(1)*\dir{>};
    (4,12)*{};(-4,20)*{} **\crv{(4,15) & (-4,17)}?(1)*\dir{>};
    (12,-4)*{}; (12,4) **\dir{-};
}};
  (12,0)*{\lambda};
  ( -10,-11)*{\scs i};
  ( 2.5,-11)*{\scs j};
  (10.5,-11)*{\scs k};
\endxy}
 \;\; =\;\;
 \vcenter{
 \xy 0;/r.18pc/:
(0,0)*{\dgreen\xybox{
    (4,-4)*{};(-4,4)*{} **\crv{(4,-1) & (-4,1)}?(1)*\dir{>};
    (-4,4)*{};(-12,12)*{} **\crv{(-4,7) & (-12,9)}?(1)*\dir{>};
    (-12,12)*{}; (-12,20) **\dir{-};
}};
(4,0)*{\dred\xybox{
    (-4,-4)*{};(4,4)*{} **\crv{(-4,-1) & (4,1)}?(0)*\dir{<};
    (4,12)*{};(-4,20)*{} **\crv{(4,15) & (-4,17)}?(0)*\dir{<};
    (4,4)*{}; (4,12) **\dir{-};
}};
(0,0)*{\dblue\xybox{
    (-12,4)*{};(-4,12)*{} **\crv{(-12,7) & (-4,9)}?(1)*\dir{>};
    (-4,12)*{};(4,20)*{} **\crv{(-4,15) & (4,17)}?(1)*\dir{>};
    (-12,-4)*{}; (-12,4) **\dir{-};
}};
  (12,0)*{\lambda};
  (10,-11)*{\scs k};
  (-2.5,-11)*{\scs j};
  (-9.5,-11)*{\scs i};
\endxy}
$}
\end{equation}
and when $i=j=k$ we have  
\begin{equation} \label{eq_r3_extra}
\text{$
 \vcenter{
 \xy 0;/r.17pc/:
 (0,0)*{\dblue\xybox{
    (-4,-4)*{};(4,4)*{} **\crv{(-4,-1) & (4,1)}?(1)*\dir{>};
    (4,-4)*{};(-4,4)*{} **\crv{(4,-1) & (-4,1)}?(1)*\dir{<};
    ?(0)*\dir{<};
    (4,4)*{};(12,12)*{} **\crv{(4,7) & (12,9)}?(1)*\dir{>};
    (12,4)*{};(4,12)*{} **\crv{(12,7) & (4,9)}?(1)*\dir{>};
    (-4,12)*{};(4,20)*{} **\crv{(-4,15) & (4,17)};
    (4,12)*{};(-4,20)*{} **\crv{(4,15) & (-4,17)}?(1)*\dir{>};
    (-4,4)*{}; (-4,12) **\dir{-};
    (12,-4)*{}; (12,4) **\dir{-};
    (12,12)*{}; (12,20) **\dir{-};
}};
  (12,0)*{\lambda};
  (-10,-11)*{\scs i};
  ( 2.5,-11)*{\scs i};
  ( 9.5,-11)*{\scs i};
\endxy}
-\;
   \vcenter{
 \xy 0;/r.17pc/:
(0,0)*{\dblue\xybox{ 
   (4,-4)*{};(-4,4)*{} **\crv{(4,-1) & (-4,1)}?(1)*\dir{>};
    (-4,-4)*{};(4,4)*{} **\crv{(-4,-1) & (4,1)}?(0)*\dir{<};
    (-4,4)*{};(-12,12)*{} **\crv{(-4,7) & (-12,9)}?(1)*\dir{>};
    (-12,4)*{};(-4,12)*{} **\crv{(-12,7) & (-4,9)}?(1)*\dir{>};
    (4,12)*{};(-4,20)*{} **\crv{(4,15) & (-4,17)};
    (-4,12)*{};(4,20)*{} **\crv{(-4,15) & (4,17)}?(1)*\dir{>};
    (4,4)*{}; (4,12) **\dir{-} ?(.5)*\dir{<};
    (-12,-4)*{}; (-12,4) **\dir{-};
    (-12,12)*{}; (-12,20) **\dir{-};
}};
  (12,0)*{\lambda};
  (10  ,-11)*{\scs i};
  (-2.5,-11)*{\scs i};
  (-9.5,-11)*{\scs i};
\endxy}
  \; = \;
 \sum_{} \; \xy 0;/r.17pc/:
(0,0)*{\dblue\xybox{ 
   (-4,12)*{}="t1";
    (4,12)*{}="t2";
  "t2";"t1" **\crv{(5,5) & (-5,5)}; ?(.15)*\dir{} ?(.9)*\dir{>}
  ?(.2)*\dir{}+(0,-.2)*{\bullet}+(3,-2)*{\bscs f_1};
    (-4,-12)*{}="t1";
    (4,-12)*{}="t2";
  "t2";"t1" **\crv{(5,-5) & (-5,-5)}; ?(.05)*\dir{} ?(.9)*\dir{<}
  ?(.15)*\dir{}+(0,-.2)*{\bullet}+(3,2)*{\bscs f_3};
    (-8.5,0.5)*{\ccbub{\bscs -\llambda_i-3+f_4}{\black i}};
    (13,12)*{};(13,-12)*{} **\dir{-} ?(.5)*\dir{<};
    (13,8)*{\bullet}+(3,2)*{\bscs f_2};
}};
(18,-6)*{\lambda};
  \endxy
+\;
  \sum_{}
\; \xy 0;/r.17pc/: 
(0,0)*{\dblue\xybox{
  (-10,12)*{};(-10,-12)*{} **\dir{-} ?(.5)*\dir{<};
  (-10,8)*{\bullet}+(-3,2)*{\bscs g_2};
  (-4,12)*{}="t1";
  (4,12)*{}="t2";
  "t1";"t2" **\crv{(-4,5) & (4,5)}; ?(.15)*\dir{>} ?(.9)*\dir{>}
  ?(.4)*\dir{}+(0,-.2)*{\bullet}+(3,-2)*{\bscs \;\; g_1};
  (-4,-12)*{}="t1";
  (4,-12)*{}="t2";
  "t2";"t1" **\crv{(4,-5) & (-4,-5)}; ?(.12)*\dir{>} ?(.97)*\dir{>}
  ?(.8)*\dir{}+(0,-.2)*{\bullet}+(1,4)*{\bscs g_3};
  (16.6,-4.5)*{\cbub{\bscs \llambda_i-1+g_4}{\black i}};
}};
  (18,6)*{\lambda};
  \endxy
$}
\end{equation}
where the first sum is over all $f_1, f_2, f_3, f_4 \geq 0$ with
$f_1+f_2+f_3+f_4=\llambda_i$ and the second sum is over all $g_1, g_2,
g_3, g_4 \geq 0$ with $g_1+g_2+g_3+g_4=\llambda_i -2$. Note that the 
first summation is zero if $\llambda_i<0$ and the second is zero 
when $\llambda_i<2$.

Reidemeister 3 like relations for all other orientations are determined from
\eqref{eq_r3_easy-gen}, \eqref{eq_r3_hard-gen}, and the above relations using
duality.


\subsubsection{Enriched $\Hom$ spaces}

For any shift $t$, there are 2-morphisms
\begin{eqnarray}
  \xy
(0,0)*{\dblue\xybox{
 (0,7);(0,-7); **\dir{-} ?(.75)*\dir{>}+(2.3,0)*{\scriptstyle{}};
 (0.1,-2)*{\txt\large{$\bullet$}};
 (6,4)*{ \black\lambda};
 (-10,0)*{};(10,0)*{};(0,-9)*{\bscs i };}};
 \endxy
 \maps  \cal{E}_{+i}\onel\{t\} \To \cal{E}_{+i}\onel\{t-2\}\quad
\xy
(0,0)*{\dblue\xybox{
    (-4,-6)*{};(4,6)*{} **\crv{(-4,-1) & (4,1)}?(1)*\dir{>};
}};
(0,0)*{\dred\xybox{
    (4,-6)*{};(-4,6)*{} **\crv{(4,-1) & (-4,1)}?(1)*\dir{>};
}};
    (-4,-8)*{\bscs i};
     (4,-8)*{\bscs j};
     (8,1)*{ \lambda};
     (-12,0)*{};(12,0)*{};
 \endxy
  \maps \cal{E}_{+i+j}\onel\{t\} \To \cal{E}_{+j+i}\onel\{t-i\cdot j\} \nn \\
\text{$
  \xy
    (0,-3)*{\dblue\bbpef{\black i}};
    (8,-5)*{ \lambda};
    (-12,0)*{};(12,0)*{};
    \endxy \maps
    \onel\{t\} \To \cal{E}_{-i+i}\onel\{t-(1+\llambda_i)\} \quad \xy
    (0,0)*{\dblue\bbcfe{\black i}};
    (7,4)*{ \lambda};
    (-12,0)*{};(12,0)*{};
    \endxy \maps
    \cal{E}_{-i+i}\onel\{t\} \To \onel\{t-(1-\llambda_i)\} 
$}\nn
\end{eqnarray}
in $\glcat$, and the diagrammatic relation
\[\text{$
\vcenter{
 \xy 0;/r.18pc/:
   (0,0)*{\dblue\xybox{
    (-4,-4)*{};(4,4)*{} **\crv{(-4,-1) & (4,1)}?(1)*\dir{>};
    (4,-4)*{};(-4,4)*{} **\crv{(4,-1) & (-4,1)}?(1)*\dir{>};
    (4,4)*{};(12,12)*{} **\crv{(4,7) & (12,9)}?(1)*\dir{>};
    (12,4)*{};(4,12)*{} **\crv{(12,7) & (4,9)}?(1)*\dir{>};
    (-4,12)*{};(4,20)*{} **\crv{(-4,15) & (4,17)}?(1)*\dir{>};
    (4,12)*{};(-4,20)*{} **\crv{(4,15) & (-4,17)}?(1)*\dir{>};
    (-4,4)*{}; (-4,12) **\dir{-};
    (12,-4)*{}; (12,4) **\dir{-};
    (12,12)*{}; (12,20) **\dir{-}; (-5.5,-3)*{\bscs i};
     (5.5,-3)*{\bscs i};(14,-3)*{\bscs i};
  (18,8)*{\black\lambda};}};
\endxy}
 \;\; =\;\;
 \vcenter{
 \xy 0;/r.18pc/:
   (0,0)*{\dblue\xybox{
    (4,-4)*{};(-4,4)*{} **\crv{(4,-1) & (-4,1)}?(1)*\dir{>};
    (-4,-4)*{};(4,4)*{} **\crv{(-4,-1) & (4,1)}?(1)*\dir{>};
    (-4,4)*{};(-12,12)*{} **\crv{(-4,7) & (-12,9)}?(1)*\dir{>};
    (-12,4)*{};(-4,12)*{} **\crv{(-12,7) & (-4,9)}?(1)*\dir{>};
    (4,12)*{};(-4,20)*{} **\crv{(4,15) & (-4,17)}?(1)*\dir{>};
    (-4,12)*{};(4,20)*{} **\crv{(-4,15) & (4,17)}?(1)*\dir{>};
    (4,4)*{}; (4,12) **\dir{-};
    (-12,-4)*{}; (-12,4) **\dir{-};
    (-12,12)*{}; (-12,20) **\dir{-};(-5.5,-3)*{\bscs i};
     (5.5,-3)*{\bscs i};(-14,-3)*{\bscs i};
  (10,8)*{\black\lambda};}};
\endxy}$}
\]
gives rise to relations in
$\glcat\big(\cal{E}_{iii}\onel\{t\},\cal{E}_{iii}\onel\{t+3i\cdot i\}\big)$ for
all $t\in \Z$.


\medskip

Note that for two $1$-morphisms $x$ and $y$ in 
$\glcat$ the 2hom-space $\HomGL(x,y)$ 
only contains 2-morphisms of degree zero and is therefore finite-dimensional. 
Following Khovanov and Lauda we introduce the graded 2hom-space 
$$\HOMGL(x,y)=\oplus_{t\in\Z}\HomGL(x\{t\},y),$$
which is infinite-dimensional. We also define the $2$-category 
$\glcat^*$ which has the same objects and $1$-morphisms as $\glcat$, 
but for two $1$-morphisms $x$ and $y$ the vector space of 2-morphisms is 
defined by 
\begin{equation}
\label{eq:ast}
\glcat^*(x,y)=\HOMGL(x,y).
\end{equation}

%
%
\subsection{The 2-category $\Scat(n,d)$}

Fix $d\in\N_{>0}$. 
As explained in Section~\ref{sec:hecke-schur}, the $q$-Schur algebra $\SD(n,d)$
can be seen as a quotient of $\glcat$ by the ideal generated by 
all idempotents corresponding to the weights that do not belong to 
$\Lambda(n,d)$. 
It is then natural to define the 2-category $\Scat(n,d)$ as a quotient of 
$\glcat$ as follows. 

\begin{defn}
The 2-category $\Scat(n,d)$ is the quotient of $\glcat$ by the ideal 
generated by all 2-morphisms containing
a region with a label not in $\Lambda(n,d)$. 
\end{defn}

We remark that we only put real bubbles, whose interior has a label outside 
$\Lambda(n,d)$, equal to zero. To see what happens to a fake bubble, one 
first has to write it in terms of real bubbles with the opposite orientation 
using the infinite Grassmannian relation~\eqref{eq_infinite_Grass}.

\section{A 2-representation of $\Scat(n,d)$}   
\label{sec:2rep}                               

In this section we define a $2$-functor
$$\fbim: \Scat{(n,d)}^{*}\to {\bim}^{*},$$
where $\bim$ is the graded $2$-category of bimodules over polynomial 
rings with rational 
coefficients. Recall that in the previous 
section (formula~\eqref{eq:ast}), we have defined the $^*$ version of a 
graded $2$-category, as the $2$-category with the same objects
and $1$-morphisms, while the $2$-morphisms between two $1$-morphisms 
can have arbitrary degree.
 
In~\cite{K-L3} Khovanov and Lauda defined a 2-functor $\Gamma^G_d$ from 
$\Ucat$ to a $2$-category equivalent to a sub-2-category of $\bim^*$. 
As one can easily 
verify, $\Gamma^G_d$ kills any diagram with labels outside 
$\Lambda(n,d)$, so it descends to $\Scat(n,d)$. In this section we have 
rewritten this 2-functor, which we denote $\fbim$, in terms of 
categorified MOY-diagrams, because we think it might help some people to 
understand its definition more easily. For further comments see 
Section~\ref{sec:2-functor}.

\subsection{Categorified MOY diagrams}

Before proceeding with the definition of $\fbim$, we first specify our 
notation for MOY diagrams and their categorification.

A colored MOY diagram \cite{M-O-Y}, is an oriented trivalent graph whose edges are 
labeled by natural numbers 
(this label is also called the \emph{color} or the \emph{thickness} of the 
corresponding edge). 
At each trivalent vertex we have at least one incoming and one outgoing edge, and we 
require that at each vertex the sum of the labels of the incoming edges is equal to the 
sum of the labels of the outgoing edges. Moreover, in this paper 
we assume that all edges in MOY diagrams are oriented upwards. \\

To obtain a bimodule corresponding to a given colored MOY diagram, 
we proceed in the following way: 
To each edge labeled $a$, we associate $a$ variables, 
say $\underline{x}=(x_1,\ldots,x_a)$, and to different edges 
we associate different variables. 
At every vertex (like the ones in Figure~\ref{fig:triv}), we impose the relations 
\begin{align*}
e_i(z_1,\ldots,z_{a+b}) 
&= e_i(x_1,\ldots,x_a,y_1,\ldots,y_{b})\\
e_i(z'_1,\ldots,z'_{a+b})
&= e_i(x'_1,\ldots,x'_a,y'_1,\ldots,y'_{b})
\end{align*}  
for all $i\in\{1,\ldots,a+b\}$, where $e_i$ is the $i$th elementary 
symmetric polynomial.
In other words, at every vertex we require that an arbitrary symmetric polynomial 
in the variables corresponding to the incoming edges, is equal to the same 
symmetric polynomial in the variables corresponding to the outgoing edges. \\

\begin{figure}[h]
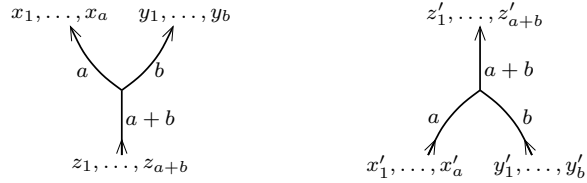

\begin{equation*}
\labellist
\tiny\hair 2pt
\pinlabel $a$ at 16 93  \pinlabel $b$ at 100 95  \pinlabel $a+b$ at 90 42
\pinlabel $x_1,\dotsc,x_a$ at -8 155 \pinlabel $y_1,\dotsc,y_b$ at 128 155
\pinlabel $z_1,\dotsc,z_{a+b}$ at 68 -10
\endlabellist
\figins{0}{0.7}{vertexup}
\mspace{140mu}
\labellist
\tiny\hair 2pt
\pinlabel $a$ at 12 42  \pinlabel $b$ at 114 44  \pinlabel $a+b$ at 94 95
\pinlabel $x'_1,\dotsc,x'_a$ at -8 -10 \pinlabel $y'_1,\dotsc,y'_b$ at 128 -10
\pinlabel $z'_1,\dotsc,z'_{a+b}$ at 68 155
\endlabellist
\figins{0}{0.7}{vertexdwn}
\end{equation*}
\caption{trivalent vertices}\label{fig:triv}
\end{figure}

Now, to an arbitrary diagram $\Gamma$, we associate the ring $R_{\Gamma}$ of 
polynomials over $\bQ$ which are symmetric in the variables on each strand 
separately, modded out by the relations corresponding to 
all trivalent vertices. 

In particular, to a graph without trivalent vertices (just strands):
\begin{equation*}
\labellist
\tiny\hair 2pt
\pinlabel $c$ at -10 189    \pinlabel $b$ at 220 191    \pinlabel $a$ at 330 189 
\pinlabel $\dotsc$ at 125 100 
\pinlabel $\underline{z}$ at   7 -20    
\pinlabel $\underline{y}$ at 235 -20
\pinlabel $\underline{x}$ at 349 -20
\endlabellist
\figins{-21}{0.7}{IDweb}\vspace*{2ex}
\end{equation*}

\noindent we associate the ring of partially symmetric polynomials  
$\bQ[\underline{x},\underline{y},\ldots,\underline{z}]^{S_a\times S_b\times\cdots\times S_c}$.

In this way, the ring $R_{\Gamma}$ associated to a MOY diagram $\Gamma$, 
is a bimodule over the rings of partially symmetric polynomials associated to 
the top (right action) and bottom end (left action) strands, respectively 
(remember that we are assuming that all MOY diagrams are oriented upwards, 
so they have a top and a bottom end). 
Bimodules are graded by setting the degree of any variable equal to 2.

In the rest of the paper, we will often identify the MOY diagram and the 
corresponding bimodule.
Also, by abuse of notation, we shall call the elements of the bimodule $R_{\Gamma}$ 
polynomials.\\

There is another way to describe these bimodules associated to MOY diagrams 
(see e.g.~\cite{Kh,M-S-V,Will}). Fix the polynomial ring 
$R:=\mathbb{Q}[x_1,\ldots,x_d]$. For any $(a_1,\ldots,a_n)\in\Lambda(n,d)$, 
let $R^{a_1,\ldots,a_n}$ be the sub-ring of polynomials which are invariant 
under $S_{a_1}\times\cdots\times S_{a_n}$. To the first diagram in 
Figure~\ref{fig:triv} one associates the $R^{a+b}-R^{a,b}$-bimodule
$$\mbox{Res}_{R^{a,b}}^{R^{a+b}}R^{a,b},$$
where one simply restricts the left action on $R^{a,b}$ to 
$R^{a+b}\subseteq R^{a,b}$.
To the second diagram in Figure~\ref{fig:triv} 
one associates the $R^{a,b}-R^{a+b}$-bimodule
$$\mbox{Ind}_{R^{a+b}}^{R^{a,b}}R^{a+b}:=R^{a,b}\otimes_{R^{a+b}}R^{a+b}.$$
In this way, to every MOY-diagram $\Gamma$ one 
associates a tensor product of bimodules, which is isomorphic to 
the bimodule $R_{\Gamma}$ that we described in the paragraph above. 

In this paper we always use $R_{\Gamma}$, since it is computationally 
easier to use 
polynomials than to use tensor products of polynomials.

\subsection{Definition of $\fbim$}
Now we can proceed with the definition of $\fbim \colon \Scat(n,d)^*\to \bim^*$. 

Let $z_1,\ldots,z_d$ be variables. For convenience we shall use 
Khovanov and Lauda's 
notation $k_i=\lambda_1+\cdots+\lambda_i$, for $i=1,\ldots,n$. 

On objects $\lambda\in\Lambda(n,d)$, the 2-functor $\fbim$ is given by:
$$\lambda=(\lambda_1,\cdots,\lambda_n)
\mapsto \bQ[z_1,\ldots ,z_d]^{S_{\lambda_1}\times\cdots\times S_{\lambda_n}}.$$

On $1$-morphisms we define $\fbim$ as follows:
$$1_{\lambda}\{t\}\mapsto \bQ[z_1,\ldots,z_d]^{S_{\lambda_1}\times\cdots\times S_{\lambda_n}}\{t\}.$$

In terms of MOY diagrams this is presented by:
\begin{equation*}
1_{\lambda}\{t\}\quad \longmapsto\qquad
\labellist
\tiny\hair 2pt
\pinlabel $\lambda_n$ at -17 189    
\pinlabel $\lambda_2$ at 210 189    
\pinlabel $\lambda_1$ at 322 189 
\pinlabel $\dotsc$ at 125 100 
\endlabellist
\figins{-21}{0.7}{IDweb}
\end{equation*}

Note that we are drawing the entries of $\lambda$ from right to left, which 
is compatible with Khovanov and Lauda's convention. 

The remaining generating $1$-morphisms are mapped as follows:
\begin{align*}
\cal{E}_{+i}\onel\{t\}\ \
&\mapsto
\qquad
\labellist
\tiny\hair 2pt
\pinlabel $\lambda_n$ at -22 193  
\pinlabel $\laii$ at 150 193 
\pinlabel $\lai$ at 275 193       
\pinlabel $\lambda_1$ at 460 193 
\pinlabel $1$ at 243 74
\pinlabel $\laii-1$ at 150 -15
\pinlabel $\lai+1$ at 310 -15 
\pinlabel $\dotsc$ at 120 100 
\pinlabel $\dotsc$ at 400 100   
\endlabellist
\figins{-23}{0.7}{HweblID}\quad \{t+1+k_{i-1}+k_i-k_{i+1}\}
\displaybreak[0] \\[4ex]
\cal{E}_{-i}\onel\{t\}\ \
&\mapsto
\qquad
\labellist
\tiny\hair 2pt
\pinlabel $\lambda_n$ at -22 193  
\pinlabel $\laii$ at 150 193 
\pinlabel $\lai$ at 275 193       
\pinlabel $\lambda_1$ at 460 193 
\pinlabel $1$ at 243 74
\pinlabel $\laii+1$ at 150 -15
\pinlabel $\lai-1$ at 310 -15 
\pinlabel $\dotsc$ at 120 100 
\pinlabel $\dotsc$ at 400 100  
\endlabellist
\figins{-23}{0.7}{HwebrID}\quad \{t+1-k_i\}
\end{align*}
\medskip

In both cases, the partition corresponding to the bottom strands is 
$\lambda+j_{\Lambda}$
(with $j$ being $+i$ or $-i$).  Thus, the condition we imposed on $\Scat(n,d)$ 
that all regions have labels from $\Lambda(n,d)$ 
(i.e. no region can have labels with negative entries), ensures that 
on the RHS above we really have MOY diagrams.

The composite $\fbim(\mathcal{E}_i1_{\lambda+j_{\Lambda}}\mathcal{E}_j1_{\lambda})$ is given by 
stacking the MOY diagram corresponding to $\mathcal{E}_j1_{\lambda}$ on top of the one 
corresponding to $\mathcal{E}_i1_{\lambda+j_{\Lambda}}$. The shifts add under composition.\\

To define $\fbim$ on $2$-morphisms, we give the image of the 
generating $2$-morphisms. In the definitions the divided difference operator $\partial_{xy}$ is used. 
For $p\in Q[x,y,\ldots]$ it is given by
\begin{equation}
\partial_{xy}p=\frac{p-p_{\mid x\leftrightarrow y}}{x-y},
\label{novo}
\end{equation}
where 
$p_{\mid x\leftrightarrow y}$ is the polynomial obtained from $p$ by swapping 
the variables $x$ and $y$. 
Moreover, for $\underline{x}=(x_1,\ldots,x_a)$, we use the shorthand notation
\begin{align}
\partial_{\underline{x}y}
&= \partial_{x_1y}\partial_{x_2y}\cdots\partial_{x_ay}\\
\partial_{y\underline{x}}
&= \partial_{yx_1}\partial_{yx_2}\cdots\partial_{yx_a}.
\end{align}
\vskip 0.3cm

Before listing the definition of $\fbim$, we explain the notation we are using. 
We denote a bimodule map as a  pair, the first term showing the corresponding MOY 
diagrams (of the source and target 1-morphism), and the second being an explicit 
formula of the map in terms of the (classes of) polynomials that are the elements of 
the corresponding rings. In a few cases we have added an intermediate 
MOY-diagram, in order to clarify the definition. Finally, in order to simplify the pictures, 
in each formula we only draw 
the strands 
that are affected, while on the others we just set the identity. Also in every line we 
require that the polynomial rings corresponding to the top (respectively bottom) end 
strands are the same throughout the movie. Furthermore, we only write 
explicitly the variables of the strands that are relevant in the definition of the corresponding 
bimodule map.

\begin{align*}
  \xy
(0,-1.5)*{\dblue\xybox{
 (0,7);(0,-7); **\dir{-} ?(1)*\dir{>};
 (4,4)*{ \bscs \lambda};
 (0,-9)*{\bscs i };}};
 \endxy
\quad
& \mapsto
\quad
\id
\left(\ \
\labellist
\tiny\hair 2pt
\pinlabel $\laii$ at -30 183  \pinlabel $\lai$ at 150 185  \pinlabel $1$ at 62 74
\endlabellist
\quad
\figins{-19}{0.6}{Hwebl}\ \quad
\right)
\displaybreak[0] 
\\[1.5ex]
  \xy
(0,-1.5)*{\dblue\xybox{
 (0,7);(0,-7); **\dir{-} ?(1)*\dir{>};
 (0.1,-2)*{\txt\large{$\bullet$}}+(2.3,1)*{\bscs r};
 (4,4)*{ \bscs \lambda};
 (0,-9)*{\bscs i };}};
 \endxy
\quad
& \mapsto 
\quad
\left(\quad\ \
\labellist
\tiny\hair 2pt
\pinlabel $\laii$ at   -30 183 \pinlabel $\lai$ at 150 185  \pinlabel $1$ at 62 74
\pinlabel $\color[rgb]{.5,.5,.5}{\downarrow}$ at 56 134 \pinlabel $x$ at 56 162
\endlabellist
\figins{-19}{0.6}{Hwebl}
\qra
\labellist
\tiny\hair 2pt
\pinlabel $\laii$ at   -30 183  
\pinlabel $\lai$ at 150 185  \pinlabel $1$ at 62 74
\endlabellist
\figins{-19}{0.6}{Hwebl}\ \
,\quad
p  \mapsto x^rp
\right)
\displaybreak[0]
\\[1.5ex]
  \xy
(0,-1.5)*{\dblue\xybox{
 (0,7);(0,-7); **\dir{-} ?(0)*\dir{<};
 (4,4)*{ \bscs \lambda};
 (0,-9)*{\bscs i };}};
 \endxy
\quad
& \mapsto
\quad
\id
\left(\quad\ \
\labellist
\tiny\hair 2pt
\pinlabel $\laii$ at  -33 183 \pinlabel $\lai$ at 146 185  \pinlabel $1$ at 66 74
\endlabellist
\figins{-19}{0.6}{Hwebr}\ \quad
\right)
\displaybreak[0]
\\[1.5ex]
  \xy
(0,-1.5)*{\dblue\xybox{
 (0,7);(0,-7); **\dir{-} ?(0)*\dir{<};
 (0.1,0)*{\txt\large{$\bullet$}}+(2.3,-1)*{\bscs r};
 (4,4)*{ \bscs \lambda};
 (0,-9)*{\bscs i };}};
 \endxy
\quad
& \mapsto 
\quad
\left(\quad\ \
\labellist
\tiny\hair 2pt
\pinlabel  $\laii$ at  -33 183 \pinlabel $\lai$ at 146 185  \pinlabel $1$ at 66 74
\pinlabel $\color[rgb]{.5,.5,.5}{\downarrow}$ at 70 134 \pinlabel $x$ at 70 162
\endlabellist
\figins{-19}{0.6}{Hwebr}\ \
\qra
\labellist
\tiny\hair 2pt
\pinlabel $\laii$ at  -33 183 \pinlabel $\lai$ at 146 185  \pinlabel $1$ at 66 74
\endlabellist
\figins{-19}{0.6}{Hwebr}\ \
,\quad
p\mapsto x^rp
\right)
\end{align*}
\begin{align*}
\xy
(0,0)*{\dblue\xybox{
    (-4,-4)*{};(4,4)*{} **\crv{(-4,-1) & (4,1)}?(1)*\dir{>}; 
    (4,-4)*{};(-4,4)*{} **\crv{(4,-1) & (-4,1)}?(1)*\dir{>};
}};
    (-5,-3)*{\scs i};
    (5.1,-3)*{\scs i};
    (7,1)*{\scs \lambda};
\endxy
\quad
& \mapsto 
\quad
\left(\quad
\ \
\labellist
\tiny\hair 2pt
\pinlabel $\laii$ at -35 193 \pinlabel $\lai$ at 138 195
\pinlabel $\color[rgb]{.5,.5,.5}{\downarrow}$ at 50 169 \pinlabel $x_2$ at 50 197
\pinlabel $\color[rgb]{.5,.5,.5}{\uparrow}$ at 50 52 \pinlabel $x_1$ at 50 20
\endlabellist
\figins{-19}{0.6}{HHwebll}
\qra
\labellist
\tiny\hair 2pt
\pinlabel  $\laii$ at  -35 193 \pinlabel $\lai$ at 146 195  \pinlabel $2$ at 62 74
\endlabellist
\figins{-19}{0.6}{Hwebl}
\qra
\labellist
\tiny\hair 2pt
\pinlabel $\laii$ at -35 193 \pinlabel $\lai$ at 140 195
\pinlabel $\color[rgb]{.5,.5,.5}{\downarrow}$ at 50 169 \pinlabel $x_1$ at 50 197
\pinlabel $\color[rgb]{.5,.5,.5}{\uparrow}$ at 50 52 \pinlabel $x_2$ at 50 20
\endlabellist
\figins{-19}{0.6}{HHwebll}
\ \
, \quad
p\mapsto \partial_{x_1x_2}p
\right)
\displaybreak[0]
\\[1.5ex]
\xy
(0,0)*{\dblue\xybox{
    (-4,-4)*{};(4,4)*{} **\crv{(-4,-1) & (4,1)}?(0)*\dir{<}; 
    (4,-4)*{};(-4,4)*{} **\crv{(4,-1) & (-4,1)}?(0)*\dir{<};
}};
    (-6,-3)*{\scs i};
    (6.1,-3)*{\scs i};
    (7,1)*{\scs\lambda};
\endxy
\quad
&\mapsto
\quad
\left(\quad
\ \
\labellist
\tiny\hair 2pt
\pinlabel $\laii$ at -36 193 \pinlabel $\lai$ at 130 195
\pinlabel $\color[rgb]{.5,.5,.5}{\downarrow}$ at 65 171 \pinlabel $x_1$ at 65 199
\pinlabel $\color[rgb]{.5,.5,.5}{\uparrow}$ at 65 52 \pinlabel $x_2$ at 65 20
\endlabellist
\figins{-19}{0.6}{HHwebrr}
\qra
\labellist
\tiny\hair 2pt
\pinlabel  $\laii$ at  -35 193 \pinlabel $\lai$ at 142 195  \pinlabel $2$ at 66 74
\endlabellist
\figins{-19}{0.6}{Hwebr}
\qra
\labellist
\tiny\hair 2pt
\pinlabel $\laii$ at -35 193 \pinlabel $\lai$ at 130 195
\pinlabel $\color[rgb]{.5,.5,.5}{\downarrow}$ at 65 171 \pinlabel $x_2$ at 65 199
\pinlabel $\color[rgb]{.5,.5,.5}{\uparrow}$ at 65 52 \pinlabel $x_1$ at 65 20
\endlabellist
\figins{-19}{0.6}{HHwebrr}
\ \
, \quad
p\mapsto \partial_{x_1x_2}p
\right)
\displaybreak[0]
\\[1.5ex]
\xy
(0,0)*{\dblue\xybox{
    (-4,-4)*{};(4,4)*{} **\crv{(-4,-1) & (4,1)}?(1)*\dir{>}; 
    (4,-4)*{};(-4,4)*{} **\crv{(4,-1) & (-4,1)}?(0)*\dir{<};
}};
    (-6,-3)*{\scs i};
    (6.1,-3)*{\scs i};
    (7,1)*{\scs\lambda};
\endxy
\quad
& \mapsto
\quad
\left(\quad
\ \
\labellist
\tiny\hair 2pt
\pinlabel $\laii$ at -36 193 \pinlabel $\lai$ at 130 195
\pinlabel $\color[rgb]{.5,.5,.5}{\downarrow}$ at 65 176 \pinlabel $x_1$ at 65 204
\pinlabel $\color[rgb]{.5,.5,.5}{\uparrow}$ at 65 42 \pinlabel $y$ at 65 10
\endlabellist
\figins{-19}{0.6}{sqweblr}
\qra
\labellist
\tiny\hair 2pt
\pinlabel $\laii$ at -35 193 \pinlabel $\lai$ at 130 195
\pinlabel $\color[rgb]{.5,.5,.5}{\downarrow}$ at 70 171 \pinlabel $y$ at 70 201
\pinlabel $\color[rgb]{.5,.5,.5}{\uparrow}$ at 70 52 \pinlabel $x_2$ at 70 20
\endlabellist
\figins{-19}{0.6}{sqwebrl}
\ \
, \quad
p\mapsto p\vert_{x_1\mapsto x_2}
\right)
\displaybreak[0]
\\[1.5ex]
\xy
(0,0)*{\dblue\xybox{
    (-4,-4)*{};(4,4)*{} **\crv{(-4,-1) & (4,1)}?(0)*\dir{<}; 
    (4,-4)*{};(-4,4)*{} **\crv{(4,-1) & (-4,1)}?(1)*\dir{>};
}};
    (-6,-3)*{\scs i};
    (6.1,-3)*{\scs i};
    (7,1)*{\scs\lambda};
\endxy
\quad
& \mapsto
\quad
\left(\quad
\ \
\labellist
\tiny\hair 2pt
\pinlabel $\laii$ at -36 193 \pinlabel $\lai$ at 130 195
\pinlabel $\color[rgb]{.5,.5,.5}{\downarrow}$ at 65 176 \pinlabel $x_1$ at 65 204
\pinlabel $\color[rgb]{.5,.5,.5}{\uparrow}$ at 65 42 \pinlabel $y$ at 65 10
\endlabellist
\figins{-19}{0.6}{sqwebrl}
\qra
\labellist
\tiny\hair 2pt
\pinlabel $\laii$ at -35 193 \pinlabel $\lai$ at 130 195
\pinlabel $\color[rgb]{.5,.5,.5}{\downarrow}$ at 65 176 \pinlabel $y$ at 65 206
\pinlabel $\color[rgb]{.5,.5,.5}{\uparrow}$ at 65 42 \pinlabel $x_2$ at 65 10
\endlabellist
\figins{-19}{0.6}{sqweblr}
\ \
, \quad
p\mapsto p\vert_{x_1\mapsto x_2}
\right)
\end{align*}

\medskip

\begin{align}
\xy
(0,0)*{\dblue\xybox{
    (-4,-4)*{};(4,4)*{} **\crv{(-4,-1) & (4,1)}?(1)*\dir{>} }};
(0,0)*{\dred\xybox{
    (4,-4)*{};(-4,4)*{} **\crv{(4,-1) & (-4,1)}?(1)*\dir{>};
}};
    (-5,-3)*{\scs i};
    (6.9,-3)*{\scs i+1};
(9,1)*{ \scs\lambda};
\endxy
\quad
&
\mapsto 
\quad
\left(\ \ 
\quad
\labellist
\tiny\hair 2pt
\pinlabel $\laiii$ at -25 189 \pinlabel $\laii$ at 170 191 \pinlabel $\lai$ at 255 189 
\pinlabel $1$ at 66 135 \pinlabel $1$ at 165 45
\endlabellist
\figins{-19}{0.6}{Hhwebl}
\qra
\labellist
\tiny\hair 2pt
\pinlabel $\laiii$ at -35 189 \pinlabel $\laii$ at 70 191 \pinlabel $\lai$ at 255 189 
\pinlabel $1$ at 166 125 \pinlabel $1$ at 66 50
\endlabellist
\figins{-19}{0.6}{hHwebl}\ \
,\ \
p\mapsto p
\right)
\nn\displaybreak[0]
\\[1.5ex]
\xy
(0,0)*{\dred\xybox{
    (-4,-4)*{};(4,4)*{} **\crv{(-4,-1) & (4,1)}?(1)*\dir{>} }};
(0,0)*{\dblue\xybox{
    (4,-4)*{};(-4,4)*{} **\crv{(4,-1) & (-4,1)}?(1)*\dir{>};
}};
    (-6.9,-3)*{\scs i+1};
    (5.1,-3)*{\scs i};
(9,1)*{ \scs\lambda};
\endxy
\quad
&
\mapsto
\quad
\left(\ \ 
\quad
\labellist
\tiny\hair 2pt
\pinlabel $\laiii$ at -35 189 \pinlabel $\laii$ at 70 191 \pinlabel $\lai$ at 250 189 
\pinlabel $1$ at 166 125 \pinlabel $1$ at 66 50
\pinlabel $\color[rgb]{.5,.5,.5}{\downarrow}$ at 50 114 \pinlabel $x$ at 50 142
\pinlabel $\color[rgb]{.5,.5,.5}{\downarrow}$ at 150 184 \pinlabel $y$ at 150 214
\endlabellist
\figins{-19}{0.6}{hHwebl}
\qra
\labellist
\tiny\hair 2pt
\pinlabel $\laiii$ at -35 189 \pinlabel $\laii$ at 165 191 \pinlabel $\lai$ at 255 189 
\pinlabel $1$ at 66 135 \pinlabel $1$ at 165 45
\pinlabel $\color[rgb]{.5,.5,.5}{\downarrow}$ at 50 186 \pinlabel $x$ at 50 214
\pinlabel $\color[rgb]{.5,.5,.5}{\downarrow}$ at 146 108 \pinlabel $y$ at 146 138
\endlabellist
\figins{-19}{0.6}{Hhwebl}\ \
,\ \
p\mapsto (x-y)p
\right)
\nn\displaybreak[0]
\\[1.5ex]
\xy
(0,0)*{\dred\xybox{
    (-4,-4)*{};(4,4)*{} **\crv{(-4,-1) & (4,1)}?(0)*\dir{<} }};
(0,0)*{\dblue\xybox{
    (4,-4)*{};(-4,4)*{} **\crv{(4,-1) & (-4,1)}?(0)*\dir{<};
}};
    (-7.9,-3)*{\scs i+1};
    (6.1,-3)*{\scs i};
(9,1)*{ \scs\lambda};
\endxy
\quad
&
\mapsto
\quad
\left(\ \ 
\quad
\labellist
\tiny\hair 2pt
\pinlabel $\laiii$ at -35 189 \pinlabel $\laii$ at 70 191 \pinlabel $\lai$ at 250 189 
\pinlabel $1$ at 166 125 \pinlabel $1$ at 66 40
\endlabellist
\figins{-19}{0.6}{hHwebr}
\qra
\labellist
\tiny\hair 2pt
\pinlabel $\laiii$ at -34 189 \pinlabel $\laii$ at 166 191 \pinlabel $\lai$ at 255 189 
\pinlabel $1$ at 66 125 \pinlabel $1$ at 165 45
\endlabellist
\figins{-19}{0.6}{Hhwebr}\ \
,\ \
p\mapsto p
\right)
\nn\displaybreak[0]
\\[1.5ex]
\xy
(0,0)*{\dblue\xybox{
    (-4,-4)*{};(4,4)*{} **\crv{(-4,-1) & (4,1)}?(0)*\dir{<} }};
(0,0)*{\dred\xybox{
    (4,-4)*{};(-4,4)*{} **\crv{(4,-1) & (-4,1)}?(0)*\dir{<};
}};
    (-6.4,-3)*{\scs i};
    (7.9,-3)*{\scs i+1};
(9,1)*{ \scs\lambda};
\endxy
\quad
&
\mapsto
\quad
\left(\ \ 
\quad
\labellist
\tiny\hair 2pt
\pinlabel $\laiii$ at -25 189 \pinlabel $\laii$ at 170 191 \pinlabel $\lai$ at 255 189 
\pinlabel $\color[rgb]{.5,.5,.5}{\downarrow}$ at 55 181 \pinlabel $x$ at 55 214
\pinlabel $\color[rgb]{.5,.5,.5}{\downarrow}$ at 151 103 \pinlabel $y$ at 151 136
\pinlabel $1$ at 66 125 \pinlabel $1$ at 165 45
\endlabellist
\figins{-19}{0.6}{Hhwebr}
\qra
\labellist
\tiny\hair 2pt
\pinlabel $\laiii$ at -35 189 \pinlabel $\laii$ at 70 191 \pinlabel $\lai$ at 255 189 
\pinlabel $\color[rgb]{.5,.5,.5}{\downarrow}$ at 151 181 \pinlabel $y$ at 151 214
\pinlabel $\color[rgb]{.5,.5,.5}{\downarrow}$ at 55 103 \pinlabel $x$ at 55 136
\pinlabel $1$ at 163 125 \pinlabel $1$ at 71 45
\endlabellist
\figins{-19}{0.6}{hHwebr}\ \
,\ \
p\mapsto (x-y)p
\right)
\nn\displaybreak[0]
\\[1.5ex]
\label{eq:assoc1}
\xy
(0,0)*{\dblue\xybox{
    (-4,-4)*{};(4,4)*{} **\crv{(-4,-1) & (4,1)}?(1)*\dir{>} }};
(0,0)*{\dred\xybox{
    (4,-4)*{};(-4,4)*{} **\crv{(4,-1) & (-4,1)}?(0)*\dir{<};
}};
    (-5,-3)*{\scs i};
    (7.8,-3.1)*{\scs i+1};
(9,1)*{ \scs\lambda};
\endxy
\quad
&
\mapsto
\quad
\left(\ \ 
\quad
\labellist
\tiny\hair 2pt
\pinlabel $\laiii$ at -35 195 \pinlabel $\laii$ at 168 197 \pinlabel $\lai$ at 258 195
\pinlabel $1$ at 70 132 \pinlabel $1$ at 168 46
\pinlabel $\color[rgb]{.5,.5,.5}{\downarrow}$ at  80 189 \pinlabel $x_1$ at  80 217
\pinlabel $\color[rgb]{.5,.5,.5}{\downarrow}$ at 160 104 \pinlabel $y$   at 160 132
\endlabellist
\figins{-19}{0.6}{Hhwebrl}
\qra
\labellist
\tiny\hair 2pt
\pinlabel $\laiii$ at -40 195 \pinlabel $\laii$ at 73 198 \pinlabel $\lai$ at 255 195
\pinlabel $1$ at 70 50 \pinlabel $1$ at 168 126
\pinlabel $\color[rgb]{.5,.5,.5}{\downarrow}$ at  80 109 \pinlabel $x_2$ at  80 137
\pinlabel $\color[rgb]{.5,.5,.5}{\downarrow}$ at 160 189 \pinlabel $y$   at 160 217
\endlabellist
\figins{-19}{0.6}{hHwebrl}
,\ \
p\mapsto p\vert_{x_1\mapsto x_2}
\right)
\displaybreak[0]
\\[1.5ex]
\label{eq:assoc2}
\xy
(0,0)*{\dred\xybox{
    (-4,-4)*{};(4,4)*{} **\crv{(-4,-1) & (4,1)}?(1)*\dir{>} }};
(0,0)*{\dblue\xybox{
    (4,-4)*{};(-4,4)*{} **\crv{(4,-1) & (-4,1)}?(0)*\dir{<};
}};
    (-6.9,-3)*{\scs i+1};
    (6.0,-3)*{\scs i};
(9,1)*{ \scs\lambda};
\endxy
\quad
&
\mapsto
\quad
\left(\ \ 
\quad
\labellist
\tiny\hair 2pt
\pinlabel $\laiii$ at -35 189 \pinlabel $\laii$ at 75 191 \pinlabel $\lai$ at 250 189 
\pinlabel $1$ at 166 115 \pinlabel $1$ at 66 50
\pinlabel $\color[rgb]{.5,.5,.5}{\downarrow}$ at 50 114 \pinlabel $y$ at 50 143
\pinlabel $\color[rgb]{.5,.5,.5}{\downarrow}$ at 176 179 \pinlabel $x_1$ at 176 207
\endlabellist
\figins{-19}{0.6}{hHweblr}\
\ra\quad
\labellist
\tiny\hair 2pt
\pinlabel $\laiii$ at -35 189 \pinlabel $\laii$ at 165 191 \pinlabel $\lai$ at 255 189 
\pinlabel $1$ at 66 115 \pinlabel $1$ at 165 45
\pinlabel $\color[rgb]{.5,.5,.5}{\downarrow}$ at  50 176 \pinlabel $y$ at 50 205
\pinlabel $\color[rgb]{.5,.5,.5}{\downarrow}$ at 176 108 \pinlabel $x_2$ at 176 136
\endlabellist
\figins{-19}{0.6}{Hhweblr}\ \
,\ \
p\mapsto p_{\vert {x_1\mapsto x_2}}
\right)
\displaybreak[0]
\\[1.5ex]
\xy
(0,0)*{\dblue\xybox{
    (-4,-4)*{};(4,4)*{} **\crv{(-4,-1) & (4,1)}?(0)*\dir{<} }};
(0,0)*{\dred\xybox{
    (4,-4)*{};(-4,4)*{} **\crv{(4,-1) & (-4,1)}?(1)*\dir{>};
}};
    (-6.5,-3)*{\scs i};
    (7.5,-3.1)*{\scs i+1};
(9,1)*{ \scs\lambda};
\endxy
\quad
&
\mapsto
\quad
\left(\ \ 
\quad
\labellist
\tiny\hair 2pt
\pinlabel $\laiii$ at -35 195 \pinlabel $\laii$ at 168 197 \pinlabel $\lai$ at 258 195
\pinlabel $1$ at 70 112 \pinlabel $1$ at 168 46
\pinlabel $\color[rgb]{.5,.5,.5}{\downarrow}$ at  75 169 \pinlabel $x_1$ at  75 197
\pinlabel $\color[rgb]{.5,.5,.5}{\downarrow}$ at 160 104 \pinlabel $y$   at 160 132
\endlabellist
\figins{-19}{0.6}{Hhweblr}
\qra
\labellist
\tiny\hair 2pt
\pinlabel $\laiii$ at -40 195 \pinlabel $\laii$ at 73 198 \pinlabel $\lai$ at 255 195
\pinlabel $1$ at 70 50 \pinlabel $1$ at 168 111
\pinlabel $\color[rgb]{.5,.5,.5}{\downarrow}$ at  80 109 \pinlabel $x_2$ at  80 137
\pinlabel $\color[rgb]{.5,.5,.5}{\downarrow}$ at 165 174 \pinlabel $y$   at 165 202
\endlabellist
\figins{-19}{0.6}{hHweblr}
,\ \
p\mapsto p_{\vert {x_1\mapsto x_2}}
\right)
\nn\displaybreak[0]
\\[1.5ex]
\xy
(0,0)*{\dred\xybox{
    (-4,-4)*{};(4,4)*{} **\crv{(-4,-1) & (4,1)}?(0)*\dir{<} }};
(0,0)*{\dblue\xybox{
    (4,-4)*{};(-4,4)*{} **\crv{(4,-1) & (-4,1)}?(1)*\dir{>};
}};
    (-7.9,-3)*{\scs i+1};
    (6.0,-3)*{\scs i};
(9,1)*{ \scs\lambda};
\endxy
\quad
&
\mapsto
\quad
\left(\ \ 
\quad
\labellist
\tiny\hair 2pt
\pinlabel $\laiii$ at -35 189 \pinlabel $\laii$ at 75 191 \pinlabel $\lai$ at 250 189 
\pinlabel $1$ at 166 125 \pinlabel $1$ at 66 50
\pinlabel $\color[rgb]{.5,.5,.5}{\downarrow}$ at 60 104 \pinlabel $y$ at 60 140
\pinlabel $\color[rgb]{.5,.5,.5}{\downarrow}$ at 176 179 \pinlabel $x_1$ at 176 207
\endlabellist
\figins{-19}{0.6}{hHwebrl}\
\ra\quad
\labellist
\tiny\hair 2pt
\pinlabel $\laiii$ at -35 189 \pinlabel $\laii$ at 165 191 \pinlabel $\lai$ at 255 189 
\pinlabel $1$ at 66 125 \pinlabel $1$ at 165 45
\pinlabel $\color[rgb]{.5,.5,.5}{\downarrow}$ at  50 176 \pinlabel $y$ at 50 204
\pinlabel $\color[rgb]{.5,.5,.5}{\downarrow}$ at 176 103 \pinlabel $x_2$ at 176 131
\endlabellist
\figins{-19}{0.6}{Hhwebrl}\ \
,\ \
p\mapsto p_{\vert {x_1\mapsto x_2}}
\right)
\nn
\end{align}

For $\vert i-j\vert\geq 2$:
\begin{equation*}
\xy
(0,0)*{\dblue\xybox{
    (-4,-4)*{};(4,4)*{} **\crv{(-4,-1) & (4,1)}?(1)*\dir{>} }};
(0,0)*{\dgreen\xybox{
    (4,-4)*{};(-4,4)*{} **\crv{(4,-1) & (-4,1)}?(1)*\dir{>};
}};
    (-5,-3)*{\scs i};
    (5.1,-3)*{\scs j};
(9,1)*{ \scs\lambda};
\endxy
\quad
\mapsto
\quad
\id\left(\ \ 
\quad
\labellist
\tiny\hair 2pt
\pinlabel $\laii$ at -35 189 \pinlabel $\lai$ at 142 191 \pinlabel $1$ at 55 65
\pinlabel $\dotsm$ at 200 110
\pinlabel $\lambda_{j+1}$ at 210 191 \pinlabel $\laj$ at 394 191 \pinlabel $1$ 
at 305 115
\endlabellist
\figins{-19}{0.64}{HHweblL}
\quad \right)
\end{equation*}
\begin{equation*}
\xy
(0,0)*{\dblue\xybox{
    (-4,-4)*{};(4,4)*{} **\crv{(-4,-1) & (4,1)}?(0)*\dir{<} }};
(0,0)*{\dgreen\xybox{
    (4,-4)*{};(-4,4)*{} **\crv{(4,-1) & (-4,1)}?(0)*\dir{<};
}};
    (-6.1,-3)*{\scs i};
    (6,-3)*{\scs j};
(9,1)*{ \scs\lambda};

\endxy
\quad\mapsto\quad
\id\left(\ \ 
\quad 
\labellist 
\tiny\hair 2pt
\pinlabel $\laii$ at -35 189 \pinlabel $\lai$ at 146 191 \pinlabel $1$ at 65 65
\pinlabel $\dotsm$ at 210 110
\pinlabel $\lambda_{j+1}$ at 240 191 \pinlabel $\laj$ at 440 191 \pinlabel $1$ at 355 100
\endlabellist
\figins{-19}{0.6}{HHwebrR}
\quad
\right)
\end{equation*}
Sideways crossings for $\vert i-j\vert\geq 2$ are defined in the same way as in the
case of $\vert i-j\vert=1$.

\begin{align*}
 \xy
    (0,0)*{\dblue\bbpef{\bscs i}};
    (6,0)*{\scs\lambda};
 \endxy
\quad
\mapsto &
\quad
\left(\ \
\begin{aligned}
&
\labellist
\tiny\hair 2pt
\pinlabel $\laii$ at -38 189 \pinlabel $\lai$ at 150 191 
\endlabellist
\quad
\figins{-19}{0.6}{id2web} \ \
\qra\ \
\labellist
\tiny\hair 2pt
\pinlabel $\laii$ at -30 189 \pinlabel $\lai$ at 125 191 
\pinlabel $1$ at 60 50 \pinlabel $1$ at 50 130
\pinlabel $\color[rgb]{.5,.5,.5}{\downarrow}$ at 55 191 \pinlabel $x$ at 55 230
\pinlabel $\color[rgb]{.5,.5,.5}{\swarrow}$ at 130 41 
\pinlabel $\und{t}$ at 163 60
\endlabellist
\figins{-19}{0.6}{sqwebrl}\qquad\
\\[1.0ex]
&  
p\mapsto \sum\limits_{\ell=0}^{\lai}(-1)^{\ell}x^{\lai-\ell}
e_{\ell}(\und{t})p
\end{aligned}
\right)
\end{align*}

\begin{align*}
  \xy
    (0,0)*{\dblue\bbpfe{\bscs i}};
    (6,0)*{\scs\lambda};
    \endxy
\quad
\mapsto &
\quad
\left(\ \ 
\begin{aligned}
&
\quad
\labellist
\tiny\hair 2pt
\pinlabel $\laii$ at -37 189 \pinlabel $\lai$ at 150 191 
\endlabellist
\figins{-19}{0.6}{id2web}
\qra\quad
\labellist
\tiny\hair 2pt
\pinlabel $\laii$ at -35 189 \pinlabel $\lai$ at 130 191 
\pinlabel $1$ at 60 90 \pinlabel $1$ at 50 170
\pinlabel $\color[rgb]{.5,.5,.5}{\uparrow}$ at 55 41 \pinlabel $y$ at 53 7
\pinlabel $\color[rgb]{.5,.5,.5}{\nearrow}$ at -15 146 
\pinlabel $\und{z}$ at -50 105
\endlabellist
\figins{-19}{0.6}{sqweblr}
\quad
\\[1.0ex]
&  
p\mapsto \sum\limits_{\ell=0}^{\lambda_{i+1}}(-1)^{\ell}e_{\laii-\ell}(\und{z})y^{\ell}p\ \ 
\end{aligned}
\right)
\end{align*}
\begin{align*}
 \xy
    (0,0)*{\dblue\bbcef{\bscs i}};
    (6,0)*{\scs\lambda};
    \endxy
\quad
\mapsto &
\quad
\left(\ \ 
\begin{aligned}
&
\quad
\labellist
\tiny\hair 2pt
\pinlabel $\laii$ at -30 189 \pinlabel $\lai$ at 130 191 
\pinlabel $1$ at 50 90 \pinlabel $1$ at 50 130
\pinlabel $\color[rgb]{.5,.5,.5}{\downarrow}$ at 55 181 \pinlabel $y$ at 55 212
\pinlabel $\color[rgb]{.5,.5,.5}{\uparrow}$ at 55 31 \pinlabel $x$ at 54 -10
\pinlabel $\color[rgb]{.5,.5,.5}{\ra}$ at -10 106 
\pinlabel $\und{u}$ at -50 105
\endlabellist
\figins{-19}{0.6}{sqwebrl}
\qra\quad
\labellist
\tiny\hair 2pt
\pinlabel $\laii$ at -38 189 \pinlabel $\lai$ at 150 191 
\endlabellist
\figins{-19}{0.6}{id2web}\quad
\\[1.0ex]
&
\qquad p\mapsto \partial_{\und{u}x}(p_{\vert {y=x}})
\end{aligned}
\right)
\end{align*}
\begin{align*}
 \xy
    (0,0)*{\dblue\bbcfe{\bscs i}};
    (6,0)*{\scs\lambda};
 \endxy
\quad
\mapsto &
\quad
\left(\ \ 
\begin{aligned}
&
\quad
\labellist
\tiny\hair 2pt
\pinlabel $\laii$ at -35 189 \pinlabel $\lai$ at 130 191 
\pinlabel $1$ at 65 90 \pinlabel $1$ at 65 130
\pinlabel $\color[rgb]{.5,.5,.5}{\downarrow}$ at 55 181 \pinlabel $y$ at 55 212
\pinlabel $\color[rgb]{.5,.5,.5}{\uparrow}$ at 55 41 \pinlabel $x$ at 55 10
\pinlabel $\color[rgb]{.5,.5,.5}{\leftarrow}$ at 123 86 
\pinlabel $\und{u}$ at 158 85
\endlabellist
\figins{-19}{0.6}{sqweblr}
\quad\qra\quad
\labellist
\tiny\hair 2pt
\pinlabel $\laii$ at -38 189 \pinlabel $\lai$ at 150 191 
\endlabellist
\figins{-19}{0.6}{id2web}\quad
\\[1.0ex]
& \qquad\ \ 
p\mapsto \partial_{x\und{u}}(p_{\vert_{y=x}})
\end{aligned}
\right)
\end{align*}

This ends the definition of $\fbim$. Without giving any details, we remark 
that the bimodule maps above can be obtained as composites of elementary ones, 
called {\em zip, unzip, associativity, digon creation} and {\em annihilation}, 
which can be found in~\cite{M-S-V}.


\subsection{$\fbim$ is a $2$-functor}
\label{sec:2-functor}

We are now able to explain the relation between our $\fbim$ and Khovanov and 
Lauda's (see Subsection 6.3 in \cite{K-L3})
$$\Gamma^G_d\colon \Ucat^*\to 
\mbox{\bf EqFLAG}^*_d\subset \bim^*.$$ 

In the first place, we categorify the homomorphism $\psi_{n,d}$ 
from Section~\ref{sec:hecke-schur}. Note that all the relations in 
$\Scat(n,d)$ only 
depend on $\mathfrak{sl}_n$-weights, except the value of the 
degree zero bubbles, which truly depend on $\mathfrak{gl}_n$-weights. 

\begin{defn} We define a 2-functor  
\label{defn:Psi}
$$\Psi_{n,d}\colon \Ucat\to \Scat(n,d).$$
On objects and $1$-morphisms $\Psi_{n,d}$ is defined just as $\psi_{n,d}\colon 
\U \to \SD(n,d)$ in~\eqref{eq:psi}. 
On $2$-morphisms we define $\Psi_{n,d}$ as follows. Let $D$ be a 
string diagram representing a $2$-morphism in $\Ucat$ (from now on we will 
simply say that $D$ is a diagram in $\Ucat$). Then $\Psi_{n,d}$ maps $D$ to the 
same diagram, multiplied by a power of $-1$ depending 
on the left cups and caps in $D$ according to the rule in~\eqref{eq:signs}. 
The labels in $\bZ^{n-1}$ of the regions of $D$ are mapped by $\phi_{n,d}$ 
to labels in $\bZ^n$ of the corresponding regions of $\Psi_{n,d}(D)$, or to 
$*$. This means that, if $D$ has a region labeled by $\lambda$ such that 
$\phi_{n,d}(\lambda)\not\in\Lambda(n,d)$, then $\Psi_{n,d}(D)=0$ by definition.  
Finally, extend this definition to all $2$-morphisms by linearity.  
\end{defn}
\noindent It is easy to see that $\Psi_{n,d}$ is well-defined, 
full and essentially surjective.

In the second place, recall that there is a well-known isomorphism 
$$\bQ[x_1,\ldots,x_d]^{S_{\lambda_1}\times\cdots\times S_{\lambda_n}}\cong 
H_{GL(d)}(Fl(\underline{k})),$$ with 
$\underline{k}=(k_0,k_1,k_2,k_3,\ldots,k_n)= (0,\lambda_1,\lambda_1+\lambda_2,
\lambda_1+\lambda_2+\lambda_3,\ldots,d)$, for any $\lambda\in\Lambda(n,d)$ 
(see (6.25) in~\cite{K-L3}, for example). Using this isomorphism, it 
is straightforward 
to check that the following lemma holds by comparing the images of 
the generators. Recall that $\Gamma_d^G$ kills all diagrams with labels outside 
$\Lambda(n,d)$.  
\begin{lem} 
\label{lem:rep}
The following triangle is commutative
\begin{equation*}
\xymatrix{
\Ucat^*\ar[rr]^{\Gamma^G_d}\ar[dr]_{\Psi_{n,d}} && \bim^*
\\
& \Scat(n,d)^*\ar[ur]_{\fbim} &
}
\end{equation*}
\end{lem}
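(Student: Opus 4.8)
The plan is to verify the commutativity generator by generator. Both $\Gamma^G_d$ and $\fbim\circ\Psi_{n,d}$ are $2$-functors out of $\Ucat^*$, so it suffices to check that they agree on objects, on the generating $1$-morphisms $\cal{E}_{\pm i}\onel$, and on each of the generating $2$-morphisms (dots, up/down crossings, left and right cups and caps). First I would record that $\Psi_{n,d}$ acts almost trivially on a diagram $D$ in $\Ucat$: it relabels each region by applying $\phi_{n,d}$ to its $\mathfrak{sl}_n$-weight, multiplies $D$ by the $\pm 1$ factor attached to the left cups and caps according to~\eqref{eq:signs}, and otherwise leaves $D$ unchanged; if some region of $D$ gets label $*$, then $\Psi_{n,d}(D)=0$. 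Since $\Gamma^G_d$ kills every diagram with a region labelled outside $\Lambda(n,d)$, both sides vanish in that case, so the desired factorization through $\Scat(n,d)^*$ is consistent and we only have to compare on diagrams all of whose regions lie in $\Lambda(n,d)$.

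On objects and $1$-morphisms the comparison is immediate from the isomorphism $\bQ[x_1,\ldots,x_d]^{S_{\lambda_1}\times\cdots\times S_{\lambda_n}}\cong H_{GL(d)}(Fl(\underline{k}))$, with $\underline{k}=(0,\lambda_1,\lambda_1+\lambda_2,\ldots,d)$, recalled just before the lemma: $\fbim\circ\Psi_{n,d}$ sends the $\mathfrak{sl}_n$-weight $\mu$ to the left-hand ring for $\lambda=\phi_{n,d}(\mu)$, while $\Gamma^G_d$ sends it to the right-hand cohomology ring, and the grading shifts in the definition of $\fbim(\cal{E}_{\pm i}\onel)$ were chosen precisely so as to match Khovanov and Lauda's. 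Composition of $1$-morphisms is handled in both cases by stacking MOY/flag diagrams and adding shifts, so this part is purely a matter of reading off the two definitions through the above isomorphism.

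The heart of the argument is the comparison on $2$-morphisms. For each generator $g$ one checks the single equation $\Gamma^G_d(g)=\fbim(\Psi_{n,d}(g))$. For dots and for the up/down crossings and sideways crossings, which are untouched by~\eqref{eq:signs}, this is a direct term-by-term comparison: the divided-difference and multiplication-by-$x$ formulas we gave for $\fbim$ are exactly Khovanov and Lauda's formulas for $\Gamma^G_d$ transported through the cohomology isomorphism (these are the ``zip'', ``unzip'', ``associativity'' and ``digon'' maps). For the left cup $\Ucupli$ and left cap $\Ucapli$ one inserts the signs $(-1)^{\lambda_{i+1}}$, resp. $(-1)^{\lambda_{i+1}+1}$, coming from~\eqref{eq:signs} into Khovanov and Lauda's formula and verifies that the result is exactly the formula we wrote for $\fbim$ on these generators (the ones involving $e_\ell(\underline{z})$, $e_\ell(\underline{t})$ and $\partial_{\underline{u}x}$); the right cup and cap carry no sign and are compared directly. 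Because our sign convention on $\Scat(n,d)$ differs from Khovanov and Lauda's exactly by the $2$-isomorphism~\eqref{eq:signs}, these sign checks are forced rather than a choice, and they are the reason the lemma holds on the nose.

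The only real obstacle is bookkeeping: one must make the identification $\bQ[x_1,\ldots,x_d]^{S_{\lambda_1}\times\cdots\times S_{\lambda_n}}\cong H_{GL(d)}(Fl(\underline{k}))$ explicit enough on generators that the elementary-symmetric-polynomial and divided-difference expressions defining $\fbim$ line up, variable by variable and shift by shift, with those in Subsection~6.3 of~\cite{K-L3}, while correctly tracking the left cup/cap signs from~\eqref{eq:signs}. No new idea is required beyond this comparison, which is why only a sketch is given.
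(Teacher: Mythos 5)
Your proposal is correct and follows essentially the same route as the paper: the paper also reduces the lemma to comparing images of generators under the isomorphism $\bQ[x_1,\ldots,x_d]^{S_{\lambda_1}\times\cdots\times S_{\lambda_n}}\cong H_{GL(d)}(Fl(\underline{k}))$, after noting that $\Gamma^G_d$ and $\Psi_{n,d}$ both kill diagrams with a region labelled outside $\Lambda(n,d)$. Your explicit tracking of the left cup/cap signs from~\eqref{eq:signs} is a useful elaboration of a step the paper leaves implicit, but it is the same argument.
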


The following result is now an immediate consequence of Khovanov and Lauda's 
Theorem 6.13. 

\begin{prop}\label{prop:fbim}
$\fbim$ defines a 2-functor from $\Scat(n,d)^*$ to $\bim^*$.
\end{prop}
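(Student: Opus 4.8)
The plan is to read the proposition off the commutative triangle of Lemma~\ref{lem:rep} together with Khovanov and Lauda's Theorem~6.13 of \cite{K-L3}, which asserts that $\Gamma^G_d\colon\Ucat^*\to\bim^*$ is a $2$-functor. Since $\fbim$ has so far only been prescribed on objects, on $1$-morphisms, and on the generating $2$-morphisms, making this precise amounts to two checks. First, that $\fbim$ is compatible with vertical and horizontal composition and with identity $2$-morphisms: this is immediate from the construction, because on $1$-morphisms $\fbim$ is given by stacking the associated MOY diagrams with grading shifts adding, and on $2$-morphisms by the corresponding stacking/juxtaposition of bimodule maps, so compatibility with composition follows from functoriality of the MOY-diagram-to-bimodule assignment of \cite{M-S-V} and associativity of $\otimes$, while identity $2$-morphisms visibly go to identity maps. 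Second — and this is the only real content — one must see that $\fbim$ annihilates every defining relation of $\Scat(n,d)^*$.

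For the second point I would proceed as follows. Recall that $\Psi_{n,d}\colon\Ucat^*\to\Scat(n,d)^*$ is full and essentially surjective and, by the very construction of $\glcat$ and $\Scat(n,d)$, exhibits $\Scat(n,d)^*$ as the $2$-category obtained from $\Ucat^*$ by renaming each $\mathfrak{sl}_n$-weight $\mu$ as the $\mathfrak{gl}_n$-weight $\phi_{n,d}(\mu)$, killing whatever passes through an out-of-range weight, and absorbing the sign change~\eqref{eq:signs} along the way. Consequently every defining relation of $\Scat(n,d)^*$ is either (i) a defining relation of $\Ucat^*$ with its region labels renamed — here one uses that the relations of $\Scat(n,d)$, the grading shifts, and the degree-zero bubble values are all determined by the data that $\Gamma^G_d$ sees, namely the $\mathfrak{sl}_n$-weight $\overline{\lambda}$ attached to each region — or (ii) one of the ``real bubble with out-of-range interior $=0$'' relations coming from the quotient. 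Given any instance $D_1=D_2$ of a relation, I would lift it, using fullness of $\Psi_{n,d}$, to diagrams $\tilde D_1,\tilde D_2$ in $\Ucat^*$, chosen so that in case (i) already $\tilde D_1=\tilde D_2$ in $\Ucat^*$, and in case (ii) the $\tilde D_i$ carry a region with label $\mu$ such that $\phi_{n,d}(\mu)\notin\Lambda(n,d)$. In case (i), Lemma~\ref{lem:rep} and Theorem~6.13 of \cite{K-L3} give $\fbim(D_i)=\fbim(\Psi_{n,d}(\tilde D_i))=\Gamma^G_d(\tilde D_i)$ and $\Gamma^G_d(\tilde D_1)=\Gamma^G_d(\tilde D_2)$; in case (ii), $\Gamma^G_d(\tilde D_i)=0$ because the corresponding partial flag variety is empty, so both sides vanish. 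Either way $\fbim(D_1)=\fbim(D_2)$, and hence $\fbim$ descends to a $2$-functor $\Scat(n,d)^*\to\bim^*$.

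The delicate bit — the only thing genuinely beyond \cite{K-L3} — is the case (ii) bookkeeping: one has to be certain that $\Gamma^G_d$ really does annihilate a real bubble whose interior region lies outside $\Lambda(n,d)$, and that this is consistent with the convention that fake bubbles are first rewritten as oppositely oriented real bubbles via the infinite Grassmannian relation~\eqref{eq_infinite_Grass} before the vanishing is imposed. This reduces to the ``easily verified'' statement that $\Gamma^G_d$ kills any diagram supported on a region outside $\Lambda(n,d)$, so I expect no new obstruction; what remains is the routine matching of grading shifts and signs that was already carried out in the proof of Lemma~\ref{lem:rep} by comparing the images of the generators.
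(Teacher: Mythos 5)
Your proposal is correct and takes essentially the same route as the paper: use Lemma~\ref{lem:rep} to identify $\fbim\circ\Psi_{n,d}$ with $\Gamma^G_d$ on generators, invoke Khovanov–Lauda's Theorem~6.13 for 2-functoriality of $\Gamma^G_d$, and observe that $\Gamma^G_d$ annihilates the ideal defining $\Scat(n,d)^*$ (because the out-of-range labels correspond to empty flag varieties), so that $\fbim$ is precisely the descent of $\Gamma^G_d$ through the full, essentially surjective quotient $\Psi_{n,d}$. The paper states this in one sentence and then, for illustration only, supplies two ``by hand'' sample calculations (the zig-zag relation and the dotted-bubble images); your write-up is the same argument with the bookkeeping spelled out.
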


One could of course prove Proposition~\ref{prop:fbim} by hand. 
We will just give two sample calculations. The 
result of the second one, the image of the dotted bubbles, will be 
needed in a later section. 

\subsubsection{Examples of the direct proof of Proposition~\ref{prop:fbim}}

We shall give the proof for the zig-zag relation of biadjointness 
and compute the images of the bubbles by $\fbim$. 

Before proceeding, we give some useful relations that are used in the computations. 
First of all, both the kernel and the image of the divided difference operator 
$\partial_{xy}$ consist of the polynomials that are symmetric in the variables $x$ and $y$.
If $p$ is symmetric in the variables $x$ and $y$ then 
$$\partial_{xy}(p\,q)=p\,\partial_{xy}q$$
for any polynomial $q$. Also, note that $\partial_{yx}=-\partial_{xy}$. 

We shall frequently use the following useful identities 
(see for example~\cite{F} for the proofs). For $\underline{x}=(x_1,\ldots,x_k)$, let $h_j(\underline{x})$ denote the 
$j$-th complete symmetric polynomial in the variables $x_1,\ldots,x_k$. Then we have
\begin{equation}\label{use1}
\partial_{y\underline{x}}(y^{N})=h_{N-k}(y,\underline{x}),
\end{equation}
and
\begin{equation}\label{use2}
\sum_{j=0}^k(-1)^je_j(\underline{x})h_{k-j}(\underline{x})=\delta_{k,0}.
\end{equation}

Moreover, if $x$, $\underline{u}=(u_1,\ldots,u_{a})$ and $\underline{t}=(t_1,\ldots,t_{a+1})$ 
are variables such that
\[
e_l(x,\underline{u})=e_l(\underline{t}),\quad l=1,\ldots,a+1,
\]
then for every $l=1,\ldots,a+1$, we have  
\begin{equation}\label{use3}
e_l(\underline{u})=\sum_{j=0}^l(-1)^jx^je_{l-j}(\underline{t}),
\end{equation}
and 
\begin{equation}\label{use4}
e_l(\underline{t})=e_l(\underline{u})+xe_{l-1}(\underline{u}).
\end{equation}

$\bullet$ The zig-zag relations.\\

In order to reduce the number of subindices (to keep the notation as concise as possible), 
we denote $\lambda_i=a$ and $\lambda_{i+1}=b$.

Then the left hand side of the first of the relations~\eqref{eq_biadjoint1} is mapped 
by $\fbim$ as 
follows:
\begin{align*}
 \xy   0;/r.18pc/:
    (0,0)*{\dblue\xybox{
    (-8,0)*{}="1";
    (0,0)*{}="2";
    (8,0)*{}="3";
    (-8,-10);"1" **\dir{-};
    "1";"2" **\crv{(-8,8) & (0,8)} ?(0)*\dir{>} ?(1)*\dir{>};
    "2";"3" **\crv{(0,-8) & (8,-8)}?(1)*\dir{>};
    "3"; (8,10) **\dir{-};}};
    (12,5)*{\lambda};(-10,-8)*{\scs i};
    \endxy
\quad
\mapsto &
\quad
\left(
\begin{aligned}
&
\quad
\labellist
\tiny\hair 2pt
\pinlabel $b$ at -12 193 \pinlabel $a$ at 128 191 
\pinlabel $1$ at 55 85
\pinlabel $\color[rgb]{.5,.5,.5}{\uparrow}$ at 55 31 \pinlabel $x_1$ at 55 0
\pinlabel $\color[rgb]{.5,.5,.5}{\searrow}$ at -16 51
\pinlabel $\und{u}$ at -46 70
\pinlabel $\color[rgb]{.5,.5,.5}{\swarrow}$ at 126 41
\pinlabel $\und{t}$ at 153 62
\endlabellist
\figins{-19}{0.7}{Hweblbot}
\quad \qra
\labellist
\tiny\hair 2pt
\pinlabel $b$ at -10 193 \pinlabel $a$ at 124 191 
\pinlabel $1$ at 38 45 \pinlabel $1$ at 72 94 \pinlabel $1$ at 38 190
\pinlabel $\color[rgb]{.5,.5,.5}{\leftarrow}$ at 120 71 
\pinlabel $\und{v}$ at 154 70
\pinlabel $\color[rgb]{.5,.5,.5}{\downarrow}$ at 70 182
\pinlabel $x_2$ at 70 210
\pinlabel $\color[rgb]{.5,.5,.5}{\uparrow}$ at 70 30
\pinlabel $x_1$ at 70 -5
\pinlabel $y$ at 40 120 
\pinlabel $\color[rgb]{.5,.5,.5}{\searrow}$ at -16 41
\pinlabel $\und{u}$ at -46 60
\pinlabel $\color[rgb]{.5,.5,.5}{\leftarrow}$ at 132 11
\pinlabel $\und{t}$ at 161 12
\endlabellist
\figins{-19}{0.7}{sqweblrl}
\qquad\ra\quad
\labellist
\tiny\hair 2pt
\pinlabel $b$ at -10 193 \pinlabel $a$ at 124 191 
\pinlabel $1$ at 38 150
\pinlabel $\color[rgb]{.5,.5,.5}{\uparrow}$ at 70 135
\pinlabel $x_2$ at 70 105
\endlabellist
\figins{-19}{0.7}{Hwebltop}\ \ \
\\[1.0ex]
& \qquad
p\mapsto 
\partial_{x_1\und{v}}
\left(\sum\limits_{\ell=0}^{a}(-1)^{\ell}x_2^{a-\ell}
e_{\ell}(\und{v}) p\right)
\end{aligned}
\right)
\end{align*}

Note that $p=p(x_1,\underline{u},\underline{t})$ is symmetric in the variables 
$\underline{u}$ and $\underline{t}$ separately. 
Also, the lowest trivalent vertex on the right strand in the middle picture of the movie, 
implies that $e_l(x_1,\underline{v})=e_l(\underline{t})$, 
for every $l=1,\ldots,a+1$. So, $x_1^j$ for $j>a$ is a symmetric polynomial in 
the variables $\underline{t}$ (e.g. this follows from (\ref{use3}) for 
$l=a+1$). Thus we can write $p$ as:
\begin{equation}
p=\sum_{j=0}^a {x_1^jq_j(\underline{u},\underline{t})},
\end{equation}
where $q_j=q_j(\underline{u},\underline{t})$, $j=0,\ldots,a$, are polynomials symmetric 
in $\underline{u}$ and $\underline{t}$ separately.

Then we have:
\[
\partial_{x_1\underline{v}}(\sum_{l=0}^a(-1)^lx_2^{a-l}
e_l(\underline{v})p)=\sum_{l=0}^a(-1)^lx_2^{a-l}\partial_{x_1\underline{v}}
(e_l(\underline{v})\sum_{j=0}^ax_1^jq_j){=}_{(l\mapsto a-l)} 
\]
\begin{equation}\label{pom11}
=\sum_{l=0}^a\sum_{j=0}^a(-1)^{a-l}x_2^lq_j\partial_{x_1\underline{v}}(x_1^je_{a-l}
(\underline{v})). 
\end{equation}

Since $e_l(x_1,\underline{v})=e_l(\underline{t})$, 
for every $l=1,\ldots,a+1$, by (\ref{use3})
we have 
$e_{a-l}(\underline{v})=\sum_{k=0}^{a-l}(-1)^k x_1^k e_{a-l-k}(\underline{t})$. 
After replacing this in (\ref{pom11}), we get 

\begin{align*}
&= \sum_{l=0}^a\sum_{j=0}^ax_2^lq_j\sum_{k=0}^{a-l}(-1)^{a-l-k}e_{a-l-k}
(\underline{t})\partial_{x_1\underline{v}}(x_1^{j+k})=_{(\ref{use1})}\\
&= \sum_{l=0}^a\sum_{j=0}^ax_2^lq_j\sum_{k=0}^{a-l}(-1)^{a-l-k}e_{a-l-k}
(\underline{t})h_{j+k-a}(x_1,\underline{v})=\\
&= \sum_{l=0}^a\sum_{j=0}^ax_2^lq_j\sum_{k=0}^{a-l}(-1)^{a-l-k}e_{a-l-k}
(\underline{t})h_{j+k-a}(\underline{t})=_{(k\mapsto a-l-k)}\\
&= \sum_{l=0}^a\sum_{j=0}^ax_2^lq_j\sum_{k=0}^{a-l}(-1)^{k}e_{k}
(\underline{t})h_{j-l-k}(\underline{t}).
\end{align*}
Since $h_p(\underline{t})=0$ for $p<0$, we must have $k\le j-l(\le a-l)$ in the innermost summation, and so by (\ref{use2}) the last expression above 
is equal to
\begin{align*}
&= \sum_{l=0}^a\sum_{j=0}^ax_2^lq_j\sum_{k=0}^{j-l}(-1)^{k}
e_{k}(\underline{t})h_{j-l-k}(\underline{t})=\sum_{l=0}^a\sum_{j=0}^ax_2^lq_j 
\delta_{j-l,0}=\\
&= \sum_{j=0}^ax_2^jq_j=p_{\mid x_1\mapsto x_2},
\end{align*}
which is just the identity map, as wanted.\\

$\bullet$ Images of bubbles by $\fbim$. \\

Again we denote $\lambda_i=a$ and $\lambda_{i+1}=b$.

The clockwise oriented bubble with $r\ge 0$ dots on it is mapped by $\fbim$ as follows
\begin{align*}
\xy 0;/r.18pc/:
 (0,0)*{\dblue\cbub{\black r}{\black i}};
  (4,8)*{\scs\lambda};
 \endxy
\quad
\mapsto &
\quad
\left(
\begin{aligned}
&
\quad
\labellist
\tiny\hair 2pt
\pinlabel $b$ at -20 193 \pinlabel $a$ at 150 191 
\pinlabel $\color[rgb]{.5,.5,.5}{\nearrow}$ at -18 58
\pinlabel $\und{t}$ at -52 20
\pinlabel $\color[rgb]{.5,.5,.5}{\nwarrow}$ at 148 58
\pinlabel $\und{u}$ at 185 20
\endlabellist
\figins{-19}{0.6}{id2web}
\qquad\ra\qquad
\labellist
\tiny\hair 2pt
\pinlabel $b$ at -15 197 \pinlabel $a$ at 128 196 
\pinlabel $\color[rgb]{.5,.5,.5}{\swarrow}$ at 130 30 
\pinlabel $\und{u}$ at 162 48
\pinlabel $\color[rgb]{.5,.5,.5}{\swarrow}$ at 120 115 
\pinlabel $\und{v}$ at 160 138
\pinlabel $x$ at 55 45
\pinlabel $y$ at 65 170 
\pinlabel $\color[rgb]{.5,.5,.5}{\nearrow}$ at -18 148
\pinlabel $\und{t}$ at -50 110
\endlabellist
\figins{-19}{0.6}{sqweblr}
\qquad\ra\quad
\labellist
\tiny\hair 2pt
\pinlabel $b$ at -15 191 \pinlabel $a$ at 144 193 
\endlabellist
\figins{-19}{0.6}{id2web}\ \ \
\\[1.0ex]
& \qquad\ 
p\mapsto 
\partial_{x\und{v}}
\biggl(\sum\limits_{\ell=0}^{b}(-1)^{\ell}e_{b-\ell}(\und{t})x^{\ell+r} p\biggr)
\end{aligned}
\right)
\end{align*}

The polynomial $p=p(\underline{t},\underline{u})$ is symmetric in the variables 
$\underline{t}$ and $\underline{u}$ separately. In particular, 
we have $\partial_{x\underline{v}} (p\,q)=p\,\partial_{x\underline{v}}(q)$, for any polynomial $q$.
We have:
\begin{align*}
\partial_{x\underline{v}}(\sum_{l=0}^b {(-1)^l e_{b-l}(\underline{t})x^{l+r}p})
&= \sum_{l=0}^b {(-1)^l 
e_{b-l}(\underline{t})p\partial_{x\underline{v}}(x^{l+r})}=_{(\ref{use1})}\\
&= p\sum_{l=0}^b {(-1)^l e_{b-l}(\underline{t})h_{l+r-a+1}(\underline{u})}=_{(l\mapsto b-l)}\\
&= p (-1)^b\sum_{l=0}^b {(-1)^l e_{l}(\underline{t})h_{b-a+r+1-l}(\underline{u})}.
\end{align*}
Since $e_l(\underline{t})=0$ for $l>b$, and $h_{b-a+r+1-l}(\underline{u})=0$, 
for $l>b-a+r+1$, we have that the clockwise oriented bubble is mapped by $\fbim$ to the 
following bimodule map:
\begin{equation}\label{pom12}
p\mapsto p (-1)^b\sum_{l=0}^{b-a+r+1} {(-1)^l e_{l}
(\underline{t})h_{b-a+r+1-l}(\underline{u})}.
\end{equation}
In particular, if $b-a+r+1<0$, i.e. if $r<a-b-1$, the bubble is mapped to zero, and if 
$r=a-b-1$, the bubble is mapped to $(-1)^b$ times the identity 
(note that $a-b=\lambda_i-\lambda_{i+1}$ is $\mathfrak{sl}_n$ weight). 
Also, $r$ can be naturally extended to $r\ge a-b-1$ (in (\ref{pom12})), i.e. to 
include fake bubbles in the case $a\le b$.\\

The counter-clockwise oriented bubble with $r\ge 0$ dots on it is mapped by $\fbim$ as 
follows\\

\begin{align*}
  \xy 0;/r.18pc/:
 (0,0)*{\dblue\ccbub{\black r}{\black i}};
  (4,8)*{\scs\lambda};
 \endxy
\quad
\mapsto &
\quad
\left(
\begin{aligned}
&
\quad
\labellist
\tiny\hair 2pt
\pinlabel $b$ at -15 194 \pinlabel $a$ at 144 192 
\pinlabel $\color[rgb]{.5,.5,.5}{\nearrow}$ at -18 58
\pinlabel $\und{t}$ at -52 20
\pinlabel $\color[rgb]{.5,.5,.5}{\nwarrow}$ at 148 58
\pinlabel $\und{u}$ at 183 20
\endlabellist
\figins{-19}{0.6}{id2web}
\qquad\ra\qquad
\labellist
\tiny\hair 2pt
\pinlabel $b$ at -15 198 \pinlabel $a$ at 130 197 
\pinlabel $\color[rgb]{.5,.5,.5}{\searrow}$ at -15 35 
\pinlabel $\und{t}$ at -54 58
\pinlabel $\color[rgb]{.5,.5,.5}{\nwarrow}$ at 130 155 
\pinlabel $\und{u}$ at 166 122
\pinlabel $\color[rgb]{.5,.5,.5}{\uparrow}$ at 58 38 
\pinlabel $x$ at 58 0
\pinlabel $y$ at 45 172 
\pinlabel $\color[rgb]{.5,.5,.5}{\searrow}$ at -10 121
\pinlabel $\und{v}$ at -50 140
\endlabellist
\figins{-19}{0.6}{sqwebrl}
\qquad\ra\quad
\labellist
\tiny\hair 2pt
\pinlabel $b$ at -15 191 \pinlabel $a$ at 145 193 
\endlabellist
\figins{-19}{0.6}{id2web}\quad
\\[1.0ex]
&  
\qquad
p\mapsto 
\partial_{\und{v}x}
\left(\sum\limits_{\ell=0}^{a}(-1)^{\ell}x^{a-\ell+r}
e_{\ell}(\und{u})p\right)
\end{aligned}
\right)
\end{align*}

Completely analogously as above, we have that the counter-clockwise oriented bubble is 
mapped by $\fbim$ to the following bimodule map:
\begin{equation}\label{pom13}
p\mapsto p (-1)^{b+1}\sum_{l=0}^{a-b+r+1} {(-1)^l e_{l}(\underline{u})h_{a-b+r+1-l}
(\underline{t})}.
\end{equation}
Again, from $r<b-a-1$, the bubble is mapped to zero, and if $r=b-a-1$, it is mapped to 
$(-1)^{b+1}$ times the identity. Moreover, $r$ can be naturally extended to $r\ge b-a-1$, 
i.e. to include fake bubbles in the case $b\le a$.\\

\begin{rem}
Our reason for changing the signs from~\cite{K-L3}, was to make the 
signs in the image of the degree zero bubbles, i.e. $(-1)^b$ for the 
clockwise bubble and $(-1)^{b+1}$ for the counter-clockwise bubble, 
coincide with those of ~\eqref{eq:bubb_deg0}.   
\end{rem}

Finally, by the Giambelli and the dual Giambelli formulas (see e.g. 
\cite{F}), from (\ref{pom12}) and (\ref{pom13}) the infinite 
Grassmannian relation follows directly.

\section{Comparisons with ${\mathcal U}(\mathfrak{sl}_n)$}  
\label{sec:struct}                                          

In this section we show the analogues for  
$\Scat(n,d)$ of some of Khovanov and Lauda's results on the structure of 
$\Ucat$. Our results are far from complete. More work will need to be 
done to understand the structure of $\Scat(n,d)$ better. 

To simplify terminology, by a $2$-functor 
we will always mean an additive $\Q$-linear degree preserving 
$2$-functor. 

\subsection{Categorical inclusions and projections}
In the first place, we categorify the homomorphisms $\pi_{d',d}$ 
from Section~\ref{sec:hecke-schur}. 

\begin{defn} Let $d'=d+kn$, with $k\in\bN$. We define a $2$-functor 
$$\Pi_{d',d}\colon \Scat(n,d')\to\Scat(n,d).$$
On objects and $1$-morphisms $\Pi_{d',d}$ is defined as $\pi_{d',d}$. 
On $2$-morphisms $\Pi_{d',d}$ is defined as follows. For any diagram $D$ in 
$\Scat(n,d')$ with regions labeled 
$\lambda\in\Lambda(n,d')$ such that $\lambda-(k^n)\in\Lambda(n,d)$, let 
$\Pi_{d',d}(D)$ be given by the same diagram with labels of the form 
$\lambda-(k^n)$, multiplied by $(-1)^k$ for every left cap and left cup in 
$D$. For any other diagram $D$, let $\Pi_{d',d}(D)=0$. Extend this definition 
to all $2$-morphisms by linearity.  
\end{defn}
\noindent Note that $\Pi_{d',d}$ is well-defined, because 
$\overline{\lambda}=\overline{\lambda-(k^n)}$. The extra $(-1)^k$ for left 
cups and caps is necessary to match our normalization of the degree zero 
bubbles. It also ensures that we have 
$$\Pi_{d',d}\Psi_{n,d'}=\Psi_{n,d},$$
where $$\Psi_{n,d}\colon \Ucat\to \Scat(n,d)$$
is the 2-functor defined in Definition~\ref{defn:Psi}.

Note also that the $\Pi_{d',d}$ form something like an inverse system of 
$2$-functors between $2$-categories, 
because 
$$\Pi_{d',d}\Pi_{d'',d'}=\Pi_{d'',d}$$ 
(compare to \eqref{eq:inversesystem}). We say ``something like'' an inverse system, because we have not been able to 
find a precise definition of such a structure in the literature on 
$n$-categories. Also one would have to think carefully if the 
 ``inverse limit'' of the $\Scat(n,d)$ would still be Krull-Schmidt. Finally, 
there appears to be no general theorem that says that 
the Grothendieck group of an inverse limit is the inverse limit of the 
Grothendieck groups (even for algebras there is no such theorem). 
So we cannot (yet) reasonably conjecture the categorification of the 
embedding \eqref{eq:inverselimit}. 
All we can say at the moment is the following:   
\begin{cor} We have:
\label{cor:inverselimit}
\begin{enumerate}
\item Let $f1_{\alpha}$ be a $2$-morphism in 
$\Ucat$. Let $d_0>0$ be the minimum value such 
that $\alpha=\overline{\beta}$ with $\beta\in\Lambda(n,d_0)$. Then $f=0$ if 
and only if $\Psi_{n,d_0+nk}(f)=0$ for any $k\geq 0$.
\item Let $\{f_i1_{\alpha}\}_{i=1}^{s}$ be a finite set of $2$-morphisms in 
$\HomU(x,y)$. Then the 
$f_i1_{\alpha}$ are linearly independent if and only if 
there exists a $d>0$ such that the $\Psi_{n,d}(f_i1_{\lambda})$ are linearly 
independent in $\HomS(x,y)$. 
\end{enumerate} 
\end{cor}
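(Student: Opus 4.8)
The plan is to deduce both assertions from the joint faithfulness of Khovanov and Lauda's flag--variety $2$-representations of $\Ucat$: their nondegeneracy argument in type $A$ (\cite{K-L3}) shows that a $2$-morphism $f$ of $\Ucat$ is zero if and only if $\Gamma^G_d(f)=0$ for every $d\geq 0$. Together with the factorization $\Gamma^G_d=\fbim\circ\Psi_{n,d}$ of Lemma~\ref{lem:rep} and the tower relation $\Pi_{d',d}\Psi_{n,d'}=\Psi_{n,d}$, this makes both parts fall out quickly.

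\textbf{Part (1).} The forward direction is immediate, since each $\Psi_{n,d}$ is an additive $2$-functor. For the converse I would first record the elementary fact that, for the fixed $\mathfrak{sl}_n$-weight $\alpha$, any two $\mathfrak{gl}_n$-lifts of $\alpha$ differ by a constant vector $(c^n)$; hence $\{\,d\geq 0 : \phi_{n,d}(\alpha)\in\Lambda(n,d)\,\}$ is an arithmetic progression of common difference $n$, and by the very definition of $d_0$ it equals $\{\,d_0+nk : k\geq 0\,\}$. Consequently, if $d\notin\{d_0+nk\}$ then the source and target $1$-morphisms of $f1_\alpha$ already sit over a region that $\Gamma^G_d$ annihilates, so $\Gamma^G_d(f)=0$ automatically; and if $d=d_0+nk$ the hypothesis $\Psi_{n,d}(f)=0$ gives, via Lemma~\ref{lem:rep}, $\Gamma^G_d(f)=\fbim(\Psi_{n,d}(f))=0$. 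Thus $\Gamma^G_d(f)=0$ for all $d\geq 0$, and Khovanov and Lauda's theorem yields $f=0$.

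\textbf{Part (2).} The ``if'' direction is formal: applying $\Psi_{n,d}$ to a relation $\sum_i c_i f_i1_\alpha=0$ and using linear independence of the images forces every $c_i=0$. For the ``only if'' direction I would pass to $V:=\mathrm{span}_{\Q}\{f_1 1_\alpha,\dots,f_s 1_\alpha\}\subseteq \HomU(x,y)$, which is finite-dimensional, being spanned by $s$ elements, and look at the kernels $K_k:=\ker\!\bigl(\Psi_{n,d_0+nk}|_V\bigr)$. The tower relation $\Psi_{n,d_0+nk}=\Pi_{d_0+n(k+1),\,d_0+nk}\circ\Psi_{n,d_0+n(k+1)}$ shows $K_{k+1}\subseteq K_k$, so this descending chain of subspaces of the finite-dimensional space $V$ stabilizes, say $K_N=K_{N+1}=\cdots$. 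But Part (1), applied to each $g\in V$ (a $2$-morphism $x\to y$, hence of the form $g1_\alpha$ with the same $d_0$), gives $\bigcap_{k}K_k=0$, whence $K_N=0$; that is, $\Psi_{n,d_0+nN}$ is injective on $V$, so $d=d_0+nN$ is the desired value.

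\textbf{Main obstacle.} The only genuinely non-formal ingredient is the Khovanov--Lauda nondegeneracy input --- the joint faithfulness of the $2$-representations $\Gamma^G_d$ on $\Ucat$; everything downstream of it is bookkeeping, in particular the identification of the admissible $d$ as the progression $d_0+n\N$ and the stabilization argument via the $\Pi_{d',d}$'s. The one subtlety worth being careful about is that $\Gamma^G_d$ automatically kills $f1_\alpha$ when $\alpha$ does not lift into $\Lambda(n,d)$ --- which is precisely what licenses testing faithfulness only along $d\in d_0+n\N$ rather than over all $d$.
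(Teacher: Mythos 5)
Your proof is correct and uses exactly the ingredients the paper cites for this corollary — Khovanov and Lauda's nondegeneracy result for $\Ucat$ (joint faithfulness of the $\Gamma^G_d$, i.e.\ Lemma~6.16/Theorem~1.3 of \cite{K-L3}), the factorization $\Gamma^G_d=\fbim\circ\Psi_{n,d}$ of Lemma~\ref{lem:rep}, and the tower relation $\Pi_{d',d}\Psi_{n,d'}=\Psi_{n,d}$ — so it is the paper's intended argument, only written out in full where the paper gives a one-line citation. The two observations you supply that the paper leaves implicit (that the admissible $d$ form the progression $d_0+n\N$ and that $\Gamma^G_d$ kills $f1_\alpha$ automatically for $d$ outside it; and the descending-chain stabilization on the finite-dimensional span for Part~(2)) are precisely the right bookkeeping.
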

The proof of Corollary~\ref{cor:inverselimit} follows from 
Khovanov and Lauda's Lemma 6.16 in~\cite{K-L3}, 
which implies Theorem 1.3 in~\cite{K-L3}, our Lemma~\ref{lem:rep} 
and the remarks above Corollary~\ref{cor:inverselimit}. 

The main reason for trying to categorify~\eqref{eq:inverselimit} 
is the following: if the inverse limit of the $\Scat(n,d)$ turns out to exist, 
perhaps it contains a sub-2-category which categorifies 
${\mathbf U}_q(\mathfrak{sl}_n)$. 

\subsection{The structure of the 2HOM-spaces}
We now turn our attention to the structure of the 2HOM-spaces in 
$\Scat(n,d)$. The reader should compare our results to Khovanov and 
Lauda's in~\cite{K-L3}. We first show the analogue of Lemma 6.15. 
\subsubsection{Bubbles for $n=2$}
For starters 
suppose that $n=2$. Let $\lambda=(a,b)\in\Lambda(2,d)$. Recall that a 
partially symmetric polynomial 
$p(\underline{x},\underline{y})=p(\underline{x},\underline{y})\in 
\Q[x_1,\ldots,x_a,y_1,\ldots,y_b]^{S_a\times S_b}$ 
is called 
{\em supersymmetric} if the substitution $x_1=t=y_1$ gives a polynomial 
independent of $t$ (see~\cite{F-P} and~\cite{McD} for example). 
We let $R_{a,b}^{ss}$ denote the ring of supersymmetric 
polynomials. The {\em elementary} supersymmetric polynomials are 
$$e_{j}(\underline{x},\underline{y})=\sum_{s=0}^j (-1)^s h_{j-s}(\underline{x})\varepsilon_s(\underline{y}),$$
where $h_{j-s}(\underline{x})$ is the $j-s$th complete symmetric polynomial in $a$ 
variables and $\varepsilon_s(\underline{y})$ the $s$th elementary symmetric polynomial in $b$ 
variables, which we put equal to zero if $s>b$ by convention. It is easy 
to see that $e_j(\underline{x},\underline{y})$ is supersymmetric, because we 
have 
$$\prod_{r=1}^a\prod_{s=1}^b\dfrac{1-y_rZ}{1-x_sZ}=\sum_j e_j(\underline{x},\underline{y})Z^j.$$ 
Using the supersymmetric analogue of the Giambelli formula we can define 
the {\em supersymmetric Schur polynomials}
$$\pi_{\alpha}(\underline{x},\underline{y})=\det(e_{\alpha_i+j-i}(\underline{x},\underline{y}))$$
for $1\leq i,j\leq m$ and $\alpha$ a partition of length $m$. In the following 
lemma we give the basic facts about supersymmetric Schur polynomials, which 
are of interest to us in this paper. For the proofs see~\cite{F-P, McD} and 
the references therein. Let $\Gamma(a,b)$ be the set of 
partitions $\alpha$ such that 
$\alpha_j\leq b$ for all $j>a$.
\begin{lem} 
\label{lem:ss}
We have
\begin{enumerate}
\item If $\alpha\not\in\Gamma(a,b)$, then $\pi_{\alpha}(\underline{x},\underline{y})=0$.
\item The set $\{\pi_{\alpha}(\underline{x},\underline{y})\,|\,\alpha\in\Gamma(a,b)\}$ is a linear 
basis of $R_{a,b}^{ss}$.
\item We have 
$$\pi_{\alpha}(\underline{x},\underline{y})\pi_{\beta}(\underline{x},\underline{y})=\sum_{\gamma}C_{\alpha\beta}^{\gamma}\pi_{\gamma}(\underline{x},\underline{y}),$$ 
where $C_{\alpha\beta}^{\gamma}$ are the Littlewood-Richardson coefficients.
\item We have 
$$\pi_{\alpha}(\underline{x},\underline{y})=(-1)^{|\alpha|}\pi_{\alpha'}(\underline{y},\underline{x}),$$ 
where $|\alpha|=\sum_i\alpha_i$ and $\alpha'$ is the conjugate partition.   
\item We also get the ordinary Schur polynomials as special cases 
\begin{align*}
\pi_{\alpha}(\underline{x},0) &= \pi_{\alpha}(\underline{x})
\\
\pi_{\beta}(0,\underline{y}) &= (-1)^{|\beta|}\pi_{\beta'}(\underline{y}).
\end{align*}
\end{enumerate}
\end{lem}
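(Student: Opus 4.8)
The plan is to derive all five statements from two facts already available: the generating-function identity
\[
\prod_{r=1}^{a}\prod_{s=1}^{b}\frac{1-y_rZ}{1-x_sZ}=\sum_j e_j(\underline{x},\underline{y})Z^j ,
\]
recorded above, and the determinantal (Jacobi--Trudi type) definition $\pi_\alpha(\underline{x},\underline{y})=\det\bigl(e_{\alpha_i+j-i}(\underline{x},\underline{y})\bigr)$; everything else is standard supersymmetric-function theory, for which I would cite \cite{F-P} and \cite{McD}. The organizing device is the \emph{supersymmetric specialization}: letting $\Lambda$ denote the ring of symmetric functions in infinitely many variables, let $\varrho_{a,b}\colon\Lambda\to\Q[\underline{x},\underline{y}]$ be the $\Q$-algebra map determined on power sums by $p_k\mapsto p_k(\underline{x})-p_k(\underline{y})$. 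Taking the logarithm of the generating function above shows that $\varrho_{a,b}$ carries the complete homogeneous generator $h_j$ to $e_j(\underline{x},\underline{y})$, hence $\varrho_{a,b}(s_\lambda)=\pi_\lambda(\underline{x},\underline{y})$ for every partition $\lambda$ by Jacobi--Trudi; and the cancellation property shows that the image of $\varrho_{a,b}$ lies in $R^{ss}_{a,b}$.

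I would first treat (4) and (5), which are purely formal. For (5), putting $\underline{y}=0$ (resp.\ $\underline{x}=0$) in the generating function gives $e_j(\underline{x},0)=h_j(\underline{x})$ (resp.\ $e_j(0,\underline{y})=(-1)^j\varepsilon_j(\underline{y})$); substituting into the determinant yields $\pi_\alpha(\underline{x},0)=\det\bigl(h_{\alpha_i+j-i}(\underline{x})\bigr)=\pi_\alpha(\underline{x})$ directly, and $\pi_\beta(0,\underline{y})=\det\bigl((-1)^{\beta_i+j-i}\varepsilon_{\beta_i+j-i}(\underline{y})\bigr)=(-1)^{|\beta|}\pi_{\beta'}(\underline{y})$ after pulling the signs out of the permutation expansion and applying the dual Jacobi--Trudi formula. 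For (4), interchanging $\underline{x}$ and $\underline{y}$ reciprocates the generating series, so $\sum_j e_j(\underline{x},\underline{y})Z^j$ and $\sum_j e_j(\underline{y},\underline{x})Z^j$ are mutually inverse power series; the classical equivalence of the two Jacobi--Trudi formulas then converts the $e(\underline{x},\underline{y})$-determinant indexed by $\alpha$ into the $e(\underline{y},\underline{x})$-determinant indexed by the conjugate $\alpha'$, and tracking the signs through the permutation expansion produces the global factor $(-1)^{|\alpha|}$, which is exactly (4).

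For (1), (2) and (3) I would invoke the kernel description of $\varrho_{a,b}$. Surjectivity of $\varrho_{a,b}$ onto $R^{ss}_{a,b}$ is the classical characterization of supersymmetric polynomials as the $\Q$-algebra generated by the $e_j(\underline{x},\underline{y})$ (see \cite{F-P,McD}), and these lie in the image. Granting moreover that $\ker\varrho_{a,b}$ equals the $\Q$-span of $\{s_\lambda\mid\lambda\notin\Gamma(a,b)\}$, statement (1) is immediate, since $\lambda\notin\Gamma(a,b)$ forces $s_\lambda\in\ker\varrho_{a,b}$ and hence $\pi_\lambda(\underline{x},\underline{y})=0$. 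Statement (2) then follows because $\{s_\lambda\}$ is a $\Q$-basis of $\Lambda$, so the images of the $s_\lambda$ with $\lambda\in\Gamma(a,b)$ form a basis of the image $R^{ss}_{a,b}$. Statement (3) follows by applying the \emph{ring} homomorphism $\varrho_{a,b}$ to the Littlewood--Richardson expansion $s_\alpha s_\beta=\sum_\gamma C^\gamma_{\alpha\beta}s_\gamma$ in $\Lambda$ and discarding the terms with $\gamma\notin\Gamma(a,b)$, which vanish by (1).

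The remaining point, and the one I expect to be the real obstacle, is the second half of the kernel description: that no nontrivial $\Q$-linear combination of the $\pi_\lambda(\underline{x},\underline{y})$ with $\lambda\in\Gamma(a,b)$ can vanish. This is where genuine input from the theory is needed; it can be proved by a triangularity argument exhibiting a distinct leading monomial for each such $\pi_\lambda$ with respect to a suitable term order, or via the Sergeev--Pragacz bialternant formula for supersymmetric Schur polynomials. Rather than reproduce either argument, I would simply cite \cite{F-P} and \cite{McD} for it, which is also the level of detail at which the surjectivity statement above is most cleanly obtained.
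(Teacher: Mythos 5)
The paper does not actually prove Lemma~\ref{lem:ss}: it presents the statements as known facts and refers the reader to \cite{F-P,McD}, so there is no internal argument to compare against. Your proposal supplies a coherent and correct proof mechanism: passing through the specialization $\varrho_{a,b}\colon\Lambda\to R^{ss}_{a,b}$, $p_k\mapsto p_k(\underline{x})-p_k(\underline{y})$, which sends $h_j\mapsto e_j(\underline{x},\underline{y})$ and hence $s_\lambda\mapsto\pi_\lambda(\underline{x},\underline{y})$ by Jacobi--Trudi. Your derivations of (4) and (5) directly from the generating-function identity and the two Jacobi--Trudi formulas are correct (the sign bookkeeping in (4) indeed reduces to $(-1)^{|\alpha'|}=(-1)^{|\alpha|}$ because the $(-1)^{j}$ and $(-1)^{-i}$ contributions cancel across the $m\times m$ determinant), and deriving (1)--(3) from the fact that $\varrho_{a,b}$ is a surjective algebra homomorphism with kernel $\operatorname{span}_{\Q}\{s_\lambda:\lambda\notin\Gamma(a,b)\}$ is exactly the right organizing idea, since (3) then becomes a one-line application of the ring-homomorphism property to $s_\alpha s_\beta=\sum_\gamma C^\gamma_{\alpha\beta}s_\gamma$. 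You are also correct that the substantive, nonformal ingredient is the kernel description (equivalently, the vanishing in (1) together with the linear independence in (2)); this is genuine input from the theory of supersymmetric functions, and citing \cite{F-P,McD} for it is at the same level of rigor as the paper itself. No gaps beyond the explicitly acknowledged citation.
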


In \cite{K-L-M-S} the {\em extended calculus} in 
${\mathcal U}(\mathfrak{sl}_2)$ was 
developed. Here we only use a little part of it. 
Below, for partitions $\alpha,\beta$ with length $m$, we write 
$\alpha^{\spadesuit}=\alpha-(a-b)-m$ for counter-clockwise oriented bubbles of 
thickness $m$ in a region labeled $(a,b)$, 
and $\beta^{\spadesuit}=\beta+(a-b)-m$ for clockwise oriented 
bubbles of thickness $m$. Recall that thick bubbles
labeled by a spaded Schur polynomial can be written as Giambelli type 
determinants (see Equations (3.33) and (3.34) in~\cite{K-L-M-S}, but bear 
our sign conventions in mind):
\begin{equation} 
\label{eq_ccbub_det}
 \xy
 (0,0)*{\stccbub{m}{\alpha}};
 (0,9)*{\scs (a,b)};
 \endxy \quad := \quad 
\left|
\begin{array}{ccccc}
  \xy 0;/r.18pc/: (0,0)*{\laudaccbub{\spadesuit+\alpha_1}{}};(5,7)*{\scs (a,b)}; \endxy &
  \xy 0;/r.18pc/: (0,0)*{\laudaccbub{\spadesuit+\alpha_1+1}{}};(5,7)*{\scs (a,b)}; \endxy &
  \xy 0;/r.18pc/: (0,0)*{\laudaccbub{\spadesuit+\alpha_1+2}{}};(5,7)*{\scs (a,b)};
\endxy & \cdots &
  \xy 0;/r.18pc/:
(0,0)*{\laudaccbub{\spadesuit+\alpha_1+(m-1)}{}};(5,7)*{\scs (a,b)}; \endxy \\ \\
 \xy 0;/r.18pc/: (0,0)*{\laudaccbub{\spadesuit+\alpha_2-1}{}};(5,7)*{\scs (a,b)}; \endxy &
  \xy 0;/r.18pc/: (0,0)*{\laudaccbub{\spadesuit+\alpha_2}{}};(5,7)*{\scs (a,b)}; \endxy &
  \xy 0;/r.18pc/: (0,0)*{\laudaccbub{\spadesuit+\alpha_2+1}{}};(5,7)*{\scs (a,b)};
\endxy & \cdots &
  \xy 0;/r.18pc/:
(0,0)*{\laudaccbub{\spadesuit+\alpha_2+(m-2)}{}};(5,7)*{\scs (a,b)}; \endxy \\ \\
  \cdots & \cdots & \cdots & \cdots & \cdots \\ \\
 \xy 0;/r.18pc/: (0,0)*{\laudaccbub{\spadesuit+\alpha_m-m+1}{}};(5,7)*{\scs (a,b)}; \endxy &
  \xy 0;/r.18pc/: (0,0)*{\laudaccbub{\spadesuit+\alpha_m-m+2}{}};(5,7)*{\scs (a,b)}; \endxy &
  \xy 0;/r.18pc/: (0,0)*{\laudaccbub{\spadesuit+\alpha_m-m+3}{}};(5,7)*{\scs (a,b)};
\endxy & \cdots &
  \xy 0;/r.18pc/: (0,0)*{\laudaccbub{\spadesuit+\alpha_m}{}};(5,7)*{\scs (a,b)}; \endxy
\end{array}
\right|
\end{equation}

\begin{equation} \label{eq_cbub_det}
 \xy
 (0,0)*{\stcbub{m}{\beta}};
 (0,9)*{\scs (a,b)};
 \endxy \quad := \quad 
\left|
\begin{array}{ccccc}
  \xy 0;/r.18pc/: (0,0)*{\laudacbub{\spadesuit+\beta_1}{}};(5,7)*{\scs (a,b)}; \endxy &
  \xy 0;/r.18pc/: (0,0)*{\laudacbub{\spadesuit+\beta_1+1}{}};(5,7)*{\scs (a,b)}; \endxy &
  \xy 0;/r.18pc/: (0,0)*{\laudacbub{\spadesuit+\beta_1+2}{}};(5,7)*{\scs (a,b)};
\endxy & \cdots &
  \xy 0;/r.18pc/: (0,0)*{\laudacbub{\spadesuit+\beta_1+(m-1)}{}};(5,7)*{\scs (a,b)};
\endxy \\ \\
 \xy 0;/r.18pc/: (0,0)*{\laudacbub{\spadesuit+\beta_2-1}{}};(5,7)*{\scs (a,b)}; \endxy &
  \xy 0;/r.18pc/: (0,0)*{\laudacbub{\spadesuit+\beta_2}{}};(5,7)*{\scs (a,b)}; \endxy &
  \xy 0;/r.18pc/: (0,0)*{\laudacbub{\spadesuit+\beta_2+1}{}};(5,7)*{\scs (a,b)};
\endxy & \cdots &
  \xy 0;/r.18pc/: (0,0)*{\laudacbub{\spadesuit+\beta_2+(m-2)}{}};(5,7)*{\scs (a,b)};
\endxy \\ \\
  \cdots & \cdots & \cdots & \cdots & \cdots \\ \\
 \xy 0;/r.18pc/: (0,0)*{\laudacbub{\spadesuit+\beta_m-m+1}{}};(5,7)*{\scs (a,b)}; \endxy &
  \xy 0;/r.18pc/: (0,0)*{\laudacbub{\spadesuit+\beta_m-m+2}{}};(5,7)*{\scs (a,b)}; \endxy &
  \xy 0;/r.18pc/: (0,0)*{\laudacbub{\spadesuit+\beta_m-m+3}{}};(5,7)*{\scs (a,b)};
\endxy & \cdots &
  \xy 0;/r.18pc/: (0,0)*{\laudacbub{\spadesuit+\beta_m}{}};(5,7)*{\scs (a,b)}; \endxy
\end{array}
\right|.
\end{equation}

The reader unfamiliar with \cite{K-L-M-S} can interpret the above simply as definitions. 
In Proposition 4.10 in \cite{K-L-M-S} it is proved that the 
clockwise thick bubbles form a 
linear basis of $\ENDU(1_{a-b})$ and that they obey the 
Littlewood-Richardson rule under multiplication. Of course the 
counter-clockwise thick bubbles form another basis and also obey the 
L-R rule. Proposition 4.10 in \cite{K-L-M-S} also shows the relation between 
the two bases (recall that we have slightly different sign conventions in 
this paper and that $\alpha'$ is the partition conjugate to $\alpha$):
\begin{equation}
\label{eq:c-cc}
 \xy
 (0,0)*{\stcbub{m}{\alpha}};
 (0,9)*{\scs (a,b)};
 \endxy \quad =(-1)^{|\alpha|+m}\quad
 \xy
 (0,0)*{\stccbub{m}{\alpha'}};
 (0,9)*{\scs (a,b)};
 \endxy. 
\end{equation}
Therefore, in our case the non-zero clockwise thick bubbles also form a 
nice basis of $\ENDS(1_{(a,b)})$. 
\begin{lem} 
\label{lem:superschur} 
$\fbim\colon \ENDS(1_{(a,b)})\to R_{a,b}^{ss}$ is a ring isomorphism, 
mapping the clockwise thick bubbles to the corresponding supersymmetric Schur 
polynomials. 
\end{lem}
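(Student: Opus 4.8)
The plan is to restrict $\fbim$ to the endomorphism ring $\ENDS(1_{(a,b)})$, show it is a graded ring isomorphism onto $R^{ss}_{a,b}$ by combining a generators-and-image argument with a Hilbert series count, and then read off the images of the thick bubbles from the Giambelli presentation \eqref{eq_cbub_det} together with the bubble computations \eqref{pom12}--\eqref{pom13}; throughout I write $a-b$ for $\llambda_i$, matching those computations. First I would set up the algebra. Since $\fbim$ is a $2$-functor (Proposition~\ref{prop:fbim}), it sends a $2$-endomorphism of the $1$-morphism $1_{(a,b)}$ to a bimodule endomorphism of $\fbim(1_{(a,b)})=R^{a,b}:=\bQ[z_1,\dots,z_d]^{S_a\times S_b}$, covariantly for vertical composition. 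As $R^{a,b}$ is free of rank one as an $R^{a,b}$-bimodule with coinciding left and right actions, every bimodule endomorphism is multiplication by its value on $1$, so $\End_{\bim^*}(R^{a,b})\cong R^{a,b}$ as graded rings and $\fbim$ restricts to a degree-preserving ring homomorphism $\phi\colon\ENDS(1_{(a,b)})\to R^{a,b}$. Identify $R^{a,b}$ with the ring $\bQ[\underline x,\underline y]^{S_a\times S_b}$ of polynomials symmetric in $a$ variables $\underline x$ and in $b$ variables $\underline y$; it contains the supersymmetric ring $R^{ss}_{a,b}$.

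Next, the image. By \eqref{pom12}, valid for fake bubbles too by the extension noted there, the clockwise bubble with $\llambda_i-1+j$ dots is sent to $(-1)^{b}\sum_{\ell}(-1)^{\ell}e_{\ell}(\underline y)h_{j-\ell}(\underline x)=(-1)^{b}e_{j}(\underline x,\underline y)$, the $j$-th elementary supersymmetric polynomial, and by \eqref{pom13} the counter-clockwise bubbles go to $(-1)^{b+1}e_{j}(\underline y,\underline x)$. The thin bubbles (fake ones included) generate $\ENDS(1_{(a,b)})$ as a ring, since every thick clockwise bubble is by \eqref{eq_cbub_det} a determinantal polynomial in thin bubbles while the thick clockwise bubbles form a basis of $\ENDS(1_{(a,b)})$ (the discussion around \eqref{eq:c-cc}); hence $\im\phi$ is the subring generated by the $e_j(\underline x,\underline y)$ (the unit sign $(-1)^b$ being irrelevant). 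Each $e_j(\underline x,\underline y)$ is supersymmetric, and conversely the super-Giambelli formula writes each basis element $\pi_\alpha(\underline x,\underline y)$ of $R^{ss}_{a,b}$ (Lemma~\ref{lem:ss}(2)) as a polynomial in the $e_j(\underline x,\underline y)$, so this subring is exactly $R^{ss}_{a,b}$ and $\phi$ is surjective onto $R^{ss}_{a,b}$. Injectivity is then a degree-by-degree dimension count: $\phi$ is degree-preserving, and the homogeneous bases $\{\stcbub{m}{\alpha}:\alpha\in\Gamma(a,b)\}$ of $\ENDS(1_{(a,b)})$ (via \cite{K-L-M-S} and the discussion around \eqref{eq:c-cc}) and $\{\pi_\alpha(\underline x,\underline y):\alpha\in\Gamma(a,b)\}$ of $R^{ss}_{a,b}$ (Lemma~\ref{lem:ss}(1),(2)) consist of elements of degrees $2|\alpha|$, so the two graded rings have the same finite-dimensional Hilbert series and the graded surjection $\phi$ must be an isomorphism.

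Finally I would identify the images of the thick bubbles. Since $\phi$ is a ring homomorphism, $\phi(\stcbub{m}{\alpha})$ is the determinant \eqref{eq_cbub_det} with each thin-bubble entry replaced by its value $(-1)^{b}e_{\bullet}(\underline x,\underline y)$ from the previous paragraph; pulling the sign out of each row and reorganising subscripts, this is the super-Giambelli determinant $\det\bigl(e_{\alpha_i+j-i}(\underline x,\underline y)\bigr)=\pi_\alpha(\underline x,\underline y)$, as required. The one genuinely delicate step is this reconciliation of the two Giambelli conventions and of the signs; here one uses the conjugation identity Lemma~\ref{lem:ss}(4) and the clockwise/counter-clockwise relation \eqref{eq:c-cc}, whose normalisation in this paper was chosen (cf. the Remark after \eqref{pom13}) precisely so that the degree-zero bubbles of \eqref{eq:bubb_deg0} hit the correct scalars, after which the bookkeeping goes through; all the rest is formal. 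As a cross-check the identification can instead be verified directly on the ring generators---the thin clockwise bubbles, which on both sides are the one-box (super) Schur polynomials $\pi_{(j)}$---and then propagated using that $\phi$ is a ring isomorphism.
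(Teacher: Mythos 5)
There is a genuine gap. In the injectivity step you assert that $\{\xy(0,0)*{\stcbub{m}{\alpha}};(0,9)*{};\endxy : \alpha\in\Gamma(a,b)\}$ is a \emph{basis} of $\ENDS(1_{(a,b)})$, citing \cite{K-L-M-S} and the discussion around \eqref{eq:c-cc}. But \cite{K-L-M-S} establishes only that the thick bubbles (indexed by \emph{all} partitions) form a basis of $\ENDU(1_{a-b})$, the endomorphism ring in the Karoubian $\mathfrak{sl}_2$-category $\Ucat$. Passing to $\ENDS(1_{(a,b)})$ is a quotient via $\Psi_{n,d}$, so the thick bubbles \emph{generate} $\ENDS(1_{(a,b)})$, but which of them become zero and whether the surviving ones remain linearly independent is precisely what the lemma is supposed to determine; the sentence in the paper preceding \eqref{eq:c-cc} is an informal statement of the lemma's conclusion, not an independent citable fact. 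Your argument is therefore circular at exactly this point.

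Concretely: $\ENDS(1_{(a,b)})$ is spanned by $v_\alpha := \fbim$-preimages of the thick bubbles for all partitions $\alpha$, and $\fbim(v_\alpha) = (-1)^{mb}\pi_\alpha(\underline{x},\underline{y})$. For $\alpha\notin\Gamma(a,b)$ one has $\pi_\alpha = 0$ by Lemma~\ref{lem:ss}(1), which shows only that $v_\alpha \in \ker(\fbim)$, not that $v_\alpha = 0$ in $\Scat(n,d)$. Without the vanishing $v_\alpha = 0$ in $\Scat(n,d)$ itself, the kernel could be nonzero, and the Hilbert-series comparison has no independent way to rule this out because the graded dimension of $\ENDS(1_{(a,b)})$ is not known a priori. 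The missing ingredient is exactly the paper's double induction (on the number of rows $m$ of $\beta$ and then on $\beta_m$), which uses the degree constraint on real bubbles with inner label outside $\Lambda(2,d)$, together with the Pieri rule, to prove $\xy(0,0)*{\stcbub{m}{\beta}};(0,9)*{};\endxy = 0$ for $\beta\notin\Gamma(a,b)$. Once that vanishing is in hand, your surjectivity argument combined with the linear independence of $\{\pi_\alpha : \alpha\in\Gamma(a,b)\}$ in $R^{ss}_{a,b}$ closes the loop immediately, and the dimension count becomes unnecessary. The rest of your write-up (the identification of the image, the computation sending thin bubbles to $(-1)^b e_j(\underline{x},\underline{y})$, and the Giambelli bookkeeping for thick bubbles) agrees with the paper and is fine.
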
  
\begin{proof}
It is clear that the thick bubbles generate $\ENDS(1_{(a,b)})$, 
because they are the image of the thick bubbles in $\ENDU(1_{a-b})$, 
which form a linear basis. Since $\Psi_{n,d}$ is a $2$-functor, we see that 
the multiplication of bubbles in $\ENDS(1_{(a,b)})$ satisfies the 
Littlewood-Richardson rule. In Section~\ref{sec:2rep} we showed that using $\fbim$ we get  
\begin{align*} 
\xy 0;/r.18pc/:
 (0,0)*{\dblue\cbub{\black r}{\black i}};
  (4,8)*{\scs(a,b)};
 \endxy
\quad
\mapsto &\quad 
(-1)^b e_{-(a-b)+1+r}(\underline{x},\underline{y}).
\end{align*}
This implies that 
\begin{equation*} 
 \xy
 (0,0)*{\stcbub{m}{\beta}};
 (0,9)*{\scs (a,b)};
 \endxy \qquad\mapsto\quad (-1)^{mb}\pi_{\beta}(\underline{x},\underline{y}).
\end{equation*}
Therefore, by Lemma~\ref{lem:ss}, all we have to show is that 
\begin{equation*} 
 \xy
 (0,0)*{\stcbub{m}{\beta}};
 (0,9)*{\scs (a,b)};
 \endxy\qquad = \quad 0 
\end{equation*}
if 
$\beta\not\in\Gamma(a,b)$. We proceed by induction on $m$. 
Note that if $m<a+1$, then $\beta\in\Gamma(a,b)$, so the induction starts at 
$m=a+1$. If $m=a+1$, then $\beta_{a+1}=\beta_m>b$ implies that 
$\beta_i>b$ holds for all $i=1,\ldots,m$, because $\beta$ is a partition. 
Therefore, for any $i=1,\ldots,m$, we have 
$$\beta_i+a-b-m=\beta_i+a-b-(a+1)=\beta_i-b-1\geq 0.$$
Thus the bubble is real and equals zero because its inner region is labeled 
$(-1,a+b+1)\not\in\Lambda(2,d)$.   

Suppose that $m>a+1$ and that the result has 
been proved for bubbles of thickness $<m$. Using induction, we will prove that 
it holds for bubbles of thickness $m$. The 
trouble is that in this case the bubble can be fake, so we cannot repeat the 
argument above. Instead we use a second induction, this time on $\beta_m$. 
Write $\beta'=(\beta_1,\ldots,\beta_{m-1})$. First suppose $\beta_m=0$. Then 

\begin{equation*} 
 \xy
 (0,0)*{\stcbub{m}{\beta}};
 (0,9)*{\scs (a,b)};
 \endxy\qquad = \quad (-1)^b
\xy
 (0,0)*{\stcbub{m-1\mspace{40mu}}{\beta'}};
 (0,9)*{\scs (a,b)};
 \endxy\qquad = \quad 0
\end{equation*}
\noindent by induction on 
$m$. Now suppose $\beta_m>0$. Then we have 
\begin{equation*} 
 \xy
 (0,0)*{\stcbub{m-1\mspace{40mu}}{\beta'}};
 (8,9)*{\scs (a,b)};
 (20,0)*{\dblue\ccbub{\black a-b-1+\beta_m}{}};
 \endxy\ = \quad 0
\end{equation*}
by induction on $m$. By Pieri's rule, the left-hand side equals  
\begin{equation*}
\qquad \sum_{\beta<\gamma\leq\beta+(\beta_m)}
\xy
 (0,0)*{\stcbub{m}{\gamma}};
 (8,9)*{\scs (a,b)};
 \endxy
\ +\quad
 \xy
 (0,0)*{\stcbub{m}{\beta}};
 (8,9)*{\scs (a,b)},
 \endxy
\end{equation*}
where $\beta+(\beta_m)=(\beta_1+\beta_m,\beta_2,\ldots,\beta_{m-1},0)$. 
Note that for any $\beta<\gamma\leq \beta+(\beta_m)$, 
we have $\gamma\not\in\Gamma(a,b)$ and $\gamma_m<\beta_m$. Thus, 
by induction on $\beta_m$, all the thick bubbles labeled with 
$\pi^{\spadesuit}_\gamma$ are zero. This implies that 
\begin{equation*}
\xy
 (0,0)*{\stcbub{m}{\beta}};
 (8,9)*{\scs (a,b)}.
 \endxy\ =\quad 0.
\end{equation*}
\end{proof} 

Note that for bubbles with the opposite orientation we have

\begin{align*} 
\xy 0;/r.18pc/:
 (0,0)*{\dblue\ccbub{\black r}{\black i}};
  (4,8)*{\scs(a,b)};
 \endxy
\
\mapsto &\quad 
(-1)^{b+1} e_{(a-b)+1+r}(y,x).
\end{align*}
This implies that 

\begin{equation} 
 \xy
 (0,0)*{\stccbub{m}{\alpha}};
 (0,9)*{\scs (a,b)};
 \endxy \quad \mapsto \quad (-1)^{m(b+1)}\pi_{\alpha}(y,x).
\end{equation}
This way we get another isomorphism between 
$\ENDS(1_{(a,b)})$ and $R^{ss}_{a,b}$. 

\subsubsection{Bubbles for $n>2$}

For $n>2$, we get polynomials in thick bubbles of $n-1$ colors.  
Unfortunately we have not been able to find anything in the literature on 
a generalization of supersymmetric polynomials to more than two alphabets. 
Nor has the extended calculus for $\Ucat$ been worked out and written up 
for $n>2$ so far. Therefore all we can say is the following.
Let 
$$S\Pi_{\lambda}=\bigotimes_{i=1}^{n-1}R_{\lambda_i,\lambda_{i+1}}^{ss}.$$
There is a surjective homomorphism 
$$S\Pi_{\lambda}\to 
\ENDS(1_{\lambda})$$ 
sending supersymmetric polynomials to the corresponding clockwise oriented 
thick bubbles. Note that 
$\Psi_{n,d}\colon \Pi_{\overline{\lambda}}\cong \ENDU(1_{\overline{\lambda}})\to 
\ENDS(1_{\lambda})$ factors through $S\Pi_{\lambda}$. Recall that 
$$\Pi_{\overline{\lambda}}\cong \bigotimes_{i=1}^{n-1}\Lambda(\underline{x}),$$
where $\Lambda(\underline{x})$ is the ring of symmetric functions in 
infinitely many variables $\underline{x}=(x_1,x_2,\ldots)$ 
(see (3.24) and Lemma 6.15 in \cite{K-L3}). The map  
$\Pi_{\overline{\lambda}}\to S\Pi_{\lambda}$ referred to above is defined by 
$$\pi^i_{\alpha}(\underline{x})\mapsto \pi_{\alpha}(\underline{x},\underline{y}),$$
where 
$(\underline{x},\underline{y})
=(x_1,\ldots,x_{\lambda_i},y_1,\ldots,y_{\lambda_{i+1}})$ 
and $\pi^i_{\alpha}(\underline{x})=1\otimes\cdots\otimes 
\pi_{\alpha}(\underline{x})\otimes\cdots\otimes 1 $ belongs to 
the $i$-th tensor 
factor and . 

Note also that the projection 
$$S\Pi_{\lambda}\to 
\ENDS(1_{\lambda})$$ 
is not an isomorphism in general. For example, with blue bubbles colored 1 and 
red bubbles colored 2, we have   
\begin{align} 
\xy 0;/r.18pc/:
 (0,0)*{\dred\cbub{\black 1}{2}};
  (4,9)*{\scs(0,1,0)};
 \endxy
- \ \
\xy 0;/r.18pc/:
 (0,-1)*{\dblue\cbub{\black -1}{1}};
  (4,8)*{\scs(0,1,0)};
 \endxy
\
=&\quad 0.
\label{eq:twistbub3}
\end{align}
To see why this holds, first use  
\begin{equation} 
\label{eq:non-inj}
\xy 0;/r.18pc/:
 (0,0)*{\dred\cbub{\black 1}{2}};
  (4,9)*{\scs(0,1,0)};
 \endxy
=\quad 
\xy 0;/r.18pc/:
 (0,0)*{\dblue\ccbub{\black 0}{1}};
 (14,0)*{\dred\cbub{\black 1}{2}};
  (7,9)*{\scs(0,1,0)};
 \endxy
\end{equation}
This equation holds because 
$$
\xy 0;/r.18pc/:
(0,0)*{\dblue\ccbub{\black 0}{1}};
(4,9)*{\scs(0,1,0)};
\endxy
=1.
$$ 
Then slide the red bubble inside the blue one on the r.h.s. 
of~\eqref{eq:non-inj} with bubble-slide~\eqref{eq:extrabubble4}. Note that 
we have to switch the colors $i$ and $i+1$ in~\eqref{eq:extrabubble4}, but 
that only changes the sign on the r.h.s. of that bubble-slide, as remarked 
below the list of bubble-slides. After doing that, only one blue bubble with 
one dot survives, because in the 
interior of that bubble, which is labeled $(1,0,0)$, only a 
degree zero red bubble is non-zero. This holds because the red bubbles of 
positive degree are real bubbles and their interior is labeled 
$(1,-1,1)\not\in\Lambda(3,1)$. The degree zero red bubble is equal to 
1, by~\eqref{eq:bubb_deg0}. Thus we have obtained 
\begin{equation}
\xy 0;/r.18pc/:
 (0,0)*{\dred\cbub{\black 1}{2}};
  (4,9)*{\scs(0,1,0)};
 \endxy
=
\xy 0;/r.18pc/:
 (0,0)*{\dblue\ccbub{\black 1}{1}};
  (4,9)*{\scs(0,1,0)};
 \endxy,
\end{equation}
which is equal to 
\begin{equation}
\xy 0;/r.18pc/:
 (0,-1)*{\dblue\cbub{\black -1}{1}};
  (4,8)*{\scs(0,1,0)};
 \endxy
\end{equation}
by the infinite Grassmannian 
relation~\eqref{eq_infinite_Grass} and relation~\eqref{eq:bubb_deg0}. 

The relation above between bubbles of different colors generalizes. 
Using the extended calculus for $\Scat(n,d)$~\cite{K-L-M-S}, 
we can see that 
whenever $\lambda$ is of the form $(\ldots,0,\lambda_i,0,\ldots)$, bubbles of the same degree of 
colors $i-1$ and $i$ are equal up to a sign. 
This also has to do with the fact that compositions of $d$ of the form 
$(\ldots,a,0,\dots)$ and $(\ldots,0,a,\ldots)$ are equivalent as objects 
in the Karoubi envelope $\ScatD(n,d)$. We will explain this in 
Remark~\ref{rem:zeros}. Here we just leave a conjecture about 
$\EndS(1_{\lambda})$.
\begin{conj} 
\label{conj:bubbles}
Let $\lambda\in\Lambda(n,d)$ be arbitrary and let 
$\mu\in\Lambda(n,d)$ be obtained from $\lambda$ by placing all zero 
entries of $\lambda$ at the end, but without changing the relative order of the 
non-zero entries, e.g. for $\lambda=(2,0,1)$ we get $\mu=(2,1,0)$. 
Then we conjecture that 
$$
\EndS(1_{\lambda})\cong S\Pi_{\mu}.
$$
\end{conj}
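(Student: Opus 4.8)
The strategy is to reduce the general case to the two-row situation handled in Lemma~\ref{lem:superschur} by an explicit ``reordering'' argument, combined with a careful analysis of the bubble-slide relations~\eqref{eq:bub_slides}--\eqref{eq:extrabubble4}. The surjective map $S\Pi_{\mu}\to\EndS(1_{\lambda})$ already exists (sending supersymmetric Schur polynomials in the appropriate adjacent pairs of alphabets to the corresponding clockwise thick bubbles), so the entire content is injectivity, i.e.\ that the thick bubbles coming from non-adjacent colors do not impose extra relations beyond those already present in $S\Pi_{\mu}$, and that the only relations among same-degree bubbles of adjacent colors are the ones we have seen in~\eqref{eq:twistbub3} and its generalization. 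The first step is therefore to make precise what the extended calculus of~\cite{K-L-M-S}, transported to $\Scat(n,d)$ via $\Psi_{n,d}$, tells us about thick bubbles: for each color $i$ the thick clockwise bubbles in a region labeled $\lambda$ span a copy of $R^{ss}_{\lambda_i,\lambda_{i+1}}$ (by Lemma~\ref{lem:superschur} applied ``locally'' at the $i$-th pair of rows), and bubbles of different colors commute (being images of commuting bubbles in $\Ucat$, or directly because far-apart strands slide past each other freely).

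The key reduction is the following. Given $\lambda$ with a zero entry, say $\lambda_i=0$ with $\lambda_{i-1},\lambda_{i+1}$ possibly nonzero, I would show that a color-$(i-1)$ bubble and a color-$i$ bubble of the same degree in the region $\lambda$ agree up to a sign: this is exactly the computation carried out in the excerpt for $\lambda=(0,1,0)$ in equations~\eqref{eq:non-inj}--\eqref{eq:twistbub3}, and the general case follows by the same three moves --- insert a degree-zero bubble of the other color (which equals $\pm1$ by~\eqref{eq:bubb_deg0}), apply the bubble-slide~\eqref{eq:extrabubble4} (with colors $i-1,i$ switched, hence an extra sign), and observe that inside the resulting bubble, whose region has a negative entry in slot $i$, only the degree-zero bubble of the relevant color survives because all positive-degree real bubbles there vanish (their interior lies outside $\Lambda(n,d)$). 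Iterating, when a block of consecutive zeros appears between two nonzero entries $\lambda_j$ and $\lambda_k$, all the ``in-between'' colors $j,j+1,\dots,k-1$ contribute the \emph{same} ring of bubbles up to sign; identifying these is precisely what collapses $S\Pi_\lambda$ down to $S\Pi_\mu$, since in $\mu$ the zeros are pushed to the end and the nonzero entries $\lambda_j,\lambda_k$ become adjacent. One then checks directly from the definition of $\mu$ that $S\Pi_\mu$ is exactly the quotient of (the free object governing all colors) by these identifications.

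To finish, I would prove injectivity of $S\Pi_\mu\to\EndS(1_\lambda)$ by exhibiting enough $2$-morphisms detecting the bubbles. Here the clean route is via $\fbim$: compose the surjection $S\Pi_\mu\to\EndS(1_\lambda)$ with $\fbim\colon\EndS(1_\lambda)\to\End(\fbim(1_\lambda))=\bQ[z_1,\dots,z_d]^{S_{\lambda_1}\times\cdots\times S_{\lambda_n}}$ and compute, using formulas~\eqref{pom12}--\eqref{pom13}, that a color-$i$ thick bubble $\pi^\spadesuit_\beta$ of color $i$ acts on this ring as multiplication by $\pm\pi_\beta$ of the supersymmetric Schur polynomial in the variable-block $(\underline{x}^{(i)},\underline{x}^{(i+1)})$ attached to rows $i,i+1$. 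Since the images of the generators of $S\Pi_\mu$ under this composite are multiplications by algebraically independent supersymmetric Schur polynomials in the distinct blocks of variables corresponding to $\mu$ --- here one uses part (2) of Lemma~\ref{lem:ss} in each block, together with the fact that the blocks are on disjoint sets of variables --- the composite is injective on $S\Pi_\mu$, hence so is $S\Pi_\mu\to\EndS(1_\lambda)$, giving the isomorphism.

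\textbf{Main obstacle.} The delicate point is establishing that the only relations forced among bubbles of adjacent colors at a zero entry are the sign identifications above --- equivalently, that no \emph{further} collapse happens. The bubble-slide relations~\eqref{eq:bub_slides}--\eqref{eq:extrabubble4} are a priori rather intricate when iterated over a long string of zeros, and one must check that the various sign contributions and fake-bubble corrections assemble exactly into the transposition-of-alphabets identity of Lemma~\ref{lem:ss}(4) without introducing new constraints. I expect this bookkeeping, rather than any conceptual difficulty, to be where the real work lies; the injectivity half via $\fbim$ should then be routine once the supersymmetric Schur polynomial images are identified from~\eqref{pom12}--\eqref{pom13}.
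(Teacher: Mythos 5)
The statement you are addressing is labeled a \emph{conjecture} in the paper; the authors give no proof, and after stating it they only add some informal remarks. So there is no proof of the paper's to compare against, and the question is whether your proposed strategy is viable.

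The substantive gap is in your injectivity step via $\fbim$, and it is not a gap that can be patched within this approach. First, the tensor factors of $S\Pi_\mu=\bigotimes_i R^{ss}_{\mu_i,\mu_{i+1}}$ do \emph{not} land on ``disjoint sets of variables'': consecutive factors $R^{ss}_{\mu_i,\mu_{i+1}}$ and $R^{ss}_{\mu_{i+1},\mu_{i+2}}$ both involve the $(i+1)$-st block of $\mu_{i+1}$ variables, so algebraic independence of the images is precisely what has to be proved, not what can be assumed. Second, and more fatally, $\fbim$ cannot be injective on $\ENDS(1_{\lambda})$ once $n\geq 3$ if the conjecture is to hold: $\fbim$ sends $\ENDS(1_{\lambda})$ into $\End_{\bim^*}(\fbim(1_{\lambda}))$, which, since $\fbim(1_\lambda)=R:=\Q[z_1,\dots,z_d]^{S_{\lambda_1}\times\cdots\times S_{\lambda_n}}$ is a commutative ring, is just $R$ acting by multiplication. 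But $S\Pi_\mu$ has strictly larger graded dimension than $R$ already for $\mu=(1,1,0)$, $d=2$. By Lemma~\ref{lem:ss}(2), $\dim_\Q(R^{ss}_{1,1})_k$ equals the number of hook partitions of $k$, so $1,1,2,3,\dots$; hence $\dim(S\Pi_{(1,1,0)})_2 = \dim(R^{ss}_{1,1}\otimes R^{ss}_{1,0})_2 = 1\cdot1+1\cdot1+2\cdot1 = 4$, whereas $\dim_\Q(\Q[z_1,z_2])_2 = 3$. Therefore the composite $S\Pi_\mu\to\ENDS(1_\lambda)\to R$ has a kernel, and $\fbim$ cannot detect injectivity. (This count also shows that the $n=2$ case is special: there $S\Pi_\mu$ has a single factor, and $R^{ss}_{a,b}\subset\Q[\underline x,\underline y]^{S_a\times S_b}$ is literally a subring, which is why the argument in Lemma~\ref{lem:superschur} works.) For $n\geq 3$ one would need some other family of $2$-morphisms to separate $\ENDS(1_\lambda)$, and the paper supplies none --- which is precisely why this is left as a conjecture.

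A secondary point: the ``collapse'' bookkeeping you identify as the main obstacle is actually unnecessary for surjectivity. Remark~\ref{rem:zeros} already gives an isomorphism $\lambda\cong\mu$ of objects in $\ScatD(n,d)$ (iterating the two-row relations $\mathcal{E}_{i^{(a)}}\mathcal{E}_{-i^{(a)}}1_{\lambda}\cong 1_{\lambda}$), whence $\ENDS(1_\lambda)\cong\ENDS(1_\mu)$, and the surjection $S\Pi_\mu\to\ENDS(1_\mu)$ is the one the paper constructs. The entire content of the conjecture is the injectivity of that surjection when the zeros already sit at the end; that, not the sign analysis of bubble-slides across zero rows, is where the real difficulty lies, and it is the step your proposal does not resolve.
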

\noindent Note that if $\mu_k\ne 0$ and $\mu_{k+1}=0$ for a 
certain $1\leq k\leq n-1$ in Conjecture~\ref{conj:bubbles}, then 
$S\Pi_{\mu}$ is isomorphic to the 
algebra of all partially symmetric polynomials 
$\Q[x_1,\ldots,x_d]^{S_{\mu_1}\times\cdots\times S_{\mu_k}}$. This follows from the fact 
that $R^{ss}_{\mu_k,0}$ is the algebra of symmetric polynomials in $\mu_k$ 
variables. For example, suppose $\mu=(1,1,0)$. Then $R^{ss}_{1,1}\cong \Q[x-y]$ 
and $R^{ss}_{1,0}\cong\Q[y]$, so $S\Pi_{(1,1,0)}\cong \Q[x-y]\otimes\Q[y]$. 
The latter algebra is isomorphic to $\Q[x,y]$ by
$$(x-y)\otimes 1+1\otimes y \leftrightarrow x,\qquad 
1\otimes y\leftrightarrow y.$$ 

\subsubsection{More general 2-morphisms}
There is not all that much that we know about more general $2$-hom spaces in 
$\Scat(n,d)$. Let us give a conjecture about an ``analogue'' of Lemma 3.9 
from~\cite{K-L3} for $\Scat(n,d)$. Let $\nu\in\N[I]$ and ${\mathbf i},{\mathbf j}\in\nu$. 
Recall (see Section 2 in \cite{K-L1} and Subsection 3.2.2 in \cite{K-L3}) that 
${}_{\mathbf i}R(\nu)_{\mathbf j}$ is the vector space of upwards 
oriented braid-like diagrams as in $\Ucat$ whose lower boundary is labeled by 
$\mathbf i$ and upper boundary by $\mathbf j$, modulo the braid-like relations 
in $\Ucat$. Note that all strands of such a diagram have labels uniquely 
determined by $\mathbf i$ and $\mathbf j$. Note also that the braid-like 
relations in $\Ucat$ are independent of the weights, so the definition of 
${}_{\mathbf i}R(\nu)_{\mathbf j}$ does not involve weights. Unfortunately, we 
cannot define 
the analogue of ${}_{\mathbf i}R(\nu)_{\mathbf j}$ for $\Scat(n,d)$, 
because there the braid-like diagrams with a region labeled by a weight outside 
$\Lambda(n,d)$ are equal to zero, creating a weight dependence. However, we 
will be able to use ${}_{\mathbf i}R(\nu)_{\mathbf j}$ and the fact that 
$\Psi_{n,d}$ is full. Khovanov and Lauda~(Lemma 3.9, Definition 3.15 and 
the remarks thereafter, and Theorem 1.3 in \cite{K-L3}) 
showed that the obvious map  
$$
\Psi_{{\mathbf i},{\mathbf j},\overline{\lambda}}\colon 
{}_{\mathbf i}R(\nu)_{\mathbf j}\otimes \Pi_{\overline{\lambda}}\to 
\HOMU({\mathcal E}_{\mathbf i}1_{\overline{\lambda}},
{\mathcal E}_{\mathbf j}1_{\overline{\lambda}})
$$
is an isomorphism. Unfortunately it is also impossible to factor  
$\HOMS({\mathcal E}_{\mathbf i}1_{\lambda}, {\mathcal E}_{\mathbf j}1_{\lambda})$ 
so nicely into braid-like diagrams and 
bubbles. For example, let $\lambda=(0,1)$ and 
look at the following reduction to bubble relation  

$$ 
0 \; =\; \text{$\xy 0;/r.18pc/:
  (12,8)*{\scs (0,1)};
  (0,-3)*{\dblue\xybox{
  (-3,-8)*{};(3,8)*{} **\crv{(-3,-1) & (3,1)}?(1)*\dir{>};?(0)*\dir{>};
    (3,-8)*{};(-3,8)*{} **\crv{(3,-1) & (-3,1)}?(1)*\dir{>};
  (-3,-12)*{\bbsid};
  (-3,8)*{\bbsid};
  (3,8)*{}="t1";
  (9,8)*{}="t2";
  (3,-8)*{}="t1'";
  (9,-8)*{}="t2'";
   "t1";"t2" **\crv{(3,14) & (9, 14)};
   "t1'";"t2'" **\crv{(3,-14) & (9, -14)};
   "t2'";"t2" **\dir{-} ?(.5)*\dir{<};}};
   (9,0)*{}; (-7.5,-12)*{\scs i};
 \endxy$} \; = \; -\sum_{f=0}^{1}
   \xy
  (19,4)*{\scs (0,1)};
  (0,0)*{\dblue\bbe{}};(-2,-8)*{\scs i};
  (12,-2)*{\dblue\cbub{\black f}{\black i}};
  (0,6)*{\dblue\bullet}+(6,1)*{\scs 1-f};
 \endxy.
$$
This result generalizes to any $\lambda$, using the extended calculus 
in~\cite{K-L-M-S}. Thus, 
given any $\lambda$, there is an upper bound $t_r$ for the number of dots 
on the arcs of the $r$-strands. Any braid-like diagram in 
$\HOMS({\mathcal E}_{\mathbf i}1_{\lambda},
{\mathcal E}_{\mathbf j}1_{\lambda})$ with more than 
$t_r$ dots on an $r$-colored strand can be written as a linear combination of 
braid-like diagrams whose $r$-strands have $\leq t_r$ dots with 
coefficients in $\ENDS(1_{\lambda})$.
By the fullness of $\Psi_{n,d}$ and the fact that 
${}_{\mathbf i}R(\nu)_{\mathbf j}$ has a basis ${}_{\mathbf i}B_{\mathbf j}$ which 
only contains a finite number of braid-like diagrams if one forgets the dots 
(see Theorem 2.5 in \cite{K-L1}), it follows that 
$\HOMS({\mathcal E}_{\mathbf i}1_{\lambda},
{\mathcal E}_{\mathbf j}1_{\lambda})$ is finitely generated over 
$\ENDS(1_{\lambda})$. In Section~\ref{sec:soergel} 
we will say a little more about 
the image of 
$$B_{{\mathbf i},{\mathbf j},\overline{\lambda}}=
\Psi_{{\mathbf i},{\mathbf j},\overline{\lambda}}
({}_{\mathbf i}B_{\mathbf j})\subseteq \HOMU({\mathcal E}_{\mathbf i}
1_{\overline{\lambda}},{\mathcal E}_{\mathbf j}1_{\overline{\lambda}})$$ 
in $\HOMS({\mathcal E}_{\mathbf i}1_{\lambda},
{\mathcal E}_{\mathbf j}1_{\lambda})$ under $\Psi_{n,d}$.
Recall again that ${}_{\mathbf i}B_{\mathbf j}$ is Khovanov and Lauda's basis 
of ${}_{\mathbf i}R(\nu)_{\mathbf j}$ in Theorem 2.5 in \cite{K-L1}. 
Unfortunately, all we can give for now is a conjecture. 
\begin{conj} 
\label{conj:struct}
We conjecture that 
$\HOMS({\mathcal E}_{\mathbf i}1_{\lambda},
{\mathcal E}_{\mathbf j}1_{\lambda})$ is a free right module of finite rank over 
$\ENDS(1_{\lambda})$.
\end{conj}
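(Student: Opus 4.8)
The plan is to establish Conjecture~\ref{conj:struct} in two stages: first exhibit an explicit finite spanning set of $\HOMS({\mathcal E}_{\mathbf i}1_{\lambda},{\mathcal E}_{\mathbf j}1_{\lambda})$ over $\ENDS(1_{\lambda})$, and then prove that this spanning set is linearly independent over $\ENDS(1_{\lambda})$. The first stage is essentially the content of the paragraphs preceding the conjecture; the delicate part is the second, for which the natural tool is the $2$-representation $\fbim$.

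For the spanning set I would start from Khovanov and Lauda's isomorphism
$$\Psi_{{\mathbf i},{\mathbf j},\overline{\lambda}}\colon {}_{\mathbf i}R(\nu)_{\mathbf j}\otimes\Pi_{\overline{\lambda}}\;\xrightarrow{\ \sim\ }\;\HOMU({\mathcal E}_{\mathbf i}1_{\overline{\lambda}},{\mathcal E}_{\mathbf j}1_{\overline{\lambda}}),$$
which exhibits the $\Ucat$-side as a free $\Pi_{\overline{\lambda}}$-module with basis $\Psi_{{\mathbf i},{\mathbf j},\overline{\lambda}}({}_{\mathbf i}B_{\mathbf j})$. Applying the full $2$-functor $\Psi_{n,d}$ and using the dot-reduction relation~\eqref{eq:redtobubbles} together with its generalizations in the extended calculus, one bounds the number of dots on each $r$-coloured arc by the integer $t_r$ depending only on $\lambda$. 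This leaves the finite set $B_\lambda=\{\Psi_{n,d}(b_\beta)\}_\beta$, where $b_\beta$ runs over the finitely many underlying braid-like diagrams in ${}_{\mathbf i}B_{\mathbf j}$ (forgetting dots, cf.\ Theorem 2.5 in~\cite{K-L1}) decorated by dot-monomials with at most $t_r$ dots on each $r$-strand; one checks as in the text that $B_\lambda$ generates $\HOMS({\mathcal E}_{\mathbf i}1_{\lambda},{\mathcal E}_{\mathbf j}1_{\lambda})$ over $\ENDS(1_{\lambda})$, so the expected rank is $|B_\lambda|$.

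For linear independence I would push everything through $\fbim$ into the $2$-category of bimodules (Proposition~\ref{prop:fbim}, Lemma~\ref{lem:rep}): compute $\fbim({\mathcal E}_{\mathbf i}1_{\lambda})$ as a bimodule over $\bQ[z_1,\dots,z_d]^{S_{\lambda_1}\times\cdots\times S_{\lambda_n}}$ via the categorified MOY-calculus, identify $\Hom$ between the two bimodules as an explicit module of partially symmetric polynomials, and show that the images of the elements of $B_\lambda$ are independent over the image of $\ENDS(1_{\lambda})$. Given a relation $\sum_\beta c_\beta\,\Psi_{n,d}(b_\beta)=0$ with $c_\beta\in\ENDS(1_{\lambda})$, applying $\fbim$ and separating variables according to which strands each $b_\beta$ touches should force $\fbim(c_\beta)=0$ for every $\beta$, after which one needs $\fbim$ to be injective on $\ENDS(1_{\lambda})$ — which holds for $n=2$ by Lemma~\ref{lem:superschur}, and for general $n$ would follow from a suitable part of Conjecture~\ref{conj:bubbles}.

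The main obstacle is exactly the faithfulness of $\fbim$ on these $2$-HOM spaces, coupled with the missing precise description of $\ENDS(1_{\lambda})$ for $n>2$. For $n=2$ both ingredients are at hand — Lemma~\ref{lem:superschur} gives $\ENDS(1_{(a,b)})\cong R^{ss}_{a,b}$ and the relevant bimodules are those of the categorified $\mathfrak{sl}_2$ MOY-calculus — so one should be able to settle the $n=2$ case along these lines, and more speculatively treat $\lambda$ of the shape $(\ldots,0,\lambda_i,0,\ldots)$ using Remark~\ref{rem:zeros}. For general $\lambda$ and $n$, an alternative to proving faithfulness of $\fbim$ directly would be to transport independence statements between $\Scat(n,d')$ for large $d'$ and $\Ucat$ via the $2$-functors $\Pi_{d',d}$ and Corollary~\ref{cor:inverselimit}; but upgrading such a non-vanishing statement to \emph{freeness} over $\ENDS(1_{\lambda})$ appears to require new input on how dots are absorbed into bubbles uniformly in $d$.
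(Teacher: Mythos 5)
This statement is a \emph{conjecture} in the paper, not a theorem: the sentence immediately before it reads that ``all we can give for now is a conjecture.'' So there is no paper proof for you to match, and what you offer is a strategy — one that, as you acknowledge yourself, has gaps.

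Your spanning argument reproduces the paper's own reasoning in the paragraph preceding the conjecture (push Khovanov--Lauda's basis through the full $2$-functor $\Psi_{n,d}$, then bound dots via the reduction-to-bubbles relations). But the finite spanning set $B_\lambda$ you write down will in general \emph{not} be linearly independent over $\ENDS(1_\lambda)$, because $\Psi_{n,d}$ introduces genuinely new relations among minimal diagrams. The paper gives an explicit instance at the end of Section~\ref{sec:grothendieck}: for $n=2$, $\lambda=(1,0)$, relation~\eqref{eq:EF} collapses so that the identity on $\mathcal{E}_{+1}\mathcal{E}_{-1}1_{(1,0)}$ and the cup-over-cap diagram — two distinct minimal diagrams, hence distinct $\Ucat$-basis elements — become proportional in $\Scat(2,1)$. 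Identifying which subset of $B_\lambda$ survives as a basis is exactly the missing idea, and your plan never confronts it. The independence step through $\fbim$ has a second gap which you somewhat understate: even for $n=2$, Lemma~\ref{lem:superschur} gives injectivity of $\fbim$ only on $\ENDS(1_{(a,b)})$, not on the larger $2$-HOM spaces, so ``both ingredients are at hand for $n=2$'' is more optimistic than the paper supports; and for $n>2$ the whole route rests on the unproven Conjecture~\ref{conj:bubbles}. In short you have correctly located the obstructions, but you have not closed them — which is precisely why the paper left this as a conjecture.
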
 
Note that if 
${\mathcal E}_{\mathbf i}1_{\lambda}=1_{\mu}{\mathcal E}_{\mathbf i}$ and 
${\mathcal E}_{\mathbf j}1_{\lambda}=1_{\mu}{\mathcal E}_{\mathbf j}$, 
then we also conjecture that $\HOMS(1_{\mu}{\mathcal E}_{\mathbf i},
1_{\mu}{\mathcal E}_{\mathbf j})$ is a free left module of finite rank over 
$\ENDS(1_{\mu})$. However, it is not hard to give examples which show that, 
if the conjectures are true at all, the ranks of 
$\HOMS(1_{\mu}{\mathcal E}_{\mathbf i}1_{\lambda},1_{\mu}{\mathcal E}_{\mathbf j}1_{\lambda})$ as a right $\ENDS(1_{\lambda})$-module and as 
a left $\ENDS(1_{\mu})$-module are not equal in general. 
This is not surprising, because the graded 
dimensions of $\ENDS(1_{\lambda})$ and $\ENDS(1_{\mu})$ are not equal 
in general either. 

\subsection{The categorical anti-involution}
The last part of this section is dedicated to the categorification of 
the anti-involution $\tau\colon \SD(n,d)\to\SD(n,d)^{\mbox{\scriptsize op}}$ in 
Section~\ref{sec:hecke-schur}. 
We simply follow Khovanov and Lauda's Subsection 3.3.2. Let 
$\Scat(n,d)^{\mbox{\scriptsize coop}}$ denote the 2-category which the same 
objects as $\Scat(n,d)$, but with the directions of the 1- and 2-morphisms 
reversed. We define a strict degree preserving 2-functor 
$\tilde{\tau}\colon \Scat(n,d)\to \Scat(n,d)^{\mbox{\scriptsize coop}}$ by 
\begin{align*}
\lambda&\mapsto\lambda\\
1_{\mu}\mathcal{E}_{s_1}\mathcal{E}_{s_2}\cdots\mathcal{E}_{s_{m-1}}
\mathcal{E}_{s_m}1_{\lambda}\{t\}&\mapsto 
1_{\lambda}\mathcal{E}_{-s_m}\mathcal{E}_{-s_{m-1}}\cdots\mathcal{E}_{-s_2}
\mathcal{E}_{-s_1}1_{\mu}\{-t+t'\}\\
\zeta&\mapsto\zeta^*.
\end{align*}
Let $D$ be a diagram, then $D^*$ is obtained from $D$ by rotating the latter  
$180^{\circ}$. Since $\Scat(n,d)$ is cyclic, it does not matter in which way 
you rotate. By linear extension this defines $\zeta^*$ for any $2$-morphism. 
The shift $t'$ is defined by requiring that $\tilde{\tau}$ be degree 
preserving. One can easily check that $\tilde{\tau}$ is well-defined. 
For more details on the analogous $\tilde{\tau}$ defined on $\Ucat$ 
see Subsection 3.3.2 in \cite{K-L3}. As a matter 
of fact $\tilde{\tau}$ is a functorial anti-involution. 
The most important result about $\tilde{\tau}$ is the analogue 
of Remark 3.20 in~\cite{K-L3}.
\begin{lem}
\label{lem:tildetau}
There are degree zero isomorphisms of graded $\Q$-vector spaces
\begin{align*}
\HOMS(fx,y)&\cong \HOMS(x,\tilde{\tau}(f)y)\\
\HOMS(xg,y)&\cong \HOMS(x,y\tilde{\tau}(g)),
\end{align*} 
for any $1$-morphisms $x,y,f,g$.
\end{lem}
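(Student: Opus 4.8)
The plan is to follow Khovanov and Lauda's proof of Remark 3.20 in~\cite{K-L3} essentially verbatim, since the adjunction structure in $\Scat(n,d)$ is the same as in $\Ucat$ (all biadjointness and cyclicity relations of Definition~\ref{def_glcat} are inherited by the quotient $\Scat(n,d)$, being relations rather than things that could be killed). First I would reduce to the case where $f$ is a single generating $1$-morphism $\mathcal{E}_{+i}1_{\lambda}$ or $\mathcal{E}_{-i}1_{\lambda}$, since a general $1$-morphism is a direct sum of composites of these and both sides of the claimed isomorphisms are additive and compatible with composition in $f$ (using $\tilde{\tau}(fg)=\tilde{\tau}(g)\tilde{\tau}(f)$, which holds because $\tilde\tau$ is a $2$-functor to the coop category). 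So it suffices to produce, for $f=\mathcal{E}_{\pm i}1_{\lambda}$, a natural degree zero isomorphism $\HOMS(\mathcal{E}_{\pm i}x,y)\cong\HOMS(x,\mathcal{E}_{\mp i}y)$, and symmetrically for $xg$.

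The map itself is the standard one given by ``bending a strand around a cap/cup.'' Concretely, for $f=\mathcal{E}_{+i}1_{\lambda}$, given a $2$-morphism $\zeta\colon \mathcal{E}_{+i}x\To y$ one attaches the right cup $\Ucupri$ (respectively a left cup, depending on which adjunction one uses) to the bottom-left of the diagram for $\zeta$, turning the leftmost upward $i$-strand into a downward $i$-strand feeding into $x$; this produces a $2$-morphism $x\To \mathcal{E}_{-i}y$. The inverse attaches the corresponding cap. That these two operations are mutually inverse is precisely the content of the zig-zag (biadjointness) relations~\eqref{eq_biadjoint1} and~\eqref{eq_biadjoint2}, which hold in $\Scat(n,d)$. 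The degree count works out because the degrees of the cup and cap of a given type are negatives of each other up to the shift built into the definition of $\mathcal{E}_{\mp i}$; one checks the bookkeeping once. Naturality in $x$ and $y$ (i.e. compatibility with pre- and post-composition by $2$-morphisms) is immediate from the interchange law for $2$-morphisms, since the cup/cap is attached ``away from'' where other $2$-morphisms act. The second isomorphism, $\HOMS(xg,y)\cong\HOMS(x,y\tilde\tau(g))$, is proved identically by bending strands on the right-hand side of the diagrams instead of the left.

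The only genuinely substantive point is to make sure nothing goes wrong in passing from $\Ucat$ (or $\glcat$) to the quotient $\Scat(n,d)$: the cups and caps used in the construction, and the zig-zag relations witnessing that the construction is invertible, could in principle involve regions with labels outside $\Lambda(n,d)$. But bending a strand in a diagram representing a $2$-morphism of $\Scat(n,d)$ only creates new regions whose labels are among those already forced by the source and target $1$-morphisms $x$, $\mathcal{E}_{\pm i}x$, $y$, $\mathcal{E}_{\mp i}y$; if all of those lie in $\Lambda(n,d)$ (which they do, since the $1$-morphisms in question are genuine $1$-morphisms of $\Scat(n,d)$), then so does every region appearing in the bent diagram. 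Hence the construction descends, and the zig-zag relations — which are honest relations in $\glcat$ and hence in the quotient — still witness invertibility. I expect this ``no bad labels are introduced'' check to be the main (though not hard) obstacle; everything else is a transcription of the $\Ucat$ argument. I would phrase the final proof as: ``The proof is identical to that of Remark 3.20 in~\cite{K-L3}, once one observes that bending a strand around a cup or cap in a diagram for a $2$-morphism of $\Scat(n,d)$ does not create regions with labels outside $\Lambda(n,d)$, and that the zig-zag relations~\eqref{eq_biadjoint1}--\eqref{eq_biadjoint2} hold in $\Scat(n,d)$.''
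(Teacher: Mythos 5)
Your proposal is correct and takes essentially the same approach as the paper, which states the lemma as ``the analogue of Remark 3.20 in [K-L3]'' and offers no further proof, relying implicitly on the fact that the biadjointness and cyclicity relations of $\glcat$ descend to the quotient $\Scat(n,d)$. The extra check you spell out --- that bending a strand around a cup or cap introduces no regions with labels outside $\Lambda(n,d)$, because every new region label is forced by the boundary $1$-morphisms, all of which are assumed to live in $\Scat(n,d)$ --- is precisely the one substantive point, which the paper leaves implicit.
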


\section{The diagrammatic Soergel categories and $\Scat(n,d)$}  
\label{sec:soergel}                                             

\subsection{The diagrammatic Soergel category revisited}
\label{ssec:soergel}

In this subsection we recall the diagrammatics for Soergel categories 
introduced by Elias and Khovanov in~\cite{E-Kh}. Actually we first 
recall the version sketched by Elias and Khovanov in Section 4.5 and 
used by Elias and Krasner in~\cite{E-Kr}. After that we will comment on how 
to alter it in order to get the original version by Elias and Khovanov. 
Note that both versions categorify the Hecke algebra, although they are 
not equivalent as categories. In this paper we will need both versions.  

Fix a positive integer $n$. The category $\mathcal{SC}_1(n)$ is the category 
whose objects are finite length sequences of points on the real line, where 
each point is colored by an 
integer between $1$ and $n-1$. 
We read sequences of points from left to right. 
Two colors $i$ and $j$ are called {\em adjacent} 
if $\vert i-j\vert=1$ and {\em distant} if 
$\vert i-j\vert >1$.  
The morphisms of $\mathcal{SC}_1(n)$ are given by generators modulo relations. 
A morphism of $\mathcal{SC}_1(n)$ is a $\Q$-linear combination of planar 
diagrams 
constructed by horizontal and vertical gluings of the following generators 
(by convention no label means a generic color $j$):
\begin{itemize}
\item Generators involving only one color:
\begin{equation*}
\xymatrix@R=1.0mm{
\figins{-15}{0.5}{enddot}
&
\figins{-9}{0.5}{startdot}
&
\figins{-15}{0.55}{merge}
&
\figins{-15}{0.55}{split}
\\
\text{EndDot} & \text{StartDot} & \text{Merge} & \text{Split}
}
\end{equation*}

\medskip

It is useful to define the cap and cup as
\begin{equation*}
\figins{-17}{0.55}{dvert-u}\
\equiv\
\figins{-17}{0.55}{cap-v}\
\mspace{50mu}
\figins{-17}{0.55}{dvert-d}\
\equiv\
\figins{-17}{0.55}{cup-v}
\end{equation*}

\medskip

\item Generators involving two colors:
\begin{itemize}
\item The 4-valent vertex, with distant colors,
\begin{equation*}
\labellist
\tiny\hair 2pt
\pinlabel $i$ at  -4 -10
\pinlabel $j$ at 134 -10
\pinlabel $i$ at 134 140
\pinlabel $j$ at -4 140
\endlabellist
\figins{-15}{0.55}{4vert}\vspace{1.5ex}
\end{equation*}
\item and the 6-valent vertex, with adjacent colors $i$ and $j$
\begin{equation*}
\labellist
\tiny\hair 2pt
\pinlabel $i$   at  -4 -10 
\pinlabel $j$ at  66 -12
\pinlabel $i$ at 136 -10
\pinlabel $j$ at -4 140
\pinlabel $i$ at 66 140
\pinlabel $j$ at 136 140
\pinlabel $j$ at 250 -10
\pinlabel $i$ at 320 -10
\pinlabel $j$ at 390 -10
\pinlabel $i$ at 250 140
\pinlabel $j$ at 320 140
\pinlabel $i$ at 390 140
\endlabellist
\figins{-17}{0.55}{6vertd}
\mspace{55mu}
\figins{-17}{0.55}{6vertu}\ .
\vspace{1.5ex}
\end{equation*}
\end{itemize}
\end{itemize}

\n In this setting a diagram represents a morphism from the 
bottom boundary to the top.
We can add a new colored point to a sequence and this endows $\mathcal{SC}_1(n)$ with a 
monoidal structure on objects, which is extended to morphisms in the obvious way. 
Composition of morphisms consists of stacking one diagram on top of the other.

We consider our diagrams modulo the following relations.

\n\emph{''Isotopy'' relations:}
\begin{equation}\label{eq:adj}
\figins{-17}{0.55}{biadj-l}\
=\
\figins{-17}{0.55}{line}\
=\
\figins{-17}{0.55}{biadj-r}
\end{equation}

\begin{equation}\label{eq:curldot}
\figins{-17}{0.55}{dcurl-ul}\
=\
\figins{-17}{0.55}{enddot}\
=\
\figins{-17}{0.55}{dcurl-ur}
\end{equation}

\begin{equation}\label{eq:v3rot}
\figins{-17}{0.55}{yl}\
=\
\figins{-17}{0.55}{merge}\
=\
\figins{-17}{0.55}{ly}
\end{equation}

\begin{equation}\label{eq:v4rot}
\figins{-17}{0.55}{4vertr-l}\
=\
\figins{-17}{0.55}{4vert}\
=\
\figins{-17}{0.55}{4vertr-r}
\end{equation}

\begin{equation}\label{eq:v6rot}
\figins{-17}{0.55}{6vertu-l}\
=\
\figins{-17}{0.55}{6vertu}\
=\
\figins{-17}{0.55}{6vertu-r}
\end{equation}

The relations are presented in terms of diagrams with generic colorings. 
Because of isotopy invariance, one may draw a diagram with a boundary on the side,
and view it as a morphism in $\mathcal{SC}_1(n)$ by either bending strands up 
or down.
By the same reasoning, a horizontal line corresponds to a sequence of cups and caps.

\bigskip\medskip

\n\emph{One color relations:}

\begin{equation}\label{eq:dumbrot}
\figins{-16}{0.5}{dumbells}\
=\
\figins{-14}{0.45}{dumbellh}
\end{equation}

\begin{equation}\label{eq:lollipop}
\figins{-17}{0.55}{lollipop-u}\
=\
0
\end{equation}

\begin{equation}\label{eq:deltam}
\figins{-17}{0.55}{startenddot-edge}\
+\
\figins{-17}{0.55}{edge-startenddot}\
=\ 2\ 
\figins{-17}{0.55}{matches-ud}
\end{equation}

\medskip
\n\emph{Two distant colors:}
\begin{equation}\label{eq:reid2dist}
\figins{-32}{0.9}{reid2}\
=\
\figins{-32}{0.9}{line-br}
\end{equation}

\begin{equation}\label{eq:slidedotdist}
\figins{-16}{0.5}{4vertdot}\
=\
\figins{-16}{0.5}{4vertnodot}
\end{equation}

\begin{equation}\label{eq:slide3v}
\figins{-17}{0.55}{splitslide-u}\
=\
\figins{-17}{0.55}{splitslide-d}
\end{equation}

\medskip
\n\emph{Two adjacent colors:}
\begin{equation}\label{eq:dot6v}
\figins{-16}{0.5}{6vertdotd}\
=\
\figins{-16}{0.5}{mergedots}\
+\
\figins{-16}{0.5}{capcupdot}
\end{equation}

\begin{equation}\label{eq:reid3}
\figins{-30}{0.85}{id-r3}\
=\
\figins{-30}{0.85}{reid3}\
-\
\figins{-30}{0.85}{dumbell-dd}
\end{equation}

\begin{equation}\label{eq:dumbsq}
\figins{-30}{0.85}{dumbellsquare}\
=\
\figins{-30}{0.85}{dumbellsquareh}
\end{equation}

\begin{equation}\label{eq:slidenext}
\labellist
\tiny\hair 2pt
\pinlabel $j$ at -15  35
\pinlabel $i$ at  46 -10
\endlabellist
\figins{-17}{0.55}{sedot-edge-d}\
-\
\labellist
\tiny\hair 2pt
\pinlabel $j$ at  63  35
\pinlabel $i$ at   5 -10
\endlabellist
\figins{-17}{0.55}{edge-sedot-d}\quad
=\
\frac{1}{2}
\Biggl(\
\labellist
\tiny\hair 2pt
\pinlabel $i$   at  58  35
\pinlabel $i$   at   5 -10
\endlabellist
\figins{-17}{0.55}{edge-startenddot}\
-\
\labellist
\tiny\hair 2pt
\pinlabel $i$   at  -5   35
\pinlabel $i$   at  48 -10
\endlabellist
\figins{-17}{0.55}{startenddot-edge}\
\Biggr)
\end{equation}

\medskip
\n\emph{Relations involving three colors:}
(adjacency is determined by the vertices which appear)
\begin{equation}\label{eq:slide6v}
\figins{-18}{0.6}{6vert-slidel}\
=\
\figins{-18}{0.6}{6vert-slider}
\end{equation}

\begin{equation}\label{eq:slide4v}
\figins{-18}{0.6}{4vert-slidel}\
=\
\figins{-18}{0.6}{4vert-slider}
\end{equation}

\begin{equation}\label{eq:dumbdumbsquare}
\figins{-30}{0.85}{dumbdumbsquare}\
=\
\figins{-30}{0.85}{dumbdumbsquareh}.
\end{equation}

Introduce a grading on $\mathcal{SC}_1(n)$ by declaring dots to have degree 
$1$, trivalent vertices degree $-1$ 
and $4$- and $6$-valent vertices degree $0$.

\begin{defn}
The category $\mathcal{SC}_2(n)$ is the category containing all direct sums and 
grading 
shifts of objects in 
$\mathcal{SC}_1(n)$ and whose morphisms are the grading preserving morphisms from 
$\mathcal{SC}_1(n)$.
\end{defn}

\begin{defn}
The category $\mathcal{SC}(n)$ is the Karoubi envelope of the category 
$\mathcal{SC}_2(n)$.
\end{defn}


\subsection{The extension $\mathcal{SC}'(n)$ of $\mathcal{SC}(n)$}
\label{ssec:usoergel}

In~\cite{E-Kh} Elias and Khovanov give a slightly different diagrammatic 
Soergel category, denoted $\mathcal{SC}'(n)$, which is a faithful extension  
of $\mathcal{SC}(n)$. The objects of $\mathcal{SC}'_1(n)$ are the same
as those of $\mathcal{SC}_1(n)$. The vector spaces of morphisms are an 
extension of the ones of $\mathcal{SC}_1(n)$ in the following sense. 
Regions can be decorated with boxes colored by $i$ 
for $1\leq i\leq n$, which we depict as
\begin{equation*}
{\bbox{i}}
\end{equation*}
For $f$ a polynomial in the set of boxes colored from $1$ to $n$ we use the 
shorthand notation
\begin{equation*}
{\bbox{f}}
\end{equation*}
The set of boxes is therefore in bijection with the polynomial ring in $n$ 
variables. 
Let $s_i$ be the transposition that switches $i$ and $i+1$.
Define the formal symbol
\begin{equation*}
\bbox{\partial_{i}f}
=\bbox{\partial_{x_ix_{i+1}}f}
\end{equation*}
where $\partial_{x_ix_{i+1}}$ was defined in Equation~\eqref{novo}.
This way any box $\bbox{f}$ can be written as
\begin{equation*}
\bbox{f}=
\bbox{P_i(f)} + \bbox{i}\,\bbox{\partial_if}
\end{equation*}
where $P_i(f)$ is a polynomial which is symmetric in $\bbox{i}$ and 
$\bbox{i+1}$ (we will take this formula as a definition of $P_i(f)$).

\medskip

The boxes are related to the previous calculus by the \emph{box relations}
\begin{align}
\label{eq:box1}
\figins{-6}{0.25}{startenddot}_{\,i}\
&= \
\bbox{i} -\bbox{i+1}
\\[1ex] \displaybreak[0]
\biggl(
\bbox{i}\ 
 +\
\bbox{i+1} 
\biggr)\
\underset{i}{\figins{-20}{0.65}{vedge}}
&=
\underset{i}{\figins{-20}{0.65}{vedge}}
\biggl(
\bbox{i}\
+\
\bbox{i+1}
\biggr)
\\[1ex] \displaybreak[0]
\bbox{i}\ \bbox{i+1}\
\underset{i}{\figins{-20}{0.65}{vedge}}
&=
\underset{i}{\figins{-20}{0.65}{vedge}}\ \bbox{i}\ \bbox{i+1}
\\[1ex] \displaybreak[0]
\bbox{j}\
\underset{i}{\figins{-20}{0.65}{vedge}} 
&=
\underset{i}{\figins{-20}{0.65}{vedge}}\ \bbox{j}
\rlap{\hspace*{10ex} for $j\neq i, i+1$.}
\end{align}

It is clear that $\mathcal{SC}(n)$ is a faithful monoidal subcategory of 
$\mathcal{SC}'(n)$. As explained in Section 4.5 of~\cite{E-Kh},  
the category $\mathcal{SC}(n)$
is also isomorphic to the quotient of $\mathcal{SC}'(n)$ by the central 
morphism 
\begin{equation*}
\raisebox{1pt}{\bbox{e_1}}
\overset{def}{=}
\sum\limits_{i=1}^n{\bbox{i}}.
\end{equation*}
This result depends subtly on the base field, which in our case is 
$\bQ$.

The category $\mathcal{SC}'_1(n)$ has a grading induced by the one of 
$\mathcal{SC}_1(n)$, if we declare that a box colored $i$ has degree 2 
for all $1\geq i\geq n$.


\begin{defn}
The category $\mathcal{SC}'_2(n)$ is the category containing all direct sums and grading 
shifts of objects in $\mathcal{SC}'_1(n)$ and whose morphisms are the grading preserving 
morphisms from $\mathcal{SC}'_1(n)$. 
The category $\mathcal{SC}'(n)$ is the Karoubi envelope of the category 
$\mathcal{SC}'_2(n)$.
\end{defn}

Elias and Khovanov's main result in \cite{E-Kh} is that 
$\mathcal{SC}(n)$ and $\mathcal{SC}'(n)$ are equivalent to the corresponding 
Soergel categories. A corollary to that is the following theorem, 
where $K_0$ is the split Grothendieck group and  
$K_0^{\bQ(q)}(-)=K_0(-)\otimes_{\bZ[q,q^{-1}]}\bQ(q)$. 
\begin{thm}[Elias-Khovanov, Soergel]
\label{thm:e-k-s}
We have 
$$K_0^{\bQ(q)}(\mathcal{SC}(n))\cong K_0^{\bQ(q)}(\mathcal{SC}'(n))\cong H_q(n).$$
\end{thm}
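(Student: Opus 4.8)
The statement is announced as a corollary of the two inputs recalled immediately above, and the plan is simply to assemble them. First I would quote Elias and Khovanov's main theorem from~\cite{E-Kh}: the graded $\bQ$-linear monoidal categories $\mathcal{SC}(n)$ and $\mathcal{SC}'(n)$ are equivalent, as graded monoidal categories, to Soergel's two categories of bimodules of type $A_{n-1}$ (the "small" version over the coinvariant algebra matching $\mathcal{SC}(n)$, and the "big" version over the full polynomial ring matching $\mathcal{SC}'(n)$), the equivalence sending the generating object of color $i$ to the Bott--Samelson bimodule $B_{s_i}$ and the grading shift $\{1\}$ on both sides to the grading shift on bimodules. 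Since an equivalence of additive categories induces an isomorphism of split Grothendieck groups, and a grading-preserving monoidal equivalence promotes this to an isomorphism of $\bZ[q,q^{-1}]$-algebras, it suffices to identify the split Grothendieck groups of Soergel's categories.

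Second I would invoke Soergel's categorification theorem~\cite{S}: for the symmetric group $S_n$ over a field of characteristic zero, here $\bQ$, the indecomposable Soergel bimodules $B_w$ ($w\in S_n$) are classified, their classes form a $\bZ[q,q^{-1}]$-basis of the split Grothendieck group, and $[B_w]$ corresponds to the Kazhdan--Lusztig basis element $b_w$ under an isomorphism identifying $K_0$ with the $\bZ[q,q^{-1}]$-integral form of the Hecke algebra; applying $-\otimes_{\bZ[q,q^{-1}]}\bQ(q)$ turns this into an isomorphism of $\bQ(q)$-algebras onto $H_q(n)$. The same conclusion holds for both of Soergel's variants, with the same set of indecomposables, which is exactly why the two diagrammatic categories end up with the same Grothendieck group; on the diagrammatic side this also matches the fact, noted in Section~\ref{ssec:usoergel}, that $\mathcal{SC}(n)$ is the quotient of $\mathcal{SC}'(n)$ by the central morphism $e_1$. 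Combining this with the equivalences of the previous paragraph and with the definition $K_0^{\bQ(q)}(-)=K_0(-)\otimes_{\bZ[q,q^{-1}]}\bQ(q)$ gives
\[
K_0^{\bQ(q)}(\mathcal{SC}(n))\;\cong\;H_q(n)\;\cong\;K_0^{\bQ(q)}(\mathcal{SC}'(n)),
\]
as claimed.

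At the level of this corollary there is no genuine obstacle: everything is packaged inside the two quoted results, which we are entitled to assume. The one delicate point, which we do not attempt to reprove, is Soergel's classification of the indecomposables and the resulting freeness of $K_0$ of the correct rank; this is where the hypothesis on the base field enters, precisely as remarked below the displayed formula for $e_1$ in Section~\ref{ssec:usoergel}. If one instead wanted an argument internal to the graph calculus of Section~\ref{ssec:soergel}, the hard part would be to show directly that the classes of the Bott--Samelson objects $BS(\underline{w})$ span a free $\bZ[q,q^{-1}]$-module of rank $|S_n|$ after passing to the Karoubi envelope --- equivalently, non-degeneracy of the diagrammatic trace form together with the classification of indecomposable objects --- which is the technical heart of~\cite{E-Kh} and of Soergel's original work, and so is exactly what we prefer to cite rather than redo here.
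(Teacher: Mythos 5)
Your proposal is correct and takes essentially the same route as the paper, which states this result as an immediate corollary of Elias--Khovanov's equivalence with Soergel's bimodule categories and Soergel's own categorification theorem, without reproving either. The paper's only additional content is the one-sentence remark (which you also flag) that the conclusion depends on working over $\bQ$.
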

\n As explained in~\cite{E-Kh}, this result also depends on the fact that 
we are working over $\bQ$.
Recall that $\mathcal{SC}(n)$ and $\mathcal{SC}'(n)$ are monoidal categories, 
with the monoidal structure defined by concatenation. Therefore 
their Grothendieck groups are algebras indeed.

Let $\bim(n)^*=\mbox{End}_{\bim^*}(\Q[x_1,\ldots,x_n])$.
Elias and Khovanov defined functors from 
$\mathcal{SC}(n)$ and $\mathcal{SC}'(n)$ to $\bim(n)^*$ (see~\cite{E-Kh, E-Kr}) 
which we denote by $\fek$ and $\fek'$ respectively.

\subsection{A functor from $\mathcal{SC}(n)$ to $\Scat(n,n)^*((1^n),(1^n))$}
\label{ssec:scqs}
Let $n\geq 1$ be arbitrary but fixed. In this subsection we define an 
additive $\Q$-linear monoidal functor 
$$\Sigma_{n,n}\colon \mathcal{SC}_1(n)\to \Scat(n,n)^*((1^n),(1^n)),$$
where the target is the monoidal category whose objects are 
the $1$-endomorphisms of $(1^n)$ in $\Scat(n,n)^*$ and whose morphisms 
are the $2$-morphisms between such $1$-morphisms in $\Scat(n,n)^*$. This 
monoidal functor categorifies the homomorphism $\sigma_{n,n}$ from 
Section~\ref{sec:hecke-schur}. 

\n\emph{On objects:} $\Sigma_{n,n}$ sends the empty sequence in 
$\mathcal{SC}_1(n)$ 
to $1_n=1_{(1^n)}$ in $\Scat(n,n)^*$ and 
the one-term sequence $(i)$ to $\mathcal{E}_{-i}\mathcal{E}_{+i}1_n$, 
with $\Sigma_{n,n}(jk)$ given by the 
horizontal composite  
$\mathcal{E}_{-j}\mathcal{E}_{+j}\mathcal{E}_{-k}\mathcal{E}_{+k}1_n$.

\n\emph{On morphisms:}

\begin{itemize}
\item The empty diagram is sent to the 
empty diagram in the region labeled $(1^n)$.

\item The vertical line coloured $i$ is sent to the identity $2$-morphism on 
$\mathcal{E}_{-i}\mathcal{E}_{+i}1_n$.
\begin{equation*}
\labellist
\tiny\hair 2pt
\pinlabel $i$   at -10  60
\endlabellist
\figins{-16}{0.5}{line}\ \
\longmapsto\ \ \
\text{$
 \xy 
 (0,0)*{\dblue\xybox{
 (-5,7);(-5,-7); **\dir{-} ?(.5)*\dir{>}+(2.3,0)*{\scriptstyle{}};}};
 (-6.3,-9)*{\scs i};
 (0,0)*{\dblue\xybox{
 (10,7);(10,-7); **\dir{-} ?(.5)*\dir{<}+(12.3,0)*{\scriptstyle{}};}};
 ( -1.2,-9)*{\scs i};
 (6,0)*{ (1^n)};
  \endxy
$}
\vspace*{2ex}
\end{equation*}

\item The \emph{StartDot} and \emph{EndDot} morphisms are sent to the cup and 
the cap respectively:
\begin{equation*}
\labellist
\tiny\hair 2pt
\pinlabel $i$   at -10  60
\endlabellist
\figins{-16}{0.5}{startdot}
\longmapsto\ 
    {\dblue \xy
    (0,2)*{\bbpef{\black i}};
    (10,2)*{\black (1^n) };
    \endxy}
\mspace{140mu}
\labellist
\tiny\hair 2pt
\pinlabel $i$   at -10  60
\endlabellist
\figins{-16}{0.5}{enddot}
\longmapsto\
    {\dblue \xy
    (0,-2.5)*{\bbcef{\black i}};
    (8,0.5)*{ \black (1^n) };
    \endxy}
\vspace*{2ex}
\end{equation*}

\item \emph{Merge} and \emph{Split} are sent to diagrams involving cups and 
caps:
\begin{equation*}
\labellist
\hair 2pt
\pinlabel $\scs i$   at  45  95
\endlabellist
\figins{-16}{0.6}{merge}
\longmapsto\
\xy 0;/r.16pc/; 
    (0,-1.5)*{\dblue\bbcfe{\black i}};
    (14,4)*{(1^n) };    
    (-12,3)*{};(12,3)*{};
    ( 7.5,2)*{\dblue\xybox{
    (-3,-5)*{}; (-10,8.5) **\crv{(-3,1) & (-10,3)}?(1)*\dir{>};}};
    ( -7.5,2)*{\dblue\xybox{
    ( 3,-5)*{}; ( 10,8.5) **\crv{( 3,1) & ( 10,3)}?(0)*\dir{<};}};
    (-11,-7)*{\scs i};
    ( 11,-7)*{\scs i};
    \endxy
\mspace{80mu}
\labellist
\hair 2pt
\pinlabel $\scs i$   at  45  45
\endlabellist
\figins{-16}{0.6}{split}
\longmapsto \
\xy 0;/r.16pc/; 
    (0,5)*{\dblue\bbpfe{\black i}};
    (14,0)*{(1^n) };    
    (-12,3)*{};(12,3)*{};
    (-7.5,2)*{\dblue\xybox{
    (-3,-5)*{}; (-10,8.5) **\crv{(-3,1) & (-10,3)}?(0)*\dir{<};}};
    ( 7.5,2)*{\dblue\xybox{
    ( 3,-5)*{}; ( 10,8.5) **\crv{( 3,1) & ( 10,3)}?(1)*\dir{>};}};
    (-4,-7)*{\scs i};
    ( 4,-7)*{\scs i};
    \endxy
\vspace*{2ex}
\end{equation*}

\item The \emph{4-valent vertex} with distant colors. For $i$ and $j$ distant we have:
\begin{equation*}
\labellist
\hair 2pt
\pinlabel $\scs j$   at  -5 -12
\pinlabel $\scs i$   at 128 -10
\endlabellist
\figins{-16}{0.6}{4vert}
\longmapsto\ \ \ 
\text{$
\xy 0;/r.16pc/; 
    (14,0)*{(1^n) };    
    ( 0,0)*{\dblue\xybox{
    ( 0,-11)*{}; (-15,8.5) **\crv{( 0,-11) & (-15,8.5)}?(0)*\dir{<};
    ( 5,-11)*{}; (-10,8.5) **\crv{( 5,-11) & (-10,8.5)}?(1)*\dir{>};}};
    ( 0,0)*{\dred\xybox{  
    ( 0,-11)*{}; ( 15,8.5) **\crv{( 0,-11) & ( 15,8.5)}?(0)*\dir{<};
    ( 5,-11)*{}; ( 20,8.5) **\crv{( 5,-11) & ( 20,8.5)}?(1)*\dir{>};}};
    (-10,-12)*{\scs j}; (-5,-12)*{\scs j};
    ( 10,-12)*{\scs i}; ( 5,-12)*{\scs i};
    \endxy
$}
\vspace*{2ex}
\end{equation*}

\item For the \emph{6-valent vertices} we have:
\begin{equation}\label{eq:sixval}
\labellist
\tiny\hair 2pt
\pinlabel $i+1$ at -5 -10
\pinlabel $i$   at 65 -10
\endlabellist
\figins{-18}{0.6}{6vertu}\
\longmapsto\ \ \
\text{$
\xy 0;/r.16pc/; 
    (16,0)*{(1^n) };    
    ( 0,0)*{\dblue\xybox{
    (-7.5,10)*{}; (    5,-10) **\crv{(-4.5, 7) & ( 7.5,0) & ( 5,-9)}?(0)*\dir{<};
    (12.5,10)*{}; (    0,-10) **\crv{( 9.5, 7) & (-2.5,0) & ( 0,-9)}?(1)*\dir{>};
    (17.5,10)*{}; (-12.5, 10) **\crv{(   8, 0) & ( 2.5,-6) & (-3,0)}?(0)*\dir{<};
}};
    ( 0,0)*{\dred\xybox{  
    (-10,-20)*{};( 10,-20) **\crv{(-9,-19) & (0,-12) & (8,-19)}?(.2)*\dir{>} ?(.8)*\dir{>};
    ( 2.5, 0)*{};( 15,-20) **\crv{( 2.5,0) & ( 2,-10) & ( 15,-20)}?(0)*\dir{<};
    (-2.5, 0)*{};(-15,-20) **\crv{(-2.5,0) & (-2,-10) & (-15,-20)}?(1)*\dir{>};
}};  
    ( -17,-12)*{\scs i+1}; (-10,-12)*{\scs i+1};
    (-2.5,-12)*{\scs i }; (2.5,-12)*{\scs i };
    ( 16,-12)*{\scs i+1}; 
    ( 16, 12)*{\scs i};
    \endxy
$}
\vspace*{2ex}
\end{equation}
and
\begin{equation*}
\labellist
\tiny\hair 2pt
\pinlabel $i$ at -5 -10
\pinlabel $i+1$   at 65 -10
\endlabellist
\figins{-18}{0.6}{6vertd}\
\longmapsto\ \ \
\text{$
\xy 0;/r.16pc/; 
    (16,0)*{(1^n) };    
    ( 0,0)*{\dblue\xybox{
    (-7.5,-10)*{}; (  5, 10) **\crv{(-4.5,-7) & ( 7.5,0) & ( 5,9)}?(1)*\dir{>};
    (12.5,-10)*{}; (  0, 10) **\crv{( 9.5,-7) & (-2.5,0) & ( 0,9)}?(0)*\dir{<};
    (17.5,-10)*{}; (-12.5,-10) **\crv{(  8,  0) & (2.5,6) & (-3, 0)}?(1)*\dir{>};
}};
    ( 0,0)*{\dred\xybox{  
    (-10,0)*{};(10,0) **\crv{(-9,-1) & (0,-8) & (8,-1)}?(.2)*\dir{<} ?(.8)*\dir{<};
    ( 2.5,-20)*{}; (  15, 0) **\crv{( 2.5,-20) & ( 2,-10) & ( 15,0)}?(1)*\dir{>};
    (-2.5,-20)*{}; ( -15, 0) **\crv{(-2.5,-20) & (-2,-10) & (-15,0)}?(0)*\dir{<};
}};  
    ( -16,-12)*{\scs   i}; (-11,-12)*{\scs   i};
    (-3.5,-12)*{\scs i+1}; (3.5,-12)*{\scs i+1};
    (  11,-12)*{\scs i  }; ( 16,-12)*{\scs   i}; 
    (  10, 12)*{\scs i+1};
    \endxy
$}
\vspace*{2ex}
\end{equation*}
\end{itemize}

It is clear that $\Sigma_{n,n}$ respects the gradings of the morphisms.
Moreover, let us remark that, in the decategorified picture, the image of 
$H_q(n)$ lies in the projection of the zero weight space of $\U$ onto 
$\SD(n,n)$, so we have $E_iE_{-i}=E_{-i}E_i$. 
Using the $2$-isomorphism $\mathcal{E}_i\mathcal{E}_{-i}\cong \mathcal{E}_{-i}
\mathcal{E}_i$ given by the crossing, we obtain a $2$-functor naturally 
isomorphic to $\Sigma_{n,n}$. However, this $2$-functor cannot be obtained by 
simply inverting the orientation of the diagrams defining $\Sigma_{n,n}$, 
as can be easily checked. As a matter of fact, inverting the orientations 
does not even give a $2$-functor, e.g. 
relation~\eqref{eq:dot6v} is not preserved.

\begin{lem}
$\Sigma_{n,n}$ is a monoidal functor.
\end{lem}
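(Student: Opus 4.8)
The plan is to verify directly that $\Sigma_{n,n}$ respects all the defining relations of $\mathcal{SC}_1(n)$ listed in Subsection~\ref{ssec:soergel}, since respecting gradings, compositions, and the monoidal structure is built into the definition of the images of objects and generators. Because $\Scat(n,n)^*$ is cyclic and biadjoint, the ``isotopy'' relations~\eqref{eq:adj}--\eqref{eq:v6rot} are essentially automatic: they follow from biadjointness~\eqref{eq_biadjoint1}--\eqref{eq_biadjoint2}, cyclicity of the $2$-morphisms, and the definitions of the sideways crossings in~\eqref{eq_crossl-gen}--\eqref{eq_crossr-gen}. So the first step is to dispatch those, essentially by inspection of the pictures. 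The second step is the one-color relations~\eqref{eq:dumbrot}--\eqref{eq:deltam}: here I would translate each into a statement about cups, caps and dots on a single pair of $\mathcal{E}_{-i}\mathcal{E}_{+i}$ strands in the region $(1^n)$, and check it using the $\mf{sl}_2$-relations~\eqref{eq:redtobubbles}--\eqref{eq_ident_decomp}, the bubble evaluations~\eqref{eq:bubb_deg0}, and the fact that in weight $(1^n)$ we have $\overline{(1^n)}_i=0$, so many bubbles are forced to degree zero or are real bubbles with inner region outside $\Lambda(n,n)$ and hence vanish. The relation~\eqref{eq:lollipop} in particular should reduce to a negative-degree (hence zero) bubble, and~\eqref{eq:deltam} to the identity decomposition~\eqref{eq_ident_decomp0}--\eqref{eq_ident_decomp} specialized to this weight.

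The third step handles the two-distant-color relations~\eqref{eq:reid2dist}--\eqref{eq:slide3v}, which should follow quickly from the $R(\nu)$-relations for $i\cdot j=0$ (the first case of~\eqref{eq_r2_ij-gen}), the mixed $EF$-relations~\eqref{eq_downup_ij-gen}, and the dot-slide~\eqref{eq_dot_slide_ij-gen}; the distant strands simply commute past one another. The fourth and most involved step is the two-adjacent-color relations~\eqref{eq:dot6v}--\eqref{eq:slidenext}, which require the hard $R3$-relation~\eqref{eq_r3_hard-gen} (with its sign $(i-j)$), the bubble-slides~\eqref{eq:bub_slides}--\eqref{eq:extrabubble4}, and the extra $R3$-like relations~\eqref{eq_other_r3_1}--\eqref{eq_r3_extra}; one must check, for instance, that the two terms produced by moving a $6$-valent vertex correspond under $\Sigma_{n,n}$ to the ``merge of dots'' plus ``cap-cup with a dot'' on the right side of~\eqref{eq:dot6v}. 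Finally the three-color relations~\eqref{eq:slide6v}--\eqref{eq:dumbdumbsquare} should reduce to far-commutativity combined with the two-adjacent-color computations already done. Throughout, one must keep careful track of the sign convention~\eqref{eq:signs} distinguishing our left cups/caps from Khovanov--Lauda's, which is exactly why the functor is stated for our sign convention; the remark after the lemma (about inverting orientations failing to preserve~\eqref{eq:dot6v}) is a sanity check that the orientations chosen in the definition are the right ones.

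The main obstacle I expect is the adjacent-color $6$-valent relations, especially~\eqref{eq:reid3} and~\eqref{eq:slidenext}: these are genuinely the content of the braid relation in the Hecke algebra, so their images under $\Sigma_{n,n}$ unwind into lengthy diagrammatic identities in $\Scat(n,n)^*$ that only close up after applying~\eqref{eq_r3_hard-gen} together with several bubble-slides, and the bookkeeping of dots and bubble labels (many of which are fake bubbles that must be rewritten via~\eqref{eq_infinite_Grass}) is where errors are easy to make. A cleaner route, which I would use to cross-check, is to compose $\Sigma_{n,n}$ with the $2$-representation $\fbim\colon\Scat(n,n)^*\to\bim^*$ of Section~\ref{sec:2rep} and compare with Elias and Khovanov's functor $\fek\colon\mathcal{SC}(n)\to\bim(n)^*$: since $\fbim$ restricted to $\End(1_{(1^n)})$ lands in $\bim(n)^*$, one expects $\fbim\circ\Sigma_{n,n}$ to agree with $\fek$ on generators, and then faithfulness of $\fek$ (Elias--Khovanov) would let one deduce that $\Sigma_{n,n}$ preserves exactly the relations it needs to. This reduces the problem to a finite check on generators at the level of bimodules, which is more mechanical than the diagrammatic verification, though it proves a priori only a slightly weaker statement unless one also knows $\fbim$ is faithful on the relevant hom-space; so I would present the direct diagrammatic verification as the proof and mention the bimodule computation as corroboration.
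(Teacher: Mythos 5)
Your plan matches the paper's proof: a direct diagrammatic verification that $\Sigma_{n,n}$ respects each defining relation of $\mathcal{SC}_1(n)$, with the isotopy and distant-color relations dispatched quickly, the lollipop killed by a negative-degree bubble, \eqref{eq:deltam} reduced to the $\mathfrak{sl}_2$ identity-decomposition, and the adjacent/three-color relations handled via \eqref{eq_r3_hard-gen}, the bubble slides, and the extra $R3$-like relations. One small caution on your cross-check route: it is faithfulness of $\fbim$ (not of $\fek$) that would be needed to upgrade a comparison of $\fbim\circ\Sigma_{n,n}$ with $\fek$ on generators into a proof that $\Sigma_{n,n}$ preserves relations — you note this yourself at the end, and rightly fall back on the direct verification since $\fbim$'s faithfulness is not established.
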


\begin{proof}
The assignment given by $\Sigma_{n,n}$ clearly respects the monoidal structures of 
the categories $\mathcal{SC}_1(n)$ and 
$\mbox{End}_{\Scat(n,n)^*}(1^n)$. 
So we only need to show that $\Sigma_{n,n}$ is a functor, i.e. it respects the 
relations~\eqref{eq:adj} to~\eqref{eq:dumbdumbsquare}.

\medskip
\n\emph{''Isotopy relations'':} 
Relations~\eqref{eq:adj} to~\eqref{eq:v6rot} are straightforward to check and correspond 
to isotopies of their images under $\Sigma_{n,n}$. 

\medskip
\n\emph{One color relations:} 
To check the one color relations we only need to use the $\mathfrak{sl}_2$ relations. 
Relation~\eqref{eq:dumbrot} corresponds to an easy isotopy of diagrams in 
$\Scat(n,n)$.
For relation~\eqref{eq:lollipop} we have
\begin{equation*}
\labellist
\hair 2pt
\pinlabel $\scs i$ at 26 45            
\endlabellist
\Sigma_{n,n}\Biggl(\
\figins{-16}{0.5}{lollipop-u}\
\Biggr)
=\ \
\text{$
\xy
(0,3)*{\dblue\xybox{%
    (-6,0)*{};
    (6,0)*{};
    (4,0);(-4,0) **\crv{(4,6) & (-4,6)};?(.7)*\dir{}+(-2,0)*{\bscs i};
    ?(0)*\dir{<} ?(.95)*\dir{<};
    (4,0);(-4,0) **\crv{(4,-6) & (-4,-6)};}};
(0,3)*{\dblue\xybox{
    (8,0);(-8,0) **\crv{(8,12) & (-8,12)};
    (8,0);(4,-12) **\crv{(8,-4) & (4,-6)};?(0.8)*\dir{<}+(2,-1)*{\bscs i};
    (-8,0);(-4,-12) **\crv{(-8,-4) & (-4,-6)};
}};
(12,8)*{\scs (1^n)};(0,-5)*{\scs (1_{+i}^n)};
\endxy
=\
0
$}
\end{equation*}
because the bubble in the diagram on the r.h.s. has negative degree. 
We have used the notation $1_{+i}^n=(1,\ldots,2,0,1,\ldots,1)$, with the $2$ 
on the $i$th coordinate. 

Relation~\eqref{eq:deltam} requires some more work. First notice that
from relations~\eqref{eq:bubb_deg0} and~\eqref{eq:EF} it follows that
\begin{equation}
\text{$
0\ =\
\xy
(1,0)*{\dblue\xybox{%
    (4,4);(-4,4) **\crv{(4,2) & (8,-4) & (0,-9) & (-8,-4) & (-4,2)};
?(.7)*\dir{}+(-1,8)*{\bscs i};?(0)*\dir{<} ?(.95)*\dir{<};
    (4,-12);(-4,-12) **\crv{(4,-10) & (8,-4) & (0,1) & (-8,-4) & (-4,-10)};
?(.7)*\dir{}+(-1,-8)*{\bscs i};?(1)*\dir{>} ?(.05)*\dir{>};
}};
(12,4)*{\scs (1^n)};(3,0)*{}
\endxy\ 
=\
\xy
(1,0)*{\dblue\xybox{%
    (4,4);(-4,4) **\crv{(4,-3) & (-4,-3)};?(.7)*\dir{}+(-4,3.5)*{\bscs i};
    ?(0)*\dir{<} ?(.95)*\dir{<};
    (4,-12);(-4,-12) **\crv{(4,-5) & (-4,-5)};?(.7)*\dir{}+(-4,-3.5)*{\bscs i};
    ?(1)*\dir{>} ?(.05)*\dir{>};
}};
(6,0)*{\scs (1^n)};
\endxy\ 
-\ 
\xy
(4,0)*{\dblue\xybox{%
    ( 3,9);( 3,-9) **\crv{( 4,1) & ( 4,-1)};?(0)*\dir{<}; (-3.75,0)*{\bullet};
    (-3,9);(-3,-9) **\crv{(-4,1) & (-4,-1)};?(1)*\dir{>};
}};
(12,4)*{\scs (1^n)};(-1.5,-7)*{\scs i}; (9.2,-7)*{\scs i}
\endxy
-\
\xy
(4,0)*{\dblue\xybox{%
    ( 3,9);( 3,-9) **\crv{( 4,1) & ( 4,-1)};?(0)*\dir{<}; (3.75,0)*{\bullet};
    (-3,9);(-3,-9) **\crv{(-4,1) & (-4,-1)};?(1)*\dir{>};
}};
(12,4)*{\scs (1^n)};(-1.5,-7)*{\scs i}; (8.2,-7)*{\scs i}
\endxy
+\ \ \ \ \
\xy
(0,0)*{\dblue\xybox{%
    (3,0);(-3,0) **\crv{(3,4.2) & (-3,4.2)};?(.7)*\dir{}+(-2,0)*{\bscs i};
    ?(.05)*\dir{>} ?(1)*\dir{>}; (0,-3.1)*{\bullet}+(-0.4,-2)*{\bscs -2};
    (3,0);(-3,0) **\crv{(3,-4.2) & (-3,-4.2)};
    (3,9);(3,-9) **\crv{(4,6) & (6,4) & (6,-3) & (4,-6)};?(0)*\dir{<};
    (-3,9);(-3,-9) **\crv{(-4,6) & (-6,4) & (-6,-3) & (-4,-6)};?(1)*\dir{>};
}};
(10,4)*{\scs (1^n)};(-5.5,-7)*{\scs i}; (5.5,-7)*{\scs i}
\endxy\ $}.
\end{equation}
The first diagram is zero, because the middle region has label 
$(1,\ldots,3,-1,\ldots,1)\not\in\Lambda(n,n)$, with $3$ on the $i$th coordinate. 
Therefore
\begin{equation*}
\labellist
\hair 2pt
\pinlabel $\scs i$ at -2 22    
\pinlabel $\scs i$ at -2 130  
\endlabellist
\Sigma_{n,n}\Biggl(\
\figins{-17}{0.55}{matches-ud}\
\Biggr)
=\ 
\xy
(1,0)*{\dblue\xybox{%
    (4,4);(-4,4) **\crv{(4,-3) & (-4,-3)};?(.7)*\dir{}+(-4,3.5)*{\bscs i};
    ?(0)*\dir{<} ?(.95)*\dir{<};
    (4,-12);(-4,-12) **\crv{(4,-5) & (-4,-5)};?(.7)*\dir{}+(-4,-3.5)*{\bscs i};
    ?(1)*\dir{>} ?(.05)*\dir{>};
}};
(6,0)*{\scs (1^n)};
\endxy\ 
=\
\xy
(4,0)*{\dblue\xybox{%
    ( 3,9);( 3,-9) **\crv{( 4,1) & ( 4,-1)};?(0)*\dir{<}; (-3.75,0)*{\bullet};
    (-3,9);(-3,-9) **\crv{(-4,1) & (-4,-1)};?(1)*\dir{>};
}};
(12,4)*{\scs (1^n)};(-1.5,-7)*{\scs i}; (9.2,-7)*{\scs i}
\endxy
+\
\xy
(4,0)*{\dblue\xybox{%
    ( 3,9);( 3,-9) **\crv{( 4,1) & ( 4,-1)};?(0)*\dir{<}; (3.75,0)*{\bullet};
    (-3,9);(-3,-9) **\crv{(-4,1) & (-4,-1)};?(1)*\dir{>};
}};
(12,4)*{\scs (1^n)};(-1.5,-7)*{\scs i}; (8.2,-7)*{\scs i}
\endxy
-\ \ \ \ \
\xy
(0,0)*{\dblue\xybox{%
    (3,0);(-3,0) **\crv{(3,4.2) & (-3,4.2)};?(.7)*\dir{}+(-2,0)*{\bscs i};
    ?(.05)*\dir{>} ?(1)*\dir{>}; (0,-3.1)*{\bullet}+(-0.4,-2)*{\bscs -2};
    (3,0);(-3,0) **\crv{(3,-4.2) & (-3,-4.2)};
    (3,9);(3,-9) **\crv{(4,6) & (6,4) & (6,-3) & (4,-6)};?(0)*\dir{<};
    (-3,9);(-3,-9) **\crv{(-4,6) & (-6,4) & (-6,-3) & (-4,-6)};?(1)*\dir{>};
}};
(10,4)*{\scs (1^n)};(-5.5,-7)*{\scs i}; (5.5,-7)*{\scs i}
\endxy
\end{equation*}
Using~\eqref{eq:bub_slides} and the bubble evaluation~\eqref{eq:bubb_deg0} 
we obtain
\begin{align}
\label{eq:edge-dots}
\labellist
\hair 2pt
\pinlabel $\scs i$ at -10  60
\endlabellist
\Sigma_{n,n}\Biggl(\
\figins{-17}{0.55}{edge-startenddot}\
\Biggr)
&=\ 2\
\xy
(4,0)*{\dblue\xybox{%
    ( 3,9);( 3,-9) **\crv{( 4,1) & ( 4,-1)};?(0)*\dir{<}; (3.75,0)*{\bullet};
    (-3,9);(-3,-9) **\crv{(-4,1) & (-4,-1)};?(1)*\dir{>};
}};
(12,4)*{\scs (1^n)};(-1.5,-7)*{\scs i}; (8.2,-7)*{\scs i}
\endxy
-\ \ \ \ \
\xy
(0,0)*{\dblue\xybox{%
    (3,0);(-3,0) **\crv{(3,4.2) & (-3,4.2)};?(.7)*\dir{}+(-2,0)*{\bscs i};
    ?(.05)*\dir{>} ?(1)*\dir{>}; (0,-3.1)*{\bullet}+(-0.4,-2)*{\bscs -2};
    (3,0);(-3,0) **\crv{(3,-4.2) & (-3,-4.2)};
    (3,9);(3,-9) **\crv{(4,6) & (6,4) & (6,-3) & (4,-6)};?(0)*\dir{<};
    (-3,9);(-3,-9) **\crv{(-4,6) & (-6,4) & (-6,-3) & (-4,-6)};?(1)*\dir{>};
}};
(10,4)*{\scs (1^n)};(-5.5,-7)*{\scs i}; (5.5,-7)*{\scs i}
\endxy
\displaybreak[0]
\intertext{and}
\label{eq:dots-edge}
\labellist
\hair 2pt
\pinlabel $\scs i$ at  65  60
\endlabellist
\Sigma_{n,n}\Biggl(\
\figins{-17}{0.55}{startenddot-edge}\
\Biggr)
&=\ 2\
\xy
(4,0)*{\dblue\xybox{%
    ( 3,9);( 3,-9) **\crv{( 4,1) & ( 4,-1)};?(0)*\dir{<}; (-3.75,0)*{\bullet};
    (-3,9);(-3,-9) **\crv{(-4,1) & (-4,-1)};?(1)*\dir{>};
}};
(12,4)*{\scs (1^n)};(-1.5,-7)*{\scs i}; (9.2,-7)*{\scs i}
\endxy
-\ \ \ \ \
\xy
(0,0)*{\dblue\xybox{%
    (3,0);(-3,0) **\crv{(3,4.2) & (-3,4.2)};?(.7)*\dir{}+(-2,0)*{\bscs i};
    ?(.05)*\dir{>} ?(1)*\dir{>}; (0,-3.1)*{\bullet}+(-0.4,-2)*{\bscs -2};
    (3,0);(-3,0) **\crv{(3,-4.2) & (-3,-4.2)};
    (3,9);(3,-9) **\crv{(4,6) & (6,4) & (6,-3) & (4,-6)};?(0)*\dir{<};
    (-3,9);(-3,-9) **\crv{(-4,6) & (-6,4) & (-6,-3) & (-4,-6)};?(1)*\dir{>};
}};
(10,4)*{\scs (1^n)};(-5.5,-7)*{\scs i}; (5.5,-7)*{\scs i}
\endxy.
\end{align}
This establishes that
\begin{equation*}
\Sigma_{n,n}
\Biggl(
\figins{-17}{0.55}{startenddot-edge}\
\Biggr)
+\
\Sigma_{n,n}
\Biggl(\
\figins{-17}{0.55}{edge-startenddot}\
\Biggr)
=\ 2\ 
\Sigma_{n,n}
\Biggl(\
\figins{-17}{0.55}{matches-ud}\
\Biggr).
\end{equation*}

\medskip
\n\emph{Two distant colors:} 
Checking relations~\eqref{eq:reid2dist} to~\eqref{eq:slide3v} is straightforward and only uses relations~\eqref{eq_downup_ij-gen} and~\eqref{eq_r2_ij-gen} 
with distant colors $i$ and $j$. 

\medskip
\n\emph{Adjacent colors:} 
To prove relation~\eqref{eq:dot6v} we first notice 
that using~\eqref{eq_r3_hard-gen} we get
\begin{equation*}
\labellist
\hair 2pt
\pinlabel $\scs i$   at  0 -10
\pinlabel $\scs i+1$ at 69 -11
\endlabellist
\Sigma_{n,n}\Biggl(\
\figins{-12}{0.4}{6vertdotd}\
\Biggr)\
=
\xy 0;/r.16pc/; 
    (16,0)*{(1^n) };    
    ( 0,0)*{\dblue\xybox{
    (-7.5,-10)*{}; (  5, 10) **\crv{(-4.5,-7) & ( 7.5,0) & ( 5,9)}?(1)*\dir{>};
    (12.5,-10)*{}; (  0, 10) **\crv{( 9.5,-7) & (-2.5,0) & ( 0,9)}?(0)*\dir{<};
    (17.5,-10)*{}; (-12.5,-10) **\crv{(  8,  0) & (2.5,6) & (-3, 0)}?(1)*\dir{>};
}};
    ( 0,1)*{\dred\xybox{  
    (-10,0)*{};(10,0) **\crv{(-9,-1) & (0,-8) & (8,-1)}?(.2)*\dir{<} ?(.8)*\dir{<};
    (-15,0)*{}; (15, 0) **\crv{(-4,-10) & (-2.5,-18) & (2.5,-18) & (4,-10)};
    ?(1)*\dir{>};?(0.01)*\dir{>};
}};  
    (-16,-12)*{\scs   i}; (-11,-12)*{\scs   i};
    ( 11,-12)*{\scs i  }; ( 16,-12)*{\scs   i}; 
    ( -9, 12)*{\scs i+1}; (-16, 12)*{\scs i+1};
    \endxy\
=\ 
\xy 0;/r.16pc/; 
    (16,0)*{(1^n) };    
    ( 0,0)*{\dblue\xybox{
    (-7.5,-10)*{}; (12.5,-10) **\crv{(-4,-5) & ( 9,-5)}?(1)*\dir{>};
    (5, 10)*{}; ( 0, 10) **\crv{( 6,0) & (2.5,-4) & ( -1,0)}?(0)*\dir{<};
    (17.5,-10)*{}; (-12.5,-10) **\crv{(  8,  0) & (2.5,6) & (-3, 0)}?(1)*\dir{>};
}};
    ( 0,2.5)*{\dred\xybox{  
    (-10,0)*{};(10,0) **\crv{(-9,-1) & (0,-8) & (8,-1)}?(.2)*\dir{<} ?(.8)*\dir{<};
    (-15,0)*{}; (15, 0) **\crv{(-6,-8) & (-2.5,-15) & (2.5,-15) & (6,-8)};
    ?(1)*\dir{>};?(0.01)*\dir{>};
}};  
    (-16,-12)*{\scs   i}; (-11,-12)*{\scs   i};
    ( 11,-12)*{\scs i  }; ( 16,-12)*{\scs   i}; 
    ( -9, 12)*{\scs i+1}; (-16, 12)*{\scs i+1};
    \endxy\ .
\vspace*{2ex}
\end{equation*}
Note that the other term on the r.h.s. of~\eqref{eq_r3_hard-gen} is equal to 
zero, because it contains a region whose label has a negative entry, i.e. 
does not belong to $\Lambda(n,n)$. 

Using~\eqref{eq_ident_decomp0} followed by~\eqref{eq_downup_ij-gen} and~\eqref{eq:bubb_deg0}
gives
\begin{equation*}
\xy 0;/r.16pc/; 
    (16,2)*{(1^n) };    
    ( 0,0)*{\dblue\xybox{
    (-7.5,-10)*{}; (12.5,-10) **\crv{(-2,-6) & (2.5,-5) & ( 7,-6)}?(1)*\dir{>};
    (5, 10)*{}; ( 0, 10) **\crv{( 4.5,9) & (2.5,8) & ( .5,9)}?(0)*\dir{<};
    (17.5,-10)*{}; (-12.5,-10) **\crv{(8, -3) & (2.5,-2) & (-3, -3)}?(1)*\dir{>};
}};
    ( 0,5.5)*{\dred\xybox{  
    (-10,0)*{};(10, 0) **\crv{( -8,-2) & (0, -9) & ( 8,-2)}?(0)*\dir{<};
    (-15,0)*{};(15, 0) **\crv{(-10,-5) & (0,-13) & (10,-5)}?(1)*\dir{>};
}};  
    (-16,-12)*{\scs   i}; (-11,-12)*{\scs   i};
    ( -9, 12)*{\scs i+1}; (-16, 12)*{\scs i+1};
    \endxy\
\ +\
\xy 0;/r.16pc/; 
    (16,2)*{(1^n) };    
    ( 0,0)*{\dblue\xybox{
    (-7.5,-10)*{}; (12.5,-10) **\crv{(-2,-6) & (2.5,-5) & ( 7,-6)}?(1)*\dir{>};
    (5,10)*{};( 17.5,-10) **\crv{(5,10) & ( 4.5,0) & ( 17.5,-10)}?(0)*\dir{<};
    (0,10)*{};(-12.5,-10) **\crv{(0,10) & ( 0.5,0) & (-12.5,-10)}?(1)*\dir{>};
}};
    ( 0,5.5)*{\dred\xybox{  
    (-10,0)*{};(10, 0) **\crv{( -8,-2) & (0, -9) & ( 8,-2)}?(0)*\dir{<};
    (-15,0)*{};(15, 0) **\crv{(-10,-5) & (0,-13) & (10,-5)}?(1)*\dir{>};
}};  
    (-16,-12)*{\scs   i}; (-11,-12)*{\scs   i};
    ( -9, 12)*{\scs i+1}; (-16, 12)*{\scs i+1};
    \endxy\ .
\end{equation*}
Applying~\eqref{eq_ident_decomp} to the two red strands in the middle region of the second 
term (only one term survives) followed by~\eqref{eq_downup_ij-gen} and~\eqref{eq:bubb_deg0}
gives
\begin{equation*}
\labellist
\hair 2pt
\pinlabel $\scs i$   at  0 -10
\pinlabel $\scs i+1$ at 69 -11
\endlabellist
\Sigma_{n,n}\Biggl(\
\figins{-12}{0.4}{6vertdotd}\
\Biggr)\
=\
\xy 0;/r.16pc/; 
    (16,2)*{(1^n) };    
    ( 0,0)*{\dblue\xybox{
    (-7.5,-10)*{}; (12.5,-10) **\crv{(-2,-6) & (2.5,-5) & ( 7,-6)}?(1)*\dir{>};
    (5, 10)*{}; ( 0, 10) **\crv{( 4.5,9) & (2.5,8) & ( .5,9)}?(0)*\dir{<};
    (17.5,-10)*{}; (-12.5,-10) **\crv{(8, -3) & (2.5,-2) & (-3, -3)}?(1)*\dir{>};
}};
    ( 0,5.5)*{\dred\xybox{  
    (-10,0)*{};(10, 0) **\crv{( -8,-2) & (0, -9) & ( 8,-2)}?(0)*\dir{<};
    (-15,0)*{};(15, 0) **\crv{(-10,-5) & (0,-13) & (10,-5)}?(1)*\dir{>};
}};  
    (-16,-12)*{\scs   i}; (-11,-12)*{\scs   i};
    ( -9, 12)*{\scs i+1}; (-16, 12)*{\scs i+1};
    \endxy\
\ +\
\xy 0;/r.16pc/; 
    (18,2)*{(1^n) };    
    ( 0,0)*{\dblue\xybox{
    (-7.5,-10)*{}; (12.5,-10) **\crv{(-2,-6) & (2.5,-5) & ( 7,-6)}?(1)*\dir{>};
    (5,10)*{};( 17.5,-10) **\crv{(5,10) & ( 4.5,0) & ( 17.5,-10)}?(0)*\dir{<};
    (0,10)*{};(-12.5,-10) **\crv{(0,10) & ( 0.5,0) & (-12.5,-10)}?(1)*\dir{>};
}};
    ( 0,7.5)*{\dred\xybox{  
    (-15,0)*{};(-10, 0) **\crv{(-15,-4) & (-12.5,-6) & (-10,-4)}?(1)*\dir{>};
    ( 15,0)*{};( 10, 0) **\crv{( 15,-4) & ( 12.5,-6) & ( 10,-4)}?(0)*\dir{<};
}};  
    (-16,-12)*{\scs   i}; (-11,-12)*{\scs   i};
    ( 10, 12)*{\scs i+1}; (-15, 12)*{\scs i+1};
    \endxy\ ,
\end{equation*}
which is equal to 
$\Sigma_{n,n}\bigl(\figins{-5}{0.2}{capcupdot}\bigr)+\Sigma_{n,n}\bigl(\figins{-5}{0.2}{mergedots}\bigr)$ .

The corresponding relation with colors switched is not difficult to prove.
We have 
\begin{equation*}
\labellist
\hair 2pt
\pinlabel $\scs i+1$ at  5 -12
\pinlabel $\scs i$   at 69 -10
\endlabellist
\Sigma_{n,n}\Biggl(\ \
\figins{-12}{0.4}{6vertdotdd}\ 
\Biggr)\
=
\xy 0;/r.16pc/; 
    (20,0)*{(1^n) };    
    ( 0,0)*{\dblue\xybox{
    (17.5,10)*{}; (-12.5, 10) **\crv{(   8, 0) & ( 2.5,-6) & (-3,0)}?(0)*\dir{<};
    (12.5,10)*{}; ( -7.5,10) **\crv{(9.5, 7) & (-2.5,0) & (0,-11) & (5,-11) & (7.5,0) & (-4.5,7) }?(1)*\dir{>}?(0.49)*\dir{>};
}};
    ( 0,0)*{\dred\xybox{  
    (-10,-20)*{};( 10,-20) **\crv{(-9,-19) & (0,-12) & (8,-19)}?(.2)*\dir{>} ?(.8)*\dir{>};
    ( 2.5, 0)*{};( 15,-20) **\crv{( 2.5,0) & ( 2,-10) & ( 15,-20)}?(0)*\dir{<};
    (-2.5, 0)*{};(-15,-20) **\crv{(-2.5,0) & (-2,-10) & (-15,-20)}?(1)*\dir{>};
}};  
    ( -17,-12)*{\scs i+1}; (-10,-12)*{\scs i+1};
    ( 10, 12)*{\scs i };
    ( 16,-12)*{\scs i+1}; 
    ( 16, 12)*{\scs i};
    \endxy
.
\end{equation*}
Use~\eqref{eq_r2_ij-gen} on the bottom part of the diagram. 
Only one of the resulting terms survives (use the first relation in~\eqref{eq_nil_rels}), which in turn equals
\begin{equation*}
\xy 0;/r.16pc/; 
    (20,0)*{(1^n) };    
    ( 0,2)*{\dblue\xybox{
    (7.5,15)*{}; (12.5,15) **\crv{(4.5,12) & (-6.5,5) & (-3,1) & (0,1) & (2.5,5) }?(1)*\dir{>};
    (-12.5,15)*{}; (-17.5,15) **\crv{(-9.5,12) & (1.5,5) & (-2,1) & (-5,1) & (-7.5,5) }?(0)*\dir{<};
}};
    ( 0,0)*{\dred\xybox{  
    (-10,-20)*{};( 10,-20) **\crv{(-9,-19) & (0,-12) & (8,-19)}?(.2)*\dir{>} ?(.8)*\dir{>};
    ( 2.5, 0)*{};( 15,-20) **\crv{( 2.5,0) & ( 2,-10) & ( 15,-20)}?(0)*\dir{<};
    (-2.5, 0)*{};(-15,-20) **\crv{(-2.5,0) & (-2,-10) & (-15,-20)}?(1)*\dir{>};
}};  
    ( -17,-12)*{\scs i+1}; (-10,-12)*{\scs i+1};
    ( 10, 12)*{\scs i };
    ( 16,-12)*{\scs i+1}; 
    ( 16, 12)*{\scs i};
    \endxy
\end{equation*}
(use the first relation in~\eqref{eq_nil_rels} combined with~\eqref{eq_nil_dotslide}).
Applying~\eqref{eq:EF} we get two terms, one of which is
\begin{equation*}
\xy 0;/r.16pc/; 
    (18,2)*{(1^n) };    
    ( 0,0)*{\dred\xybox{
    (-7.5,-10)*{}; (12.5,-10) **\crv{(-2,-6) & (2.5,-5) & ( 7,-6)}?(1)*\dir{>};
    (5,10)*{};( 17.5,-10) **\crv{(5,10) & ( 4.5,0) & ( 17.5,-10)}?(0)*\dir{<};
    (0,10)*{};(-12.5,-10) **\crv{(0,10) & ( 0.5,0) & (-12.5,-10)}?(1)*\dir{>};
}};
    ( 0,7.5)*{\dblue\xybox{  
    (-15,0)*{};(-10, 0) **\crv{(-15,-4) & (-12.5,-6) & (-10,-4)}?(1)*\dir{>};
    ( 15,0)*{};( 10, 0) **\crv{( 15,-4) & ( 12.5,-6) & ( 10,-4)}?(0)*\dir{<};
}};  
    (-16,-12)*{\scs   i}; (-11,-12)*{\scs   i};
    ( 10, 12)*{\scs i+1}; (-15, 12)*{\scs i+1};
    \endxy\
 \end{equation*}
(this follows easily from~\eqref{eq_downup_ij-gen}) 
and the other equals
\begin{equation*}
\xy 0;/r.16pc/; 
    (16,2)*{(1^n) };    
    ( 0,0)*{\dred\xybox{
    (-7.5,-10)*{}; (12.5,-10) **\crv{(-2,-6) & (2.5,-5) & ( 7,-6)}?(1)*\dir{>};
    (5,10)*{};( 17.5,-10) **\crv{(5,10) & ( 4.5,0) & ( 17.5,-10)}?(0)*\dir{<};
    (0,10)*{};(-12.5,-10) **\crv{(0,10) & ( 0.5,0) & (-12.5,-10)}?(1)*\dir{>};
}};
    ( 0,5.5)*{\dblue\xybox{  
    (-10,0)*{};(10, 0) **\crv{( -8,-2) & (0, -9) & ( 8,-2)}?(0)*\dir{<};
    (-15,0)*{};(15, 0) **\crv{(-10,-5) & (0,-13) & (10,-5)}?(1)*\dir{>};
}};  
    (-16,-12)*{\scs   i}; (-11,-12)*{\scs   i};
    ( -9, 12)*{\scs i+1}; (-16, 12)*{\scs i+1};
    \endxy\
.
\end{equation*}
Here we used~\eqref{eq:bubb_deg0}.
The rest of the computation is the same as in the previous case.

We now prove relation~\eqref{eq:reid3}.
We only prove the case where ''blue`` corresponds to $i$ and ''red`` 
corresponds to $i+1$. The relation with the colors reversed is proved 
in the same way.
Start with 
\begin{equation*}
\Sigma_{n,n}
\left(\
\labellist
\tiny\hair 2pt
\pinlabel $i+1$  at  70  -14
\pinlabel $i$    at   0  -12
\endlabellist
\figins{-26}{0.8}{reid3-inv}\
\right)
=\
\xy 0;/r.16pc/; 
( 0,0)*{\dblue\xybox{
    (-7,10)*{};(-7,-26) **\crv{(-4.5, 8) & ( 4.5,3) & ( 5.5,-8.5) & (4.5,-20) & (-4.5,-24)}
   ?(0)*\dir{<}?(0.47)*\dir{<};
    ( 12,10)*{};(12,-26) **\crv{(9.5, 8) & (0.5,3) & (-0.5,-8.5) & (0.5,-20) & (9.5,-24)}
   ?(1)*\dir{>}?(0.5)*\dir{>};
    (17.5,10)*{};(-12.5, 10) **\crv{(7,1.5) & (6,1) & ( 2.5,-2) & (-1,1) & (-2,1.5)}
   ?(0)*\dir{<};
    (17.5,-26)*{};(-12.5,-26) **\crv{(7,-17.5) & (6,-17) & (2.5,-14) & (-1,-17) & (-2,-17.5)}
   ?(1)*\dir{>};
}};
( 0,0)*{\dred\xybox{  
    ( 2.5,-20)*{}; ( 2.5, 16) 
   **\crv{( 2.5,-18) & ( 6,-14) & (12,-6) & ( 12,-2) & (12,2) & (6,10) & (2.5,14) }
   ?(1)*\dir{>}?(0.5)*\dir{>};
    (-2.5,-20)*{}; (-2.5, 16) 
   **\crv{(-2.5,-18) & (-6,-14) & (-12,-6) & (-12,-2) & (-12,2) & (-6,10) & (-2.5,14) }
   ?(0)*\dir{<}?(0.5)*\dir{<};
    (-6,-2)*{}; (6,-2) **\crv{(-6,0) & (0,5) & (6,0)}?(0.01)*\dir{>};
    (-6,-2)*{}; (6,-2) **\crv{(-6,-4) & (0,-9) & (6,-4)};
}};
    (16,10)*{(1^n) };   
    ( -16,-20)*{\scs   i}; (-10,-20)*{\scs   i};
    (-3.5,-20.1)*{\scs i+1}; (3.5,-20.1)*{\scs i+1};
    (  10,-20)*{\scs   i}; ( 16,-20)*{\scs   i}; 
    ( -16, 20)*{\scs i}; (7,-4)*{\scs i+1};
\endxy\
= \
\xy 0;/r.16pc/; 
( 0,0)*{\dblue\xybox{
    (-7,10)*{};(-7,-26) **\crv{(-4.5, 7) & ( 7.5,-1) & ( 11.5,-8.5) & (7.5,-16) & (-4.5,-23)}
   ?(0)*\dir{<}?(0.5)*\dir{<};
    ( 12,10)*{};(12,-26) **\crv{(9.5, 7) & (-2.5,-1) & (-6.5,-8.5) & (-2.5,-16) & (9.5,-23)}
   ?(1)*\dir{>}?(0.5)*\dir{>};
    (17.5,10)*{};(-12.5, 10) **\crv{(7, 0) & (6,-1) & ( 2.5,-4) & (-1,-1) & (-2,0)}
   ?(0)*\dir{<};
    (17.5,-26)*{};(-12.5,-26) **\crv{(7,-16) & (6,-15) & (2.5,-13) & (-1,-15) & (-2,-16)}
   ?(1)*\dir{>};
}};
( 0,0)*{\dred\xybox{  
    ( 2.5,-20)*{}; ( 2.5, 16) 
   **\crv{( 2.5,-18) & ( 6,-14) & (12,-6) & ( 12,-2) & (12,2) & (6,10) & (2.5,14) }
   ?(1)*\dir{>}?(0.5)*\dir{>};
    (-2.5,-20)*{}; (-2.5, 16) 
   **\crv{(-2.5,-18) & (-6,-14) & (-12,-6) & (-12,-2) & (-12,2) & (-6,10) & (-2.5,14) }
   ?(0)*\dir{<}?(0.5)*\dir{<};
}};
    (16,10)*{(1^n) };   
    ( -16,-20)*{\scs   i}; (-10,-20)*{\scs   i};
    (-3.5,-20.1)*{\scs i+1}; (3.5,-20.1)*{\scs i+1};
    (  10,-20)*{\scs   i}; ( 16,-20)*{\scs   i}; 
    ( -16, 20)*{\scs i};
\endxy,
\end{equation*}
where the second equality follows from~\eqref{eq_downup_ij-gen} and~\eqref{eq:bubb_deg0}.
Now notice that
\begin{equation}
\label{eq:reid3proof}
0 =
\xy 0;/r.16pc/; 
( 0,0)*{\dblue\xybox{
    (-7,10)*{};(-7,-26) **\crv{(-4.5, 7) & ( 7.5,-1) & ( 11.5,-8.5) & (7.5,-16) & (-4.5,-23)}
   ?(0)*\dir{<}?(0.5)*\dir{<};
    ( 12,10)*{};(12,-26) **\crv{(9.5, 7) & (-2.5,-1) & (-6.5,-8.5) & (-2.5,-16) & (9.5,-23)}
   ?(1)*\dir{>}?(0.5)*\dir{>};
    (17.5,10)*{};(-12.5, 10) **\crv{(7, 0) & (7.5,-4) & ( 2.5,-14) & (-2.5,-4) & (-2,0)}
   ?(0)*\dir{<};
    (17.5,-26)*{};(-12.5,-26) **\crv{(7,-16) & (7.5,-12) & (2.5,-2) & (-2.5,-12) & (-2,-16)}
   ?(1)*\dir{>};
}};
( 0,0)*{\dred\xybox{  
    ( 2.5,-20)*{}; ( 2.5, 16) 
   **\crv{( 2.5,-18) & ( 6,-14) & (12,-6) & ( 12,-2) & (12,2) & (6,10) & (2.5,14) }
   ?(1)*\dir{>}?(0.5)*\dir{>};
    (-2.5,-20)*{}; (-2.5, 16) 
   **\crv{(-2.5,-18) & (-6,-14) & (-12,-6) & (-12,-2) & (-12,2) & (-6,10) & (-2.5,14) }
   ?(0)*\dir{<}?(0.5)*\dir{<};
}};
    (16,10)*{(1^n) };   
    ( -16,-20)*{\scs   i}; (-10,-20)*{\scs   i};
    (-3.5,-20.1)*{\scs i+1}; (3.5,-20.1)*{\scs i+1};
    (  10,-20)*{\scs   i}; ( 16,-20)*{\scs   i}; 
    ( -16, 20)*{\scs i};
\endxy
= 
\xy 0;/r.16pc/; 
( 0,0)*{\dblue\xybox{
    (-7,10)*{};(-7,-26) **\crv{(-4.5, 7) & ( 7.5,-1) & ( 11.5,-8.5) & (7.5,-16) & (-4.5,-23)}
   ?(0)*\dir{<}?(0.5)*\dir{<};
    ( 12,10)*{};(12,-26) **\crv{(9.5, 7) & (-2.5,-1) & (-6.5,-8.5) & (-2.5,-16) & (9.5,-23)}
   ?(1)*\dir{>}?(0.5)*\dir{>};
    (17.5,10)*{};(-12.5, 10) **\crv{(7, 0) & (6,-4) & ( 2.5,-8) & (-1,-4) & (-2,0)}
   ?(0)*\dir{<};
    (17.5,-26)*{};(-12.5,-26) **\crv{(7,-16) & (6,-12) & (2.5,-8) & (-1,-12) & (-2,-16)}
   ?(1)*\dir{>};
}};
( 0,0)*{\dred\xybox{  
    ( 2.5,-20)*{}; ( 2.5, 16) 
   **\crv{( 2.5,-18) & ( 6,-14) & (12,-6) & ( 12,-2) & (12,2) & (6,10) & (2.5,14) }
   ?(1)*\dir{>}?(0.5)*\dir{>};
    (-2.5,-20)*{}; (-2.5, 16) 
   **\crv{(-2.5,-18) & (-6,-14) & (-12,-6) & (-12,-2) & (-12,2) & (-6,10) & (-2.5,14) }
   ?(0)*\dir{<}?(0.5)*\dir{<};
}};
    (16,10)*{(1^n) };   
    ( -16,-20)*{\scs   i}; (-10,-20)*{\scs   i};
    (-3.5,-20.1)*{\scs i+1}; (3.5,-20.1)*{\scs i+1};
    (  10,-20)*{\scs   i}; ( 16,-20)*{\scs   i}; 
    ( -16, 20)*{\scs i};
\endxy
-
\xy 0;/r.16pc/; 
( 0,0)*{\dblue\xybox{
    (-7,10)*{};(-7,-26) **\crv{(-4.5, 7) & ( 7.5,-1) & ( 11.5,-8.5) & (7.5,-16) & (-4.5,-23)}
   ?(0)*\dir{<};
    (-12,10)*{};(-12,-26) 
   **\crv{(-11.5,7) & (-2.5,-1) & (0.5,-8.5) & (-2.5,-16) & (-11.5,-23)}
   ?(1)*\dir{>}?(0.5)*{\bullet};
    ( 12,10)*{};(12,-26) **\crv{(9.5, 7) & (-2.5,-1) & (-6.5,-8.5) & (-2.5,-16) & (9.5,-23)}
   ?(1)*\dir{>};
    ( 17,10)*{};(17,-26) **\crv{(16.5, 7) & (6.5,-1) & (4.5,-8.5) & (6.5,-16) & (16.5,-23)}
   ?(0)*\dir{<}?(0.5)*{\bullet};
}};   
( 0,0)*{\dred\xybox{  
    ( 2.5,-20)*{}; ( 2.5, 16) 
   **\crv{( 2.5,-18) & ( 6,-14) & (12,-6) & ( 12,-2) & (12,2) & (6,10) & (2.5,14) }
   ?(1)*\dir{>}?(0.5)*\dir{>};
    (-2.5,-20)*{}; (-2.5, 16) 
   **\crv{(-2.5,-18) & (-6,-14) & (-12,-6) & (-12,-2) & (-12,2) & (-6,10) & (-2.5,14) }
   ?(0)*\dir{<}?(0.5)*\dir{<};
}}; 
    (16,10)*{(1^n) };   
    ( -16,-20)*{\scs   i}; (-10,-20)*{\scs   i};
    (-3.5,-20.1)*{\scs i+1}; (3.5,-20.1)*{\scs i+1};
    (  10,-20)*{\scs   i}; ( 16,-20)*{\scs   i}; 
\endxy.
\end{equation}
The first equality in~\eqref{eq:reid3proof} comes from the fact that 
the inner most region of the diagram has a label outside $\Lambda(n,n)$. 
The second equality follows from~\eqref{eq_ident_decomp0}. 
The last term is the only non-zero term coming from the sum in
~\eqref{eq_ident_decomp0} (this is a consequence of~\eqref{eq_nil_rels}). 

Applying~\eqref{eq_nil_rels} and~\eqref{eq_nil_dotslide} to 
the last term, we obtain a diagram that can be simplified further by successive application 
of~\eqref{eq_r3_hard-gen},~\eqref{eq_ident_decomp0},~\eqref{eq_downup_ij-gen} and 
again~\eqref{eq:bubb_deg0}.
\begin{align*} 
&
\xy 0;/r.16pc/; 
( 0,0)*{\dblue\xybox{
   (-12,8)*{};(12,-24) **\crv{(-6,4) & (-8,-6) & (2,-16)}?(1)*\dir{>};
   (-12,-24)*{};(12, 8) **\crv{(-6,-20) & (-8,-10) & (2,0)}?(0)*\dir{<};
   ( 17,8)*{};(-7,-24) **\crv{( 11,4) & ( 13,-6) & (3,-16)}?(0)*\dir{<};
   ( 17,-24)*{};(-7,8) **\crv{( 11,-20) & ( 13,-10) & (3,0)}?(1)*\dir{>};
}};
 ( 0,0)*{\dred\xybox{  
    ( 2.5,-18)*{}; ( 2.5, 14) 
  **\crv{( 2.5,-15) & ( 6,-14) & (12,-4) & ( 12,-2) & (12,2) & (6,8) & (2.5,11) }
  ?(1)*\dir{>}?(0.5)*\dir{>};
    (-2.5,-18)*{}; (-2.5, 14) 
  **\crv{(-2.5,-15) & (-6,-14) & (-12,-4) & (-12,-2) & (-12,2) & (-6,8) & (-2.5,11) }
  ?(0)*\dir{<}?(0.45)*\dir{<};
}}; 
    (18,10)*{(1^n) };   
    ( -15,-18)*{\scs   i}; (-10,-18)*{\scs   i};
    (-3.5,-18.1)*{\scs i+1}; (3.5,-18.1)*{\scs i+1};
    (  10,-18)*{\scs   i}; ( 16,-18)*{\scs   i}; 
\endxy\
\overset{\eqref{eq_r3_hard-gen}}{=}\ \ \
\xy 0;/r.16pc/; 
( 0,0)*{\dblue\xybox{
   (-12,8)*{};(-12,-24) **\crv{(-11,4) & (-13,-8) & (-11,-20)}?(1)*\dir{>};
   ( -7,8)*{};(-7,-24) **\crv{( 3,0) & ( 14,-8) & (3,-16)}?(0)*\dir{<};
   ( 12,8)*{};(12,-24) **\crv{( 2,0) & (-9,-8) & (2,-16)}?(1)*\dir{>};
   ( 17,8)*{};( 17,-24) **\crv{(16,4) & (19,-8) & (16,-20)}?(0)*\dir{<};
}};
 ( 0,0)*{\dred\xybox{  
    ( 2.5,-18)*{}; ( 2.5, 14) 
  **\crv{( 2.5,-15) & ( 6,-14) & (12,-4) & ( 12,-2) & (12,2) & (6,8) & (2.5,11) }
  ?(1)*\dir{>}?(0.5)*\dir{>};
    (-2.5,-18)*{}; (-2.5, 14) 
  **\crv{(-2.5,-15) & (-6,-14) & (-12,-4) & (-12,-2) & (-12,2) & (-6,8) & (-2.5,11) }
  ?(0)*\dir{<}?(0.45)*\dir{<};
}};
    (20,10)*{(1^n) };   
    ( -15,-18)*{\scs   i}; (-10,-18)*{\scs   i};
    (-3.5,-18.1)*{\scs i+1}; (3.5,-18.1)*{\scs i+1};
    (  10,-18)*{\scs   i}; ( 14.5,-18)*{\scs   i}; 
\endxy\
\\[1.5ex]\displaybreak[0]
&
\mspace{45mu}
\overset{\eqref{eq_ident_decomp0}+\eqref{eq:bubb_deg0}}{=}\
\ \ \
\xy 0;/r.16pc/; 
( 0,0)*{\dblue\xybox{
   (-12,8)*{};(-12,-24) **\crv{(-11,4) & (-13,-8) & (-11,-20)}?(1)*\dir{>};
   ( -7,8)*{};(-7,-24) **\crv{(-5,0) & ( 2,-8) & (-5,-16)}?(0)*\dir{<};
   ( 12,8)*{};(12,-24) **\crv{(10,0) & (3,-8) & (10,-16)}?(1)*\dir{>};
   ( 17,8)*{};(17,-24) **\crv{(16,4) & (19,-8) & (16,-20)}?(0)*\dir{<};
}};
(0,0)*{\dred\xybox{
   (  7,8)*{};( 7,-24) **\crv{(9,0) & (16,-8) & (9,-16)}?(0)*\dir{<};
   ( -2,8)*{};(-2,-24) **\crv{(-4,0) & (-11,-8) & (-4,-16)}?(1)*\dir{>};
}};
    (20,10)*{(1^n) };   
    ( -15,-18)*{\scs   i}; (-10,-18)*{\scs   i};
    (-3.5,-18.1)*{\scs i+1}; (3.5,-18.1)*{\scs i+1};
    (  10,-18)*{\scs   i}; ( 14.5,-18)*{\scs   i}; 
\endxy\
+\ \ \
\xy 0;/r.16pc/; 
( 0,0)*{\dblue\xybox{
   (-12,8)*{};(-12,-24) **\crv{(-11,4) & (-13,-8) & (-11,-20)}?(1)*\dir{>};
   ( 17,8)*{};( 17,-24) **\crv{(16,4) & (19,-8) & (16,-20)}?(0)*\dir{<};
   (-7, 8)*{};(12, 8) **\crv{(-5,  4) & ( 2.5, -5) & (12,  4)}?(0)*\dir{<};
   (-7,-24)*{};(12,-24) **\crv{(-5,-20) & ( 2.5,-11) & (12,-20)}?(1)*\dir{>};
}};
(0,0)*{\dred\xybox{
   (  7,8)*{};( 7,-24) **\crv{(9,0) & (11,-8) & (9,-16)}?(0)*\dir{<};
   ( -2,8)*{};(-2,-24) **\crv{(-4,0) & (-6,-8) & (-4,-16)}?(1)*\dir{>};
}};
    (20,10)*{(1^n) };   
    ( -15,-18)*{\scs   i}; (-10,-18)*{\scs   i};
    (-3.5,-18.1)*{\scs i+1}; (3.5,-18.1)*{\scs i+1};
    (  10,-18)*{\scs   i}; ( 14.5,-18)*{\scs   i}; 
\endxy\
\\[1.5ex]\displaybreak[0]
&
\mspace{60mu}
\overset{\eqref{eq_downup_ij-gen}}{=}
\mspace{20mu}
\ \ \
\xy 0;/r.16pc/; 
( 0,0)*{\dblue\xybox{
   (-12,8)*{};(-12,-24) **\crv{(-11,4) & (-13,-8) & (-11,-20)}?(1)*\dir{>};
   ( -7,8)*{};( -7,-24) **\crv{( -8,4) & ( -6,-8) & ( -8,-20)}?(0)*\dir{<};
   ( 12,8)*{};( 12,-24) **\crv{(13,4) & (11,-8) & (13,-20)}?(1)*\dir{>};
   ( 17,8)*{};( 17,-24) **\crv{(16,4) & (19,-8) & (16,-20)}?(0)*\dir{<};
}};
(0,0)*{\dred\xybox{
   (  7,8)*{};( 7,-24) **\crv{(7,0) & ( 6,-8) & (7,-16)}?(0)*\dir{<};
   ( -2,8)*{};(-2,-24) **\crv{(-2,0) & (0,-8) & (-2,-16)}?(1)*\dir{>};
}};
    (20,10)*{(1^n) };   
    ( -15,-18)*{\scs   i}; (-10,-18)*{\scs   i};
    (-3.5,-18.1)*{\scs i+1}; (3.5,-18.1)*{\scs i+1};
    (  10,-18)*{\scs   i}; ( 14.5,-18)*{\scs   i}; 
\endxy\
+\ \ \
\xy 0;/r.16pc/; 
( 0,0)*{\dblue\xybox{
   (-12,8)*{};(-12,-24) **\crv{(-11,4) & (-13,-8) & (-11,-20)}?(1)*\dir{>};
   ( 17,8)*{};( 17,-24) **\crv{(16,4) & (19,-8) & (16,-20)}?(0)*\dir{<};
   (-7, 8)*{};(12, 8) **\crv{(-5,  4) & ( 2.5, -5) & (12,  4)}?(0)*\dir{<};
   (-7,-24)*{};(12,-24) **\crv{(-5,-20) & ( 2.5,-11) & (12,-20)}?(1)*\dir{>};
}};
(0,0)*{\dred\xybox{
   (  7,8)*{};( 7,-24) **\crv{(9,0) & (11,-8) & (9,-16)}?(0)*\dir{<};
   ( -2,8)*{};(-2,-24) **\crv{(-4,0) & (-6,-8) & (-4,-16)}?(1)*\dir{>};
}};
    (20,10)*{(1^n) };   
    ( -15,-18)*{\scs   i}; (-10,-18)*{\scs   i};
    (-3.5,-18.1)*{\scs i+1}; (3.5,-18.1)*{\scs i+1};
    (  10,-18)*{\scs   i}; ( 14.5,-18)*{\scs   i}; 
\endxy\ .
\end{align*}
Applying~\eqref{eq_ident_decomp} to the vertical red strands in the second term, 
followed by~\eqref{eq_downup_ij-gen} and~\eqref{eq:bubb_deg0}, we get that it is equal to
\begin{equation*}
\xy 0;/r.16pc/; 
( 0,0)*{\dblue\xybox{
   (-12,10)*{};(-12,-16) **\crv{(-11,4) & (-13,-3) & (-11,-10)}?(1)*\dir{>};
   ( 17,10)*{};( 17,-16) **\crv{(16,4) & (19,-3) & (16,-10)}?(0)*\dir{<};
   (-7, 10)*{};(12, 10) **\crv{(-7,  4) & ( 2.5, -5) & (12,  4)}?(0)*\dir{<};
   (-7,-16)*{};(12,-16) **\crv{(-7,-10) & ( 2.5,-1) & (12,-10)}?(1)*\dir{>};
}};
(0,0)*{\dred\xybox{
   ( -2, 10)*{};(7, 10) **\crv{(-2,8) & (2.5,1) & (7,8)}?(1)*\dir{>};
   ( -2,-16)*{};(7,-16) **\crv{(-2,-14) & (2.5,-7) & (7,-14)}?(0)*\dir{<};
}};
    (20,8)*{(1^n) };   
    ( -15,-15)*{\scs   i}; (-10,-15)*{\scs   i};
    (-3.5,-15.1)*{\scs i+1}; (3.5,-15.1)*{\scs i+1};
    (  10,-15)*{\scs   i}; ( 14.5,-15)*{\scs   i}; 
\endxy\ ,
\end{equation*}
which equals $\Sigma_{n,n}\bigl(\figins{-5}{0.22}{dumbell-dd-short-inv}\bigr)$.
Therefore $\Sigma_{n,n}\bigl(\figins{-5}{0.22}{reid3-short-inv}\bigr)
= \Sigma_{n,n}\bigl(\figins{-5}{0.22}{id3-short-inv}\bigr) +
\Sigma_{n,n}\bigl(\figins{-5}{0.22}{dumbell-dd-short-inv}\bigr)$.

We now prove relation~\eqref{eq:dumbsq}. We denote the left and right hand 
sides of~\eqref{eq:dumbsq} $L$ and $R$, respectively. We have
\begin{equation*}
\Sigma_{n,n}(L)\
=\ \ \
\xy 0;/r.16pc/; 
( 0,0)*{\dblue\xybox{
    (-7,10)*{};(-7,-26) **\crv{(-4.5, 8) & ( 4.5,3) & ( 5.5,-8.5) & (4.5,-20) & (-4.5,-24)}
   ?(0)*\dir{<}?(0.47)*\dir{<};
    ( 12,10)*{};(12,-26) **\crv{(9.5, 8) & (0.5,3) & (-0.5,-8.5) & (0.5,-20) & (9.5,-24)}
   ?(1)*\dir{>}?(0.5)*\dir{>};
    (17.5,10)*{};(-12.5, 10) **\crv{(7,1.5) & (6,1) & ( 2.5,-2) & (-1,1) & (-2,1.5)}
   ?(0)*\dir{<};
    (17.5,-26)*{};(-12.5,-26) **\crv{(7,-17.5) & (6,-17) & (2.5,-14) & (-1,-17) & (-2,-17.5)}
   ?(1)*\dir{>};
}};
( 0,0)*{\dred\xybox{  
      ( 2.5,-20)*{}; (15.5,-4.5) **\crv{( 2.5,-18) & ( 6,-13) & (10,-6) & ( 12,-4.5)}
  ?(1)*\dir{>};
      ( 2.5, 16)*{}; (15.5, 0.5) **\crv{( 2.5,14) & ( 6,9) & (10,2) & ( 12, 0.5)}
  ?(0)*\dir{<};
      (-2.5,-20)*{}; (-15.5,-4.5) **\crv{(-2.5,-18) & (-6,-13) & (-10,-6) & (-12,-4.5)}
  ?(0)*\dir{<};
      (-2.5, 16)*{}; (-15.5, 0.5) **\crv{(-2.5,14) & (-6,9) & (-10,2) & (-12, 0.5)}
  ?(1)*\dir{>};
      (-6,-2)*{}; (6,-2) **\crv{(-6,0) & (0,5) & (6,0)}?(0.01)*\dir{>};
      (-6,-2)*{}; (6,-2) **\crv{(-6,-4) & (0,-9) & (6,-4)};
}};
    (18,10)*{(1^n) };   
    ( -16,-20)*{\scs   i}; (-10,-20)*{\scs   i};
    (-3.5,-20.1)*{\scs i+1}; (3.5,-20.1)*{\scs i+1};
    (  10,-20)*{\scs   i}; ( 16,-20)*{\scs   i}; 
    ( -16, 20)*{\scs i}; (7,-4)*{\scs i+1};
\endxy\
=\ \ \
\xy 0;/r.16pc/; 
( 0,0)*{\dblue\xybox{
    (-7.5,-26)*{};( 17.5,10) **\crv{(-7.5,-26) & (7,-10) & ( 17.5,10)}?(1)*\dir{>};
    (-12.5,-26)*{};( 12.5,10) **\crv{(-12.5,-26) & (-2,-6) & ( 12.5,10)}?(0)*\dir{<};
    (12,-26)*{};(-12.5,10) **\crv{(12,-26) & (-2,-10) & (-12.5,10)}?(0)*\dir{<};
    (17.5,-26)*{};(-7.5,10) **\crv{(17.5,-26) & (7,-6) & (-7.5,10)}?(1)*\dir{>};
}};
( 0,0)*{\dred\xybox{  
      ( 2.5,-20)*{}; (15.5,-4.5) **\crv{( 2.5,-18) & ( 6,-13) & (10,-6) & ( 12,-4.5)}
  ?(1)*\dir{>};
      ( 2.5, 16)*{}; (15.5, 0.5) **\crv{( 2.5,14) & ( 6,9) & (10,2) & ( 12, 0.5)}
  ?(0)*\dir{<};
      (-2.5,-20)*{}; (-15.5,-4.5) **\crv{(-2.5,-18) & (-6,-13) & (-10,-6) & (-12,-4.5)}
  ?(0)*\dir{<};
      (-2.5, 16)*{}; (-15.5, 0.5) **\crv{(-2.5,14) & (-6,9) & (-10,2) & (-12, 0.5)}
  ?(1)*\dir{>};
}};
    (20,10)*{(1^n) };   
    ( -16,-20)*{\scs   i}; (-10,-20)*{\scs   i};
    (-3.5,-20.1)*{\scs i+1}; (3.5,-20.1)*{\scs i+1};
    (  10,-20)*{\scs   i}; ( 16,-20)*{\scs   i}; 
    ( -16, 20)*{\scs i};
\endxy
\end{equation*}
The second equality is obtained as in Equation~\eqref{eq:reid3proof}. 
The same argument shows that this equals 
$\Sigma_{n,n}(R)$.

Relation~\eqref{eq:slidenext} is straightforward to check (it only uses bubble 
slides).

\medskip
\n\emph{Relations involving three colors:}
Relations~\eqref{eq:slide6v} and~\eqref{eq:slide4v} are easy because the 
green strands have 
to be distant from red and blue and so we have all Reidemeister 2 and 
Reidemeister 3 like 
moves between green and one of the other colors.

It remains to prove that $\Sigma_{n,n}$ respects relation~\eqref{eq:dumbdumbsquare}. 
First notice that the diagrams on the left- and right-hand side of~\eqref{eq:dumbdumbsquare}
 are invariant under $180^{\circ}$ rotations and that they can be obtained from one 
 another using a $90^{\circ}$ rotation.
Therefore it suffices to show that the image of one of them is invariant under 
$90^{\circ}$ rotations.
Denote by $L$ the diagram on the left-hand-side of~\eqref{eq:dumbdumbsquare}. 
Then
\begin{equation*}
\Sigma_{n,n}(L)\
=\ \ \
\xy 0;/r.16pc/; 
( 0,0)*{\dblue\xybox{
    (-7,10)*{};(-15,-26) **\crv{(-4.5, 8) & ( 4.5,3) & ( 5.5,-8.5) & (4.5,-20) & (-6.5,-22)}
   ?(0)*\dir{<};
    ( 20,10)*{};(12,-26) **\crv{(11.5, 6) & (0.5,3) & (-0.5,-8.5) & (0.5,-20) & (9.5,-24)}
   ?(1)*\dir{>};
    (25.5,10)*{};(-12.5, 10) **\crv{(11,1) & (6,-1) & ( 2,-2) & (-1,1) & (-2,1.5)}
   ?(0)*\dir{<};
    (17.5,-26)*{};(-20.5,-26) **\crv{(7,-17.5) & (6,-17) & (2.5,-14) & (-1,-16) & (-6,-17.5)}
   ?(1)*\dir{>};
}};
( 0,0)*{\dred\xybox{  
      ( 2.5,-20)*{}; ( 2.5,16) **\crv{( 2.5,-14) & ( 22,-3) & (2.5,10)}
  ?(1)*\dir{>};
      (-2.5,-20)*{}; (-2.5,16) **\crv{(-2.5,-14) & (-22,-3) & (-2.5,10)}
  ?(0)*\dir{<};
      (-23,-5)*{}; (23,-5) **\crv{(-9,-4.5) & (-2.8,3) & (3.0,3) & (9,-4.5)}
  ?(1)*\dir{>};
      (-23,0)*{}; (23,0) **\crv{(-9,1.5) & (-2.8,-7) & (3.0,-7) & (9,1.5)}
  ?(0)*\dir{<};
}};
(0,0)*{\dgreen\xybox{
      (-20.5, 10)*{}; (-10.5,-26) **\crv{(-14.5,4) & (-18.5,-16)}?(1)*\dir{>};
      ( 15.5, 10)*{}; ( 25.5,-26) **\crv{( 25.5,-2) & (19.5,-20)}?(0)*\dir{<};
      (-16.5, 10)*{}; ( 11.5, 10) **\crv{(-13,-2) & (-8.5,-4) & (1,-8) & (5.5,-7) & (22,-2)}?(0)*\dir{<};
      (- 6.5,-26)*{}; ( 21.5,-26) **\crv{(-16.5,-14) & (-1,-9) & (4,-9) & (13,-11) & (17.5,-13)}?(1)*\dir{>};
}};
    (   24, 10)*{(1^n) };   
    (-24.5,-20)*{\scs   i}; (-18,-20)*{\scs   i};
    (  24 , 20)*{\scs   i}; ( 18,20)*{\scs   i};
    (- 3.5,-20.1)*{\scs i+1}; (3.5,-20.1)*{\scs i+1};
    (  -27, -3)*{\scs i+1};(-27,2)*{\scs i+1};
    (-26.5, 17)*{\scs i+2};(-20,20)*{\scs i+2};
    ( 26.5,-17)*{\scs i+2};(20,-20)*{\scs i+2};
\endxy\ .
\end{equation*}
Taking into account that the green strands are distant from the blue ones, we apply~\eqref{eq_downup_ij-gen} 
and a sequence of Reidemeister 3 like moves to obtain 
\begin{equation*}
\Sigma_{n,n}(L)\
=\ \ \
\xy 0;/r.16pc/; 
( 0,0)*{\dblue\xybox{
    (-7,10)*{};(-15,-26) **\crv{(-4.5, 8) & ( 4.5,3) & ( 5.5,-8.5) & (4.5,-20) & (-6.5,-22)}
   ?(0)*\dir{<};
    ( 20,10)*{};(12,-26) **\crv{(11.5, 6) & (0.5,3) & (-0.5,-8.5) & (0.5,-20) & (9.5,-24)}
   ?(1)*\dir{>};
    (25.5,10)*{};(-12.5, 10) **\crv{(11,1) & (6,-1) & ( 2,-2) & (-1,1) & (-2,1.5)}
   ?(0)*\dir{<};
    (17.5,-26)*{};(-20.5,-26) **\crv{(7,-17.5) & (6,-17) & (2.5,-14) & (-1,-16) & (-6,-17.5)}
   ?(1)*\dir{>};
}};
( 0,0)*{\dred\xybox{  
      ( 2.5,-20)*{}; ( 2.5,16) **\crv{( 2.5,-14) & ( 18,-3) & (2.5,10)}
  ?(1)*\dir{>};
      (-2.5,-20)*{}; (-2.5,16) **\crv{(-2.5,-14) & (-18,-3) & (-2.5,10)}
  ?(0)*\dir{<};
      (-23,-5)*{}; (23,-5) **\crv{(-9,-5) & (0,4.5) & (9,-5)}
  ?(1)*\dir{>};
      (-23,0)*{}; (23,0) **\crv{(-9,0) & (0,-9.5) & (9,0)}
  ?(0)*\dir{<};
}};
(0,0)*{\dgreen\xybox{
      (-20.5, 10)*{}; (-10.5,-26) **\crv{(-14.5,4) & (-10.5,-20)}?(1)*\dir{>};
      ( 15.5, 10)*{}; ( 25.5,-26) **\crv{( 15.5,4) & (19.5,-20)}?(0)*\dir{<};
      (-16.5, 10)*{}; ( 11.5, 10) **\crv{(-11.5,7) & (4.5,7)}?(0)*\dir{<};
      (- 6.5,-26)*{}; ( 21.5,-26) **\crv{( 1.5,-23) & (14.5,-23)}?(1)*\dir{>};
}};
    (   24, 10)*{(1^n) };   
    (-24.5,-20)*{\scs   i}; (-18,-20)*{\scs   i};
    (  24 , 20)*{\scs   i}; ( 18,20)*{\scs   i};
    (- 3.5,-20.1)*{\scs i+1}; (3.5,-20.1)*{\scs i+1};
    (  -27, -3)*{\scs i+1};(-27,2)*{\scs i+1};
    (-26.5, 17)*{\scs i+2};(-20,20)*{\scs i+2};
    ( 26.5,-17)*{\scs i+2};(20,-20)*{\scs i+2};
\endxy\ .
\end{equation*}
Using \eqref{eq_other_r3_1} twice 
between the two horizontal red lines and a vertical blue line, followed by
~\eqref{eq_ident_decomp} gives
\begin{equation*}
\Sigma_{n,n}(L)\
=\ \ \
\xy 0;/r.16pc/; 
( 0,0)*{\dblue\xybox{
    (-7,10)*{};(-15,-26) **\crv{(-4.5, 8) & ( 4.5,3) & ( 5.5,-8.5) & (4.5,-20) & (-6.5,-22)}
   ?(0)*\dir{<};
    ( 20,10)*{};(12,-26) **\crv{(11.5, 6) & (0.5,3) & (-0.5,-8.5) & (0.5,-20) & (9.5,-24)}
   ?(1)*\dir{>};
    (25.5,10)*{};(-12.5, 10) **\crv{(11,1) & (6,-1) & ( 2,-2) & (-1,1) & (-2,1.5)}
   ?(0)*\dir{<};
    (17.5,-26)*{};(-20.5,-26) **\crv{(7,-17.5) & (6,-17) & (2.5,-14) & (-1,-16) & (-6,-17.5)}
   ?(1)*\dir{>};
}};
( 0,0)*{\dred\xybox{  
      ( 2.5,-20)*{}; ( 2.5,16) **\crv{( 2.5,-14) & ( 18,-3) & (2.5,10)}
  ?(1)*\dir{>};
      (-2.5,-20)*{}; (-2.5,16) **\crv{(-2.5,-14) & (-18,-3) & (-2.5,10)}
  ?(0)*\dir{<};
      (-23,-5)*{}; (23,-5) **\crv{(-9,-5) & (0,-3) & (9,-5)}?(1)*\dir{>};
      (-23,0)*{}; (23,0) **\crv{(-9,0) & (0,-2) & (9,0)}?(0)*\dir{<};
}};
(0,0)*{\dgreen\xybox{
      (-20.5, 10)*{}; (-10.5,-26) **\crv{(-14.5,4) & (-10.5,-20)}?(1)*\dir{>};
      ( 15.5, 10)*{}; ( 25.5,-26) **\crv{( 15.5,4) & (19.5,-20)}?(0)*\dir{<};
      (-16.5, 10)*{}; ( 11.5, 10) **\crv{(-11.5,7) & (4.5,7)}?(0)*\dir{<};
      (- 6.5,-26)*{}; ( 21.5,-26) **\crv{( 1.5,-23) & (14.5,-23)}?(1)*\dir{>};
}};
    (   24, 10)*{(1^n) };   
    (-24.5,-20)*{\scs   i}; (-18,-20)*{\scs   i};
    (  24 , 20)*{\scs   i}; ( 18,20)*{\scs   i};
    (- 3.5,-20.1)*{\scs i+1}; (3.5,-20.1)*{\scs i+1};
    (  -27, -3)*{\scs i+1};(-27,2)*{\scs i+1};
    (-26.5, 17)*{\scs i+2};(-20,20)*{\scs i+2};
    ( 26.5,-17)*{\scs i+2};(20,-20)*{\scs i+2};
\endxy\ .
\end{equation*}
Notice that the sums in~\eqref{eq_ident_decomp} are not increasing 
and therefore there are no terms with dots here.
Applying~\eqref{eq_other_r3_1} and~\eqref{eq_r3_extra} to the top and bottom 
 we can pass the top and bottom $(i,i)$ crossings to the middle of the diagram 
(the terms coming from the sums in~\eqref{eq_r3_extra} are zero).
We get  
\begin{equation*}
\Sigma_{n,n}(L)\
=\ \ \ 
\xy 0;/r.16pc/; 
( 0,0)*{\dblue\xybox{
    (-7,10)*{};(-15,-26) **\crv{(-3.5, 8) & ( 5.5,-6) & ( 5,-8.5) & (5.5,-11) & (-4.5,-22)}
   ?(0)*\dir{<};
    ( 20,10)*{};(12,-26) **\crv{(11.5, 6) & (-0.5,-6) & ( 0,-8.5) & (-0.5,-11) & (9.5,-24)}
   ?(1)*\dir{>};
    (25.5,10)*{};(-12.5, 10) **\crv{(11,3) & (6,1) & ( 2,2) & (-1,3) & (-2,3.5)}
   ?(0)*\dir{<};
    (17.5,-26)*{};(-20.5,-26) **\crv{(7,-19.5) & (6,-19) & (2.5,-17.5) & (-1,-18) & (-6,-20)}
   ?(1)*\dir{>};
}};
( 0,0)*{\dred\xybox{  
      ( 2.5,-20)*{}; ( 2.5,16) **\crv{( 6,-17) & ( 12,-14) & (12, 8) & (6,13)}
  ?(1)*\dir{>};
      (-2.5,-20)*{}; (-2.5,16) **\crv{(-6,-17) & (-12,-14) & (-12, 8) & (-6,13)}
  ?(0)*\dir{<};
      (-23,-5)*{}; (23,-5) **\crv{(-19,-5) & (0,-16) & (19,-5)}?(1)*\dir{>};
      (-23,0)*{}; (23,0) **\crv{(-19,0) & (0,11) & (19,0)}?(0)*\dir{<};
}};
(0,0)*{\dgreen\xybox{
      (-20.5, 10)*{}; (-10.5,-26) **\crv{(-14.5,4) & (-10.5,-20)}?(1)*\dir{>};
      ( 15.5, 10)*{}; ( 25.5,-26) **\crv{( 15.5,4) & (19.5,-20)}?(0)*\dir{<};
      (-16.5, 10)*{}; ( 11.5, 10) **\crv{(-11.5,8) & (4.5,6)}?(0)*\dir{<};
      (- 6.5,-26)*{}; ( 21.5,-26) **\crv{( 1.5,-22) & (14.5,-24)}?(1)*\dir{>};
}};
    (   24, 10)*{(1^n) };   
    (-24.5,-20)*{\scs   i}; (-18,-20)*{\scs   i};
    (  24 , 20)*{\scs   i}; ( 18,20)*{\scs   i};
    (- 3.5,-20.1)*{\scs i+1}; (3.5,-20.1)*{\scs i+1};
    (  -27, -3)*{\scs i+1};(-27,2)*{\scs i+1};
    (-26.5, 17)*{\scs i+2};(-20,20)*{\scs i+2};
    ( 26.5,-17)*{\scs i+2};(20,-20)*{\scs i+2};
\endxy\ .
\end{equation*}
Using~\eqref{eq_ident_decomp0} in the middle of the diagram followed 
by~\eqref{eq_downup_ij-gen} and a sequence of 
Reidemeister 3 like moves to pass the vertical red strands to the middle gives
\begin{equation*}
\Sigma_{n,n}(L)\
=\ \ \ 
\xy 0;/r.16pc/; 
( 0,0)*{\dblue\xybox{
    (-7,10)*{};(-15,-26) **\crv{(-6.5, 8) & (-4.5,-6) & (-4,-11) & (-4.5,-22)}
   ?(0)*\dir{<};
    ( 20,10)*{};(12,-26) **\crv{(11.5, 7) & (11,-6) & ( 11,-8.5) & (11,-11)}
   ?(1)*\dir{>};
    (24.5,8)*{};(-12.5, 10) **\crv{(11,3) & (6,1) & ( 2,2) & (-1,3) & (-2,3.5)}
   ?(0)*\dir{<};
    (17.5,-26)*{};(-19.5,-24) **\crv{(7,-19.5) & (6,-19) & (2.5,-17.5) & (-1,-18) & (-6,-20)}
   ?(1)*\dir{>};
}};
( 0,0)*{\dred\xybox{  
      ( 2.5,-20)*{}; ( 2.5,16) **\crv{( 3.5,-6) & ( 3.5,6)}
  ?(1)*\dir{>};
      (-2.5,-20)*{}; (-2.5,16) **\crv{(-3.5,-6) & (-3.5,6)}
  ?(0)*\dir{<};
      (-23,-5)*{}; (23,-5) **\crv{(-9,-6) & (9,-6)}?(1)*\dir{>};
      (-23,0)*{}; (23,0) **\crv{(-9,1) & (9,1)}?(0)*\dir{<};
}};
(0,0)*{\dgreen\xybox{
      (-19.5, 8)*{}; (-10.5,-26) **\crv{(-14.5,4) & (-10.5,-20)}?(1)*\dir{>};
      ( 15.5, 10)*{}; ( 24.5,-24) **\crv{( 15.5,4) & (19.5,-20)}?(0)*\dir{<};
      (-16.5, 10)*{}; ( 11.5, 10) **\crv{(-11.5,8) & (4.5,6)}?(0)*\dir{<};
      (- 6.5,-26)*{}; ( 21.5,-26) **\crv{( 1.5,-22) & (14.5,-24)}?(1)*\dir{>};
}};
    (   24, 10)*{(1^n) };   
    (-24.5,-20)*{\scs   i}; (-18,-20)*{\scs   i};
    (  24 , 20)*{\scs   i}; ( 18,20)*{\scs   i};
    (- 3.5,-20.1)*{\scs i+1}; (3.5,-20.1)*{\scs i+1};
    (  -27, -3)*{\scs i+1};(-27,2)*{\scs i+1};
    (-26.5, 17)*{\scs i+2};(-20,20)*{\scs i+2};
    ( 26.5,-17)*{\scs i+2};(20,-20)*{\scs i+2};
\endxy\ ,
\end{equation*}
which is symmetric under $90^{\circ}$ rotations.
\end{proof}

\subsection{$\mathcal{SC}_1(n)$ is a full sub-2-category of $\Scat(n,n)$}

\begin{lem}
\label{lem:comm}
The following diagram commutes  
\begin{equation*}
\xymatrix{
\mathcal{SC}_1(n)\ar[rr]^{\fek}\ar[dr]_{\Sigma_{n,n}} && \bim(n)^*
\\
& \Scat(n,n)^*((1^n),(1^n))\ar[ur]_{\fbim} &
}
\end{equation*}
\end{lem}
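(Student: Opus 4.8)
The plan is to show that the two functors $\fek$ and $\fbim\circ\Sigma_{n,n}$ from $\mathcal{SC}_1(n)$ to $\bim(n)^*$ coincide. That both are additive $\Q$-linear monoidal functors compatible with the gradings is immediate: $\fek$ is the functor of Elias and Khovanov, $\Sigma_{n,n}$ is monoidal by the previous lemma, and $\fbim$ is a $2$-functor by Proposition~\ref{prop:fbim}, so its restriction to the monoidal category $\Scat(n,n)^*((1^n),(1^n))$ is again monoidal. Since every object of $\mathcal{SC}_1(n)$ is a monoidal product of the one-point sequences $(i)$, and every morphism is built from the generators (EndDot, StartDot, Merge, Split, the distant $4$-valent vertex, and the two adjacent $6$-valent vertices) by horizontal and vertical composition, it suffices to exhibit compatible isomorphisms between the images of the generating objects and to check that the two functors agree on each of the finitely many generating morphisms.

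First I would treat the objects. Both functors send the empty sequence to $\bQ[x_1,\dots,x_n]$, since for $d=n$ one has $S_{\lambda_1}\times\cdots\times S_{\lambda_n}=\{e\}$ when $\lambda=(1^n)$. For a one-point sequence $(i)$, the functor $\fek$ assigns the Soergel bimodule $B_i=\bQ[\underline{x}]\otimes_{\bQ[\underline{x}]^{s_i}}\bQ[\underline{x}]$ up to a grading shift, while $\fbim(\Sigma_{n,n}((i)))=\fbim(\mathcal{E}_{-i}\mathcal{E}_{+i}1_{(1^n)})$ is the tensor product of the MOY bimodules attached to an unzip stacked on a zip at colours $i$ and $i+1$; using the $\mathrm{Res}/\mathrm{Ind}$ description of these bimodules from Section~\ref{sec:2rep} this is canonically $\bQ[\underline{x}]\otimes_{\bQ[\underline{x}]^{s_i}}\bQ[\underline{x}]$ as well. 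The only subtle point is the grading shift: evaluating the shifts $\{t+1+k_{i-1}+k_i-k_{i+1}\}$ and $\{t+1-k_i\}$ from the definition of $\fbim$ with $k_j=j$ (resp. with the $k_j$ of the intermediate weight $(1,\dots,2,0,\dots,1)$) yields a total shift equal to the one built into $B_i$. Fixing these isomorphisms and extending by monoidality identifies the two functors on all objects; on the identity lines coloured $i$ both send the generator to the identity of this bimodule.

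Next I would run through the list of generating morphisms. For each generator $g$, the $2$-morphism $\Sigma_{n,n}(g)$ is written down explicitly in Section~\ref{ssec:scqs}; its image under $\fbim$ is computed by decomposing $\Sigma_{n,n}(g)$ into the elementary bimodule maps of~\cite{M-S-V} --- zip, unzip, associativity, digon creation and annihilation --- exactly as in the definition of $\fbim$, and the resulting polynomial formula is then compared with the formula Elias and Khovanov give for $\fek(g)$. StartDot and EndDot go to the cup and cap, whose $\fbim$-images involve the symmetric-function expressions $\sum_{\ell}(-1)^{\ell}x^{\bullet}e_{\ell}(\underline{t})$ from Section~\ref{sec:2rep}; matching these with the unit and counit of the Frobenius extension $\bQ[\underline{x}]^{s_i}\subset\bQ[\underline{x}]$ is a short manipulation using the identities~\eqref{use1}--\eqref{use4}. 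Merge and Split correspond to the multiplication maps of $B_i$, and the distant $4$-valent vertex is handled by the $\lvert i-j\rvert\geq 2$ relations, where everything reduces to an interchange of independent variables.

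The hard part will be the two $6$-valent vertices. Their images under $\Sigma_{n,n}$ (see~\eqref{eq:sixval} and the diagram following it) are long composites of cups, caps and crossings of the adjacent colours $i$ and $i+1$, so computing $\fbim$ of them requires composing many elementary pieces and simplifying the resulting bimodule map down to the $6$-valent map of Elias and Khovanov; I expect this to be the one genuinely laborious computation, and the place where the bookkeeping of grading shifts is most delicate. Once it is verified, the fact that both $\fek$ and $\fbim\circ\Sigma_{n,n}$ are honest functors forces all of the Soergel relations~\eqref{eq:adj}--\eqref{eq:dumbdumbsquare} to be respected automatically on both sides, so $\fek=\fbim\circ\Sigma_{n,n}$ and the triangle commutes.
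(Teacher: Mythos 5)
Your proposal follows essentially the same strategy as the paper's proof: check the two functors agree on the generating objects and morphisms, observing that the only genuinely laborious computation is the $6$-valent vertex, which is handled by decomposing $\Sigma_{n,n}$ of it into layers and computing $\fbim$ of each elementary bimodule map. The paper's proof spells out the layer decomposition for the $6$-valent vertex explicitly but otherwise leaves the straightforward generators to the reader, just as you describe.
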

\begin{proof}
The commutativity of the diagram can be checked by direct computation. Most of the 
computation is straightforward except for the 6-valent vertex. 
To compute its image under $\mathcal{F}_{Bim}\Sigma_{n,n}$ we divide it in layers and 
compute the bimodule maps for each layer.
We do the case with the colors as in Equation~\ref{eq:sixval}, the other case being similar. 
Remember that
\begin{equation*}
\labellist
\tiny\hair 2pt
\pinlabel $i+1$ at -5 -10
\pinlabel $i$   at 65 -10
\endlabellist
\figins{-18}{0.6}{6vertu}\ \ \
\xmapsto{\Sigma_{n,n}} \ \ 
\text{$
\xy 0;/r.16pc/; 
    (16,0)*{(1^n) };    
    ( 0,0)*{\dblue\xybox{
    (-7.5,10)*{}; (    5,-10) **\crv{(-4.5, 7) & ( 7.5,0) & ( 5,-9)}?(0)*\dir{<};
    (12.5,10)*{}; (    0,-10) **\crv{( 9.5, 7) & (-2.5,0) & ( 0,-9)}?(1)*\dir{>};
    (17.5,10)*{}; (-12.5, 10) **\crv{(   8, 0) & ( 2.5,-6) & (-3,0)}?(0)*\dir{<};
}};
    ( 0,0)*{\dred\xybox{  
    (-10,-20)*{};( 10,-20) **\crv{(-9,-19) & (0,-12) & (8,-19)}?(.2)*\dir{>} ?(.8)*\dir{>};
    ( 2.5, 0)*{};( 15,-20) **\crv{( 2.5,0) & ( 2,-10) & ( 15,-20)}?(0)*\dir{<};
    (-2.5, 0)*{};(-15,-20) **\crv{(-2.5,0) & (-2,-10) & (-15,-20)}?(1)*\dir{>};
}};  
    ( -17,-12)*{\scs i+1}; (-10,-12)*{\scs i+1};
    (-2.5,-12)*{\scs i }; (2.5,-12)*{\scs i };
    ( 16,-12)*{\scs i+1}; 
    ( 16, 12)*{\scs i};
    \endxy
$}\ \ .
\vspace*{2ex}
\end{equation*}
It is easy to see that the map corresponding to the layer
\begin{equation*}
\text{$
\xy 0;/r.16pc/; 
    (20,0)*{(1^n) };    
    ( 0,0)*{\dblue\xybox{
    ( 8.5,8)*{}; (5,-8) **\crv{( 8.5, 7) & ( 7.5,0) & ( 5,-8)}?(0)*\dir{<};
    (-3.5,8)*{}; (0,-8) **\crv{(-3.5, 7) & (-2.5,0) & ( 0,-8)}?(1)*\dir{>};
}};
    ( 0,0.5)*{\dred\xybox{  
    (-10,-18)*{};( 10,-18) **\crv{(-8,-14) & (0,0) & (8,-14)}?(.2)*\dir{>} ?(.83)*\dir{>};
    ( 12, -2)*{};( 15,-18) **\crv{(12,-10) & (15,-18)}?(0)*\dir{<};
    (-12, -2)*{};(-15,-18) **\crv{(-12,-10) & (-15,-18)}?(1)*\dir{>};
}};  
    ( -17,-10)*{\scs i+1}; (-10,-10)*{\scs i+1};
    (-2.5,-10)*{\scs i }; (2.5,-10)*{\scs i };
    ( 16,-10)*{\scs i+1}; 
    \endxy
$}
\end{equation*}
consists only of a relabeling of variables.
The next one is
\begin{align*}
\xy 0;/r.12pc/; 
    (0,5)*{\dblue\bbpef{\black i}};
    (16,-2)*{(1^n) };    
    (-12,3)*{};(12,3)*{};
    (-6.5,2)*{\dblue\xybox{
    (-3,-5)*{}; (-10,8.5) **\crv{(-3,1) & (-10,3)}?(0)*\dir{<};}};
    ( 6.5,2)*{\dblue\xybox{
    ( 3,-5)*{}; ( 10,8.5) **\crv{( 3,1) & ( 10,3)}?(1)*\dir{>};}};
    (-12.5,2)*{\dred\xybox{
    (-3,-5)*{}; (-10,8.5) **\crv{(-3,1) & (-10,3)}?(0)*\dir{<};}};
    (12.5,2)*{\dred\xybox{
    ( 3,-5)*{}; ( 10,8.5) **\crv{( 3,1) & ( 10,3)}?(1)*\dir{>};}};
    (-3,-7)*{\scs i};(3,-7)*{\scs i};
    (-9,-7.1)*{\scs i+1};(9,-7.1)*{\scs i+1};
    \endxy
\ \
\mapsto &
\ \
\left(\quad
\begin{aligned}
\labellist
\tiny\hair 2pt
\pinlabel $1$ at -20 483 \pinlabel $1$ at 140 483 \pinlabel $1$ at 295 483 
\pinlabel $y$ at 235 90 \pinlabel $x$ at 140 245 \pinlabel $z$ at 235 385
\endlabellist
\figins{-42}{1.20}{web-v6cup0}
\qra
\labellist
\tiny\hair 2pt
\pinlabel $1$ at -20 483 \pinlabel $1$ at 140 483 \pinlabel $1$ at 295 483 
\pinlabel $y$ at 235 90  \pinlabel $x'$ at 140 320 \pinlabel $x$ at 140 180
\pinlabel $z$ at 235 385 \pinlabel $t_1,t_2$ at 370 170
\endlabellist
\figins{-42}{1.20}{web-v6cup}
&\quad\ 
\\[1.0ex]\
p\mapsto \sum\limits_{\ell=0}^{2}(-1)^{\ell}x'^{2-\ell}\varepsilon_{\ell}(t_1,t_2)p
\quad &
\end{aligned}
\right).
\end{align*}
The next step consists of the two crossings between strands labeled $i$,
\begin{equation*}
\text{$
\xy 0;/r.16pc/; 
    (20,0)*{(1^n) };    
    ( 0,0)*{\dblue\xybox{
    ( 12.5,8)*{}; (5,-8) **\crv{(8.5,0)}?(0)*\dir{<};
    ( 12.5,-8)*{}; (5,8) **\crv{(8.5,0)}?(1)*\dir{>};
    (-7.5,8)*{}; (0,-8) **\crv{(-3.5,0)}?(1)*\dir{>};
    (-7.5,-8)*{};(0,8) **\crv{(-3.5,0)}?(0)*\dir{<};
}};
    ( 0,0.5)*{\dred\xybox{  
    ( 15, -2)*{};( 15,-18) **\crv{(15,-10) & (15,-18)}?(0)*\dir{<};
    (-15, -2)*{};(-15,-18) **\crv{(-15,-10) & (-15,-18)}?(1)*\dir{>}; 
}};  
    ( -17,-10)*{\scs i+1}; (-10,-10)*{\scs i};
    (-2.5,-10)*{\scs i }; (2.5,-10)*{\scs i };
    (10,-10)*{\scs i}; 
    ( 16,-10)*{\scs i+1}; 
    \endxy
$}
\end{equation*}
corresponding to the map $p\mapsto \partial_{zx'}\partial_{xy}p$. 
The left pointing $(i,i)$-crossing and the remaining $(i,i+1)$ crossings consist only of
relabeling of variables and shifts. 
Putting everything together, the reader 
can check that this map coincides with the one obtained from $\cF_{EK}$ 
by a straightforward computation.
\end{proof}

We now get to the main result of this subsection. 
\begin{prop}
\label{prop:fulfaith}
The functor $\Sigma_{n,n}$ is an equivalence of categories.
\end{prop}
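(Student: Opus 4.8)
The plan is to establish the three properties of an equivalence — faithfulness, fullness, essential surjectivity — with fullness being where the real work lies.

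Faithfulness is immediate from Lemma~\ref{lem:comm}: since $\fek=\fbim\circ\Sigma_{n,n}$ and $\fek$ is faithful (this is part of Elias and Khovanov's identification of $\mathcal{SC}(n)$ with Soergel's bimodule category, the input to Theorem~\ref{thm:e-k-s}), the functor $\Sigma_{n,n}$ is faithful as well. In particular, for any two Soergel sequences $\underline{i},\underline{j}$ the degree-preserving linear map $\Sigma_{n,n}\colon\Hom_{\mathcal{SC}_1(n)}(\underline{i},\underline{j})\to\HOMS(\Sigma_{n,n}(\underline{i}),\Sigma_{n,n}(\underline{j}))$ is injective, so $\dim_q\HOMS(\Sigma_{n,n}(\underline{i}),\Sigma_{n,n}(\underline{j}))\geq\dim_q\Hom_{\mathcal{SC}_1(n)}(\underline{i},\underline{j})$. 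Hence, to prove fullness it suffices to prove the reverse graded-dimension inequality; then injectivity together with equality of graded dimensions forces surjectivity on every $\Hom$ space.

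For the reverse inequality the two sides are attacked separately. On the Soergel side, Elias--Khovanov (and Soergel) give $\dim_q\Hom_{\mathcal{SC}_1(n)}(\underline{i},\underline{j})$ explicitly as the graded rank of the corresponding Bott--Samelson $\Hom$-module, i.e. in terms of the standard bilinear form on $H_q(n)$. On the $\Scat(n,n)$ side one uses the structural results of Section~\ref{sec:struct}: the $2$-hom space $\HOMS(\mathcal{E}_{\mathbf{a}}1_{(1^n)},\mathcal{E}_{\mathbf{b}}1_{(1^n)})$ is finitely generated over $\ENDS(1_{(1^n)})$ by braid-like diagrams carrying at most a bounded number of dots on each strand — using the fullness of $\Psi_{n,n}$, Khovanov--Lauda's basis of the relevant $R(\nu)$-space (finite once dots are disregarded), and the reduction of surplus dots to bubbles — while $\ENDS(1_{(1^n)})$ is a quotient of $S\Pi_{(1^n)}=\bigotimes_{i=1}^{n-1}R^{ss}_{1,1}$, the proof of Lemma~\ref{lem:superschur} supplying the $n=2$ building block. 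This yields an explicit graded-dimension upper bound for $\HOMS(\Sigma_{n,n}(\underline{i}),\Sigma_{n,n}(\underline{j}))$, and the heart of the argument is the combinatorial check that this bound agrees with the Soergel answer. Equivalently, one may phrase this step as: $\fbim$ is faithful on the hom-spaces of the $(1^n)$-block between Bott--Samelson $1$-morphisms, which, together with the fact that $\fek$ maps \emph{onto} all of $\Hom_{\bim(n)^*}(\fek\underline{i},\fek\underline{j})$, forces $\Sigma_{n,n}$ to be full.

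Essential surjectivity is the easy part: every $1$-endomorphism of $(1^n)$ in $\Scat(n,n)$ is isomorphic to a $1$-morphism of Bott--Samelson form, i.e. to some $\Sigma_{n,n}(\underline{i})$ (allowing finite direct sums, which is harmless). This follows by induction on the length of the defining signed sequence: one uses biadjunction and the $i\neq j$ crossing isomorphisms to bring an $\mathcal{E}_{+i}$ next to an $\mathcal{E}_{-i}$, and then the decomposition relations~\eqref{eq_ident_decomp0}--\eqref{eq_ident_decomp}; at the weight $(1^n)$ one has $\overline{(1^n)}_i=0$, so those relations reduce to the isomorphism $\mathcal{E}_i\mathcal{E}_{-i}1_{(1^n)}\cong\mathcal{E}_{-i}\mathcal{E}_i1_{(1^n)}$, and at neighbouring weights the correction terms are $1$-morphisms of shorter length, handled by the inductive hypothesis. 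The main obstacle is thus squarely the graded-dimension estimate for the $(1^n)$-block $2$-hom spaces of $\Scat(n,n)$: as Section~\ref{sec:struct} makes clear, these spaces are in general only partially understood, and one needs just enough control over them to match Soergel's formula on the nose — everything else is either formal or a lengthy but routine diagrammatic verification.
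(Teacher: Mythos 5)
Your treatment of faithfulness matches the paper's: both invoke the commutative triangle with $\fbim$ and $\fek$ (Lemma~\ref{lem:comm}) and the faithfulness of $\fek$. Your essential surjectivity argument is also essentially the paper's, which simply observes that any nonzero $1$-morphism in $\Scat(n,n)^*((1^n),(1^n))$ must contain equally many $\mathcal{E}_{-j}$ and $\mathcal{E}_{+j}$ factors and can be rearranged into a direct sum of objects built from $\mathcal{E}_{-i}\mathcal{E}_{+i}1_n$ using the commutation $2$-isomorphisms; your appeal to relations~\eqref{eq_ident_decomp0}--\eqref{eq_ident_decomp} is compatible with this but slightly heavier than necessary.

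Where you genuinely diverge is fullness, and that is where there is a gap. You propose to establish equality of graded dimensions $\dim_q\Hom_{\mathcal{SC}_1(n)}(\underline{i},\underline{j})=\dim_q\HOMS(\Sigma_{n,n}(\underline{i}),\Sigma_{n,n}(\underline{j}))$ directly, for all pairs $(\underline{i},\underline{j})$, by bounding the right side from above using the finite generation over $\ENDS(1_n)$ and matching the result against Soergel's hom formula. You explicitly defer this matching as ``the heart of the argument.'' But that check is not at all routine: all that Section~\ref{sec:struct} gives you is finite generation (with an a priori unknown generating set and no freeness statement in general — Conjecture~\ref{conj:struct} is precisely what is unproved), so the upper bound you get need not be sharp, and closing the gap this way would amount to proving the main dimension-count content of the proposition itself. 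Your alternative phrasing — ``$\fbim$ is faithful on the $(1^n)$-block, so fullness of $\fek$ forces fullness of $\Sigma_{n,n}$'' — is logically sound, but faithfulness of $\fbim$ on these hom-spaces is not established anywhere in the paper and is no easier than the dimension count; assuming it smuggles in the hard part.

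The missing idea that the paper uses to bypass all of this is a reduction via biadjointness. Since $\tilde{\tau}$ fixes each $\mathcal{E}_{-i}\mathcal{E}_{+i}1_n$ and $\Sigma_{n,n}$ intertwines the adjunction isomorphisms of Lemma~\ref{lem:tildetau} with the Elias--Khovanov ones (Corollary 4.12 of~\cite{E-Kh}), it suffices to prove surjectivity of $\Sigma_{n,n}$ on $\HOM_{\mathcal{SC}_1(n)}(\emptyset,\underline{i})$ for $\underline{i}=(i_1,\ldots,i_t)$ strictly increasing. For $t=0$ this is Lemma~\ref{lem:superschur} together with Elias--Khovanov's computation of $\HOM_{\mathcal{SC}_1(n)}(\emptyset,\emptyset)$. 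For $t>0$ both sides are free rank-one modules over the respective endomorphism rings of the monoidal unit — on the Soergel side by Corollary 4.11 of~\cite{E-Kh}, on the $\Scat(n,n)$ side by fullness of $\Psi_{n,n}$ together with the structural theorems of~\cite{K-L3} — with the generator (StartDots) mapping to the generator (cups). No dimension comparison across arbitrary hom-spaces is ever needed. As written, your proposal would stall at exactly the step you flagged as the heart of the argument.
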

\begin{proof} We have to show that $\Sigma_{n,n}$ is essentially surjective and 
fully faithful. By the commutation 2-isomorphisms, i.e. the relations 
involving Reidemeister II and III type moves between diagrams in 
$\Scat(n,n)$, we can 
commute the factors of any object $x$ in $\Scat(n,n)^*((1^n),(1^n))$ so 
that it becomes a direct sum of objects whose factors are all of the form 
$\mathcal{E}_{-i}\mathcal{E}_{+i}1_n$. This is always possible because $x$ 
has to have as many factors $\mathcal{E}_{-j}$ as $\mathcal{E}_{+j}$, 
for any $j=1,\ldots,n-1$, or else $x$ contains a factor $1_{\lambda}$ with 
$\lambda\not\in\Lambda(n,n)$ and is therefore equal to zero. 
This shows that $\Sigma_{n,n}$ is essentially 
surjective.  

Since the functor $\fek$ is 
faithful~\cite{E-Kh}, it follows from Lemma~\ref{lem:comm} that 
$\Sigma_{n,n}$ is faithful too. Therefore it only remains to show that $\Sigma_{n,n}$ is 
full. To this end we first note that 
$$\tilde{\tau}(\mathcal{E}_{-i}\mathcal{E}_{+i}1_n)=
\mathcal{E}_{-i}\mathcal{E}_{+i}1_n.$$
By simply checking the definitions one sees that the natural isomorphisms 
in Corollary 4.12 
in~\cite{E-Kh} and the ones in Lemma~\ref{lem:tildetau} in this paper 
intertwine $\Sigma_{n,n}$. For example, we have a commutative square
$$
\begin{CD}
\HOM_{\mathcal{SC}_1(n)}(i\kk,\jj)@>{\cong}>>\HOM_{\mathcal{SC}_1(n)}(\kk,i\jj)\\
@V{\Sigma_{n,n}}VV @V{\Sigma_{n,n}}VV\\
\HOM_{\Scat(n,n)}(\mathcal{E}_{-i}\mathcal{E}_{+i}1_n\Sigma_{n,n}(\kk),\Sigma_{n,n}(\jj))
@>{\cong}>>
\HOM_{\Scat(n,n)}(\Sigma_{n,n}(\kk),
\mathcal{E}_{-i}\mathcal{E}_{+i}1_n\Sigma_{n,n}(\jj)).
\end{CD} 
$$
This observation together with the results 
after Corollary 4.12 in Section 4.3 in \cite{E-Kh} and the fact that 
$\Sigma_{n,n}$ is additive and $\Q$-linear implies that it is enough 
to prove that 
$$\Sigma_{n,n}\colon \HOM_{\mathcal{SC}_1(n)}(\emptyset,\ii)\to 
\HOM_{\Scat(n,n)}(1_n,{\mathcal E}_{-i_1}{\mathcal E}_{+i_1}\cdots 
{\mathcal E}_{-i_t}{\mathcal E}_{+i_t}1_n)$$
is surjective, where $\ii=(i_1,\ldots,i_t)$ is a sequence of $t$ points of 
strictly increasing color $1\leq i_1 < i_2 <\cdots<i_t\leq n-1$. If $t=0$, 
then this is true, because $\HOM_{\mathcal{SC}_1(n)}(\emptyset,\emptyset)\cong 
\Q[x_1-x_2,x_2-x_3,\ldots,x_{n-1}-x_n]$ by Elias and Khovanov's Theorem 1. 
Note that 
$$\Scat(n,d)^*((1^n),(1^n))\cong \Q[x_1-x_2,x_2-x_3,\ldots,x_{n-1}-x_n]$$ 
is exactly the ring generated by the colored bubbles, as we proved in 
Lemma~\ref{lem:superschur}. The functor $\Sigma_{n,n}$ sends double dots to 
colored bubbles. 

Note also that 
$S\Pi_{(1^n)}\cong \Q[x_1-x_2,x_2-x_3,\ldots,x_{n-1}-x_n]$ and 
the surjective map $S\Pi_{(1^n)}\to \END_{\Scat(n,n)}(1_n)$, which 
we explained in Section~\ref{sec:struct}, is 
equal to $\Sigma_{n,n}$. This actually shows that 
$\END_{\Scat(n,n)}(1_n)\cong S\Pi_{(1^n)}$, which is compatible with our 
Conjecture~\ref{conj:bubbles}. 

For $t>0$, note that by Corollary 
4.11 in \cite{E-Kh} $\HOM_{\mathcal{SC}_1(n)}(\emptyset,\ii)$ is a free 
left $\HOM_{\mathcal{SC}_1(n)}(\emptyset,\emptyset)$-module of rank one, generated by 
the diagram consisting of $t$ StartDots colored $i_1,\ldots,i_t$ respectively. 
Note also that, by the fullness of $\Psi_{n,n}$ and by 
Theorem 1.3, Proposition 1.4 and Theorem 2.7 in \cite{K-L3}, we know 
that 
$$\HOM_{\Scat(n,n)}(1_n,{\mathcal E}_{-i_1}{\mathcal E}_{+i_1}\cdots 
{\mathcal E}_{-i_t}{\mathcal E}_{+i_t}1_n)$$ 
is a free 
$\END_{\Scat(n,n)}(1_n)$-module 
of rank one generated by the diagram consisting of $t$ cups colored 
$i_1,\ldots,i_t$ respectively. 
Our functor $\Sigma_{n,n}$ maps the StartDots to the cups, so 
we get that 
$$\Sigma_{n,n}\colon \HOM_{\mathcal{SC}_1(n)}(\emptyset,\ii)\to 
\HOM_{\Scat(n,n)}(1_n,{\mathcal E}_{-i_1}{\mathcal E}_{+i_1}\cdots 
{\mathcal E}_{-i_t}{\mathcal E}_{+i_t}1_n)$$
is an isomorphism. 
\end{proof}

\subsection{A functor from $\mathcal{SC}_1'(d)$ to $\Scat(n,d)^*((1^d),(1^d))$ 
for $d<n$}
\label{ssec:uscqs}

Let $d<n$ be arbitrary but fixed. For $(1^d)\in\Lambda(n,d)$, we write 
$1_d=1_{(1^d)}$. We define a monoidal additive $\Q$-linear functor 
$$\Sigma_{n,d}\colon\mathcal{SC}_1'(d)\to \Scat(n,d)^*((1^d),(1^d)),$$
which is very similar to $\Sigma_{n,n}$ from the previous subsection and 
categorifies $\sigma_{n,d}$ of Section~\ref{sec:hecke-schur}. 
Recall that $\mathcal{SC}_1(d)\subseteq\mathcal{SC}_1'(d)$ is a faithful 
subcategory. So we define $\Sigma_{n,d}$ in exactly the same way as $\Sigma_{n,n}$, 
but restricting to the colors $1\leq i\leq d-1$ and sending $\emptyset$ to the 
empty diagram in the region labeled $(1^d)$ instead of $(1^n)$. The only 
new ingredient for the definition of $\Sigma_{n,d}$ is the image of the boxes, 
which we define by 
\begin{equation*}
\Sigma_{n,d}\bigl(\;\bbox{i}\;\bigr) = 
\sum\limits_{j=i}^{d-1}\
\xy
(0,0)*{\dblue\xybox{%
    (3,0);(-3,0) **\crv{(3,4.2) & (-3,4.2)};
    ?(.05)*\dir{>} ?(1)*\dir{>}; 
    (3,0);(-3,0) **\crv{(3,-4.2) & (-3,-4.2)} ?(.3)*\dir{}+(2,0)*{\bscs j};
}};
(6,3)*{\scs (1^d)},
\endxy-\xy 0;/r.18pc/:
 (0,-1)*{\dred\ccbub{\black -1}{\black d}};
  (8,4)*{\scs(1^d)};
 \endxy
\end{equation*}
for any $i=1,\ldots,d$.
Note that we have 
\begin{equation*}
\Sigma_{n,d}\bigl(\;\bbox{i}-\bbox{i+1}\;\bigr) = \ \
\xy
(0,0)*{\dblue\xybox{%
    (3,0);(-3,0) **\crv{(3,4.2) & (-3,4.2)};
    ?(.05)*\dir{>} ?(1)*\dir{>}; 
    (3,0);(-3,0) **\crv{(3,-4.2) & (-3,-4.2)} ?(.3)*\dir{}+(2,0)*{\bscs i};
}};
(6,3)*{\scs (1^d)},
\endxy
\end{equation*}
which agrees with the first box relation~\eqref{eq:box1}. 
One easily checks that $\Sigma_{n,d}$ preserves 
the other box relations as well. 
The rest of the proof that $\Sigma_{n,d}$ is 
well-defined uses the same arguments as in the previous subsection. 

As in Subsection~\ref{ssec:scqs} we have
\begin{lem}
\label{lem:ucomm}
There is a commutative diagram 
\begin{equation*}
\xymatrix{
\mathcal{SC}_1'(d)\ar[rr]^{\fek'}\ar[dr]_{\Sigma_{n,d}} && \bim(d)^*
\\
& \Scat(n,d)^*((1^d),(1^d))\ar[ur]_{\fbim} & 
}
\end{equation*}
\end{lem}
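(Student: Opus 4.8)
The plan is to verify Lemma~\ref{lem:ucomm} by the same strategy as Lemma~\ref{lem:comm}, namely by direct computation on generators, reducing everything to the already-established commutativity $\fbim\circ\Sigma_{n,n}=\fek$ plus a single extra check for the boxes. Since $\mathcal{SC}_1(d)\subseteq \mathcal{SC}_1'(d)$ is a faithful monoidal subcategory and $\Sigma_{n,d}$ restricted to $\mathcal{SC}_1(d)$ is defined exactly as $\Sigma_{n,n}$ (with colors restricted to $1\leq i\leq d-1$ and the empty sequence sent to the empty diagram in the region $(1^d)$), the commutativity of the diagram on the subcategory $\mathcal{SC}_1(d)$ follows immediately from Lemma~\ref{lem:comm}: both $\fek'$ and $\fbim\circ\Sigma_{n,d}$ restrict to $\fek$ and $\fbim\circ\Sigma_{n,n}$ respectively on that subcategory (the target $\bim(d)^*$ is the same, with polynomial ring $\bQ[x_1,\ldots,x_d]$). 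So the only genuinely new generators to treat are the decorated boxes $\bbox{i}$, and the full statement will follow by functoriality once we check these.

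First I would compute $\fbim\bigl(\Sigma_{n,d}(\bbox{i})\bigr)$ as an endomorphism of $\bQ[x_1,\ldots,x_d]$ (with each $x_j$ in degree $2$). By definition $\Sigma_{n,d}(\bbox{i})=\sum_{j=i}^{d-1}\bigl(\text{dotted blue $j$-bubble}\bigr)-\bigl(\text{ccw red $d$-bubble with $-1$ dots}\bigr)$ in the region $(1^d)$. Using the images of the dotted bubbles computed in Section~\ref{sec:2rep} (formulas~\eqref{pom12} and~\eqref{pom13}, evaluated at the weight $(1^d)$, i.e.\ $a=b=1$ in each relevant $\mathfrak{sl}_2$-slot, and recalling that for such weights the relevant supersymmetric $e_1(\underline{x},\underline{y})$ specializes to $x_j-x_{j+1}$ up to a controllable sign governed by $(-1)^b$), one sees that the $j$-colored dotted bubble of degree-zero content goes to multiplication by $\pm(x_j-x_{j+1})$, and the fake red $d$-bubble term provides the correction that assembles the telescoping sum into multiplication by $x_i$. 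Concretely, $\sum_{j=i}^{d-1}(x_j-x_{j+1})=x_i-x_d$, and the fake red bubble $\ccbub{-1}{d}$ at weight $(1^d)$ contributes (via~\eqref{eq:bubb_deg0} and the infinite Grassmannian relation) exactly the term that, after sign bookkeeping, turns $x_i-x_d$ into $x_i$. Hence $\fbim\bigl(\Sigma_{n,d}(\bbox{i})\bigr)$ is multiplication by $x_i$, which is precisely $\fek'(\bbox{i})$ by the definition of Elias--Khovanov's functor $\fek'$ (boxes colored $i$ go to multiplication by $x_i$). I would record this as the key computation.

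Having checked the boxes, I would conclude by noting that $\mathcal{SC}_1'(d)$ is generated as a $\Q$-linear monoidal category by the generators of $\mathcal{SC}_1(d)$ together with the boxes, and that both $\fek'$ and $\fbim\circ\Sigma_{n,d}$ are additive $\Q$-linear monoidal functors. Since they agree on all generators — on the diagrammatic generators by Lemma~\ref{lem:comm}, and on the boxes by the computation just described — and since $\Sigma_{n,d}$ was checked to be well-defined (respecting the box relations~\eqref{eq:box1} and their companions, which is part of the construction in Subsection~\ref{ssec:uscqs}), the two functors coincide on all of $\mathcal{SC}_1'(d)$. This gives the commutative triangle.

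The main obstacle I expect is the sign and fake-bubble bookkeeping in the box computation: one must be careful that at the weight $(1^d)$ the relevant $\mathfrak{sl}_n$-slots have $a=1,b=1$ (adjacent $1$'s) so the supersymmetric Schur polynomial machinery of Lemma~\ref{lem:superschur} degenerates to a very simple form, and that the ``$-1$-dotted'' red $d$-bubble is genuinely a fake bubble whose value is fixed by the normalization in~\eqref{eq:bubb_deg0} and~\eqref{eq_infinite_Grass} rather than by the generic formula; getting the telescoping sum $\sum_{j=i}^{d-1}(x_j-x_{j+1})+(\text{correction})=x_i$ to come out with all signs $+1$ (matching $\fek'$ exactly, not just up to sign) is the delicate point. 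Everything else is either routine relabeling of variables or an immediate appeal to Lemma~\ref{lem:comm}.
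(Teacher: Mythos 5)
Your approach matches the paper's implicit argument: the paper simply says ``As in Subsection~\ref{ssec:scqs} we have'' before stating the lemma, so the intended proof is indeed ``repeat the computation of Lemma~\ref{lem:comm} on the diagrammatic generators, then check the new generators, the boxes.'' You correctly isolate the boxes as the only new content. Two points are worth flagging, though.

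First, the reduction to Lemma~\ref{lem:comm} is not quite a literal restriction: $\Sigma_{n,d}$ restricted to $\mathcal{SC}_1(d)$ targets $\Scat(n,d)^*((1^d),(1^d))$, not $\Scat(n,n)^*((1^n),(1^n))$, and these are different $2$-categories even though the defining pictures are the same. What actually makes the reduction work is that $\fbim$ is a local calculation: for a diagram using only colors $1,\ldots,d-1$, the bimodule map only depends on the weights and variables of the strands $1,\ldots,d$, and at weight $(1^d)\in\Lambda(n,d)$ this local data (one variable per strand, $\overline{\lambda}_j=0$ for $j\leq d-1$) is identical to that at weight $(1^n)\in\Lambda(n,n)$. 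You use this implicitly; it deserves a sentence.

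Second, the sign in the telescoping needs more care than a ``$\pm$'' can carry, and your display may have it backwards. Plugging $a=b=1$, $r=0$ into the paper's formula~\eqref{pom13} for a counter-clockwise $j$-colored bubble at weight $(1^d)$ gives multiplication by $(-1)^{b+1}\bigl[h_1(\underline{t})-e_1(\underline{u})\bigr]=x_{j+1}-x_j$, not $x_j-x_{j+1}$; similarly the fake red bubble $\ccbub{-1}{d}$ at $(a,b)=(1,0)$ evaluates, by the same formula, to $(-1)^{0+1}\cdot(-x_d)=x_d$. With those signs, $\fbim\bigl(\Sigma_{n,d}(\bbox{i})\bigr)=\sum_{j=i}^{d-1}(x_{j+1}-x_j)-x_d=-x_i$, which matches $\fek'(\bbox{i})$ only if one adopts the sign convention $\fek'(\bbox{i})=-x_i$ (equivalently, the double dot in Elias--Khovanov's calculus goes to $x_{i+1}-x_i$, which is forced by Lemma~\ref{lem:comm} applied to the relation $\eqref{eq:box1}$). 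So the telescoping does close up, but you should run it in the direction the formula dictates and pin down the $\fek'$ normalization against relation~\eqref{eq:box1} rather than assume $\fek'(\bbox{i})=x_i$ from the outset.
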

\begin{prop}
\label{prop:ufull}
The functor $\Sigma_{n,d}$ is an equivalence of categories.
\end{prop}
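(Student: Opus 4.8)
The plan is to mimic the proof of Proposition~\ref{prop:fulfaith} as closely as possible, using Lemma~\ref{lem:ucomm} in place of Lemma~\ref{lem:comm} and exploiting the fact that $d<n$. \emph{Essential surjectivity} is proved exactly as before: using the Reidemeister~II and~III type $2$-isomorphisms in $\Scat(n,d)$ one commutes the factors of any $1$-endomorphism $x$ of $(1^d)$ so that $x$ becomes a direct sum of objects whose factors are all of the form $\mathcal{E}_{-i}\mathcal{E}_{+i}1_d$ with $1\leq i\leq d-1$; this is forced because $x$ must contain equally many $\mathcal{E}_{-j}$ and $\mathcal{E}_{+j}$ factors, or else it has a region labeled outside $\Lambda(n,d)$ and is zero. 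These are precisely the images of the objects of $\mathcal{SC}_1'(d)$ under $\Sigma_{n,d}$.

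For \emph{faithfulness}, I would invoke Lemma~\ref{lem:ucomm}: since $\fek'$ is faithful by Elias and Khovanov's theorem, and $\fbim\circ\Sigma_{n,d}=\fek'$, the functor $\Sigma_{n,d}$ is faithful too. For \emph{fullness} I would again reduce, via the $\tilde{\tau}$-adjunction isomorphisms of Lemma~\ref{lem:tildetau} together with Elias--Khovanov's corresponding natural isomorphisms (Corollary~4.12 and the discussion in Section~4.3 of~\cite{E-Kh}), to showing surjectivity of
$$
\Sigma_{n,d}\colon \HOM_{\mathcal{SC}_1'(d)}(\emptyset,\ii)\to
\HOM_{\Scat(n,d)}(1_d,\mathcal{E}_{-i_1}\mathcal{E}_{+i_1}\cdots
\mathcal{E}_{-i_t}\mathcal{E}_{+i_t}1_d)
$$
for $\ii=(i_1,\ldots,i_t)$ a strictly increasing sequence of colors in $\{1,\ldots,d-1\}$. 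One has to be a little careful that $\tilde{\tau}$ fixes $\mathcal{E}_{-i}\mathcal{E}_{+i}1_d$ and that the adjunction squares commute; this is the same checking of definitions as in the previous subsection.

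The case $t=0$ is the crux and is where $d<n$ enters. Here $\HOM_{\mathcal{SC}_1'(d)}(\emptyset,\emptyset)\cong \bQ[x_1,\ldots,x_d]$ (the full polynomial ring on $d$ boxes, \emph{not} the quotient by $e_1$, since we are in the primed category), so I must show that $\END_{\Scat(n,d)}(1_d)\cong S\Pi_{(1^d)}$ and that $\Sigma_{n,d}$ maps the box $\bbox{i}$ to the appropriate bubble expression so as to realize this isomorphism. By Lemma~\ref{lem:superschur} and the discussion of $S\Pi_\lambda$ in Section~\ref{sec:struct}, $\END_{\Scat(n,d)}(1_d)$ is a quotient of $S\Pi_{(1^d)}\cong\bigotimes_{i=1}^{d-1}R^{ss}_{1,1}\otimes R^{ss}_{1,0}\cong \bQ[x_1,\ldots,x_d]$ (using that $(1^d)\in\Lambda(n,d)$ has a trailing block of $n-d$ zeros, so the last supersymmetric factor $R^{ss}_{1,0}$ contributes an honest polynomial variable rather than collapsing); the point is that for $d<n$ this quotient map is an isomorphism, which one can see from the injectivity statement in Corollary~\ref{cor:inverselimit}(1) applied to $\Ucat$, or more directly by checking that the relation~\eqref{eq:twistbub3}-type identifications among bubbles of adjacent colors, which in the $d=n$ case cut $\Q[x_1,\ldots,x_n]$ down to $\Q[x_1-x_2,\ldots,x_{n-1}-x_n]$ by killing $e_1$, do \emph{not} occur here because the relevant regions (which would be labeled by weights with a negative entry) do belong to $\Lambda(n,d)$ when there is spare room $n-d>0$. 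For $t>0$ I would conclude exactly as in Proposition~\ref{prop:fulfaith}: by Corollary~4.11 in~\cite{E-Kh} the source is a free rank-one left module over $\HOM_{\mathcal{SC}_1'(d)}(\emptyset,\emptyset)$ generated by the $t$ StartDots, by the fullness of $\Psi_{n,d}$ together with Theorem~1.3, Proposition~1.4 and Theorem~2.7 of~\cite{K-L3} the target is a free rank-one module over $\END_{\Scat(n,d)}(1_d)$ generated by the $t$ cups, and $\Sigma_{n,d}$ sends generator to generator and (by the $t=0$ case) restricts to an isomorphism on the base rings, hence is itself an isomorphism. The main obstacle I anticipate is precisely pinning down $\END_{\Scat(n,d)}(1_d)\cong\bQ[x_1,\ldots,x_d]$ for $d<n$: one must be sure that no bubble-slide identifications force extra relations, i.e. that the defining ideal of $\Scat(n,d)$ (regions outside $\Lambda(n,d)$) does not reach down to impose anything on $\END(1_d)$ beyond supersymmetry, and that the image boxes $\Sigma_{n,d}(\bbox{i})$ are chosen so that the $n$-th "phantom" variable is absorbed correctly.
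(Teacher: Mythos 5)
Your overall strategy is the paper's: essential surjectivity via commutation down to $\mathcal{E}_{-i}\mathcal{E}_{+i}1_d$ with $i\leq d-1$, faithfulness via Lemma~\ref{lem:ucomm} and faithfulness of $\fek'$, and fullness by the $\tilde\tau$-adjunction reduction with the crux being $t=0$. Two places deserve comment.

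For essential surjectivity, having equally many $\mathcal{E}_{+j}$ and $\mathcal{E}_{-j}$ factors only guarantees the $1$-morphism starts and ends at $(1^d)$; it does not yet rule out pairs colored $j\geq d$ (e.g.\ $\mathcal{E}_{+d-1}\mathcal{E}_{+d}\mathcal{E}_{-d}\mathcal{E}_{-(d-1)}1_d$ passes only through weights in $\Lambda(n,d)$). The paper's proof makes the key additional observation that $\mathcal{E}_{+k}1_d=0$ for $k\geq d$; only \emph{combined with} the commutation isomorphisms does one eliminate the high-colored factors. You should state this explicitly.

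For fullness, you correctly locate the crux --- proving $\END_{\Scat(n,d)}(1_d)\cong\Q[x_1,\ldots,x_d]$ --- and you correctly compute $S\Pi_{(1^d,0^{n-d})}\cong\bigotimes_{i<d}R^{ss}_{1,1}\otimes R^{ss}_{1,0}\cong\Q[x_1,\ldots,x_d]$. But both justifications you offer for injectivity of the surjection $S\Pi_{(1^d,0^{n-d})}\to\END_{\Scat(n,d)}(1_d)$ are off the mark. First, the passage from $\Q[x_1,\ldots,x_n]$ to $\Q[x_1-x_2,\ldots,x_{n-1}-x_n]$ when $d=n$ is \emph{not} caused by relation~\eqref{eq:twistbub3}-type identifications: $S\Pi_{(1^n)}$ is already $\bigotimes R^{ss}_{1,1}\cong\Q[x_1-x_2,\ldots,x_{n-1}-x_n]$, so nothing is being cut down on the bubble side; the quotient by $e_1$ is the passage $\mathcal{SC}'(n)\to\mathcal{SC}(n)$ on the Soergel side, a separate phenomenon. (The $\eqref{eq:twistbub3}$ identifications arise from weights like $(0,1,0)$ with internal zero entries, which neither $(1^n)$ nor $(1^d,0^{n-d})$ has in the relevant positions.) Also your phrase ``regions \ldots labeled by weights with a negative entry do belong to $\Lambda(n,d)$'' is self-contradictory as written, since a negative entry is exactly what excludes a weight from $\Lambda(n,d)$. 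Second, Corollary~\ref{cor:inverselimit}(1) controls injectivity of $\Psi_{n,d}$ on $2$-morphisms of $\Ucat$, i.e.\ on the map $\Pi_{\overline{\lambda}}\to\END_{\Scat(n,d)}(1_\lambda)$, not on the already-quotiented map $S\Pi_\lambda\to\END_{\Scat(n,d)}(1_\lambda)$, so it doesn't give you what you need. The paper's actual argument is more direct: all $i$-colored bubbles of positive degree with $i>d$ vanish because their interiors leave $\Lambda(n,d)$, while the $d$-colored clockwise bubble with one dot maps under $\fbim$ to $x_d$; together with the colors $<d$ this yields $\END_{\Scat(n,d)}(1_d)\cong\Q[x_1-x_2,\ldots,x_{d-1}-x_d,x_d]\cong\Q[x_1,\ldots,x_d]$, with injectivity visible from the commuting triangle of Lemma~\ref{lem:ucomm} and faithfulness of $\fek'$.
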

\begin{proof} Note that $\mathcal{E}_{+k}1_d=0$, for any $k\geq d$, 
so by the commutation 
isomorphisms in $\Scat(n,d)$ we see that any object $x$ in  
$\Scat(n,d)^*((1^d),(1^d))$ is isomorphic to a direct sum of objects whose 
factors are all of the form $\mathcal{E}_{-i}\mathcal{E}_{+i}1_d$ with 
$1\leq i\leq d-1$. This is a consequence of the commutation relations on
the decategorified level~\cite{D-G} which become commutation
isomorphisms on the category level.  
Therefore $\Sigma_{n,d}$ is essentially surjective. 
Faithfulness follows from Elias and Khovanov's results and the 
commuting triangle in Lemma~\ref{lem:ucomm}, just as in the previous 
subsection. 

The arguments which show that $\Sigma_{n,d}$ is full are almost identical 
to the ones in the previous subsection. The only difference is that 
we now have 
$$\HOM_{\mathcal{SC}_1'(d)}(\emptyset,\emptyset)\cong\Q[x_1,\ldots,x_d]\cong 
\ENDS(1_d).$$
The first isomorphism follows from Elias and Khovanov's results 
in~\cite{E-Kh}. The 
second isomorphism follows from the fact that the $i$-colored bubbles 
of positive degree are all zero for $i>d$, since their inner regions 
are labeled by elements that do not belong to $\Lambda(n,d)$, and the 
$d$-colored bubble with a dot is 
mapped to $x_d$. Therefore we have $$\ENDS(1_d)\cong 
\Q[x_1-x_2,\ldots,x_{d-1}-x_d,x_d]\cong \Q[x_1,\ldots,x_d].$$ 
\end{proof}


\section{Grothendieck algebras}    
\label{sec:grothendieck}           

\subsection{The Grothendieck algebra of $\Scat(n,d)$}
To begin with, let us introduce some notions and notations analogous to 
Khovanov and Lauda's in Section 3.5 in \cite{K-L3}. 
Let $\UcatD$ and $\ScatD(n,d)$ 
denote the Karoubi envelopes of $\Ucat$ and $\Scat(n,d)$ respectively.  
We define the objects of 
$\ScatD(n,d)$ to be the elements in $\Lambda(n,d)$ and 
we define the hom-category $\ScatD(n,d)(\lambda,\mu)$ to be the usual 
Karoubi envelope of $\Scat(n,d)(\lambda,\mu)$, for any 
$\lambda,\mu\in\Lambda(n,d)$. There exist 
idempotents 
$e\in\EndS({\mathcal E}_{\mathbf i}1_{\lambda})$, 
so that $({\mathcal E}_{\mathbf i},e)$ is a direct summand of 
${\mathcal E}_{\mathbf i}$ in $\ScatD(n,d)$. For example, we can define 
the idempotents  
\[
e_{+i,m,\lambda} = \xy 0;/r.15pc/:
 (0,0)*{\dblue\xybox{
(-12,-20)*{}; (12,20) **\crv{(-12,-8) & (12,8)}?(1)*\dir{>};
 (-4,-20)*{}; (4,20) **\crv{(-4,-13) & (12,2) & (12,8)&(4,13)}?(1)*\dir{>};?(.88)*\dir{}+(0.1,0)*{\bullet};
 (4,-20)*{}; (-4,20) **\crv{(4,-13) & (12,-8) & (12,-2)&(-4,13)}?(1)*\dir{>}?(.86)*\dir{}+(0.1,0)*{\bullet};
 ?(.92)*\dir{}+(0.1,0)*{\bullet};
 (12,-20)*{}; (-12,20) **\crv{(12,-8) & (-12,8)}?(1)*\dir{>}?(.70)*\dir{}+(0.1,0)*{\bullet};
 ?(.90)*\dir{}+(0.1,0)*{\bullet};?(.80)*\dir{}+(0.1,0)*{\bullet};
  (16,0)*{\lambda};
  }};
\endxy
,
\qquad  e_{-i,m,\lambda} = (-1)^{\frac{m(m-1)}{2}}\;\xy 0;/r.15pc/:
 (0,0)*{\dblue\xybox{
 (-12,-20)*{}; (12,20) **\crv{(-12,-8) & (12,8)}?(0)*\dir{<};
 (-4,-20)*{}; (4,20) **\crv{(-4,-13) & (12,2) & (12,8)&(4,13)}?(0)*\dir{<};?(.88)*\dir{}+(0.1,0)*{\bullet};
 (4,-20)*{}; (-4,20) **\crv{(4,-13) & (12,-8) & (12,-2)&(-4,13)}?(0)*\dir{<}?(.86)*\dir{}+(0.1,0)*{\bullet};
 ?(.92)*\dir{}+(0.1,0)*{\bullet};
 (12,-20)*{}; (-12,20) **\crv{(12,-8) & (-12,8)}?(0)*\dir{<}?(.70)*\dir{}+(0.1,0)*{\bullet};
 ?(.90)*\dir{}+(0.1,0)*{\bullet};?(.80)*\dir{}+(0.1,0)*{\bullet};
 (16,0)*{\lambda} }};
 \endxy
\]
\noindent in $\EndS(\mathcal{E}_{+i^m}1_{\lambda})$ and 
$\EndS(\mathcal{E}_{-i^m}1_{\lambda})$ respectively.
We can 
define the $1$-morphisms in $\ScatD(n,d)$ 
$$
{\mathcal E}_{\pm i^{(m)}}1_{\lambda}:=({\mathcal E}_{\pm i^m}1_{\lambda}, 
e_{\pm i,m,\lambda})\left\{ \dfrac{m(1-m)}{2}\right\} 
$$ 
and have 
$$
{\mathcal E}_{\pm i^m}1_{\lambda}\cong \left({\mathcal E}_{\pm i^{(m)}}1_{\lambda}
\right)^{\oplus [m]!}. 
$$
Recall that $[m]!\in\bN[q,q^{-1}]$ is the $q$-factorial $[m][m-1]\cdots 1$, 
with $[s]=(q^s-q^{-s})/(q-q^{-1})$. For any $q$-integer 
$\oplus_{n=-j}^{k} a_{n}q^{n}\in\bN[q,q^{-1}]$, we define    
$$A^{\oplus_{n=-j}^{k} a_{n}q^{n}}= \bigoplus_{n=-j}^{k}
\left(\oplus_{i=1}^{a_n}A\{n\}\right).$$ 
Note that $e_{+i,m,\lambda}=0$ for $m>\lambda_{i+1}$ and 
$e_{-i,m,\lambda}=0$ for $m>\lambda_i$, because for those values of $m$ 
the left-most region of their defining diagrams has a label with 
a negative entry. This shows that these idempotents depend on $\lambda$, 
which was not the case in~\cite{K-L3}. Note that these lower bounds for 
$m$ are sharp, i.e. 
\begin{align*}
{\mathcal E}_{+i^{(m)}}1_{\lambda}=0&\Leftrightarrow   
m>\lambda_{i+1}\\
{\mathcal E}_{-i^{(m)}}1_{\lambda}=0&\Leftrightarrow m>\lambda_i. 
\end{align*}
This follows from observing the image of 
${\mathcal E}_{\pm i^{(m)}}1_{\lambda}$ under the $2$-functor 
$\fbim\colon \Scat(n,d)^*\to \bim^*$. 

Before we go on, let us make the remark alluded to above 
Conjecture~\ref{conj:bubbles}, when we showed that
\begin{align}
\label{eq:non-inj_two} 
\xy 0;/r.18pc/:
 (0,0)*{\dred\cbub{\black 1}{2}};
  (4,9)*{\scs(0,1,0)};
 \endxy
- \ \
\xy 0;/r.18pc/:
 (0,-1)*{\dblue\cbub{\black -1}{1}};
  (4,8)*{\scs(0,1,0)};
 \endxy
\
=&\quad 0.
\end{align} 

\begin{rem}
\label{rem:zeros}
Suppose $\lambda=(\ldots,a,0,\ldots)\in\Lambda(n,d)$, with $a$ in the 
$i$th position. Let $\mu=(\ldots,0,a,\ldots)$ 
be obtained from $\lambda$ by switching $a$ and $0$. 
From Theorem 5.6 and Corollary 5.8 in~\cite{K-L-M-S} it follows that 
$$\mathcal{E}_{i^{(a)}}\mathcal{E}_{{-i}^{(a)}}1_{\lambda}\cong 
1_{\lambda}\quad\mbox{and}\quad 
\mathcal{E}_{{-i}^{(a)}}\mathcal{E}_{i^{(a)}}1_{\mu}\cong 1_{\mu},$$
because we have $\mathcal{E}_{i^{(j)}}1_{\lambda}=0$ and 
$\mathcal{E}_{{-i}^{(j)}}1_{\mu}=0$ in $\ScatD(n,d)$ 
for any $j>0$. Therefore $\lambda$ and $\mu$ are isomorphic objects in 
the 2-category $\ScatD(n,d)$. Our proof of~\eqref{eq:non-inj_two} 
used the $2$-isomorphism between $1_{(0,1,0)}$ and $\mathcal{E}_{-1}\mathcal{E}_11_{(0,1,0)}$ explicitly in the first step.
\end{rem}
 
Note that $\ScatD(n,d)$ is Krull-Schmidt, just as $\UcatD$. Therefore, we 
can take the split Grothendieck algebras/categories $K_0^{\Q(q)}(\UcatD)$ and 
$K_0^{\bQ(q)}(\ScatD(n,d))$. Considering the latter as a category, we follow Khovanov 
and Lauda~\cite{K-L3} and define  
$\Lambda(n,d)$ to be the set of objects. The hom-space 
$\hom(\lambda,\mu)$ we define to be the split Grothendieck algebra 
of the additive category $\ScatD(\lambda,\mu)$. 
Alternatively, we can see this as an (idempotented) algebra rather than a 
category. In the sequel we will use both points of view interchangeably. 
Note that the remark above shows that there are objects in $K_0^{\bQ(q)}(\ScatD(n,d))$ 
which are isomorphic, e.g. $(1,0,0), (0,1,0)$ and $(0,0,1)$ 
are all isomorphic in $K_0^{\bQ(q)}(\ScatD(3,1))$.  

Analogous 
to Khovanov and Lauda's homomorphism $\gamma=\gamma_U\colon 
\dot{\mathbf U}(\mathfrak{sl}_n)\to K_0^{\bQ(q)}(\UcatD)$, we define a homomorphism 
$\gamma_S\colon \SD(n,d)\to K_0^{\bQ(q)}(\ScatD(n,d))$
by 
$$E_{s_1}\cdots E_{s_m}1_{\lambda}\mapsto 
\left[\mathcal{E}_{s_1}\cdots\mathcal{E}_{s_m}
1_{\lambda}\right].$$
Our main goal in this section is to prove that $\gamma_S$ is an isomorphism. 
Recall that in order to show that $\gamma_U$ is an isomorphism, 
Khovanov and Lauda had to 
determine the indecomposable direct summands of certain $1$-morphisms 
$x$ in $\UcatD$. 
They did this 
by looking at $K_0^{\bQ(q)}(\ENDU(x))$, which is the Grothendieck group of the finitely 
generated graded projective $\ENDU(x)$-modules. This allowed them to 
use known results about the Grothendieck groups of graded algebras, which 
we recall below. The connection between the two sorts of Grothendieck groups 
relies on the 
fact that a finitely-generated graded projective $\ENDU(x)$-module 
is determined by an idempotent $e$ in $\ENDU(x)$ and $[(x,e)]$ is 
an element of $K_0^{\bQ(q)}(\UcatD)$. The isomorphism classes of 
indecomposable projective modules 
form a basis of $K_0^{\bQ(q)}(\ENDU(x))$ and correspond to the minimal idempotents 
in $\ENDU(x)$. We refer to \cite{K-L3} for more details. 
We will follow Khovanov and 
Lauda's approach closely to show that $\gamma_S$ is surjective, but will use a 
completely different method to show that $\gamma_S$ is injective. Although we 
have tried to explain our results clearly, we suspect that the part 
of this section which deals with the surjectivity of $\gamma_S$ will be quite 
hard to understand for someone unfamiliar with \cite{K-L1, K-L2, K-L3, L1}. 
The part on the injectivity of $\gamma_S$ can probably be read independently. 

Before we move on to our results in this section, we should recall 
the basic facts about Grothendieck groups of (graded) algebras which 
Khovanov and Lauda explained in Subsections 3.8.1 and 3.8.2 in~\cite{K-L3}.   
If $A$ is a finite-dimensional algebra over a field, let $K_0(A)$ be the 
Grothendieck group of the category of the finitely generated projective 
$A$-modules. 
\begin{prop}
\label{prop:surj}
Let $f\colon A\to B$ be a surjective homomorphism between two 
finite-dimensional algebras, then 
$K_0(f)\colon K_0(A)\to K_0(B)$ is surjective.
\end{prop}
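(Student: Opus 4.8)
The plan is to prove Proposition~\ref{prop:surj} by the standard module-theoretic argument: lift a projective $B$-module along $f$ and show its class hits the given generator of $K_0(B)$. Recall that $K_0(A)$ is the free abelian group on the isomorphism classes of indecomposable finitely generated projective $A$-modules, equivalently on the classes $[Ae]$ for $e$ a primitive idempotent, and that every finitely generated projective module is a direct summand of a free module $A^{\oplus k}$, hence of the form $Ae$ for an idempotent $e\in M_k(A)$. Since $K_0(B)$ is generated as an abelian group by the classes of finitely generated projective $B$-modules, it suffices to show that each such class lies in the image of $K_0(f)$.

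So first I would take an arbitrary finitely generated projective $B$-module $Q$, write it as a summand of $B^{\oplus k}$, and pick an idempotent $e\in M_k(B)=M_k(A)/M_k(\ker f)$ with $Q\cong B^{\oplus k}e$. The key step is the \emph{idempotent lifting} lemma: because $\ker f$ is a nilpotent-by-nothing... more precisely, because $M_k(\ker f)$ is an ideal in the finite-dimensional (hence semiperfect, in fact Artinian) algebra $M_k(A)$, every idempotent of the quotient lifts to an idempotent of $M_k(A)$. Concretely one uses that in an Artinian ring every ideal is contained in a sum of the Jacobson radical and... the cleanest route is: the kernel of $M_k(A)\to M_k(B)$ need not be nilpotent, but one can still lift idempotents modulo an arbitrary ideal in a finite-dimensional algebra by a classical result (see e.g. the idempotent-refinement/lifting results for semiperfect rings). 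Having lifted $e$ to an idempotent $\tilde e\in M_k(A)$, I set $P=A^{\oplus k}\tilde e$, a finitely generated projective $A$-module, and observe that $f$ induces an isomorphism $B\otimes_A P\cong B^{\oplus k}\tilde e = B^{\oplus k}e\cong Q$. Therefore $K_0(f)([P])=[B\otimes_A P]=[Q]$, which shows $[Q]$ is in the image. Running over all projective $Q$ and using that their classes generate $K_0(B)$ gives surjectivity of $K_0(f)$.

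The main obstacle, and the only genuinely nontrivial ingredient, is the idempotent lifting step: one must justify that idempotents lift along the surjection $M_k(A)\twoheadrightarrow M_k(B)$ without assuming $\ker f$ is nilpotent. I would handle this by invoking the standard fact that finite-dimensional algebras over a field are semiperfect, and that any surjection of rings with semiperfect source has the idempotent-lifting property onto the image (this is exactly the hypothesis under which Khovanov and Lauda apply their analogous statement, and it is where finite-dimensionality is essential). Everything else — the description of $K_0$ in terms of idempotents, the compatibility $B\otimes_A(A^{\oplus k}\tilde e)\cong B^{\oplus k}e$, and the fact that base change $-\otimes_A B$ on projectives is what induces $K_0(f)$ — is routine and I would not spell it out in detail.

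Alternatively, and perhaps more cleanly for the write-up, I would argue directly at the level of indecomposable projectives: a complete set of primitive idempotents of $B$ is obtained as the images of a suitable complete set of primitive (or merely orthogonal) idempotents of $A$, after lifting and refining; each indecomposable projective $B$-module is then a direct summand of $B\otimes_A P_i$ for an indecomposable projective $A$-module $P_i$, so its class is a summand — in particular an integer combination — of classes in the image of $K_0(f)$, and since these indecomposables form a $\mathbb{Z}$-basis of $K_0(B)$, surjectivity follows. I expect the proof in the paper to be short, citing the semiperfect/idempotent-lifting machinery, so I would keep my version correspondingly brief and flag the lifting lemma as the load-bearing point.
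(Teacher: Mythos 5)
The paper does not actually prove Proposition~\ref{prop:surj}: it is recalled as a standard fact, attributed to Khovanov and Lauda (Subsections~3.8.1--3.8.2 of~\cite{K-L3}), so there is no in-paper proof against which to compare yours. That said, your argument is correct and is the standard one. Two remarks on the details. First, you are right that the idempotent-lifting step is the only real content, and your justification is sound: a finite-dimensional algebra is semiperfect, semiperfect rings are exchange rings, and by Nicholson's theorem idempotents in an exchange ring lift modulo every (even one-sided) ideal, which disposes of the worry that $\ker f$ need not be nil. An alternative that sidesteps exact lifting is to observe that any idempotent of $B$ is conjugate to a sum of primitive orthogonal idempotents that \emph{are} honest images under $f$, and conjugate idempotents yield the same class in $K_0$. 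Second, your ``alternatively, and perhaps more cleanly'' route via primitive idempotents actually works more directly than you suggest, and with a small gap as written: being a direct summand of something in the image does not by itself give an integer combination of classes in the image. But no summand-passing or ``refining'' is needed. If $e\in A$ is primitive, then $f(e)Bf(e)$ is a quotient of the local ring $eAe$, hence local or zero, so $f(e)$ is primitive or zero; thus the nonzero images of a complete set of primitive orthogonal idempotents of $A$ already form a complete such set for $B$, every indecomposable projective $B$-module is $B\otimes_A Ae_i$ on the nose, and its class is literally $K_0(f)([Ae_i])$. Either version gives a clean short proof consistent with what Khovanov and Lauda use.
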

\noindent Unfortunately in the applications in \cite{K-L3} and in our paper, 
the algebras involved are not finite-dimensional. But fortunately 
they are $\bZ$-graded and we can resort to finite-dimensional quotients 
which do not alter the Grothendieck groups. Let $A$ be a $\bZ$-graded algebra 
over a field, such that in each degree it has finite dimension and the 
grading is bounded from below.   
\begin{defn}
\label{defn:virtnilpot}
Let $I\subset A$ be a two-sided homogeneous 
ideal. We say that $I$ is {\em virtually nilpotent} if for each degree 
$a\in\bZ$ there exists an $N>0$ such that the degree $a$ summand of $I^N$ is 
equal to zero. 
\end{defn} 
\begin{lem}
\label{lem:virtnilpot} 
Let $I\subset A$ be a virtually nilpotent 
ideal. Then $K_0(A)\cong K_0(A/I)$.
\end{lem}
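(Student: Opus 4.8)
\textbf{Proof plan for Lemma~\ref{lem:virtnilpot}.}
The plan is to reduce the statement to the finite-dimensional case handled by Proposition~\ref{prop:surj}, by passing to the finite-dimensional quotients of $A$ in each degree and taking a limit. First I would recall the standard description of $K_0$ for a $\bZ$-graded algebra $A$ which is locally finite-dimensional and bounded below: the graded finitely generated projective modules are all obtained, up to grading shift, from idempotents, and $K_0(A)$ is the free $\bZ[q,q^{-1}]$-module on the classes of indecomposable graded projectives, which correspond to (conjugacy classes of) primitive homogeneous idempotents in $A$. The key point is that such idempotents, and the equivalences between the projective modules they cut out, all live in some bounded range of degrees, so they are ``seen'' by a large enough finite-dimensional quotient of $A$.

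The main step is the following lifting/truncation argument. For a positive integer $M$, let $A_{\geq M}$ denote the two-sided ideal of $A$ consisting of elements of degree $\geq M$ (this is a homogeneous ideal since the grading is bounded below and concentrated in finitely many degrees up to $M$), and let $A^{(M)} = A/A_{\geq M}$, a finite-dimensional algebra. Since $I$ is virtually nilpotent, for each $M$ there is an $N=N(M)$ with $(I^N)_a = 0$ for all $a < M$, so the image of $I$ in $A^{(M)}$ is a nilpotent ideal; hence by Proposition~\ref{prop:surj} applied to $A^{(M)}\twoheadrightarrow A^{(M)}/\overline{I}$ together with the standard fact that $K_0$ is unchanged modulo a nilpotent ideal, we get $K_0(A^{(M)})\cong K_0(A^{(M)}/\overline{I})$, where $\overline I$ is the image of $I$. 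Moreover $A^{(M)}/\overline{I}$ is a finite-dimensional quotient of $A/I$ of the same shape, namely $(A/I)/(A/I)_{\geq M}$. So for every $M$ we have a commuting square of $K_0$'s relating $A^{(M)}$, $(A/I)^{(M)}$, and their reductions, with the horizontal maps isomorphisms.

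It then remains to identify $K_0(A)$ with $K_0(A^{(M)})$ for $M$ sufficiently large (and likewise for $A/I$). I would argue that the natural surjection $A\twoheadrightarrow A^{(M)}$ induces an isomorphism on $K_0$ once $M$ exceeds the top degree in which any primitive idempotent of $A$ and any homogeneous element implementing an isomorphism between indecomposable projectives can be chosen to have a nonzero component; concretely, one uses that idempotent lifting along the surjection with virtually nilpotent (indeed, for this truncation, nilpotent in bounded degree) kernel is bijective on conjugacy classes of primitive idempotents, and grading shifts are tracked by the $\bZ[q,q^{-1}]$-module structure. Since both $A$ and $A/I$ are locally finite and bounded below, such an $M$ exists, and chasing the commuting square above yields $K_0(A)\cong K_0(A^{(M)})\cong K_0((A/I)^{(M)})\cong K_0(A/I)$, naturally in the projection $A\to A/I$. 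The main obstacle I anticipate is making the ``sufficiently large $M$'' step fully rigorous, i.e. proving that $K_0(A)\to K_0(A^{(M)})$ is an isomorphism for large $M$: this needs a careful statement of idempotent lifting in the graded, non-unital-in-general setting and the fact that a finitely generated graded projective module is generated in a single degree, so that its defining idempotent can be normalized to lie in a bounded degree range; this is precisely the graded analogue of the facts Khovanov and Lauda recall in Subsections 3.8.1--3.8.2 of~\cite{K-L3}, and I would cite those together with the standard reference on idempotents modulo nilpotent ideals.
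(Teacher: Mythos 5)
The paper gives no proof of Lemma~\ref{lem:virtnilpot}: it is explicitly recalled, together with Proposition~\ref{prop:surj} and Corollary~\ref{cor:virtnilpot}, as a fact that ``Khovanov and Lauda explained in Subsections 3.8.1 and 3.8.2 in~\cite{K-L3}.'' So your submission cannot be checked against a proof in the text; what follows is an assessment of the argument on its own terms.

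Your step (a), reducing the comparison of $A^{(M)}$ and $A^{(M)}/\bar I$ to Proposition~\ref{prop:surj} plus the fact that $K_0$ of a finite-dimensional algebra is unchanged by a nilpotent ideal, is fine. The genuine gap is step (b), the claim that $A\twoheadrightarrow A^{(M)}$ induces an isomorphism on $K_0$. You identify this as the danger point, and the worry is real: the kernel $A_{\ge M}$ is itself only \emph{virtually} nilpotent, not nilpotent (already for $A=\Q[x]$ one has $(A_{\ge M})^N\neq 0$ for all $N$), so $K_0(A)\cong K_0(A^{(M)})$ is just another instance of the lemma you are trying to prove. Phrases like ``nilpotent in bounded degree'' are restatements of virtual nilpotence, and invoking ``idempotent lifting along the surjection with virtually nilpotent kernel'' at this point makes the argument circular unless you independently prove that lifting statement -- but that lifting statement is precisely the content of the lemma. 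In the end you write that you would cite~\cite{K-L3} for this step, which concedes the core of the proof.

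The truncation detour is in fact unnecessary. The direct argument, which is what~\cite{K-L3} gives, is an idempotent-lifting argument applied straight to $I$. Because $A$ is bounded below and locally finite, and because $I$ is virtually nilpotent, any fixed homogeneous degree $a$ is eventually annihilated by $I^N$ for $N\gg 0$. Starting from a homogeneous idempotent $\bar e$ over $A/I$, lift arbitrarily to a homogeneous $e_0$ over $A$ and run the Newton iteration $e_{n+1}=3e_n^2-2e_n^3$; one checks $e_{n+1}^2-e_{n+1}\in I^{2^n}$ and $e_{n+1}-e_n\in I^{2^n}$. By virtual nilpotence, in any fixed degree the sequence $e_n$ stabilizes after finitely many steps, so it converges to a genuine idempotent $e$ over $A$ lifting $\bar e$. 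A parallel argument lifts the degree-zero elements $u,v$ implementing a conjugacy $uv=\bar e$, $vu=\bar f$ in $A/I$. Together these show the map $[P]\mapsto[P/PI]$ is a bijection on isomorphism classes of indecomposable graded projectives, hence induces an isomorphism $K_0(A)\cong K_0(A/I)$. This proof uses nothing beyond what you already invoke for the special ideal $A_{\ge M}$, but applied once to $I$ itself instead of twice to two different ideals, which removes the circularity. A minor further point: your hedge about choosing ``$M$ sufficiently large'' is misplaced -- when $A$ is supported in non-negative degrees, idempotents and conjugating units already live in degree zero, so any $M\ge 1$ sees them; the real difficulty is not the size of $M$ but the lifting step itself, which is the same difficulty either way.
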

\begin{cor}
\label{cor:virtnilpot}
Let $f\colon A\to B$ be a degree preserving homomorphism of 
$\bZ$-graded algebras of the type described above, and 
$I\subset A$ a virtually nilpotent ideal of finite codimension. If $f$ 
is surjective, then $K_0(f)\colon K_0(A)\cong K_0(A/I)\to K_0(B/f(I))\cong 
K_0(B)$ is surjective. 
\end{cor}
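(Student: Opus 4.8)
The plan is to deduce Corollary~\ref{cor:virtnilpot} directly from Lemma~\ref{lem:virtnilpot} and Proposition~\ref{prop:surj} by a short diagram chase, being careful about the grading hypotheses. First I would observe that $f(I)\subset B$ is a two-sided homogeneous ideal, since $f$ is a degree-preserving algebra homomorphism and $I$ is homogeneous. Next I would check that $f(I)$ is again virtually nilpotent in $B$: given a degree $a$, pick $N$ with $(I^N)_a=0$; since $f$ is degree preserving and multiplicative, $f(I)^N=f(I^N)$, so $(f(I)^N)_a=f((I^N)_a)=0$. This also uses that $B$ is of the same type as $A$ (finite-dimensional in each degree, grading bounded below); this should be part of the standing hypotheses, but if not I would note that it is inherited from $A$ via the surjection $f$, since each graded piece $B_a$ is a quotient of $A_a$ and the grading of $B$ is bounded below because that of $A$ is and $f$ preserves degrees.

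Having established that, I would apply Lemma~\ref{lem:virtnilpot} twice to get isomorphisms $K_0(A)\cong K_0(A/I)$ and $K_0(B)\cong K_0(B/f(I))$. Then I would verify that $f$ descends to a well-defined algebra homomorphism $\bar{f}\colon A/I\to B/f(I)$: this is immediate because $f(I)$ is precisely the image of $I$, so the composite $A\to B\to B/f(I)$ kills $I$ and factors uniquely through $A/I$. The map $\bar f$ is still surjective, since $f$ is surjective and we are passing to quotients on both sides. Moreover $I$ has finite codimension in $A$, so $A/I$ is finite-dimensional; consequently $B/f(I)$, being a quotient of $A/I$ under $\bar f$, is finite-dimensional as well.

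At this point Proposition~\ref{prop:surj} applies to the surjection $\bar f\colon A/I\to B/f(I)$ of finite-dimensional algebras, yielding that $K_0(\bar f)\colon K_0(A/I)\to K_0(B/f(I))$ is surjective. Finally I would assemble the commuting square
\[
\begin{CD}
K_0(A) @>{K_0(f)}>> K_0(B)\\
@V{\cong}VV @VV{\cong}V\\
K_0(A/I) @>{K_0(\bar f)}>> K_0(B/f(I))
\end{CD}
\]
in which the vertical maps are the isomorphisms from Lemma~\ref{lem:virtnilpot} and the bottom map is surjective; commutativity follows from functoriality of $K_0$ applied to the commuting square of algebra maps. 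Hence $K_0(f)$ is surjective, which is the claim.

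I do not expect a genuine obstacle here: the statement is essentially a formal consequence of the two preceding results, and the only point requiring a little care is confirming that all the auxiliary objects ($f(I)$, $A/I$, $B/f(I)$) still satisfy the finiteness and boundedness hypotheses needed to invoke Lemma~\ref{lem:virtnilpot} and Proposition~\ref{prop:surj}. The mild subtlety is tracking that ``virtually nilpotent'' is preserved under the surjection, which as noted above is immediate from $f(I)^N=f(I^N)$ together with the degreewise surjectivity of $f$.
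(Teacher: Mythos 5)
Your proof is correct and is precisely the intended derivation: the paper states the corollary without proof (recalling it from Khovanov--Lauda), and it is exactly the formal consequence of Lemma~\ref{lem:virtnilpot} and Proposition~\ref{prop:surj} that you describe. The one point worth taking care of is the one you do take care of, namely that $f(I)^N=f(I^N)$ together with degree-preservation gives $(f(I^N))_a=f((I^N)_a)$, so $f(I)$ is again virtually nilpotent and of finite codimension, allowing the lemma to be applied on both sides before invoking the proposition on the finite-dimensional quotients.
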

We also need a fact about the split Grothendieck group of Krull-Schmidt 
categories. This result is not recalled in~\cite{K-L3}, but is well 
known in homological algebra. We thank Mikhail Khovanov for explaining it to 
us. To help the reader, we briefly sketch the proof below. 
\begin{prop}
\label{prop:inj}
Let ${\mathcal F}\colon C\to D$ be an additive $\Q$-linear degree preserving 
functor between two graded Krull-Schmidt categories, whose hom-spaces 
are finite-dimensional in each degree and whose gradings are bounded from 
below. If ${\mathcal F}$ is fully faithful, 
then $K_0({\mathcal F})\colon K_0(C)\to K_0(D)$ is injective.
\end{prop}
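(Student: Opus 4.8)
The plan is to reduce the statement about split Grothendieck groups of Krull--Schmidt categories to a purely combinatorial fact about the monoids generated by isomorphism classes of indecomposable objects. Recall that in a graded Krull--Schmidt category $C$ of the type described, every object decomposes uniquely (up to isomorphism and permutation) as a finite direct sum of shifts of indecomposable objects, and $K_0(C)$ is the free $\bZ[q,q^{-1}]$-module on the set $\mathrm{Ind}(C)$ of isomorphism classes of indecomposables (with $q$ acting by shift), modulo the identification of objects that become isomorphic after a shift. So the first step is to record this structure: $K_0(C)\cong\bigoplus_{[P]\in\overline{\mathrm{Ind}}(C)}\bZ[q,q^{-1}]\cdot[P]$, where $\overline{\mathrm{Ind}}(C)$ is a set of representatives of indecomposables up to isomorphism and shift, and similarly for $D$.

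Next I would use full faithfulness to control indecomposables and isomorphisms. If $\mathcal{F}$ is fully faithful, then for any object $X$ of $C$ we have $\End_D(\mathcal{F}X)\cong\End_C(X)$ as graded algebras; since idempotents and their lifts are detected in the endomorphism ring, $X$ is indecomposable in $C$ if and only if $\mathcal{F}X$ is indecomposable in $D$, and $\mathcal{F}X\cong\mathcal{F}Y\{t\}$ in $D$ if and only if $X\cong Y\{t\}$ in $C$. (Here one uses that $\End_C(X)$ is a graded algebra, finite-dimensional in each degree and bounded below, so it is semiperfect in the appropriate graded sense; the key point is just that $\Hom$ groups are preserved, so an isomorphism $\mathcal{F}X\to\mathcal{F}Y\{t\}$ with its inverse pulls back to an isomorphism $X\to Y\{t\}$.) Consequently $\mathcal{F}$ induces an \emph{injective} map of sets $\overline{\mathrm{Ind}}(C)\hookrightarrow\overline{\mathrm{Ind}}(D)$.

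Finally I would assemble these facts: $K_0(\mathcal{F})$ sends the $\bZ[q,q^{-1}]$-basis $\overline{\mathrm{Ind}}(C)$ of $K_0(C)$ to the subset $\mathcal{F}(\overline{\mathrm{Ind}}(C))$ of the $\bZ[q,q^{-1}]$-basis $\overline{\mathrm{Ind}}(D)$ of $K_0(D)$, and this map of basis sets is injective by the previous paragraph. A $\bZ[q,q^{-1}]$-linear map that takes a basis injectively into (a subset of) a basis is injective, so $K_0(\mathcal{F})$ is injective, as claimed. This is essentially the sketch the authors promise ``to help the reader''.

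The main obstacle is the bookkeeping around the grading shifts: one must be careful that ``indecomposable up to isomorphism and shift'' is the right notion (otherwise $q-1$ would annihilate nothing and the basis statement fails), and that the unique decomposition theorem in the graded Krull--Schmidt setting genuinely holds under the stated hypotheses (hom-spaces finite-dimensional in each degree, grading bounded below) — this is what makes $\End_C(X)$ into a ring in which idempotents lift and indecomposability is equivalent to the endomorphism ring having no nontrivial idempotents. Once that foundational point is in place, the rest is formal. Everything else — preservation of $\Hom$, hence of $\End$, hence of indecomposability and of isomorphism type — is immediate from full faithfulness, and the linear-algebra conclusion is trivial.
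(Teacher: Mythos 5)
Your proof is correct and follows essentially the same route as the paper's brief sketch: Krull--Schmidt gives that $K_0$ is free over $\bZ[q,q^{-1}]$ on shift-classes of indecomposables, full faithfulness preserves endomorphism rings and hence indecomposability and isomorphism type, and a linear map taking a basis injectively into a basis is injective. The extra care you take with shifts and with idempotent-splitting is the right thing to flag, but it is the same argument.
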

\noindent Since $C$ and $D$ are Krull-Schmidt, each object in 
$C$ or $D$ can 
be uniquely decomposed into indecomposables, which generate $K_0(C)$ and 
$K_0(D)$ respectively. Being fully faithful, $\mathcal F$ maps the set of 
indecomposables in $C$ injectively into the set of indecomposables in $D$.      

We now get to the main part of this section. By simply checking the 
definitions, we see that the following square commutes:
\begin{equation}
\begin{CD}
\label{cd:groth}
\U @>{\gamma_U}>>K_0^{\bQ(q)}(\UcatD)\\
@V{\phi_{n,d}}VV @VV{K_0^{\bQ(q)}(\Psi_{n,d})}V\\
\SD(n,d)@>{\gamma_S}>> K_0^{\bQ(q)}(\ScatD(n,d)).
\end{CD} 
\end{equation}
We know that $\phi_{n,d}$ is surjective and $\gamma_U$ is an isomorphism. 
We also know that $\Psi_{n,d}$ is full, but we cannot automatically conclude 
that $K_0^{\bQ(q)}(\Psi_{n,d})$ is surjective, because $\ENDS(x)$ is infinite-dimensional 
for any $1$-morphism $x$. We want to prove that 
$K_0^{\bQ(q)}(\Psi_{n,d})$ and $\gamma_S$ are surjective. Of course it suffices to 
prove that $\gamma_S$ is surjective. 

Let us first sketch the chain of arguments that leads to the proof of 
the surjectivity of $\gamma_U$ in Theorem 1.1 in~\cite{K-L3}. 
The proof is by induction with respect to the width of an 
indecomposable $1$-morphism $P$ in $\Ucat$, which by definition is the smallest 
non-negative integer $m$ such that $P$ is isomorphic 
to a direct summand of ${\mathcal E}_{\mathbf i}1_{\lambda}\{t\}$ 
with $||{\mathbf i}||=m$. In Lemma 3.38 Khovanov and Lauda prove 
that any indecomposable object of 
width $m$ is isomorphic to a direct summand of 
$\mathcal{E}_{\nu,-\nu'}1_{\lambda}\{t\}$, for certain $\lambda\in\bZ^{n-1}$, 
$t\in\bZ$ and $\nu,\nu'\in\bN[I]$, such that 
$\vert\vert\nu\vert\vert+\vert\vert\nu'\vert\vert=m$. This narrows down the 
number of cases that need to be considered in the proof of Theorem 1.1. 

Next, suppose $P$ has width zero, then 
$P\cong 1_{\lambda}$ up to a shift, and $K_0^{\bQ(q)}(\ENDU(1_{\lambda}))$ 
lies in the image of $\gamma_U$, because it is isomorphic to $\Q$ 
with generator $[1_{\lambda}]$. The induction step relies on 
the exact sequence of rings (3.38)
\begin{equation}
\label{eq:KLExSeq}
0\to I_{\nu,-\nu',\lambda}\to \ENDU({\mathcal E}_{\nu,-\nu'}
1_{\lambda})\to R_{\nu,-\nu',\lambda}\to 0.
\end{equation}
Recall that for $\mathfrak{g}=\mathfrak{sl}_n$, the ring 
$R_{\nu,-\nu',\lambda}$ is isomorphic to that of 2-morphisms whose 
diagrams are split into upward strands with source and target belonging to 
$\nu$, downward strands with source and target belonging to $-\nu'$, and 
bubbles on the right-hand side. The ideal 
$I_{\nu,-\nu',\lambda}$ is generated by diagrams which contain 
at least one cup or cap between $\nu$ and $-\nu'$. Note that the latter are 
precisely the 2-morphisms which factor through a direct sum of objects with 
width smaller than $\vert\vert\nu\vert\vert+\vert\vert\nu'\vert\vert$. 
As they remark in Remark 3.18, this exact sequence is 
split for $\mathfrak{g}=\mathfrak{sl}_n$. Therefore there is a direct sum 
decomposition  
\begin{equation}
\label{eq:KLDirSum}
K_0^{\bQ(q)}(\ENDU({\mathcal E}_{\nu,-\nu'}
1_{\lambda}))\cong K_0^{\bQ(q)}(I_{\nu,-\nu',\lambda})
\oplus K_0^{\bQ(q)}(R_{\nu,-\nu',\lambda}).
\end{equation}
The fact that $K_0^{\bQ(q)}(R_{\nu,-\nu',\lambda})$ lies in the image of 
$\gamma_U$ is essentially a consequence of the results in~\cite{K-L1, K-L2} 
and a technical result involving a virtually nilpotent ideal, the details 
of which we do not need here. On the other hand, The 2-morphisms in 
$I_{\nu,-\nu',\lambda}$ factor through direct sums of objects 
of smaller width, so any minimal idempotent in this ideal corresponds to an 
object of smaller width. Therefore 
$K_0^{\bQ(q)}(I_{\nu,-\nu',\lambda})$ 
lies in the image of $\gamma_U$ by induction. This shows that 
$K_0^{\bQ(q)}(\ENDU({\mathcal E}_{\nu,-\nu'}
1_{\lambda}))$ lies in the image of $\gamma_U$, as had to be proved. 
We should warn the reader that, contrary to what might seem at a first 
reading, the direct sum decomposition in~\eqref{eq:KLDirSum} does not 
preserve indecomposability. For example, consider the direct sum 
$EF1_1\cong FE1_1\oplus 1_1$ for $n=2$. This corresponds to the diagrammatic 
equation 
\begin{eqnarray}
\label{eq:EFagain}
 \vcenter{\xy 0;/r.18pc/:
  (0,0)*{\dblue\xybox{
  (-8,0)*{};
  (8,0)*{};
  (-4,10)*{}="t1";
  (4,10)*{}="t2";
  (-4,-10)*{}="b1";
  (4,-10)*{}="b2";
  "t1";"b1" **\dir{-} ?(.5)*\dir{<};
  "t2";"b2" **\dir{-} ?(.5)*\dir{>};}};
  (-6,-8)*{};
  (6,-8)*{};
  (10,2)*{(1)};
  (-10,2)*{(1)};
  \endxy}
&\quad = \quad&
 \vcenter{   \xy 0;/r.18pc/:
    (0,0)*{\dblue\xybox{
    (-4,-4)*{};(4,4)*{} **\crv{(-4,-1) & (4,1)}?(1)*\dir{>};
    (4,-4)*{};(-4,4)*{} **\crv{(4,-1) & (-4,1)}?(1)*\dir{<};?(0)*\dir{<};
    (-4,4)*{};(4,12)*{} **\crv{(-4,7) & (4,9)};
    (4,4)*{};(-4,12)*{} **\crv{(4,7) & (-4,9)}?(1)*\dir{>};}};
    (10,2)*{(1)};(-6,-7)*{};(6.8,-7)*{};
 \endxy}
  \quad - \quad
    \vcenter{\xy 0;/r.18pc/:
    (10,2)*{(1)};
  (0,0)*{\dblue\xybox{
  (-4,-8)*{}="b1";
  (4,-8)*{}="b2";
  "b2";"b1" **\crv{(5,-2) & (-5,-2)}; ?(.05)*\dir{<} ?(.93)*\dir{<}
  ?(.8)*\dir{}+(0,-.1)*{}+(-5,2)*{};
  (-4,8)*{}="t1";
  (4,8)*{}="t2";
  "t2";"t1" **\crv{(5,2) & (-5,2)}; ?(.15)*\dir{>} ?(.95)*\dir{>}
  ?(.4)*\dir{};}}
  \endxy}
\end{eqnarray}
The identity on $EF1_1$ is an indecomposable idempotent in $R_{+,-,(1)}$, 
but can be decomposed in $\ENDU(\mathcal{E}_{+,-}1_1)$ into the two 
indecomposable idempotents on the right-hand side of~\eqref{eq:EFagain}, 
which have width 2 and 0 respectively. Note that the 
second term on the right-hand side belongs to $I_{+,-,(1)}$. So 
Khovanov and Lauda's homomorphism
$$\beta\colon \ENDU(\mathcal{E}_{+,-}1_1)\to R_{+,-,(1)}$$ 
maps the first term on the right-hand side to the identity on $EF1_1$. The map 
backwards, which they call $\alpha$, is simply the inclusion, so it maps the 
identity to the identity. In the induction step above, one therefore 
writes 
\begin{eqnarray*}
\vcenter{\xy 0;/r.18pc/:
    (0,0)*{\dblue\xybox{
    (-4,-4)*{};(4,4)*{} **\crv{(-4,-1) & (4,1)}?(1)*\dir{>};
    (4,-4)*{};(-4,4)*{} **\crv{(4,-1) & (-4,1)}?(1)*\dir{<};?(0)*\dir{<};
    (-4,4)*{};(4,12)*{} **\crv{(-4,7) & (4,9)};
    (4,4)*{};(-4,12)*{} **\crv{(4,7) & (-4,9)}?(1)*\dir{>};}};
    (10,2)*{(1)};(-6,-7)*{};(6.8,-7)*{};
 \endxy}
&\quad = \quad
\vcenter{\xy 0;/r.18pc/:
  (0,0)*{\dblue\xybox{
  (-8,0)*{};
  (8,0)*{};
  (-4,10)*{}="t1";
  (4,10)*{}="t2";
  (-4,-10)*{}="b1";
  (4,-10)*{}="b2";
  "t1";"b1" **\dir{-} ?(.5)*\dir{<};
  "t2";"b2" **\dir{-} ?(.5)*\dir{>};}};
  (-6,-8)*{};
  (6,-8)*{};
  (10,2)*{(1)};
  (-10,2)*{(1)};
  \endxy}
\quad - \left( - \quad
    \vcenter{\xy 0;/r.18pc/:
    (10,2)*{(1)};
  (0,0)*{\dblue\xybox{
  (-4,-8)*{}="b1";
  (4,-8)*{}="b2";
  "b2";"b1" **\crv{(5,-2) & (-5,-2)}; ?(.05)*\dir{<} ?(.93)*\dir{<}
  ?(.8)*\dir{}+(0,-.1)*{}+(-5,2)*{};
  (-4,8)*{}="t1";
  (4,8)*{}="t2";
  "t2";"t1" **\crv{(5,2) & (-5,2)}; ?(.15)*\dir{>} ?(.95)*\dir{>}
  ?(.4)*\dir{};}}
  \endxy}\right)
\end{eqnarray*}
to prove that the class of the indecomposable summand of $EF1_1$ of width 2 
corresponding to the idempotent on the left-hand side, belongs to the image of 
$\gamma_U$. 
      
Next let us see how Khovanov and Lauda's proofs can be adapted to our setting. 
In the first place, note that all results in Section 3.5 of~\cite{K-L3} 
continue to be true. More precisely, the statements in their 
Propositions 3.24, 3.25 and 3.26 are still true, although some 
direct summands might now be zero depending on the labels of the 
regions in the diagrams. The crucial Lemma 3.38 in Section 3.8 in~\cite{K-L3} 
holds literally true in our case just as well. 

Let us now prove the analogue of their Theorem 1.1. Our proof is essentially 
the same, except that we use the fact that $\gamma_U$ is an isomorphism and 
$\Psi_{n,d}$ is full to avoid having to formulate and use analogues of 
the results in~\cite{K-L1} and~\cite{K-L2}, which might be hard. 
This is the reason why we did not go into the details of those results above.  

\begin{lem} 
\label{lem:surj}
The homomorphism 
$$\gamma_S\colon \SD(n,d)\to K_0^{\bQ(q)}(\ScatD(n,d))$$
is surjective. 
\end{lem}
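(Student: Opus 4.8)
The plan is to follow Khovanov and Lauda's induction on the width of an indecomposable $1$-morphism $P$ in $\ScatD(n,d)$, adapting it to the quotient setting. First I would recall that by the analogue of Lemma 3.38 in~\cite{K-L3} (which, as remarked in the excerpt, holds literally in our case) any indecomposable $P$ of width $m$ is isomorphic to a direct summand of $\mathcal{E}_{\nu,-\nu'}1_{\lambda}\{t\}$ with $\lambda\in\Lambda(n,d)$, $t\in\bZ$ and $\nu,\nu'\in\bN[I]$ satisfying $\vert\vert\nu\vert\vert+\vert\vert\nu'\vert\vert=m$. Hence it suffices to show that $K_0^{\bQ(q)}(\ENDS(\mathcal{E}_{\nu,-\nu'}1_{\lambda}))$ lies in the image of $\gamma_S$ for all such data.

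The base case $m=0$ is immediate: then $P\cong 1_{\lambda}\{t\}$ and $K_0^{\bQ(q)}(\ENDS(1_{\lambda}))\cong \bQ(q)$ with generator $[1_{\lambda}]=\gamma_S(1_{\lambda})$, since $\ENDS(1_{\lambda})$ is a non-negatively graded connected algebra (its degree-zero part is $\bQ$, by Lemma~\ref{lem:superschur} and the surjection $S\Pi_{\lambda}\to\ENDS(1_\lambda)$). For the induction step, I would first push the exact sequence~\eqref{eq:KLExSeq} through $\Psi_{n,d}$. Applying the $2$-functor $\Psi_{n,d}$ to $\ENDU(\mathcal{E}_{\nu,-\nu'}1_{\tilde\lambda})$, where $\tilde\lambda=\overline{\lambda}$, and using that $\Psi_{n,d}$ is full (Definition~\ref{defn:Psi}), we obtain a surjection $\ENDU(\mathcal{E}_{\nu,-\nu'}1_{\tilde\lambda})\twoheadrightarrow\ENDS(\mathcal{E}_{\nu,-\nu'}1_{\lambda})$. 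The ideal $I_{\nu,-\nu',\tilde\lambda}$ of diagrams factoring through a cup-cap between $\nu$ and $-\nu'$ maps onto the corresponding ideal $I^S_{\nu,-\nu',\lambda}\subset\ENDS(\mathcal{E}_{\nu,-\nu'}1_{\lambda})$, whose elements factor through objects of strictly smaller width; and the quotient $R_{\nu,-\nu',\tilde\lambda}$ maps onto a quotient ring $R^S_{\nu,-\nu',\lambda}$ of $2$-morphisms built from upward/downward strands and bubbles. Thus $K_0^{\bQ(q)}(\ENDS(\mathcal{E}_{\nu,-\nu'}1_{\lambda}))$ decomposes (the split exact sequence of Remark 3.18 in~\cite{K-L3} survives the quotient, since $\Psi_{n,d}$ preserves the splitting maps $\alpha,\beta$) as $K_0^{\bQ(q)}(I^S_{\nu,-\nu',\lambda})\oplus K_0^{\bQ(q)}(R^S_{\nu,-\nu',\lambda})$.

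It then remains to treat the two summands. For the $R^S$ part: $R^S_{\nu,-\nu',\lambda}$ is a quotient of $R_{\nu,-\nu',\tilde\lambda}$ by a virtually nilpotent ideal of finite codimension in each degree — here I would invoke the reduction-to-bubbles relations in the extended calculus~\cite{K-L-M-S}, which (as noted in Subsection~\ref{sec:struct}) bound the number of dots on each strand, so that the kernel of the surjection is generated by such higher-dot diagrams and is virtually nilpotent — and by Corollary~\ref{cor:virtnilpot} its $K_0$ agrees with that of $R_{\nu,-\nu',\tilde\lambda}$. Since $R_{\nu,-\nu',\tilde\lambda}$ is isomorphic (by Khovanov--Lauda) to a tensor product of nilHecke-type algebras $\iRi\otimes{}_{\mathbf j}R(\nu')_{\mathbf j}$ with a bubble polynomial ring, whose indecomposable projectives lift to $\gamma_U$-images, the images of their idempotents under $\Psi_{n,d}$ give classes of the form $\gamma_S(E_{\ii}E_{-\jj}1_{\lambda})$, hence lie in the image of $\gamma_S$. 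For the $I^S$ part: every minimal idempotent in $I^S_{\nu,-\nu',\lambda}$ corresponds to an indecomposable summand of some $\mathcal{E}_{\mathbf k}1_{\mu}$ of strictly smaller width, whose class lies in the image of $\gamma_S$ by the inductive hypothesis. Combining, $[\mathcal{E}_{\nu,-\nu'}1_{\lambda}]$ and every summand thereof lie in $\im\gamma_S$, completing the induction.

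\textbf{Main obstacle.} The delicate point is controlling the kernel of the induced surjection $R_{\nu,-\nu',\tilde\lambda}\twoheadrightarrow R^S_{\nu,-\nu',\lambda}$: one must check that in the quotient by weights outside $\Lambda(n,d)$ this kernel is still virtually nilpotent (and of finite codimension in each degree), so that Corollary~\ref{cor:virtnilpot} applies and $K_0$ is unchanged. This is where the bounds on dots coming from the extended calculus~\cite{K-L-M-S} are essential, and where one must be careful that collapsing a weight to zero does not merge idempotents in a way that enlarges $K_0$ — though by Proposition~\ref{prop:surj}, $K_0$ of the quotient can only get smaller, so in fact the argument only needs that the relevant classes remain in the image, which follows since they are already $\Psi_{n,d}$-images of $\gamma_U$-images. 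Injectivity of $\gamma_S$ is, as the excerpt says, a separate matter handled by a different (representation-theoretic, via $\fbim$) method and is not part of this lemma.
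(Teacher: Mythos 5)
Your proposal is close in spirit to the paper's proof (same base case, same induction on width, same push of the Khovanov--Lauda exact sequence~\eqref{eq:KLExSeq} through $\Psi_{n,d}$), but there is one genuine gap at the heart of the induction step: you claim that the split exact sequence of Remark~3.18 in~\cite{K-L3} ``survives the quotient, since $\Psi_{n,d}$ preserves the splitting maps $\alpha,\beta$,'' and use this to write $K_0^{\bQ(q)}(\ENDS(\mathcal{E}_{\nu,-\nu'}1_{\lambda}))$ as a direct sum. The paper explicitly states that it does \emph{not} know whether the pushed sequence~\eqref{eq:MSVExSeq} is split. And indeed, applying $\Psi_{n,d}$ to the splitting $\alpha\colon R_{\nu,-\nu',\overline{\lambda}}\hookrightarrow\ENDU(\mathcal{E}_{\nu,-\nu'}1_{\overline{\lambda}})$ does \emph{not} produce a section of the quotient map $\ENDS(\mathcal{E}_{\nu,-\nu'}1_{\lambda})\to\ENDS(\mathcal{E}_{\nu,-\nu'}1_{\lambda})/\Psi_{n,d}(I_{\nu,-\nu',\overline{\lambda}})$, because there is no reason for the image of $\Psi_{n,d}\circ\alpha$ to meet $\Psi_{n,d}(I_{\nu,-\nu',\overline{\lambda}})$ trivially --- diagrams that were independent in $\Ucat$ can collapse onto the ideal after killing weights outside $\Lambda(n,d)$ (cf.\ the paper's remark after Theorem~\ref{thm:groth} that minimal diagrams fail to be linearly independent in $\Scat(n,d)$, illustrated by~\eqref{eq:EF} at $\lambda=(1,0)$). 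Your ``main obstacle'' paragraph gestures at this, but the proposed fix --- showing the kernel of $R_{\nu,-\nu',\overline{\lambda}}\twoheadrightarrow R^S_{\nu,-\nu',\lambda}$ is virtually nilpotent --- both is unproven and addresses the wrong surjection.

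The paper's workaround is to avoid any splitting at all. One keeps the virtually nilpotent ideal $\beta\alpha(J)$ inside $R_{\nu,-\nu',\overline{\lambda}}$ itself (not in a quotient) and uses it only to show that $K_0^{\bQ(q)}(R_{\nu,-\nu',\overline{\lambda}})$ surjects onto $K_0^{\bQ(q)}$ of the quotient $\ENDS(\mathcal{E}_{\nu,-\nu'}1_{\lambda})/\Psi_{n,d}(I_{\nu,-\nu',\overline{\lambda}})$. Then, given a minimal idempotent $e$ of width $m$, one lifts its image $\overline{e}$ in this quotient to an idempotent $e'\in R_{\nu,-\nu',\overline{\lambda}}$, uses the commuting square~\eqref{cd:groth} to conclude $[\Psi_{n,d}(e')]\in\im\gamma_S$, and then observes --- by minimality of $e$ --- that $\Psi_{n,d}(e')=e+e''$ with $e''$ an orthogonal idempotent lying in $\Psi_{n,d}(I_{\nu,-\nu',\overline{\lambda}})$ and hence of strictly smaller width, so $[e'']\in\im\gamma_S$ by the inductive hypothesis and $[e]=[\Psi_{n,d}(e')]-[e'']\in\im\gamma_S$. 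This idempotent-lifting step is the missing ingredient in your proposal; the rest of your argument (base case, reduction via Lemma~3.38, and the observation that one only needs classes to be in the image rather than an isomorphism) is sound and matches the paper.
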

\begin{proof}
For the basis of the induction, recall our surjection 
$S\Pi_{\lambda}\to \ENDS(1_{\lambda})$ explained in Section~\ref{sec:struct}. The 
ideal of elements of positive degree $S\Pi_{\lambda}^+$ is virtually nilpotent 
of codimension one, so by Corollary~\ref{cor:virtnilpot} it follows that 
$$\Q\cong K_0^{\bQ(q)}(S\Pi_{\lambda})\to K_0^{\bQ(q)}(\ENDS(1_{\lambda}))$$
is surjective. Therefore $K_0^{\bQ(q)}(\ENDS(1_{\lambda}))$ is generated by 
$[1_{\lambda}]$, i.e. $1_{\lambda}$ is also indecomposable in our case. 
Since $\gamma_S(1_{\lambda})=[1_{\lambda}]$, we see that 
$K_0^{\bQ(q)}(\ENDS(1_{\lambda}))$ lies in the image of $\gamma_S$. Note that 
we have not yet proved that $[1_{\lambda}]\ne 0$. After we have proved that 
$K_0^{\bQ(q)}(\ScatD(n,d))\cong \SD(n,d)$ in Theorem~\ref{thm:groth}, it 
follows that $K_0^{\bQ(q)}(\ENDS(1_{\lambda}))\cong \Q$ with $[1_{\lambda}]\ne 0$ 
being the generator.

For the induction step, note that $\Psi_{n,d}$ maps the exact 
sequence (\ref{eq:KLExSeq}) 
surjectively onto the exact sequence
\begin{equation}
\label{eq:MSVExSeq}
0\to \Psi_{n,d}(I_{\nu,-\nu',\overline{\lambda}})\to 
\ENDS({\mathcal E}_{\nu,-\nu'}
1_{\lambda})\to \ENDS({\mathcal E}_{\nu,-\nu'}1_{\lambda})/
\Psi_{n,d}(I_{\nu,-\nu',\overline{\lambda}})\to 0.
\end{equation}  
We do not know if this exact sequence is split, but fortunately it does not 
matter for our purpose. 

Note also that $\Psi_{n,d}$ induces a surjective map 
$$
R_{\nu,-\nu',\lambda}\to \ENDS({\mathcal E}_{\nu,-\nu'}1_{\lambda})/
\Psi_{n,d}(I_{\nu,-\nu',\overline{\lambda}}).
$$ 
Recall that Khovanov and Lauda defined a virtually nilpotent ideal $\beta\alpha(J)\subset R_{\nu,-\nu',\overline{\lambda}}$ of codimension one in 
Section 3.8.3 in~\cite{K-L3}, alluded to 
above. By Corollary~\ref{cor:virtnilpot} this 
implies that 
\begin{equation}
\label{eq:Rsurj}
K_0^{\bQ(q)}(R_{\nu,-\nu',\overline{\lambda}})\to 
K_0^{\bQ(q)}(\ENDS
({\mathcal E}_{\nu,-\nu'}1_{\lambda})/
\Psi_{n,d}(I_{\nu,-\nu',\overline{\lambda}}))
\end{equation}
is surjective. Now, just as in the proof of Theorem 1.1, let 
$e\in \ENDS({\mathcal E}_{\nu,-\nu'}1_{\lambda})$ be a minimal 
idempotent of width $m$, with $||\nu||+||\nu'||=m$. We have to show that 
$[({\mathcal E}_{\nu,-\nu'}1_{\lambda},e)]$ lies in the image of 
$\gamma_S$. Let $\overline{e}$ be the image of $e$ in $\ENDS
({\mathcal E}_{\nu,-\nu'}1_{\lambda})/
\Psi_{n,d}(I_{\nu,-\nu',\overline{\lambda}})$. Note that we do not know 
a priori that $\overline{e}$ is indecomposable, but that does not matter. 
By the surjectivity of~\eqref{eq:Rsurj}, we can lift $\overline{e}$ to an 
idempotent $e'\in 
R_{\nu,-\nu',\overline{\lambda}}$. By Khovanov and Lauda's results, we know that 
$$[({\mathcal E}_{\nu,-\nu'}1_{\overline{\lambda}},e')]\in 
K_0^{\bQ(q)}(\ENDU
({\mathcal E}_{\nu,-\nu'}1_{\lambda}))\subseteq 
K_0^{\bQ(q)}(\UcatD)$$ 
is in the 
image of $\gamma_U$. By the commutativity of the square in~\eqref{cd:groth}, 
this implies 
that 
$$[({\mathcal E}_{\nu,-\nu'}1_{\lambda},\Psi_{n,d}(e'))]\in 
K_0^{\bQ(q)}(\ENDS
({\mathcal E}_{\nu,-\nu'}1_{\lambda}))\subseteq 
K_0^{\bQ(q)}(\ScatD(n,d))$$ 
is in the image of $\gamma_S$. 
Note that $e-\Psi_{n,d}(e')$ maps to zero in $\ENDS
({\mathcal E}_{\nu,-\nu'}1_{\lambda})/
\Psi_{n,d}(I_{\nu,-\nu',\overline{\lambda}})$. By the minimality of $e$, 
we therefore have $\Psi_{n,d}(e')=e+e''$, with $e''$ an orthogonal idempotent 
in $\Psi_{n,d}(I_{\nu,-\nu',\overline{\lambda}})$ which can be decomposed 
into minimal idempotents of width $<m$. By induction 
$[({\mathcal E}_{\nu,-\nu'}1_{\lambda},e'')]$ is contained in the 
image of $\gamma_S$. This shows 
that $[({\mathcal E}_{\nu,-\nu'}1_{\lambda},e)]$ is contained in the image of 
$\gamma_S$ too, as we had to show. 
\end{proof}

The following two corollaries are immediate. 

\begin{cor} 
\label{cor:surj1}
The homomorphism 
$$K_0^{\bQ(q)}(\Psi_{n,d})\colon K_0^{\bQ(q)}(\dot{\mathcal U}(\mathfrak{sl}(n)))
\to K_0^{\bQ(q)}(\ScatD(n,d))$$ 
is surjective. 
\end{cor}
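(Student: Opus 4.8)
The statement to prove is Corollary~\ref{cor:surj1}, asserting surjectivity of $K_0^{\bQ(q)}(\Psi_{n,d})\colon K_0^{\bQ(q)}(\dot{\mathcal U}(\mathfrak{sl}(n)))\to K_0^{\bQ(q)}(\ScatD(n,d))$.

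\medskip

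The plan is to deduce this immediately from the surjectivity of $\gamma_S$, which is Lemma~\ref{lem:surj}, together with the commutativity of the square~\eqref{cd:groth}. First I would recall that square~\eqref{cd:groth} reads
\begin{equation*}
\begin{CD}
\U @>{\gamma_U}>>K_0^{\bQ(q)}(\UcatD)\\
@V{\phi_{n,d}}VV @VV{K_0^{\bQ(q)}(\Psi_{n,d})}V\\
\SD(n,d)@>{\gamma_S}>> K_0^{\bQ(q)}(\ScatD(n,d)),
\end{CD}
\end{equation*}
so that on elements we have $K_0^{\bQ(q)}(\Psi_{n,d})\circ\gamma_U=\gamma_S\circ\phi_{n,d}$. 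By Lemma~\ref{lem:surj} the bottom map $\gamma_S$ is surjective, and by the discussion recalled just below~\eqref{eq:psi} (and in Section~\ref{sec:hecke-schur}) the left vertical map $\phi_{n,d}\colon\U\to\SD(n,d)$ is surjective. Hence the composite $\gamma_S\circ\phi_{n,d}$ is surjective, and therefore so is $K_0^{\bQ(q)}(\Psi_{n,d})\circ\gamma_U$. Since $\gamma_U$ is an isomorphism (this is Khovanov and Lauda's Theorem~1.1, invoked repeatedly in Section~\ref{sec:grothendieck}), precomposition with it preserves surjectivity, so $K_0^{\bQ(q)}(\Psi_{n,d})$ is surjective.

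\medskip

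There is essentially no obstacle here: the corollary is a purely formal consequence of facts already in hand. The only point worth being careful about is that all four maps in the square are $\bQ(q)$-linear (equivalently $\bZ[q,q^{-1}]$-linear after tensoring), which is automatic from the definitions of $\gamma_U$, $\gamma_S$, $\phi_{n,d}$ and the fact that $\Psi_{n,d}$ is an additive degree-preserving $2$-functor; and that the square genuinely commutes, which is the content of the sentence preceding~\eqref{cd:groth} (``by simply checking the definitions''). Given those, the diagram chase is one line, and I would present it as such rather than reproving anything about $\gamma_U$ or $\gamma_S$.
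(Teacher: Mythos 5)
Your proof is correct and coincides with the paper's intended argument: the corollary is stated immediately after Lemma~\ref{lem:surj} with no further proof, and the sentence just above the commuting square~\eqref{cd:groth} ("Of course it suffices to prove that $\gamma_S$ is surjective") confirms that the diagram chase you perform is exactly what the authors had in mind. A tiny simplification worth noting: once $\gamma_S\circ\phi_{n,d}$ is known to be surjective, surjectivity of $K_0^{\bQ(q)}(\Psi_{n,d})$ already follows from the general fact that if a composite $f\circ g$ is surjective then so is $f$, without needing $\gamma_U$ to be an isomorphism or even surjective.
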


\begin{cor} 
\label{cor:surj2}
$K_0^{\bQ(q)}(\ScatD(n,d))$ is a quotient of $\SD(n,d)$. In particular 
$K_0^{\bQ(q)}(\ScatD(n,d))$ is finite-dimensional and semi-simple. 
\end{cor}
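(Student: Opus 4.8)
The statement follows almost immediately by combining the preceding results, so the ``proof'' is really an assembly of already-established facts. The plan is as follows. First, I would invoke the commutative square~\eqref{cd:groth}: we have $\gamma_U\colon\U\to K_0^{\bQ(q)}(\UcatD)$ an isomorphism (this is Khovanov--Lauda's Theorem~1.1, which we are assuming), $\phi_{n,d}\colon\U\to\SD(n,d)$ the surjection recalled in Section~\ref{sec:hecke-schur}, and $K_0^{\bQ(q)}(\Psi_{n,d})$ on the right. By Corollary~\ref{cor:surj1}, $K_0^{\bQ(q)}(\Psi_{n,d})$ is surjective, so chasing the square shows that $\gamma_S$ is surjective as well (this is exactly the content of Lemma~\ref{lem:surj}, which I would simply cite). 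The key point to extract next is that $\gamma_S$ factors through the quotient $\SD(n,d)$ of $\U$: indeed, by construction $\gamma_S\circ\phi_{n,d}=K_0^{\bQ(q)}(\Psi_{n,d})\circ\gamma_U$, and since $\gamma_U$ is an isomorphism and $K_0^{\bQ(q)}(\Psi_{n,d})$ is a well-defined homomorphism, $\gamma_S$ is a genuine algebra homomorphism out of $\SD(n,d)$.

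The conclusion is then purely formal. Since $\gamma_S\colon\SD(n,d)\to K_0^{\bQ(q)}(\ScatD(n,d))$ is a surjective homomorphism of $\bQ(q)$-algebras, $K_0^{\bQ(q)}(\ScatD(n,d))$ is a quotient of $\SD(n,d)$. Now recall from Section~\ref{sec:hecke-schur} that $\SD(n,d)\cong S_q(n,d)$ is a finite-dimensional split semi-simple unital $\bQ(q)$-algebra, with $\SD(n,d)\cong\prod_{\lambda\in\Lambda^+(n,d)}\End_{\bQ(q)}(V_\lambda)$. Any quotient of a finite-dimensional algebra is finite-dimensional, and any quotient of a semi-simple algebra is semi-simple (a quotient of a product of matrix algebras is a sub-product of matrix algebras, obtained by killing some of the factors). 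Hence $K_0^{\bQ(q)}(\ScatD(n,d))$ is finite-dimensional and semi-simple, which is the assertion of the Corollary.

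There is essentially no obstacle here: the Corollary is a bookkeeping consequence of Lemma~\ref{lem:surj} together with the structure of $\SD(n,d)$, and the only thing to be careful about is that $\gamma_S$ really is defined on $\SD(n,d)$ rather than only on $\U$ — but this is built into the definition of $\gamma_S$ via the defining relations of the Serre presentation in Lemma (the one presenting $\SD(n,d)$), which the classes $[\mathcal{E}_{s_1}\cdots\mathcal{E}_{s_m}1_\lambda]$ satisfy in $K_0^{\bQ(q)}(\ScatD(n,d))$ because $\Scat(n,d)$ was defined precisely by quotienting $\glcat$ by diagrams with regions outside $\Lambda(n,d)$. The real work — establishing surjectivity of $\gamma_S$ — was already done in Lemma~\ref{lem:surj} and Corollary~\ref{cor:surj1}, and the genuine hard part of the whole section, injectivity of $\gamma_S$, is deferred to the later proof of Theorem~\ref{thm:groth} and is not needed here.
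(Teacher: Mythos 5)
Your proof is correct and takes the same route the paper has in mind: the paper labels this corollary (together with Corollary~\ref{cor:surj1}) as ``immediate'' from Lemma~\ref{lem:surj}, and you have simply spelled out why --- $\gamma_S$ is a surjective algebra homomorphism by Lemma~\ref{lem:surj}, $\SD(n,d)\cong\prod_{\lambda\in\Lambda^+(n,d)}\End_{\bQ(q)}(V_\lambda)$ is finite-dimensional split semi-simple, and any quotient of such an algebra (obtained by killing some of the matrix factors) is again finite-dimensional and semi-simple. The only small inefficiency is the opening detour through Corollary~\ref{cor:surj1} and the square~\eqref{cd:groth}: in the paper Corollary~\ref{cor:surj1} is itself deduced from Lemma~\ref{lem:surj} via that square, so you can cite Lemma~\ref{lem:surj} directly without routing through $K_0^{\bQ(q)}(\Psi_{n,d})$. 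Your remark on why $\gamma_S$ is well-defined on $\SD(n,d)$ is correct but not strictly needed here, since the paper already constructs $\gamma_S$ as a homomorphism out of $\SD(n,d)$ above the diagram~\eqref{cd:groth}.
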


Before we prove the main result of this paper, we first categorify the 
homomorphism ${\iota}_{n,m}$ from Section~\ref{sec:hecke-schur}. 
Let $m\geq n$ and $d$ arbitrary. Let $\Xi_{n,m}=\oplus_{\lambda\in
\Lambda(n,d)}1_{\lambda}\in \Scat(m,d)$. Let $\Scat(n,m,d)$ 
be the full sub-2-category of $\Scat(m,d)$ whose objects belong to 
$\Lambda(n,d)\subseteq \Lambda(m,d)$.

\begin{defn}
\label{defn:INCL}
Let $m\geq n$ and $d$ arbitrary. We define a functor 
$${\mathcal I}_{n,m}\colon \Scat(n,d)\to \Scat(n,m,d)$$
by mapping any diagram in $\Scat(n,d)$ to itself, using the inclusion 
$\Lambda(n,d)\subseteq\Lambda(m,d)$ for the labels. 
\end{defn}
\noindent It is easy to see that ${\mathcal I}_{n,m}$ is well-defined and 
essentially surjective. We conjecture it to be faithful, but have no proof. 
It is certainly not full, because $\Scat(n,m,d)$ contains 
$n$-colored bubbles for example. Perhaps there is a virtually nilpotent ideal 
$I\subset \Scat(n,m,d)$ such that $\Scat(n,d)\cong \Scat(n,m,d)/I$, e.g. the 
ideal generated by all diagrams with $n$-colored bubbles of positive degree 
on the right-hand side.    

\begin{thm}
\label{thm:groth} The homomorphism 
$$\gamma_S\colon \SD(n,d)\to K_0^{\bQ(q)}(\ScatD(n,d))$$ is an isomorphism.
\end{thm}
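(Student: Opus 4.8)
The plan is to combine the surjectivity statement already proved in Lemma~\ref{lem:surj} with an injectivity argument, so that $\gamma_S$ becomes a bijective algebra homomorphism. By Lemma~\ref{lem:surj} and Corollary~\ref{cor:surj2} we already know that $K_0^{\bQ(q)}(\ScatD(n,d))$ is a quotient of $\SD(n,d)$ via $\gamma_S$. Since $\SD(n,d)\cong S_q(n,d)$ is finite-dimensional and split semi-simple, isomorphic to $\prod_{\lambda\in\Lambda^+(n,d)}\End_{\Q(q)}(V_{\lambda})$, every quotient of it is a product over a subset $\Lambda'\subseteq\Lambda^+(n,d)$; so injectivity of $\gamma_S$ is equivalent to showing that no simple factor $\End_{\Q(q)}(V_{\lambda})$ lies in the kernel. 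This is exactly the situation tailor-made for Lemma~\ref{lem:emb}, provided $0<d\leq n$, and the case $d>n$ can be handled separately by reduction via the projection $2$-functors $\Pi_{d',d}$ and $\pi_{d',d}$, or by an independent argument.

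First I would treat the case $d\leq n$. Here one uses the $q$-Schur functor machinery: $\sigma_{n,d}\colon H_q(d)\to 1_d\SD(n,d)1_d$ is an embedding, and Lemma~\ref{lem:emb} says that if $\pi\colon\SD(n,d)\to A$ is a surjection of $\Q(q)$-algebras with $\pi\circ\sigma_{n,d}$ still an embedding, then $A\cong\SD(n,d)$. So I would apply this with $A=K_0^{\bQ(q)}(\ScatD(n,d))$ and $\pi=\gamma_S$ (which is surjective by Lemma~\ref{lem:surj}). The crucial point to verify is that $\gamma_S\circ\sigma_{n,d}\colon H_q(d)\to K_0^{\bQ(q)}(\ScatD(n,d))$ is injective. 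This is where the diagrammatic Soergel category enters: by Proposition~\ref{prop:fulfaith} (for $d=n$) or Proposition~\ref{prop:ufull} (for $d<n$), the functor $\Sigma_{n,d}$ is an equivalence of categories $\mathcal{SC}_1^{(\prime)}(d)\to\Scat(n,d)^*((1^d),(1^d))$. Passing to Karoubi envelopes and split Grothendieck groups, $\Sigma_{n,d}$ induces an isomorphism on $K_0^{\bQ(q)}$; combined with the Elias--Khovanov--Soergel Theorem~\ref{thm:e-k-s} that $K_0^{\bQ(q)}(\mathcal{SC}(d))\cong H_q(d)$, this identifies the subalgebra $1_d K_0^{\bQ(q)}(\ScatD(n,d))1_d$ with $H_q(d)$, and by checking that $\Sigma_{n,d}$ categorifies $\sigma_{n,d}$ (which is recorded where $\Sigma_{n,d}$ is defined) the composite $\gamma_S\circ\sigma_{n,d}$ is precisely this isomorphism — in particular injective. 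Then Lemma~\ref{lem:emb} gives $K_0^{\bQ(q)}(\ScatD(n,d))\cong\SD(n,d)$, and since this isomorphism is realized by the surjection $\gamma_S$ between algebras of the same finite dimension, $\gamma_S$ itself is the isomorphism.

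For the case $d>n$, I would use the compatibility $\pi_{d',d}\psi_{n,d'}=\psi_{n,d}$ together with the categorical analogue $\Pi_{d',d}\Psi_{n,d'}=\Psi_{n,d}$, but more directly: one can instead observe that $\SD(n,d)$ is a quotient of $\SD(d,d)$ via ${\iota}_{n,d}^{-1}$-type maps (using $\Lambda(n,d)\subseteq\Lambda(d,d)$ and the functor ${\mathcal I}_{n,d}$ of Definition~\ref{defn:INCL}), so that the simple factors of $\SD(n,d)$ are a subset of those of $\SD(d,d)$; since $\gamma_S$ is known to be surjective and $\Lambda^+(n,d)\subseteq\Lambda^+(d,d)$, injectivity for $\SD(d,d)$ (already handled, as $d\leq d$) can be transported. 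Alternatively, and more cleanly, I would note that the argument via $\sigma_{n,d}$ and Lemma~\ref{lem:emb} in fact only needs $d\leq n$; for $d>n$ one uses the embedding of $q$-Schur algebras and the fact that $K_0^{\bQ(q)}(\Psi_{n,d})$ fits into the commuting square~\eqref{cd:groth} with $\gamma_U$ an isomorphism, so injectivity of $\gamma_S$ on the image of $\phi_{n,d}$ — which is all of $\SD(n,d)$ — follows from injectivity of $\gamma_U$ once one knows $K_0^{\bQ(q)}(\Psi_{n,d})$ does not collapse any simple summand, and this non-collapsing is detected by the faithful $2$-representation $\fbim$ of Proposition~\ref{prop:fbim}. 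I expect the main obstacle to be the bookkeeping needed to make the identification ``$\gamma_S\circ\sigma_{n,d}$ equals the $K_0$-isomorphism induced by $\Sigma_{n,d}$'' fully rigorous — in particular checking that the indecomposable summands in $\Scat(n,d)^*((1^d),(1^d))$ correspond under $\Sigma_{n,d}$ to the Soergel bimodules whose classes form the Kazhdan--Lusztig basis of $H_q(d)$, and handling the idempotent-completion carefully so that no rank information is lost when passing through the Karoubi envelope.
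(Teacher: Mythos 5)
Your argument for the case $d\leq n$ is essentially the paper's own: surjectivity of $\gamma_S$ from Lemma~\ref{lem:surj}, plus the chain Proposition~\ref{prop:fulfaith}/\ref{prop:ufull} $\to$ Proposition~\ref{prop:inj} $\to$ Theorem~\ref{thm:e-k-s} $\to$ Lemma~\ref{lem:emb} to force the dimensions to agree. (The paper deduces the injectivity of the composite $\gamma_S\circ\sigma_{n,d}$ by passing the equivalence of Proposition~\ref{prop:fulfaith}/\ref{prop:ufull} to $K_0$ via Proposition~\ref{prop:inj}, which is exactly the ``careful handling of idempotent completion'' you flag as a worry; it is genuinely needed and is the role of Proposition~\ref{prop:inj}.)

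For $d>n$ your sketch has a real gap. You assert that $\SD(n,d)$ is a \emph{quotient} of $\SD(d,d)$ ``via $\iota_{n,d}^{-1}$-type maps.'' That is not correct: $\iota_{n,d}$ is an isomorphism onto the \emph{corner algebra} $\xi_{n,d}\SD(d,d)\xi_{n,d}$, and for a split semisimple algebra $A=\prod_i M_{n_i}$ and an idempotent $e$, the corner $eAe=\prod_i M_{m_i}$ with $0\leq m_i\leq n_i$ is generally neither a quotient nor a unital subalgebra of $A$, so the ``simple factors'' argument as you state it doesn't go through. Your second alternative — detecting non-collapsing of simple summands via $\fbim$ and the square~\eqref{cd:groth} — also does not work as written, because the paper does not establish that $\fbim$ is a faithful $2$-functor, and without that you cannot conclude $K_0^{\bQ(q)}(\Psi_{n,d})$ kills nothing. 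The actual argument in the paper is cleaner and purely formal: it uses the commuting square
$$
\begin{CD}
\SD(n,d)@>{\iota_{n,d}}>>\xi_{n,d}\SD(d,d)\xi_{n,d}\\
@V{\gamma_S(n)}VV @VV{\gamma_S(d)}V\\
K_0^{\bQ(q)}(\ScatD(n,d))@>{K_0^{\bQ(q)}({\mathcal I}_{n,d})}>>K_0^{\bQ(q)}(\ScatD(n,d,d)),
\end{CD}
$$
where $\iota_{n,d}$ is an isomorphism by definition and $\gamma_S(d)$ restricted to the corner is an isomorphism by the already-proved case $n=d$; the square then forces $\gamma_S(n)$ to be injective, and Lemma~\ref{lem:surj} finishes the job. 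You should replace your $d>n$ discussion with this square argument, which needs no faithfulness of $\fbim$ and no claim about quotients of $\SD(d,d)$.
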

\begin{proof}
After the result of Lemma~\ref{lem:surj} it only remains to show that 
$K_0^{\bQ(q)}(\Scat(n,d))$ and $\SD(n,d)$ have the same dimension. 

We first show the case $n=d$. Let $1_n=1_{(1^n)}$. In 
Proposition~\ref{prop:fulfaith} we proved that 
$\mathcal{SC}_1(n)\cong \Scat(n,n)^*((1^n),(1^n))$ is a full sub-2-category of 
$\Scat(n,n)^*$. By Proposition~\ref{prop:inj} this implies 
$$K_0^{\bQ(q)}(\mathcal{SC}(n))\cong K_0^{\bQ(q)}(\ScatD(n,n)((1^n),(1^n)))\subseteq 
K_0^{\bQ(q)}(\ScatD(n,n)).$$
By Theorem~\ref{thm:e-k-s} we know that $K_0^{\bQ(q)}(\mathcal{SC}(n))$ is isomorphic 
to $H_q(n)$. Thus Lemma~\ref{lem:emb} implies that 
$K_0^{\bQ(q)}(\ScatD(n,n))\cong \SD(n,n)$. 

Now let $d<n$. In Proposition~\ref{prop:ufull} we proved that 
$\mathcal{SC}'_1(d)\cong \Scat(n,d)^*((1^d),(1^d))$ is a full sub-2-category 
of $\Scat(n,d)^*$. By Proposition~\ref{prop:inj} this implies
$$K_0^{\bQ(q)}(\mathcal{SC}'(d))\cong K_0^{\bQ(q)}(\ScatD(n,d)((1^d),(1^d)))
\subseteq K_0^{\bQ(q)}(\ScatD(n,d)).$$
By Theorem~\ref{thm:e-k-s} we know that $K_0^{\bQ(q)}(\mathcal{SC}'(d))$ 
is isomorphic to $H_q(d)$. Thus Lemma~\ref{lem:emb} shows that 
$K_0^{\bQ(q)}(\ScatD(n,d))\cong \SD(n,d)$. 

Next, assume that $n<d$. 
Consider the functor 
$${\cal I}_{n,d}\colon \Scat(n,d)\to \Scat(n,d,d).$$ 
We have the following commuting square
$$
\begin{CD}
\SD(n,d)@>{\iota_{n,d}}>>\xi_{n,d}\SD(d,d)\xi_{n,d}\\
@V{\gamma_S(n)}VV @VV{\gamma_S(d)}V\\
K_0^{\bQ(q)}(\ScatD(n,d))@>{K_0^{\bQ(q)}({\mathcal I}_{n,d})}>>K_0^{\bQ(q)}(\ScatD(n,d,d)).
\end{CD}
$$
We already know that $\gamma_S(d)\colon \SD(d,d)\to K_0^{\bQ(q)}(\Scat)(d,d))$ 
is an isomorphism from the first case we proved. 
Therefore $\gamma_S(d)\colon\xi_{n,d}\SD(n,d)\xi_{n,d}\to 
[\Xi_{n,d}]K_0^{\bQ(q)}(\ScatD(d,d))[\Xi_{n,d}]\cong K_0^{\bQ(q)}(\ScatD(n,d,d))$ 
is an isomorphism as well. Recall that $\iota_{n,d}$ is an isomorphism. 
It follows that 
$\gamma_S(n)$ is injective. Recall that $\gamma_S(n)$ is surjective, 
by Lemma~\ref{lem:surj}. It follows that $K_0^{\bQ(q)}(\ScatD(n,d))\cong \SD(n,d)$.    
\end{proof}

Note that we did not follow Khovanov and Lauda's approach to prove injectivity 
of $\gamma_S$. Recall that they defined a non-degenerate $\Q$-semilinear 
form on 
$\dot{U}(\mathfrak{sl}_n)$, which is closely related to Lusztig's bilinear 
form in~\cite{Lu}, and defined an inner product on $K_0^{\bQ(q)}(\UcatD)$ 
by 
$$\langle[x],[y]\rangle=\dim_q(\HOMUD(x,y)).$$
They showed that $\gamma_U$ is injective by proving that it is an isometry. 
We could not prove that $\gamma_S$ is injective in this way, because 
we could not find such a $\Q$-semilinear form on $\SD(n,d)$ in the 
literature.\footnote{Williamson defines such a form in~\cite{Will} 
for $n=d$, but we do 
not know of any diagrammatic interpretation of his form even in that 
restricted case. We conjecture that his form is equivalent to ours for $n=d$. 
This is the only related form in the literature that we could find, even after 
asking numerous experts.} 
By our Theorem~\ref{thm:groth}, we can define one now. 
We first define a non-degenerate $\Q$-semilinear form on 
$K_0^{\bQ(q)}(\ScatD(n,d))$ as above
$$\langle [x],[y]\rangle=\dim_q(\HOMSD(x,y)).$$
\begin{defn} We define a non-degenerate $\Q$-semilinear form on $\SD(n,d)$ by 
$$\langle x,y\rangle=\langle\gamma_S(x),\gamma_S(y)\rangle.$$
\end{defn}
\noindent By definition $\gamma_S$ is an isometry. It is easy to see that the 
semilinear form on $\SD(n,d)$ has the following properties 
(compare to Proposition 2.4 in~\cite{K-L3}):
\begin{cor} We have  
\begin{enumerate} 
\item $\langle 1_{\lambda_1}x1_{\lambda_2},1_{\lambda'_1}x1_{\lambda'_2}\rangle=0$ 
for all $x,y$ unless $\lambda_1=\lambda'_1$ and $\lambda_2=\lambda'_2$.
\item $\langle ux,y\rangle=\langle x,\tau(u)y\rangle.$ 
\end{enumerate}
\end{cor}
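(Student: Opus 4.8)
The plan is to deduce both properties directly from the defining equation $\langle x,y\rangle=\langle\gamma_S(x),\gamma_S(y)\rangle=\dim_q\HOMSD(\gamma_S(x),\gamma_S(y))$, using only basic properties of $\gamma_S$, of the $2$-category $\ScatD(n,d)$, and of the anti-involution $\tilde\tau$ from Lemma~\ref{lem:tildetau}. Recall that $\gamma_S(1_{\lambda})=[1_{\lambda}]$ and that $\gamma_S$ is a homomorphism, so $\gamma_S(1_{\lambda_1}x1_{\lambda_2})$ is the class of a $1$-morphism lying in $\ScatD(n,d)(\lambda_2,\lambda_1)$, i.e. a $1$-morphism from $\lambda_2$ to $\lambda_1$ (reading the composition convention of Definition~\ref{def_glcat}).

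For part (1): if $x'=1_{\lambda_1}x1_{\lambda_2}$ and $y'=1_{\lambda'_1}y1_{\lambda'_2}$, then $\gamma_S(x')$ is (a $\bZ[q,q^{-1}]$-combination of classes of) $1$-morphisms with source $\lambda_2$ and target $\lambda_1$, while $\gamma_S(y')$ has source $\lambda'_2$ and target $\lambda'_1$. A $2$-morphism in $\ScatD(n,d)$ between two $1$-morphisms can only be nonzero if the source objects agree and the target objects agree, since $\HOMSD$ is computed inside a single hom-category $\ScatD(n,d)(\lambda_2,\lambda_1)$. Hence $\HOMSD(\gamma_S(x'),\gamma_S(y'))=0$ unless $\lambda_2=\lambda'_2$ and $\lambda_1=\lambda'_1$, which gives $\langle x',y'\rangle=0$ in that case. (One should note the harmless index mismatch in the statement: the claim is clearly meant to read $\langle 1_{\lambda_1}x1_{\lambda_2},1_{\lambda'_1}y1_{\lambda'_2}\rangle=0$ unless $\lambda_1=\lambda'_1$ and $\lambda_2=\lambda'_2$, and I would state it that way.)

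For part (2): here the key input is Lemma~\ref{lem:tildetau}, which gives a degree zero isomorphism of graded $\Q$-vector spaces $\HOMS(fx,y)\cong\HOMS(x,\tilde\tau(f)y)$, and by passing to the Karoubi envelope and taking graded dimensions, $\dim_q\HOMSD(fx,y)=\dim_q\HOMSD(x,\tilde\tau(f)y)$ for $1$-morphisms $x,y$ and any $1$-morphism $f$. The other ingredient is that $\tilde\tau$ descends on Grothendieck groups to the anti-involution $\tau$ of $\SD(n,d)$: this is built into the comparison between the categorical $\tilde\tau$ of Subsection~\ref{sec:struct} and the algebraic $\tau$ of Definition~\ref{defn:tau}, exactly as in Khovanov–Lauda (their Remark 3.20 and Proposition 2.4), so that $\gamma_S(\tau(u))=K_0(\tilde\tau)(\gamma_S(u))$ up to the appropriate grading shift, and $\langle x,\tau(u)y\rangle=\dim_q\HOMSD(\gamma_S(x),\tilde\tau(\gamma_S(u))\gamma_S(y))$. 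Applying Lemma~\ref{lem:tildetau} with $f=\gamma_S(u)$ then identifies this with $\dim_q\HOMSD(\gamma_S(u)\gamma_S(x),\gamma_S(y))=\dim_q\HOMSD(\gamma_S(ux),\gamma_S(y))=\langle ux,y\rangle$, since $\gamma_S$ is a homomorphism.

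The main obstacle I anticipate is purely bookkeeping rather than conceptual: tracking the grading shifts so that $\gamma_S\circ\tau$ really matches $K_0(\tilde\tau)\circ\gamma_S$ on the nose. The shift $t'$ in the definition of $\tilde\tau$ and the shifts hidden in $\tau$ (Definition~\ref{defn:tau}, where $E_i$ and $E_{-i}$ are exchanged up to powers of $q$) must be reconciled with the grading-shift conventions in $K_0^{\bQ(q)}(\ScatD(n,d))$, where $[x\{t\}]=q^t[x]$; since $\gamma_S$ was shown to be an isometry for the form defined via $\dim_q\HOMSD$, and $\dim_q$ is built to respect these shifts, this works out, but it requires care. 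One should also double-check that $\tilde\tau$ sends the summand idempotents appropriately so that it is well-defined on the Karoubi envelope and hence on $K_0^{\bQ(q)}(\ScatD(n,d))$; this follows from $\tilde\tau$ being a (functorial) anti-involution as already asserted in Subsection~\ref{sec:struct}. Beyond that, both parts are short once the structural statements are in place, and I would present them in the order (1) then (2) as above.
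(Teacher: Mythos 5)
The paper offers no proof of this corollary — it is stated as ``easy to see'' with a pointer to Proposition 2.4 in~\cite{K-L3} — so there is no written argument to compare against. Your fleshed-out proof is correct and takes exactly the route the paper's phrasing and cross-reference intend: part (1) is the observation that $\HOMSD$ vanishes between $1$-morphisms living in different hom-categories $\ScatD(n,d)(\lambda,\mu)$, and part (2) is the adjunction isomorphism of Lemma~\ref{lem:tildetau} combined with the compatibility $\gamma_S\circ\tau = K_0(\tilde\tau)\circ\gamma_S$, which is built into the definitions of $\tau$ (Definition~\ref{defn:tau}) and $\tilde\tau$ (the shift $t'$ in the latter being chosen precisely to make $\tilde\tau$ degree-preserving, which is what makes the graded dimensions match). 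You correctly flag the typo in the statement ($y$ for the second $x$) and the two small things that need checking — the passage from $\Scat(n,d)$ to its Karoubi envelope and the shift bookkeeping — both of which go through for the reasons you give.
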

\noindent However, Khovanov and Lauda's interpretation of the semilinear form 
on $\dot{U}(\mathfrak{sl}_n)$ in Theorem 2.7 in \cite{K-L3}, which shows 
that $\langle[x],[y]\rangle=\dim_q(\HOMUD(x,y))$ can be obtained by counting 
the number of minimal diagrams in each degree in $\HOMUD(x,y)$, does not hold 
in our case. This is because minimal diagrams in $\Scat(n,d)$ are not linearly 
independent in general. 
For example, consider relation~\eqref{eq:EF} for $n=2$ and $\lambda=(1,0)$. 
Note that the sum on the right-hand side only contains one term. 
The first term on the right-hand side, i.e. the one with the two crossings, has 
a middle region with label $(2,-1)\not\in\Lambda(2,1)$, so it is equal to zero. 
This shows that the minimal diagram on the left-hand side is equivalent to the 
minimal diagram on the right-hand side.     

\subsection{Categorical Weyl modules}
We conjecture that it is easy to categorify the irreducible 
representations $V_{\lambda}$, for $\lambda\in\Lambda^+(n,d)$, using the 
category $\Scat(n,d)$. Recall from Lemma~\ref{lem:weyl} that 
$$V_{\lambda}\cong \SD(n,d)1_{\lambda}/[\mu>\lambda].$$
\begin{defn} For any $\lambda\in\Lambda^+(n,d)$, let 
$\Scat(n,d)1_{\lambda}$ be the category whose objects are the $1$-morphisms 
in $\Scat(n,d)$ of the form $x1_{\lambda}$ and whose morphisms are the 
$2$-morphisms in $\Scat(n,d)$ between such 1-morphisms. Note that 
$\Scat(n,d)1_{\lambda}$ does not have a monoidal structure, because 
two $1$-morphisms $x1_{\lambda}$ and $y1_{\lambda}$ cannot be composed in general. 
Alternatively one can see $\Scat(n,d)1_{\lambda}$ as a graded ring, whose 
elements are the morphisms. 

Let ${\mathcal V}_{\lambda}$ be the quotient of $\Scat(n,d)1_{\lambda}$ by 
the ideal generated by all diagrams which contain a region labeled by 
$\mu>\lambda$. 
\end{defn}

Note that there is a natural categorical action of $\Scat(n,d)$, and therefore 
of $\Ucat$, on ${\mathcal V}_{\lambda}$, 
defined by putting a diagram in $\Scat(n,d)$ on the left-hand side of a 
diagram in ${\mathcal V}_{\lambda}$. This action descends to an action of 
$\SD(n,d)\cong K_0^{\bQ(q)}(\ScatD(n,d))$ on $K_0^{\bQ(q)}(\dot{{\mathcal V}}_{\lambda})$, 
where $\dot{{\mathcal V}}_{\lambda}$ is the Karoubi envelope of 
${\mathcal V}_{\lambda}$. Note that 
$\gamma_S$ induces a well-defined linear map 
$\gamma_{\lambda}\colon V_{\lambda}\to K_0^{\bQ(q)}(\dot{{\mathcal V}}_{\lambda})$, which 
intertwines the $\SD(n,d)$-actions.  

\begin{lem} 
\label{lem:repsurj}
The linear map $\gamma_{\lambda}$ is surjective. 
\end{lem}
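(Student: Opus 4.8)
The plan is to mimic the proof of Lemma~\ref{lem:surj}, but working relative to the quotient $\dot{\mathcal V}_{\lambda}$ rather than all of $\ScatD(n,d)$. Recall that $K_0^{\bQ(q)}(\dot{\mathcal V}_{\lambda})$ is generated, as an abelian group, by the classes $[(x1_{\lambda},e)]$ where $x=\mathcal E_{\mathbf i}$ is a composite of generating $1$-morphisms and $e$ is an idempotent in $\End_{{\mathcal V}_{\lambda}}(x1_{\lambda})$; here $\End_{{\mathcal V}_{\lambda}}(x1_{\lambda})$ is the quotient of $\EndS(x1_{\lambda})$ by the image of the ideal generated by diagrams passing through a region labeled $\mu>\lambda$. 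So it suffices to show that every such class lies in the image of $\gamma_{\lambda}$. Since $\gamma_{\lambda}$ intertwines the $\SD(n,d)$-action and $V_{\lambda}\cong\SD(n,d)1_{\lambda}/[\mu>\lambda]$ is cyclic, generated by the class of $1_{\lambda}$ itself, it is enough to prove that $[1_{\lambda}]$ lies in the image (which is trivial, as $\gamma_{\lambda}$ sends the generator $1_{\lambda}$ of $V_{\lambda}$ to $[1_{\lambda}]$) and then to deduce the general case from surjectivity of $\gamma_S$ together with the naturality of the whole construction.

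First I would set up the commuting square relating $\gamma_S$ and $\gamma_{\lambda}$: there is an obvious quotient $2$-functor $Q_{\lambda}\colon\Scat(n,d)1_{\lambda}\to{\mathcal V}_{\lambda}$, inducing $K_0^{\bQ(q)}(Q_{\lambda})\colon K_0^{\bQ(q)}(\ScatD(n,d)1_{\lambda})\to K_0^{\bQ(q)}(\dot{\mathcal V}_{\lambda})$, and this map is surjective because $Q_{\lambda}$ is full and essentially surjective on objects (every object of ${\mathcal V}_{\lambda}$ is the image of an object of $\Scat(n,d)1_{\lambda}$, and idempotents lift along the surjection of endomorphism rings up to the virtually nilpotent part, just as in Proposition~\ref{prop:surj} and Corollary~\ref{cor:virtnilpot}). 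Next, the left $\SD(n,d)$-module $K_0^{\bQ(q)}(\ScatD(n,d)1_{\lambda})$ contains the submodule $K_0^{\bQ(q)}(\ScatD(n,d))\cdot[1_{\lambda}]$, and by Theorem~\ref{thm:groth} (surjectivity of $\gamma_S$, which is all that Lemma~\ref{lem:surj} needs) every class $[x1_{\lambda}]$ with $x=\mathcal E_{\mathbf i}$ is of the form $\gamma_S(E_{\mathbf i}1_{\lambda})\cdot[1_{\lambda}]$ in the sense that it is the image of $E_{\mathbf i}1_{\lambda}\in\SD(n,d)1_{\lambda}$ under $\gamma_S$ acting on $[1_{\lambda}]$; pushing forward under $Q_{\lambda}$ and using $K_0^{\bQ(q)}(Q_{\lambda})([1_{\lambda}])=[1_{\lambda}]=\gamma_{\lambda}(\overline{1_{\lambda}})$, we get that $K_0^{\bQ(q)}(Q_{\lambda})([x1_{\lambda}])=\gamma_{\lambda}(E_{\mathbf i}\cdot\overline{1_{\lambda}})$. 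Finally, to handle idempotents $[(x1_{\lambda},e)]$ rather than just $[x1_{\lambda}]$, I would run the same width induction as in the proof of Lemma~\ref{lem:surj}: for $x$ of width zero, $x1_{\lambda}=1_{\lambda}$ and the claim is the base case above; for the induction step one uses the split exact sequence \eqref{eq:KLExSeq} pushed through $\Psi_{n,d}$ and then through $Q_{\lambda}$, so that a minimal idempotent of width $m$ becomes, modulo the ideal part, an idempotent coming from $R_{\nu,-\nu',\overline\lambda}$ whose class is known to be in the image by Khovanov--Lauda's results and the commuting square \eqref{cd:groth}, while the ideal part decomposes into idempotents of strictly smaller width, handled by induction.

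The main obstacle I anticipate is checking that the relevant ideals remain virtually nilpotent after passing to the quotient ${\mathcal V}_{\lambda}$, i.e. that Corollary~\ref{cor:virtnilpot} can still be applied; in principle the quotient could collapse degrees in a way that destroys the boundedness-from-below hypothesis, but since ${\mathcal V}_{\lambda}$ is a quotient of $\Scat(n,d)1_{\lambda}$ by a homogeneous ideal, the grading is inherited and bounded below exactly as before, so this should go through without trouble. A secondary point to be careful about is that the $\SD(n,d)$-action on $K_0^{\bQ(q)}(\dot{\mathcal V}_{\lambda})$ is well defined and that $\gamma_{\lambda}$ genuinely intertwines it — but this is exactly the content of the paragraph just before the lemma in the excerpt, so I would simply cite it. In short: surjectivity of $\gamma_{\lambda}$ reduces, via the cyclicity of $V_{\lambda}$ and the surjectivity of $Q_{\lambda}$ on $K_0$, to the already-established surjectivity of $\gamma_S$ together with the base case $\gamma_{\lambda}(\overline{1_{\lambda}})=[1_{\lambda}]$.
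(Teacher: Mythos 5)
Your overall outline -- show that the quotient functor $Q_\lambda\colon\Scat(n,d)1_\lambda\to{\mathcal V}_\lambda$ induces a surjection on $K_0^{\bQ(q)}$, then compose with the isomorphism $\SD(n,d)1_\lambda\cong K_0^{\bQ(q)}(\ScatD(n,d)1_\lambda)$ furnished by Theorem~\ref{thm:groth} and note that $[\mu>\lambda]$ dies -- is exactly the structure of the paper's proof. But the place where you wave your hands is precisely the place where the real work happens, and your "anticipated obstacle" is not the actual obstacle. To apply Corollary~\ref{cor:virtnilpot} to the surjection $\ENDS(x)\to\ENDS(x)/\mbox{END}^{>\lambda}_{\Scat(n,d)}(x)$ you must first exhibit a virtually nilpotent two-sided ideal \emph{of finite codimension} in $\ENDS(x)$, since Proposition~\ref{prop:surj} only speaks of finite-dimensional algebras. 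You never say what this ideal is. The paper takes $I=\mbox{END}^+_{\Scat(n,d)}(x)$, the image of $\ENDS(x)\otimes\ENDS(1_\lambda)^+$ under the right action, and for this to have finite codimension one needs the fact -- established in Section~\ref{sec:struct} via the fullness of $\Psi_{n,d}$ and Khovanov--Lauda's basis ${}_{\mathbf i}B_{\mathbf j}$ -- that $\ENDS(x)$ is finitely generated as a right $\ENDS(1_\lambda)$-module. Without that input, the reduction to the finite-dimensional setting does not go through, and this is the only nontrivial point in the whole lemma. Your worry that "the ideal might not remain virtually nilpotent after passing to $\mathcal V_\lambda$" is a non-issue (as you yourself observe, gradings survive passage to a homogeneous quotient); the real issue is upstream.

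Separately, the width induction you append at the end is redundant and suggests a misreading of the logical dependencies. Once you know $K_0^{\bQ(q)}(Q_\lambda)$ is surjective, Theorem~\ref{thm:groth} already says that \emph{every} class in $K_0^{\bQ(q)}(\ScatD(n,d)1_\lambda)$ -- including those of the form $[(x1_\lambda,e)]$ with $e$ a nontrivial idempotent -- lies in the image of $\gamma_S$; composing gives surjectivity of $\gamma_\lambda$ with no further induction on width. The width induction in Lemma~\ref{lem:surj} was there precisely because Theorem~\ref{thm:groth} had not yet been proved; here it has, and re-running the induction only obscures what is otherwise a short argument. Likewise the appeal to cyclicity of $V_\lambda$ in your opening paragraph is a red herring: knowing that the image of $\gamma_\lambda$ is an $\SD(n,d)$-submodule containing $[1_\lambda]$ does not show it is everything unless you already know $K_0^{\bQ(q)}(\dot{\mathcal V}_\lambda)$ is generated by $[1_\lambda]$, which is equivalent to what you are trying to prove.
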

\begin{proof} We first show that $K_0^{\bQ(q)}(\ScatD(n,d)1_{\lambda})\to 
K_0^{\bQ(q)}(\dot{{\mathcal V}}_{\lambda})$ is surjective. 
Again, we want to use Proposition~\ref{prop:surj}, but 
have to be careful because the graded rings involved are not 
finite-dimensional. Choose an object 
$x\in \Scat(n,d)1_{\lambda}$. Recall that $\ENDS(x)$ is finitely generated 
as a right module over $\ENDS(1_{\lambda})$. Let $\ENDS(1_{\lambda})^+\subseteq 
\ENDS(1_{\lambda})$ be the two-sided ideal of $2$-morphisms of strictly positive 
degree. Note that  
$\ENDS(1_{\lambda})^+$ is a codimension one virtually nilpotent ideal. 
Let $\mbox{END}^+_{\Scat(n,d)}(x)\subseteq \ENDS(x)$ be the image of 
$\ENDS(x)\otimes \ENDS(1_{\lambda})^+$ under the right action. Then 
$\mbox{END}^+_{\Scat(n,d)}(x)$ is a two-sided ideal of finite codimension and 
is virtually nilpotent, because the grading of $\ENDS(1_{\lambda})$ is bounded 
from below.   

Now let $\mbox{END}^{>\lambda}_{\Scat(n,d)}(x)
\subseteq \ENDS(x)$ be the two-sided ideal generated by all diagrams 
with at least one region labeled by a $\mu>\lambda$. By 
Corollary~\ref{cor:virtnilpot}, the projection 
$$\ENDS(x)\to \ENDS(x)/\mbox{END}^{>\lambda}_{\Scat(n,d)}(x)$$
induces a surjective homomorphism 
$$K_0^{\bQ(q)}(\ENDS(x))\to K_0^{\bQ(q)}(\ENDS(x)/\mbox{END}^{>\lambda}_{\Scat(n,d)}(x)).$$ 
Since $x$ was arbitrary, it follows that 
$$K_0^{\bQ(q)}(\ScatD(n,d)1_{\lambda})\to K_0^{\bQ(q)}(\dot{{\mathcal V}}_{\lambda})$$ 
is surjective. Thus, the composite linear map 
$$\SD(n,d)1_{\lambda}\cong K_0^{\bQ(q)}(\ScatD(n,d)1_{\lambda})\to 
K_0^{\bQ(q)}(\dot{{\mathcal V}}_{\lambda})$$ 
is surjective. Note that $[\mu>\lambda]$ is contained in the kernel of this 
map, which proves this lemma.  
\end{proof}

\begin{conj} For any $\lambda\in\Lambda^+(n,d)$, we have 
$$K_0^{\bQ(q)}(\dot{{\mathcal V}}_{\lambda})\cong V_{\lambda}.$$
\end{conj}

We do not know how to prove the conjecture in general. Note that by 
Lemma~\ref{lem:repsurj}, we have a surjective linear map 
$\gamma_{\lambda}\colon V_{\lambda}\to K_0^{\bQ(q)}(\dot{\mathcal V}_{\lambda})$, which 
intertwines the $\SD(n,d)$-actions. Since $V_{\lambda}$ 
is irreducible, we have $K_0^{\bQ(q)}(\dot{\mathcal V}_{\lambda})\cong V_{\lambda}$ or 
$K_0^{\bQ(q)}(\dot{\mathcal V}_{\lambda})=0$. So it suffices to show that 
$K_0^{\bQ(q)}(\dot{\mathcal V}_{\lambda})\ne 0$. Particular cases can be proved easily. 
For example, if $\lambda=(d)$, then ${\mathcal V}_{\lambda}=
\Scat(n,d)1_{\lambda}$, because there are no weights higher than $(d)$. 
By Theorem~\ref{thm:groth} we have $K_0^{\bQ(q)}(\ScatD(n,d)1_{\lambda})\cong 
\SD(n,d)1_{\lambda}$, which proves the conjecture in this case. 

We can also prove the case $n=2$. If $\lambda=(d,0)$, then the result 
follows from the previous case. Suppose $\lambda=(d-c,c)$, for $0<2c\leq d$.    
Note that $(d-2c,0)=(d-c,c)-(c,c)\in\Lambda^+(2,d-2c)$. Recall that we have a 
functor 
$$\Pi_{d,d-2c}\colon \Scat(2,d)\to\Scat(2,d-2c),$$ 
which induces a functor 
$$\Pi_{d,d-2c}\colon {\mathcal V}_{(d-c,c)}\to {\mathcal V}_{(d-2c,0)}.$$
Thus we have the following commuting square:
$$
\begin{CD}
V_{(d-c,c)}@>{\pi_{d,d-2c}}>> V_{(d-2c,0)}\\
@V{\gamma_{(d-c,c)}}VV @VV{\gamma_{(d-2c,0)}}V\\
K_0^{\bQ(q)}(\dot{\mathcal V}_{(d-c,c)})@>{K_0^{\bQ(q)}(\Pi_{d,d-2c})}>>K_0^{\bQ(q)}
(\dot{\mathcal V}_{(d-2c,0)})
\end{CD}
$$
We know that $\pi_{d,d-2c}$ and $\gamma_{(d-2c,0)}$ are isomorphisms and 
$\gamma_{(d-c,c)}$ is surjective. Therefore 
$\gamma_{(d-c,c)}$ is an isomorphism too, so 
$K_0^{\bQ(q)}(\dot{\mathcal V}_{(d-c,c)})\cong V_{(d-c,c)}$.

There is an obvious functor from the Khovanov-Lauda~\cite{K-L1} 
cyclotomic quotient 
category $R(*,\lambda)$ to a quotient of our ${\mathcal V}_{\lambda}$. The 
quotient is obtained by putting all bubbles of positive degree in the 
right-most region of the diagrams, labeled $\lambda$, equal to zero. By 
our observations above about $\END^+_{\Scat(n,d)}(x)$, this quotient has 
the same Grothendieck group as ${\mathcal V}_{\lambda}$. The 
functor is the ``identity'' on objects and morphisms. The reduction to 
bubbles argument before Conjecture~\ref{conj:struct} shows that our 
quotient satisfies the cyclotomic 
condition. The functor is clearly essentially surjective and full and we 
conjecture it to be faithful, so that it would be an equivalence of 
categories.     

\smallskip


\vspace*{1cm}

\noindent {\bf Acknowledgements} 

Foremost, we want to thank Mikhail Khovanov for his remark that we should be 
able to define the Chuang-Rouquier complex 
for a colored braid using his and Lauda's diagrammatic calculus~\cite{K-L3}. 
That observation formed the 
starting point of this paper. We also thank 
Mikhail Khovanov and Aaron Lauda for further inspiring conversations and for 
letting us copy their LaTeX 
version of the definition of $\Ucat$ almost literally. 

Secondly, we thank the anonymous referee for his or her detailed suggestions 
and comments which have helped us to improve the readability of the paper 
significantly (we hope).

Furthermore, the authors were supported by the 
Funda\c {c}\~{a}o para a Ci\^{e}ncia e Tecnologia (ISR/IST plurianual funding) through the
programme ``Programa Operacional Ci\^{e}ncia, Tecnologia, Inova\-\c{c}\~{a}o'' 
(POCTI) and the POS Conhecimento programme, cofinanced by the European Community 
fund FEDER.

PV was also financially supported by the Funda\c{c}\~ao para a Ci\^encia e Tecnologia 
through the post-doctoral fellowship SFRH/BPD/46299/ 2008.
 
MS was also partially supported by the Ministry of Science of Serbia, project 
174012.



\begin{thebibliography}{15}

\bibitem{B-L-M}
A.~Beilinson, G.~Lusztig and R.~MacPherson,
A geometric setting for the quantum deformation of $\mathfrak{gl}_n$.
Duke Math. J. 61(2) (1990), 655-677.

\bibitem{B-S}
J.~Brundan and C.~Stroppel,
Khovanov's diagram algebra III: category $\mathcal{O}$.
Represent. Theory  15  (2011), 170-243.

\bibitem{C-K-L}
S.~Cautis, J.~Kamnitzer and A.~Licata, 
Derived equivalences for cotangent bundles of Grassmannians via 
categorical sl(2) actions.
arXiv:0902.1797.

\bibitem{C-G}
N.~Chris and V.~Ginzburg,
{\em Representation theory and complex geometry}.
Birkh\"{a}user, 1997.

\bibitem{C-R}
J.~Chuang and R.~Rouquier, 
Derived equivalences for symmetric groups and sl(2)-categorification.
Annals of Math. 167 (2008), 245-298.

\bibitem{D}
S.~Donkin,
{\em The $q$-Schur algebra}.
Lond. Math. Soc. Lect. Not. Ser. 253,
Cambridge University Press, 1998.

\bibitem{D-G}
S.~Doty and A.~Giaquinto,
Presenting Schur algebras.
\emph{International Mathematics Research Notices} 36 (2002), 1907-1944.

\bibitem{E-Kh}
B.~Elias and M.~Khovanov,
Diagrammatics for Soergel categories. 
arXiv: 0902.4700v1 [math.RT].

\bibitem{E-Kr}
B.~Elias and D.~Krasner,
Rouquier complexes are functorial over braid cobordisms.
Homology, Homotopy Appl.  12  (2010),  no. 2, 109-146. 

\bibitem{F}
W.~Fulton,
{\em Young tableaux}.
Lond. Math. Soc. Stud. Texts 35,
Cambridge University Press, 1997.

\bibitem{F-P}
W.~Fulton and P.~Pragacz,
{\em Schubert varieties and degeneracy loci}.
Lect. Notes in Math. 1689, Springer, 1998. 

\bibitem{G-L}
I.~Grojnowski and G.~Lusztig,
On bases of irreducible representations of quantum $GL_n$.
Contemp. Math. 139 (1992), 167-174.

\bibitem{Kh}
M.~Khovanov,
Triply-graded link homology and Hochschild homology of Soergel bimodules.
Int. J. Math. 18(8) (2007), 869-885.

\bibitem{K-L1}
M.~Khovanov and A.~Lauda,
A diagrammatic approach to categorification of quantum groups I.
\emph{Representation Theory}
13 (2009), 309-347.

\bibitem{K-L2}
M.~Khovanov and A.~Lauda,
A diagrammatic approach to categorification of quantum groups II.
Trans. Amer. Math. Soc.  363  (2011),  no. 5, 2685-2700.

\bibitem{K-L3}
M.~Khovanov and A.~Lauda,
A categorification of quantum ${\rm sl}(n)$.  Quantum Topol.  
1  (2010),  no. 1, 1-92. 

\bibitem{K-L:err}
M.~Khovanov and A.~Lauda,
Erratum to: "A categorification of quantum ${\rm sl}(n)$''.  
Quantum Topol.  2  (2011),  no. 1, 97-99.

\bibitem{K-L-M-S}
M.~Khovanov, A.~Lauda, M.~Mackaay and M.~Sto\v si\'c,
Extended graphical calculus for categorified quantum sl(2).
arXiv:1006.2866 [math.QA].

\bibitem{Kr}
D.~Krasner,
Integral HOMFLT-PT and $sl(n)$-link homology.
 Int. J. Math. Math. Sci.  2010, Art. ID 896879, 25 pp.


\bibitem{L1}
A.~Lauda, 
A categorification of quantum $\mathfrak{sl}_2$. 
 Adv. Math.  225  (2010),  no. 6, 3327-3424.

\bibitem{L2}
A.~Lauda,
Categorified quantum $sl(2)$ and equivariant cohomology of iterated 
flag varieties. Algebr. Represent. Theory  14  (2011),  no. 2, 253-282, 

\bibitem{Lu}
G.~Lusztig,
{\em Introduction to quantum groups}.
Prog. in Math. 110, Birkh\"{a}user, 1993.

\bibitem{McD}
I.G.~Macdonald,
{\em Symmetric functions and Hall polynomials}.
Oxford Mathematical Monographs, Oxford University Press, 1995.

\bibitem{Mac}
M.~Mackaay, 
$sl(3)$-Foams and the Khovanov-Lauda categorification of quantum $sl(k)$. 
arXiv:0905.2059 [math.QA].

\bibitem{M-S-V}
M.~Mackaay,M.~Sto\v si\'c and P.~Vaz, 
The 1,2-coloured HOMFLY-PT link homology.
Trans. Amer. Math. Soc.  363  (2011),  no. 4, 2091-2124.


\bibitem{M-V}
M.~Mackaay and P.~Vaz,
The diagrammatic Soergel category and $sl(N)$ foams for $N\geq 4$.
 Int. J. Math. Math. Sci.  2010, Art. ID 468968, 20 pp.

\bibitem{Mar}
S.~Martin, 
{\em Schur algebras and representation theory}.
Cambridge Tracts in Math. 112,
Cambridge University Press, 1993. 

\bibitem{M-S1}
V.~Mazorchuk and C.~Stroppel,
Categorification of (induced) cell modules and the rough structure of 
generalised Verma modules. {\em Adv. Math.} 219: 1363-1426, 2008. 

\bibitem{M-S2}
V.~Mazorchuk and C.~Stroppel,
A combinatorial approach to functorial quantum $sl(k)$ knot invariants.
Amer. J. Math. 131(6) (2009), 1679-1713.

\bibitem{M-O-Y}
H.~Murakami, T.~Ohtsuki and S.~Yamada, HOMFLY polynomial via an invariant of 
colored plane graphs. 
{\em Enseign. Math.} (2) 44: 325-360, 1998.

\bibitem{R1}
R.~Rouquier,
Categorification of ${\mathfrak{sl}}_2$ and braid groups. 
Trends in representation theory of algebras and related topics,  137-167,
Contemp. Math., 406, Amer. Math. Soc., Providence, RI, 2006. 

\bibitem{R2}
R.~Rouquier,
2-Kac-Moody algebras.
arXiv:0812.5023.

\bibitem{S}
W.~Soergel,
The combinatorics of Harish-Chandra bimodules.
\emph{Journal Reine Angew. Math.} 429 (1992) 49-74.


\bibitem{V}
P.~Vaz,
The diagrammatic Soergel category and $sl(2)$ and $sl(3)$ foams.
Int. J. Math. Math. Sci.  2010, Art. ID 612360, 23 pp. 

\bibitem{W1} 
B.~Webster,
Knots and higher representation theory I: diagrammatic and geometric 
categorification of tensor products.
arXiv:1001.2020 [math.GT].

\bibitem{W2}
B.~Webster, 
Knots and higher representation theory II: the categorification of quantum 
knot invariants.
arXiv:1005:4559 [math.GT].

\bibitem{W-W1}
B.~Webster and G.~Williamson,
A geometric model for Hochschild homology of Soergel bimodules.
Geometry and Topology 12 (2008), 1243-1263.

\bibitem{W-W2}
B.~Webster and G.~Williamson,
A geometric construction of the colored HOMFLYPT homology.
arXiv: 0905.0486 [math.GT].

\bibitem{Will}
G.~Williamson,
{\em Singular Soergel bimodules}.
Doctoral thesis,
Albert-Ludwigs-Universit\"{a}t, Freiburg, Germany, 
2008.

\bibitem{Wu1}
H.~Wu,
Matrix factorizations and colored MOY graphs.
arXiv:0803.2071 [math.GT].

\bibitem{Wu2}
H.~Wu,
A colored $\mathfrak{sl}_n$-homology for links in $S^3$.
arXiv:0907.0695 [math.GT].

\bibitem{Wu3}
H.~Wu,
Equivariant colored $\mathfrak{sl}_n$-homology for links.
arXiv:1002.2662 [math.GT].

\bibitem{Yo1}
Y.~Yonezawa,
Matrix factorizations and intertwiners of the fundamental representations 
of quantum group $U_q(\mathfrak{sl}_n)$.
arXiv:0806.4939 [math.QA].

\bibitem{Yo2}
Y.~Yonezawa,
Quantum $(\mathfrak{sl}_n,\bigwedge V_n)$ link invariants and matrix 
factorizations.
Doctoral thesis, Nagoya University, Nagoya, Japan, 2010. 


\end{thebibliography}
\end{document}